\documentclass[11pt]{amsart}

\usepackage[T1]{fontenc}
\usepackage{lmodern}
\usepackage{microtype}
\usepackage{amsmath,amssymb,amsthm,mathtools}
\usepackage[margin=1in]{geometry}
\usepackage{enumitem}
\usepackage[colorlinks=true,linkcolor=blue,citecolor=blue,urlcolor=blue]{hyperref}
\usepackage{aliascnt}

\newtheorem{theorem}{Theorem}[section]
\newaliascnt{proposition}{theorem}
\newtheorem{proposition}[proposition]{Proposition}
\aliascntresetthe{proposition}
\newaliascnt{lemma}{theorem}
\newtheorem{lemma}[lemma]{Lemma}
\aliascntresetthe{lemma}
\newaliascnt{corollary}{theorem}
\newtheorem{corollary}[corollary]{Corollary}
\aliascntresetthe{corollary}
\newaliascnt{question}{theorem}
\newtheorem{question}[question]{Question}
\aliascntresetthe{question}
\newaliascnt{definition}{theorem}
\newtheorem{definition}[definition]{Definition}
\aliascntresetthe{definition}
\newaliascnt{remark}{theorem}
\newtheorem{remark}[remark]{Remark}
\aliascntresetthe{remark}

\usepackage[nameinlink,capitalise]{cleveref}
\crefname{theorem}{Theorem}{Theorems}
\Crefname{theorem}{Theorem}{Theorems}
\crefname{proposition}{Proposition}{Propositions}
\Crefname{proposition}{Proposition}{Propositions}
\crefname{lemma}{Lemma}{Lemmas}
\Crefname{lemma}{Lemma}{Lemmas}
\crefname{corollary}{Corollary}{Corollaries}
\Crefname{corollary}{Corollary}{Corollaries}
\crefname{question}{Question}{Questions}
\Crefname{question}{Question}{Questions}
\crefname{definition}{Definition}{Definitions}
\Crefname{definition}{Definition}{Definitions}
\crefname{remark}{Remark}{Remarks}
\Crefname{remark}{Remark}{Remarks}

\numberwithin{equation}{section}

\newcommand{\E}{\mathbb E}
\newcommand{\Prob}{\mathbb P}

\newcommand{\girth}{\operatorname{girth}}
\newcommand{\chif}{\chi_{\mathrm f}}

\newcommand{\calM}{\mathcal M}

\newcommand{\eps}{\varepsilon}

\title[Polynomial chromatic-sparsity]{The Erd\H{o}s--Hajnal High-Girth Subgraph Conjecture Holds in the Polynomial Chromatic-Sparsity Regime}

\author[E. Li]{Eric Li}
\dedicatory{\normalfont\normalsize Trinity College, University of Cambridge}
\date{June 14, 2026}
\thanks{Email addresses: \href{mailto:el593@cam.ac.uk}{el593@cam.ac.uk}, \href{mailto:contact@ericli.com}{contact@ericli.com}.}

\subjclass[2020]{05C15, 05C35, 05C38, 05C80}
\keywords{chromatic number, girth, sparse graphs, fractional chromatic number, random subgraphs, Erd\H{o}s--Hajnal problem}

\hypersetup{
  pdftitle={The Erd\H{o}s--Hajnal High-Girth Subgraph Conjecture Holds in the Polynomial Chromatic-Sparsity Regime},
  pdfauthor={Eric Li},
  pdfsubject={The Erd\H{o}s--Hajnal high-girth subgraph conjecture in the polynomial chromatic-sparsity regime},
  pdfkeywords={chromatic number, girth, polynomial chromatic sparsity, fractional chromatic number, random subgraphs, Erd\H{o}s--Hajnal problem}
}

\begin{document}

\begin{abstract}
For a graph $G$ put
\[
        h_r(G)=\max\{\chi(H):H\subseteq G,\ \girth(H)\ge r\}.
\]
Erd\H{o}s and Hajnal asked whether $h_r(G)\to\infty$ as $\chi(G)\to\infty$, for every fixed $r\ge4$.  We prove this in every fixed polynomial edge-density regime: for all $r\ge4$, $k\ge2$, $P,C>0$, there is $M=M_{r,k}(P,C)$ such that
\[
        \chi(G)\ge M,
        \qquad
        e(G)\le C\chi(G)^P
        \quad\Longrightarrow\quad
        h_r(G)\ge k.
\]
Quantitatively, after replacing $P$ by $P\vee2$ and $C$ by $C\vee2$,
\[
        M_{r,k}(P,C)
        \le \exp\!\left(O_{r,k}\bigl((P+2+\log(C\vee2))^2\bigr)\right),
\]
and consequently the same conclusion holds throughout the quasi-polynomial range
\[
        e(G)\le \exp\bigl(C_0(\log\chi(G))^a\bigr),
        \qquad 1<a<3/2,
\]
for all sufficiently large $\chi(G)$.  In each fixed polynomial-density regime we also obtain
\[
        f_{P,C}(k,r)\le k^{O_{r,P,C}(1)}.
\]

The proof combines a chromatic-defect random extraction lemma, compact and near-quadratic sparse-core bases, and a peeling/thinning bootstrap increasing the admissible edge exponent by $1/(r-1)$.  We also prove structural saturation results for possible counterexamples, including Moore-strength exact-cycle packings and quadratic saturation in projected colour-pair space.

Finally, writing
\[
        h_r^{\mathrm f}(G)=\max\{\chi_{\mathrm f}(H):H\subseteq G,\ \girth(H)\ge r\},
\]
we develop a fractional random-extraction framework based on Mohar--Wu preservation.  We prove sufficient cheap-cycle-killing criteria and verify them for several structured families, including clique-organised families, line graphs of incidence graphs of equal-order generalized quadrangles and generalized hexagons, and the Bohman--Keevash tracking-time triangle-free-process graph.  We also isolate a density-free obstruction that any proof using this fractional surgery route must overcome.
\end{abstract}

\maketitle

\section{Introduction}

Erd\H{o}s and Hajnal asked whether, for every pair $r\ge4$ and $k\ge2$, there is a finite number $f(k,r)$ such that every graph $G$ with $\chi(G)\ge f(k,r)$ contains a subgraph $H$ satisfying
\[
        \girth(H)\ge r,\qquad \chi(H)\ge k.
\]
The case $r=4$ is the theorem of R\"odl that sufficiently high chromatic number forces a triangle-free subgraph of large chromatic number \cite{Rodl1977}.  The general problem remains open.  Pettie, Tardos and Walczak recently proved tower-type lower bounds for the first open girth case $r=5$ using Burling graphs \cite{PTW2026}.  Random subgraphs of high-chromatic graphs have also been studied explicitly as a natural route toward the Erd\H{o}s--Hajnal problem; see Bukh, Krivelevich and Narayanan \cite{BKN2023} and the earlier work of Shinkar \cite{Shinkar2016}.  Mohar and Wu proved the fractional analogue of R\"odl's triangle-free subgraph theorem and also proved a fractional random-subgraph preservation theorem \cite{MoharWuTriangleFreeFractional,MoharWuRandomFractional}.

For a graph $G$, write
\[
        h_r(G)=\max\{\chi(H): H\subseteq G,\ \girth(H)\ge r\}.
\]
The Erd\H{o}s--Hajnal problem asks whether $h_r(G)\to\infty$ as $\chi(G)\to\infty$, for each fixed $r$.

\subsection{Main theorem}

The main result proves the Erd\H{o}s--Hajnal conclusion in every fixed polynomial edge-density regime.

\begin{theorem}[Polynomially sparse graphs]\label{thm:poly-sparse}
Fix integers $r\ge4$ and $k\ge2$, and real numbers $P>0$ and $C>0$.  There exists $M=M(r,k,P,C)$ such that every graph $G$ with
\[
        \chi(G)\ge M,\qquad e(G)\le C\chi(G)^P
\]
contains a subgraph $H$ with
\[
        \girth(H)\ge r,
        \qquad
        \chi(H)\ge k.
\]
\end{theorem}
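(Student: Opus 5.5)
We plan to prove \cref{thm:poly-sparse} by induction on the edge exponent: we establish the conclusion first when $P$ is below a base threshold, and then bootstrap, each step raising the admissible exponent by $1/(r-1)$. Since finitely many steps reach any fixed $P$, this suffices.

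\emph{Step 1 (reduction to a critical core).} Replace $G$ by a $\chi(G)$-vertex-critical subgraph. It has minimum degree at least $\chi(G)-1$ and still satisfies $e\le C\chi^P$. Writing $\chi=\chi(G)$, it therefore has at most $2C\chi^{P-1}$ vertices. The graph is thus polynomially small in terms of its own chromatic number.

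\emph{Step 2 (base case via random extraction).} For $P$ close to $1$, and more generally up to roughly $P<1+1/(r-1)$, we take a random subgraph. We keep each edge independently with probability $p=\chi^{-1+\delta}$ and delete one edge from every short cycle of length less than $r$. The expected number of cycles of length $\ell<r$ is at most $n\Delta^{\ell-1}p^\ell$. We bound this by the edge count instead of the maximum degree: a cycle of length $\ell$ is determined by at most $\ell/2$ edges together with short paths, and we control the number of such paths through the density. In the sparse regime this expected number is $o(pe(G))$. The key input is a chromatic-defect extraction lemma: if a random $p$-subgraph has chromatic number $\gtrsim p\chi$ with positive probability, in the sense of Mohar--Wu and Bukh--Krivelevich--Narayanan style statements, then deleting a set $F$ of edges lowers $\chi$ by at most about $\chi(F)$. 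We obtain this bound from $\chi(H)\ge\chi(H-F)$, $\chi(H)\le\chi(H-F)\chi(F)$, and the fact that the union of short-cycle edges has small degeneracy. The result is a girth-$r$ subgraph of chromatic number at least $\chi^{c}$, which exceeds $k$.

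\emph{Step 3 (peeling/thinning bootstrap).} Assume the theorem holds for exponent $P$, and let $e(G)\le C\chi^{P+1/(r-1)}$. We split into two cases according to the vertices of degree above the threshold $D=\chi^{1+1/(r-1)}$.
\begin{itemize}[leftmargin=*]
\item If the low-degree part still has chromatic number at least $\chi/2$, we thin it by a random partition into $\chi^{1/(r-1)}$ classes. Some class has chromatic number at least $\chi^{1-1/(r-1)}$ but only a correspondingly reduced share of the edges. Rescaling, it satisfies the exponent-$P$ hypothesis relative to its own chromatic number, and induction applies.
\item Otherwise the high-degree vertices carry chromatic number at least $\chi/2$. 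There are few of them, at most $2e/D$, so their induced graph has $e\le (2e/D)^2$, which is polynomially smaller relative to $\chi/2$, and we apply induction again.
\end{itemize}
The main obstacle is making the thinning step lose only a constant factor in the exponent while keeping chromatic number polynomial in $\chi$. A random vertex partition does not obviously preserve chromatic number in a single class beyond $\chi/(\text{number of parts})$, and the edge count of a class scales with the square of the part size. I would first try a careful choice of the number of parts so that both conditions are satisfied simultaneously, tracking the constants $C$ and $M$ so that $M_{r,k}$ stays finite. I expect this bookkeeping, including keeping the recursion for $M$ of the form $\exp(O((P+\log C)^2))$, to be the delicate part.
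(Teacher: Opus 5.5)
Your Step~3 does not lower the exponent in either branch, and the whole argument rests on it. A $\chi$-critical graph has at least $\binom{\chi}{2}$ edges, so only $P\ge 2$ is non-vacuous and your base range $P<1+1/(r-1)$ is empty.

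In the low-degree branch, a class of a vertex partition into $s$ parts is only guaranteed $\chi_i\gtrsim\chi/s$. Such a class carries about $e/s^2$ edges (or, by averaging, $e_i\lesssim e\chi_i/(s\chi)$). Hence the only certifiable bound is $e_i/\chi_i^{x}\lesssim s^{x-2}e/\chi^{x}$, which is worse than for $G$ when $x>2$. Concretely, $e_i\le C'\chi_i^{P}$ with $e=C\chi^{P+1/(r-1)}$ would require $\chi^{1/(r-1)}\lesssim s^{2-P}$. This fails for every $s\ge1$ once $P\ge2$, so no choice of the number of parts helps, and the degree bound $\Delta\le D$ is never used.

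In the high-degree branch, $|Z|\le 2C\chi^{P-1}$ only gives $e(G[Z])=O(\chi^{2P-2})$. That has exponent at most $P$ only when $P\le2$.

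Step~2 has a separate problem. The integral preservation $\chi(G_p)\gtrsim p\chi$ you invoke is the open Bukh-type question; Mohar--Wu is only fractional. A conclusion $\chi(H)\ge\chi^{c}$ is also far beyond what random extraction gives here.

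The missing idea is the paper's chromatic-defect dichotomy for the \emph{fixed} target $q=k-1$, combined with simultaneous control of an edge exponent $x$ and a vertex exponent $u$ (Proposition~\ref{prop:mixed}). High-degree peeling keeps $x$ fixed and lowers $u$ by a small $\eta$. This inner descent bottoms out in the compact-core theorem (Theorem~\ref{thm:compact-core}), never in $|Z|^2$.

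In the bounded-degree remainder $R$, with $\chi(R)>t/2$ and $\Delta(R)\le t^{x-u+\eta}$, set $\gamma=A-\eta$. There are two cases.

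\emph{Small defect.} Some $\psi:V(R)\to[q]$ has at most $t^{\gamma}$ monochromatic edges. By the product-colouring argument, the monochromatic-edge graph $M_\psi$ has $\chi(M_\psi)\ge t/(2q)$, and it has at most $t^{\gamma}$ edges. So the hypothesis at the lower exponent applies to $M_\psi$; this is the only place the exponent drops.

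\emph{Large defect.} Every $q$-colouring has more than $t^{\gamma}$ monochromatic edges. Then edge retention with $p=Lt^{u-\gamma}$ keeps every $q$-colouring improper, by Chernoff plus a union bound over the $q^{|V(R)|}\le q^{C't^{u}}$ colourings. Meanwhile $\E C_\ell(R_p)\le 2p^{\ell}e(R)\Delta(R)^{\ell-2}=o(t^{u})$ for small $\eta$, provided $u<(r-1)A-(r-2)x$.

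With $u=x-1$ from the critical core, this condition is $x<A+1/(r-1)$, which is where the increment $1/(r-1)$ comes from.
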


This does not solve the full Erd\H{o}s--Hajnal problem, because a chromatic-critical subgraph of a high-chromatic graph may have super-polynomially many edges in its own chromatic number.  It shows instead that any counterexample family must fail to contain arbitrarily high-chromatic critical cores whose edge count is bounded by a fixed power of their chromatic number; it is not merely a statement about the ambient graph's original edge count.

The frontier case $r=5$, $k=4$ is worth stating explicitly.

\begin{corollary}[The $r=5$, $k=4$ sparse frontier]\label{cor:r5k4}
For every $P>0$ and $C>0$ there is $M=M(P,C)$ such that every graph $G$ with
\[
        \chi(G)\ge M,
        \qquad
        e(G)
        \le C\chi(G)^P
\]
contains a subgraph of girth at least $5$ and chromatic number at least $4$.  In particular, every sufficiently high-chromatic triangle-free graph satisfying the same polynomial edge bound contains a $C_4$-free $4$-chromatic subgraph.
\end{corollary}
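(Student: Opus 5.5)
The corollary is the specialisation $r=5$, $k=4$ of \cref{thm:poly-sparse}. I would prove it by direct substitution and then treat the ``in particular'' clause separately.

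For the first assertion, fix $P>0$ and $C>0$ and set $M(P,C):=M(5,4,P,C)$, the constant supplied by \cref{thm:poly-sparse}. Take any graph $G$ with $\chi(G)\ge M(P,C)$ and $e(G)\le C\chi(G)^P$. Both hypotheses of the theorem hold with $r=5$ and $k=4$, so $G$ contains a subgraph $H$ with $\girth(H)\ge5$ and $\chi(H)\ge4$.

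For the second assertion, let $G$ be triangle-free, with $\chi(G)\ge M(P,C)$ and $e(G)\le C\chi(G)^P$. By the first part, $G$ has a subgraph $H$ with $\girth(H)\ge5$ and $\chi(H)\ge4$. Having girth at least $5$ means $H$ has no $C_3$ and no $C_4$, so in particular $H$ is $C_4$-free. Triangle-freeness of $G$ is not needed for this; it only records that the statement concerns the first case beyond R\"odl's theorem.

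Next I would make $H$ exactly $4$-chromatic, reading ``$4$-chromatic'' as $\chi=4$ rather than $\chi\ge4$. Delete vertices of $H$ one at a time. Each deletion lowers the chromatic number by at most $1$, and deleting every vertex reaches $0$. So at some stage there is an induced subgraph $H'\subseteq H$ with $\chi(H')=4$. Subgraphs cannot create new cycles, so $\girth(H')\ge\girth(H)\ge5$ and $H'$ is still $C_4$-free. Hence $H'$ is the required $C_4$-free $4$-chromatic subgraph of $G$.

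None of these steps is a real obstacle: all the content sits in \cref{thm:poly-sparse}. The only point needing care is the exact-versus-at-least reading of ``$4$-chromatic'', which the vertex-deletion step settles.
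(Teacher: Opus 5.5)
Your proposal is correct and matches the paper, which gives no separate argument: the corollary is just Theorem~\ref{thm:poly-sparse} specialised to $r=5$, $k=4$, together with the observation that girth at least $5$ excludes $C_4$. Your vertex-deletion step to reach chromatic number exactly $4$ (passing to a $4$-critical subgraph would do equally well) is a harmless refinement that the paper leaves implicit.
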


The proof of Theorem \ref{thm:poly-sparse} is in Section \ref{sec:poly}.  Its base case is a near-quadratic sparse-core theorem derived from random thinning.  Its induction step is a high-degree peeling dichotomy.  The induction increment is exactly $1/(r-1)$; iterating from the near-quadratic base reaches every fixed exponent $P$.

\subsection{Cycle-profile and core extraction}

The first production lemma gives a direct lower bound for $h_r(G)$ from global short-cycle counts.  Let $C_\ell(G)$ be the number of cycles of length $\ell$ in $G$.

\begin{theorem}[Cycle-profile extraction]\label{thm:cycle-profile}
For every integer $r\ge4$ there is a constant $c_r>0$ such that every graph $G$ with $m=\chi(G)$ satisfies
\[
 h_r(G)\ge c_r\min\left\{m,
 \min_{3\le \ell<r}\left(\frac{m^\ell}{C_\ell(G)+1}\right)^{1/(\ell-1)}\right\}.
\]
\end{theorem}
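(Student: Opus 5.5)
We plan to prove the bound by random thinning of a colour-critical core, followed by deletion of one edge from every surviving short cycle. Two points need care: the surviving graph must keep large chromatic number, and the deleted edges must not destroy it. We handle both through a defect argument on colour classes.

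\emph{Setup.} Pass to a $m$-critical subgraph $G'\subseteq G$ with $\chi(G')=m$. Since $C_\ell(G')\le C_\ell(G)$, we may assume $G=G'$, so $\delta(G)\ge m-1$ and every proper induced subgraph has smaller chromatic number. Put
\[
t=c_r\min\Bigl\{m,\min_{3\le\ell<r}\bigl(m^\ell/(C_\ell+1)\bigr)^{1/(\ell-1)}\Bigr\},
\]
and let $p=t/m$, so that $p\le c_r$. Let $H_0$ be the random subgraph keeping each edge independently with probability $p$. Let $B$ be a random set containing one edge from each cycle of length $3\le\ell<r$ in $H_0$, and set $H=H_0\setminus B$. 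Then $\girth(H)\ge r$ deterministically. The expected number of short cycles is
\[
\sum_{\ell<r}p^\ell C_\ell=\sum_{\ell<r}\frac{t^\ell C_\ell}{m^\ell}\le\sum_{\ell<r} c_r^{\ell-1}\,t .
\]
Here we used $t^{\ell-1}\le c_r^{\ell-1}m^\ell/(C_\ell+1)$. So in expectation only $O(c_r^2\,t)$ edges are deleted, which is tiny compared with the $\ge (m-1)p|V|/2$ surviving edges per vertex scale.

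\emph{Main step: chromatic-defect extraction.} We want $\chi(H)\ge t$ with positive probability. Suppose instead that $H$ has a proper colouring with $s<t$ colours. We lift it to a colouring of $G$ with fewer than $m$ colours, a contradiction.

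Each colour class $X$ of $H$ spans in $G$ a graph $G[X]$ whose edges were all removed: each such edge was either not sampled or lay in $B$. We need to show $\chi(G[X])$ is small, about $m/t$ on average, so that $\sum_X\chi(G[X])<m$.

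The not-sampled edges are controlled by a standard first-moment count over sets. For each fixed vertex set $X$ with $\chi(G[X])\ge q$, choose a $q$-critical subgraph of $G[X]$. It has at least $(q-1)|X'|/2$ edges, and all of them are missing with probability $(1-p)^{(q-1)|X'|/2}$. Union-bounding over the at most $\binom{n}{|X'|}$ choices of $X'$ fails for large $n$. So we instead use criticality degeneracy: we count only \emph{connected} critical cores, rooted at a vertex, with at most $\Delta^{|X'|}$-type counts. If this is too lossy, we replace it by the Mohar--Wu style argument in which the colouring of $H$ is fixed first, and the number of possible colourings is controlled by the number of sampled edges.

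The deleted edges $B$ are handled by adding them back. They form a graph with expected $O(c_r^2t)$ edges, hence chromatic number $O(c_r\sqrt t)$, which only multiplies the colour count by a constant factor.

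\emph{Obstacle.} The crux is this union bound. A naive count over subsets $X$ costs $\binom{n}{|X|}$, which cannot be beaten by $(1-p)^{q|X|}$ when $q\approx 1/p$. I expect the real proof to use the paper's "chromatic-defect random extraction lemma": an adaptive, vertex-exposure colouring argument that shows $\chi(G_p)\gtrsim p\,\chi(G)$ up to constants, in the spirit of the Bukh--Krivelevich--Narayanan analysis of random subgraphs. I would try to establish that lemma first, in the form $\Prob[\chi(G_p)\ge c\,pm]\ge 1/2$. Robustness to deleting few edges would then follow from $\chi(H)\ge\chi(H_0)-O(\sqrt{|B|})$, combined with Markov's inequality on $|B|$. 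Finally, choosing $c_r$ small closes the argument, and the case $t\approx c_r m$ corresponds to $p$ constant.
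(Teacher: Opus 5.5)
There is a genuine gap at the preservation step. You correctly flag the union bound over colourings as the crux, but the lemma you propose to close it with is false: $\Prob[\chi(G_p)\ge c\,p\,\chi(G)]\ge 1/2$ already fails for $G=K_n$ and $p=1/2$, since $G(n,1/2)$ has chromatic number $(1+o(1))n/(2\log_2 n)$. For edge thinning a logarithmic loss is therefore unavoidable in general. Even the log-loss form $\E\chi(G_p)\gg\chi(G)/\log\chi(G)$ for fixed $p$ is Bukh's open question, which the paper explicitly does not prove.

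Nor does the paper contain an adaptive lemma of the kind you anticipate. Its random extraction result, Theorem~\ref{thm:random-extraction}, requires $pM_q\ge 16(n\log q+\log 4)$ precisely because it union-bounds over all $q^n$ maps $V\to[q]$. So it is never density-free, and it is not used for this theorem. As proposed, your route either pays an $|V(G)|$-dependent union bound or rests on a preservation statement that is false (and, with a log, open and still too weak). Neither yields the stated bound, which has no logarithm and no dependence on $|V(G)|$.

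The missing idea is to thin vertices instead of edges. Let $Q$ be the minimum on the right-hand side (without $c_r$), and partition $V(G)$ uniformly at random into $N=\lceil m/Q\rceil$ parts.

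\begin{itemize}
\item For every partition, deterministically, $\sum_i\chi(G[V_i])\ge m$: colour the parts with disjoint palettes. So chromatic number is preserved in aggregate with no union bound at all.
\item An $\ell$-cycle lies inside one part with probability $N^{1-\ell}$. Together with $C_\ell(G)+1\le m^\ell/Q^{\ell-1}$, this gives $\E\sum_i C_{<r}(G[V_i])\le (r-3)m$.
\item Fix a partition with $\sum_i C_{<r}(G[V_i])\le 2(r-3)m$ and average over the parts with $\chi(G[V_i])\ge m/(2N)$. This produces one part with $s_i=\chi(G[V_i])\ge Q/4$ and $C_{<r}(G[V_i])+1\le 4(r-2)s_i$.
\item Inside that part the deterministic defect criterion finishes. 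Take $q=\lfloor c s_i\rfloor$ with $c$ small in terms of $r$. By Lemma~\ref{lem:defect}, every map $V_i\to[q]$ has at least $(s_i-q)^2/(2q)$ monochromatic edges, which exceeds the number of short cycles. So deleting one edge from each short cycle leaves chromatic number greater than $q$.
\end{itemize}

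The same defect inequality is what your robustness step needs. Your claim $\chi(H)\ge\chi(H_0)-O(\sqrt{|B|})$ is false: $K_{2n}$ minus a perfect matching loses $n=|B|$ colours. Adding $B$ back multiplies the colour count by $\chi(V,B)=O(c_r\sqrt t)$, not by a constant. The correct statement is that $\chi(H_0-B)>q$ whenever $q\le\chi(H_0)$ and $|B|<(\chi(H_0)-q)^2/(2q)$.
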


Consequently, polynomial savings in short-cycle counts produce polynomial high-girth chromatic growth.

\begin{corollary}[Cycle-count savings]\label{cor:cycle-savings}
Fix $r\ge4$.  Suppose $G$ has chromatic number $m$ and, for each $3\le\ell<r$,
\[
        C_\ell(G)\le A_\ell m^{\ell-\eps_\ell}
\]
with $A_\ell\ge1$ and $\eps_\ell>0$.  Then
\[
        h_r(G)\ge c_{r,A}\,m^{\theta},
        \qquad
        \theta=\min\left\{1,\min_{3\le\ell<r}\frac{\eps_\ell}{\ell-1}\right\},
\]
where $c_{r,A}>0$ depends only on $r$ and the numbers $A_\ell$.
\end{corollary}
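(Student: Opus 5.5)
This follows directly from Theorem~\ref{thm:cycle-profile}: substitute the cycle-count bounds into the inner minimum and simplify. Fix $r\ge4$ and the constants $A_\ell\ge1$, $\eps_\ell>0$ for $3\le\ell<r$. For each such $\ell$ we use $C_\ell(G)+1\le A_\ell m^{\ell-\eps_\ell}+1\le 2A_\ell m^{\ell-\eps_\ell}$, valid since $m\ge1$ and $A_\ell\ge1$. Hence
\[
 \left(\frac{m^\ell}{C_\ell(G)+1}\right)^{1/(\ell-1)}
 \ge (2A_\ell)^{-1/(\ell-1)}\, m^{\eps_\ell/(\ell-1)} .
\]

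Set $a=\min_{3\le\ell<r}(2A_\ell)^{-1/(\ell-1)}\in(0,1]$. It depends only on $r$ and the $A_\ell$. Theorem~\ref{thm:cycle-profile} then gives
\[
 h_r(G)\ge c_r\min\Bigl\{m,\ a\min_{3\le\ell<r} m^{\eps_\ell/(\ell-1)}\Bigr\}.
\]
Since $m\ge1$, every power $m^{\beta}$ with $\beta\ge\theta$ is at least $m^\theta$. Both $m=m^1$ (because $\theta\le1$) and each $m^{\eps_\ell/(\ell-1)}$ (because $\theta\le \eps_\ell/(\ell-1)$) are powers of this kind. So the minimum is at least $a\,m^\theta$, using $a\le1$ to absorb the first term. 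This yields $h_r(G)\ge c_r a\, m^\theta$, and we take $c_{r,A}=c_r a$.

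There is no real obstacle beyond Theorem~\ref{thm:cycle-profile} itself. The only points needing care are the degenerate cases. The case $m=0$ is excluded, or trivial, since then $h_r(G)\ge0=c_{r,A}m^\theta$. When $C_\ell(G)=0$, the $+1$ in the denominator is handled by the bound $C_\ell(G)+1\le 2A_\ell m^{\ell-\eps_\ell}$, which still holds because $m^{\ell-\eps_\ell}\ge1$ when $\eps_\ell\le\ell$. If some $\eps_\ell>\ell$, replace $\eps_\ell$ by $\min\{\eps_\ell,\ell-1\}$ at the start. The hypothesis still holds, and $\theta$ is unchanged because the cap $1$ in $\theta$ already dominates $\eps_\ell/(\ell-1)$ for $\eps_\ell\ge\ell-1$.
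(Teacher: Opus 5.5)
Your proposal is correct and follows the paper's proof: substitute the cycle-count bounds into Theorem~\ref{thm:cycle-profile}, absorb the cap $m$ using $\theta\le1$, and take the minimum over $\ell$. Your bound $C_\ell(G)+1\le 2A_\ell m^{\ell-\eps_\ell}$ plays the role of the paper's $(A_\ell+1)m^{\ell-(\ell-1)\theta_\ell}$, and your preliminary replacement of $\eps_\ell$ by $\min\{\eps_\ell,\ell-1\}$ does the same job as the paper's case split on whether $\eps_\ell\le\ell-1$.
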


The same random-thinning method yields compact-core and near-quadratic sparse-core theorems.  These are not needed after Theorem \ref{thm:poly-sparse} for polynomially sparse graphs, but they are the quantitative base of the bootstrapping proof and are useful independently.

\begin{theorem}[Compact chromatic cores]\label{thm:compact-core}
Fix $r\ge4$, $k\ge2$, $B>0$, and
\[
        \beta<\frac{2r-2}{2r-3}.
\]
There is $M=M(r,k,B,\beta)$ such that the following holds.  If a graph $G$ contains a subgraph $J$ with
\[
        \chi(J)=m\ge M,
        \qquad
        |V(J)|\le Bm^\beta,
\]
then $G$ contains a subgraph $H$ with $\girth(H)\ge r$ and $\chi(H)\ge k$.
\end{theorem}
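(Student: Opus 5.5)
The plan is to reduce the compact-core theorem to the cycle-profile bound, \cref{thm:cycle-profile}, after first passing to a smaller core inside $J$. Applying \cref{thm:cycle-profile} to $J$ directly cannot work. The trivial bound $C_\ell(J)\le n^\ell$ with $n=|V(J)|\le Bm^\beta$ gives $m^\ell/C_\ell(J)\le m^{\ell(1-\beta)}$, which is useless once $\beta>1$. So the work lies in finding a subgraph whose short-cycle counts are polynomially below $m^\ell$.

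\textbf{Step 1 (normalise).} Replace $J$ by an $m$-critical subgraph. This keeps $n\le Bm^\beta$ and adds $\delta(J)\ge m-1$. The minimum degree gives $e(J)\ge (m-1)n/2$, and trivially $e(J)\le n^2/2$. Hence the average degree $d$ satisfies $m\lesssim d\lesssim m^{\beta}$.

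\textbf{Step 2 (dichotomy on $\ell$-cycle densities).} For each $3\le\ell<r$, compare $C_\ell(J)$ with the threshold $m^{\ell-\eps}$ for a small $\eps=\eps(r,\beta)>0$.
\begin{itemize}[leftmargin=2em]
\item If every count is below its threshold, \cref{cor:cycle-savings} gives $h_r(J)\ge c\,m^{\eps/(r-2)}$, which is at least $k$ for large $m$.
\item Otherwise some $\ell$ has $C_\ell(J)\ge m^{\ell-\eps}$. Since $n\le Bm^\beta$, averaging shows that many of these cycles pass through a vertex set $S$ that is small relative to $m$; I would take $S$ to be the neighbourhood of a typical vertex, or the union of a few such neighbourhoods. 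The aim is to show that $J[S]$, or $J$ minus a low-chromatic piece, is a new compact core $J'$ with $\chi(J')=m'$ and $|V(J')|\le B'(m')^{\beta'}$.
\end{itemize}
Here $\chi(J)\le\chi(J[S])+\chi(J-S)$ is used to transfer chromatic number. A standard bound on the number of $\ell$-cycles through a vertex, in terms of $d^{\ell-1}$, converts cycle surplus into neighbourhood density.

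\textbf{Step 3 (iterate).} Repeat Step 2 on $J'$. I expect each iteration to either terminate in the cycle-savings branch or to decrease $\beta$ by an amount bounded below. The exponent $\frac{2r-2}{2r-3}=1+\frac1{2r-3}$ should be exactly the range where the exponent bookkeeping closes. Heuristically, the worst case is the longest forbidden length $\ell=r-1$, where the cycle-profile exponent is $1/(\ell-1)=1/(r-2)$. Balancing $m^{\ell}$ against cycles through $n\approx m^\beta$ vertices in the dense-neighbourhood branch should produce the condition $\beta(2r-3)<2r-2$. After boundedly many iterations, depending on $r$ and $\beta$, we reach either $\beta\le1$, where the trivial bound already makes $m^\ell/C_\ell$ large, or the cycle-savings branch. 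Since each step loses only a polynomial factor in $m$, taking $M$ large enough in terms of $r,k,B,\beta$ keeps the chromatic number above the threshold required at the final step.

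The main obstacle is Step 2, the dense branch. There I must show that an excess of short cycles forces a subgraph whose chromatic number is still polynomial in $m$ while its vertex count shrinks by a larger power. Chromatic number is not inherited by dense pieces automatically. My first attempt will be a random vertex sample at rate $q=m^{-\gamma}$: with positive probability it has $\chi\gtrsim qm$ (partition $V(J)$ randomly into $1/q$ parts; some part has $\chi\ge qm$) and about $qn$ vertices. I will then check whether restricting to this sample improves the ratio $\log|V|/\log\chi$ when cycle counts are large. If pure sampling does not improve the exponent, I will fall back on a random thinning argument in the spirit of the paper. In that version, each edge is kept with probability $p$ and one edge is deleted from every surviving short cycle. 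The needed estimate is that deleting $t$ edges lowers $\chi$ by only $O(\sqrt t)$, which should close the computation precisely when $\beta<\frac{2r-2}{2r-3}$.
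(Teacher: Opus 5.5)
Both of your routes have a genuine gap.

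\textbf{The main route.} It cannot work in general, because the dense branch of Step~2 may have nothing to find. Take $J=G(n,p)$ with $p=n^{-c}$ and $0<c<1/(2r-2)$.
\begin{itemize}
\item Then $m\approx np/(2\log(np))$, so $J$ is a compact core for any $\beta>1/(1-c)$, and $1/(1-c)<\frac{2r-2}{2r-3}$.
\item Yet $C_\ell(J)\approx(np)^\ell/(2\ell)$ exceeds $m^\ell$ by a polylogarithmic factor, so there are no polynomial cycle savings.
\item With high probability every $s$-vertex subgraph has chromatic number $O(sp+\log n)$. So no subgraph of large chromatic number has a noticeably smaller exponent $\log|V|/\log\chi$. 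For example, a neighbourhood has exponent about $(1-c)/(1-2c)$, and a random vertex sample of size $n^a$ has about $a/(a-c)$; both exceed $1/(1-c)$.
\end{itemize}
Restricting to vertex sets only produces induced subgraphs that behave like smaller copies of $G(n,p)$, with the same lack of savings. So the iteration in Step~3 cannot lower $\beta$, and it never reaches either terminal case.

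\textbf{The fallback.} It rests on a false estimate. Deleting $t$ edges can lower $\chi$ by $t$, not $O(\sqrt t)$: $K_{2t}$ minus a perfect matching has chromatic number $t$. The correct statement is multiplicative, $\chi(G)\le\chi(G-F)\,(1+\sqrt{2t})$. The fallback also presupposes a lower bound on $\chi(J_p)$, which you have no means to prove. Even granting $\chi(J_p)\gtrsim pm$, comparing $pm$ with $\sqrt{p^\ell n^\ell}$ closes only when $\beta<1$. That is vacuous, since $n\ge m$.

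\textbf{The missing idea.} Track $q$-colour defect rather than chromatic number; you can do this because the target $k$ is fixed. The paper's argument runs as follows.
\begin{itemize}
\item Put $q=k-1$. By \cref{lem:defect}, every map $V(J)\to[q]$ has at least $M_q=(m-q)^2/(2q)\asymp m^2$ monochromatic edges.
\item Keep each edge independently with probability $p=m^{-\alpha}$. Chernoff's inequality and a union bound over the $q^n$ maps show that, with probability at least $3/4$, every map keeps at least $pM_q/2$ monochromatic edges. This needs $pm^2\gg n\log q$, i.e. $\alpha<2-\beta$.
\item The trivial count gives $\E\, C_\ell(J_p)\le p^\ell n^\ell\le B^\ell m^{(\beta-\alpha)\ell}$. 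This is $o(pM_q)$ once $\alpha(\ell-1)>\beta\ell-2$. So by Markov, with probability at least $3/4$, fewer than $pM_q/4$ short cycles survive.
\item In a common outcome, delete one edge from each surviving short cycle. This removes fewer edges than every $q$-colouring has monochromatic edges. The result has girth at least $r$ and is not $q$-colourable; this is \cref{thm:random-extraction}.
\end{itemize}
The binding length is $\ell=r-1$, and the two constraints on $\alpha$ are compatible exactly when $\beta(2r-3)<2r-2$.

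The gain over your fallback is that deletions are compared with $pM_q\asymp pm^2$ rather than with $(pm)^2$. No lower bound on $\chi(J_p)$ is ever needed.
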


\begin{theorem}[Near-quadratic sparse chromatic cores]\label{thm:edge-core}
Fix $r\ge4$, $k\ge2$, $B>0$, and
\[
        \eps<\frac{2}{3r-5}.
\]
There is $M=M(r,k,B,\eps)$ such that the following holds.  If a graph $G$ contains a subgraph $J$ with
\[
        \chi(J)=m\ge M,
        \qquad
        e(J)\le Bm^{2+\eps},
\]
then $G$ contains a subgraph $H$ with $\girth(H)\ge r$ and $\chi(H)\ge k$.
\end{theorem}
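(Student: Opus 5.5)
The plan is to reduce \cref{thm:edge-core} to \cref{thm:compact-core} through a high-degree/low-degree dichotomy. The low-degree branch is handled by the random-thinning extraction behind \cref{thm:cycle-profile}. I have not checked the exponent bookkeeping in that branch, and I expect it to be the main obstacle.

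\textbf{Reduction to a critical core.} First replace $J$ by an $m$-vertex-critical subgraph $J_0\subseteq J$. Then $\chi(J_0)=m$, $\delta(J_0)\ge m-1$ and $e(J_0)\le e(J)\le Bm^{2+\eps}$. The minimum-degree bound gives
\[
        |V(J_0)|\le \frac{2e(J_0)}{m-1}\le 4Bm^{1+\eps}.
\]
If $1+\eps<\frac{2r-2}{2r-3}$ we would be done immediately by \cref{thm:compact-core}. However, $\frac{2}{3r-5}>\frac{1}{2r-3}$, so the full range of $\eps$ requires more work.

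\textbf{Degree split.} Fix $\delta>0$, to be chosen, and put $D=m^{1+\delta}$. Let $S=\{v\in V(J_0):\deg_{J_0}(v)>D\}$ and $L=V(J_0)\setminus S$. Since $\chi(J_0)\le\chi(J_0[S])+\chi(J_0[L])$, one of the two parts has chromatic number at least $m/2$.

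\emph{Case $\chi(J_0[S])\ge m/2$.} Here $|S|\le 2e(J_0)/D\le 2Bm^{1+\eps-\delta}$. This is a compact core of chromatic number $m'=m/2$ and exponent $\beta=1+\eps-\delta$, adjusting the constant $B$. So \cref{thm:compact-core} applies provided $\eps-\delta<\frac1{2r-3}$.

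\emph{Case $\chi(J_0[L])\ge m/2$.} Pass to a critical subgraph $J_1$ of $J_0[L]$ with $m_1=\chi(J_1)\ge m/2$. It has maximum degree at most $D$, minimum degree at least $m_1-1$, and at most $O(m^{1+\eps})$ vertices. In this branch the bounded spread of degrees, between $m_1$ and $m^{1+\delta}$, makes $J_1$ nearly regular. I would run the random extraction lemma underlying \cref{thm:cycle-profile} with parameters tuned to this regularity: keep each vertex with probability $q$ and each edge with probability $p$. Then delete one edge from each surviving cycle of length less than $r$, and compare the expected number of deletions with the chromatic defect the deletions can cause.

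In a graph with $n$ vertices and maximum degree $D$, the number of $\ell$-cycles is at most $nD^{\ell-1}$. After sampling this becomes $q^\ell p^\ell nD^{\ell-1}$. The chromatic number after sampling should remain of order $qpm_1$, up to the defect term. Balancing these exponents gives a condition of the form $\delta<f(r)$ for an explicit $f(r)>0$.

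\textbf{Choice of $\delta$.} Choose $\delta$ to satisfy both conditions, $\eps-\delta<\frac1{2r-3}$ and $\delta<f(r)$. This is possible exactly when $\eps<\frac1{2r-3}+f(r)$. For the stated threshold $\frac{2}{3r-5}$ one needs
\[
        f(r)=\frac{2}{3r-5}-\frac{1}{2r-3}=\frac{r-1}{(3r-5)(2r-3)}.
\]

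\textbf{Main obstacle.} I expect the low-degree case to be the hard step. Plain cycle counting gives $C_\ell(J_1)\lesssim m^{\ell+\eps+\delta(\ell-1)}$, which shows no saving over $m^\ell$. So \cref{thm:cycle-profile} cannot be applied as a black box. One must instead exploit that $J_1$ is nearly regular and critical, so that random thinning loses chromatic number only linearly in $p$ while destroying short cycles at rate $p^{\ell}$.

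The first thing I would try is to pick $p$ so that the expected number of short cycles through a typical vertex drops below a constant. I would then show, via the chromatic-defect extraction argument, that deleting one edge per surviving short cycle costs only a bounded factor in chromatic number. Finally I would check that the resulting exponent condition is exactly $\delta<f(r)$ as above. If it is not, the threshold in \cref{thm:edge-core} would have to be re-derived from whatever balance this step actually yields.
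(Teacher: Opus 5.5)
Your reduction to an $m$-critical core and your high-degree branch are fine. The gap is the low-degree branch, which carries all of the content: it is not carried out, and the mechanism you sketch for it is not available. You want the random thinning to keep chromatic number of order $p m_1$, and then to lose only a bounded factor when one edge per surviving short cycle is deleted. For ordinary chromatic number there is no general lower bound of the form $\chi(G_p)\gtrsim p\,\chi(G)$. It already fails by a logarithmic factor for $G=K_m$, and even the weaker $\chi(G)/\log\chi(G)$ version for fixed $p$ is the open Bukh-type question the paper mentions. Moreover, your requirement $\delta<f(r)$ comes from solving backwards from the target threshold $2/(3r-5)$, not from any computation. The true condition in that branch depends on $\eps$ as well as on $\delta$.

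The missing idea is that the thinned graph's chromatic number never needs to grow; you only need $\chi\ge k$. \cref{thm:random-extraction} gives exactly this, via the defect bound $\mu_q\ge (m-q)^2/(2q)$ and a union bound over the $q^{n}$ maps $V\to[q]$, which works as soon as $p\gg n/m^2$. Its cycle condition $p^{\ell-1}C_\ell\ll m^2/q$ then tolerates $C_\ell$ up to about $m^{\ell+1-\eps(\ell-1)}$ when $n\approx m^{1+\eps}$. So the lack of savings over $m^\ell$, which made you discard \cref{thm:cycle-profile}, is irrelevant. The paper applies this directly to the critical core, with no degree split. It takes $n\le B'm^{1+\eps}$, $p=m^{-\alpha}$, and the spectral bound $C_\ell\le (2e)^{\ell/2}\le (2B)^{\ell/2}m^{(1+\eps/2)\ell}$ of \cref{lem:spectral}. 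The two requirements
\[
        \alpha<1-\eps,
        \qquad
        \alpha>\frac{(1+\eps/2)\ell-2}{\ell-1}\quad(3\le\ell<r)
\]
are then compatible exactly when $\eps<2/(3r-5)$.

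Your split can also be completed with the same tool. On $J_0[L]$ use $C_\ell\le 2e\,\Delta^{\ell-2}$ with $\Delta\le m^{1+\delta}$; this gives the condition $(r-1)\eps+(r-3)\delta<1$. Combined with $\eps-\delta<1/(2r-3)$ from the compact branch, a suitable $\delta$ exists whenever $\eps<3/(2(2r-3))$, a range that contains $2/(3r-5)$ for $r\ge4$. That is a one-step version of the peeling in \cref{prop:mixed}, at the cost of a second branch that the spectral argument does not need.
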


\subsection{Fractional random surgery}

For the fractional chromatic number we write
\[
        h^{\mathrm f}_r(G)=\max\{\chif(H):H\subseteq G,\ \girth(H)\ge r\}.
\]
The fractional analogue has a different bottleneck from the integer argument.  Mohar--Wu's random-subgraph theorem gives small-retention preservation of order $\rho\chif(G)/\log\chif(G)$, with no dependence on $|V(G)|$.  In Section \ref{sec:limits} we combine this theorem with a fractional product-deletion inequality to prove a random-surgery criterion, Theorem \ref{thm:fractional-surgery}.  The criterion shows that, within this random-surgery framework, the remaining obstacle to the full higher-girth fractional Erd\H{o}s--Hajnal conjecture is density-free control of the fractional chromatic cost of the edges used to hit all short cycles.  We also prove a cheap-cycle-killing spread criterion and verify it for random thinnings of complete graphs, which gives a proof of concept for this route to the full fractional conjecture.

\subsection{Structural saturation}

The production theorems are complemented by structural statements for hypothetical counterexamples.  The first is an exact-cycle packing theorem in the first-failing-girth setup.  Let $s\ge3$ and $q\ge1$.  A graph $G$ is called an $(s,q)$-first-failing graph if
\begin{enumerate}[label=\textup{(\roman*)}]
\item $\girth(G)\ge s$, and
\item every subgraph of $G$ containing no cycle of length exactly $s$ is $q$-colourable.
\end{enumerate}
Let $M_s(d)$ be the Moore lower bound for graphs of girth at least $s$ and minimum degree at least $d$:
\[
M_s(d)=
\begin{cases}
1+d\sum_{i=0}^{a-1}(d-1)^i, & s=2a+1,\\[2mm]
2\sum_{i=0}^{a-1}(d-1)^i, & s=2a.
\end{cases}
\]

\begin{theorem}[Moore-strength exact-cycle packing]\label{thm:moore-packing}
Let $G$ be an $(s,q)$-first-failing graph, and let $Y\subseteq G$ have $\chi(Y)=h$.
Then $Y$ contains a family of vertex-disjoint cycles of length $s$ of size at least
\[
        \frac1s M_s(h-q-1),
\]
with the bound interpreted as $0$ if $h-q-1<1$.  Moreover, if $c=\lceil h/q\rceil$, then $Y$ contains a family of edge-disjoint cycles of length $s$ of size at least
\[
        \frac{c-1}{2s}M_s(c-1),
\]
again interpreted as $0$ when $c-1<1$.
\end{theorem}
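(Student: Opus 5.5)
The plan is a greedy packing driven by a colouring-splitting argument, with the Moore bound applied to a critical subgraph. For the vertex-disjoint statement, let $\mathcal F$ be a maximal family of vertex-disjoint $s$-cycles in $Y$, and put $U=\bigcup_{C\in\mathcal F}V(C)$, so that $|U|=s|\mathcal F|$. By maximality, $Y-U$ contains no cycle of length exactly $s$. Since $Y-U\subseteq G$ and $G$ is $(s,q)$-first-failing, $Y-U$ is $q$-colourable. Hence $\chi(Y[U])\ge h-q$, because colouring $Y[U]$ and $Y-U$ with disjoint palettes colours $Y$.

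Now take an $(h-q)$-critical subgraph $Y'\subseteq Y[U]$. It has minimum degree at least $h-q-1$ and, as a subgraph of $G$, girth at least $s$. The Moore bound then gives $|V(Y')|\ge M_s(h-q-1)$, and since $V(Y')\subseteq U$ we get $s|\mathcal F|\ge M_s(h-q-1)$. The degenerate case $h-q-1<1$ is trivial.

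For the edge-disjoint statement, take a maximal family $\mathcal F$ of edge-disjoint $s$-cycles in $Y$, and let $F$ be the union of their edge sets, so $|F|=s|\mathcal F|$. The graph $Y\setminus F$ has no $s$-cycle, so it is $q$-colourable. By the product colouring bound $\chi(Y)\le \chi(Y\setminus F)\,\chi(Y[F])$, the edge-graph $(V(Y),F)$ has chromatic number at least $\lceil h/q\rceil=c$. Take a $c$-critical subgraph of $(V,F)$. It has minimum degree at least $c-1$ and girth at least $s$, so it has at least $M_s(c-1)$ vertices. Its edge count is at least half the sum of degrees, that is, at least $\tfrac{c-1}{2}M_s(c-1)$. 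These edges lie in $F$, which gives $s|\mathcal F|\ge \tfrac{c-1}{2}M_s(c-1)$.

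The only points needing care are two standard facts: the Moore bound for graphs of minimum degree $d$ and girth at least $s$ (not only regular graphs), and the product bound $\chi(A\cup B)\le\chi(A)\chi(B)$ for edge-unions on a common vertex set. The main obstacle is making sure the Moore bound is applied to a graph of girth at least $s$. This holds here because every subgraph of $G$ inherits $\girth\ge s$; it is precisely why the first-failing hypothesis (i) is needed.
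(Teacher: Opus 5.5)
Your proposal is correct and follows the paper's proof essentially line for line. The paper also takes a maximal vertex-disjoint (respectively edge-disjoint) family of $s$-cycles and uses the first-failing hypothesis to $q$-colour what remains. It then applies the disjoint-palette (respectively product) colouring bound and finishes with the Moore bound on a critical subgraph, which inherits girth at least $s$ from $G$.
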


The second structural theorem works in every counterexample.  Let $K$ be an $m$-chromatic graph, fix a proper surjective $m$-colouring $\varphi:V(K)\to[m]$, and for a short cycle $C$ define
\[
        \Pi(C)=\bigl\{\{\varphi(x),\varphi(y)\}:xy\in E(C)\bigr\}
        \subseteq \binom{[m]}2.
\]

\begin{theorem}[Projected colour-pair saturation]\label{thm:projected}
Let $r\ge4$ and $k\ge2$, and let $K$ be an $m$-chromatic graph with $h_r(K)<k$.  Fix a proper surjective $m$-colouring $\varphi:V(K)\to[m]$, and put $a=k-1$.
\begin{enumerate}[label=\textup{(\alph*)}]
\item If $\eta:V(K)\to[a]$ is any map and $P_\eta$ is the graph on $[m]$ in which $ij$ is an edge when some edge of $K$ between colour classes $i$ and $j$ is $\eta$-monochromatic, then
\[
        \alpha(P_\eta)\le a.
\]
Consequently,
\[
        e(P_\eta)\ge \frac{m^2}{2a}-\frac m2.
\]
\item If $T$ is a graph on $[m]$ meeting $\Pi(C)$ for every cycle $C$ of $K$ of length less than $r$, then
\[
        \alpha(T)\le a
\]
and hence
\[
        e(T)\ge \frac{m^2}{2a}-\frac m2.
\]
\item The graph $K$ contains a family $\calM$ of cycles of length less than $r$ such that the sets $\Pi(C)$, $C\in\calM$, are pairwise disjoint and
\[
        |\calM|\ge \frac1{r-1}\left(\frac{m^2}{2a}-\frac m2\right).
\]
These cycles are pairwise edge-disjoint in $K$.
\end{enumerate}
\end{theorem}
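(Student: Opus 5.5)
The plan is to prove all three parts from one observation. If $I\subseteq[m]$ is a set of colours and the subgraph of $K$ induced by $S_I=\varphi^{-1}(I)$ can be properly coloured with $a$ colours, then $|I|\le a$. Indeed, if $|I|\ge a+1$, we recolour $S_I$ with $a$ fresh colours and keep $\varphi$ on the remaining $m-|I|$ classes. Every edge of $K$ either lies inside $S_I$, runs between two distinct remaining classes, or joins $S_I$ to the rest, where the colour sets are disjoint. So this gives a proper colouring of $K$ with at most $m-|I|+a\le m-1$ colours, contradicting $\chi(K)=m$. The independence-number bounds in (a) and (b) both follow by exhibiting such an $a$-colouring of $K[S_I]$ for an independent set $I$. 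The edge-count consequences then follow from Turán's theorem: $\alpha\le a$ means the complement is $K_{a+1}$-free, so it has at most $(1-1/a)m^2/2$ edges, and hence the graph has at least $\binom m2-(1-1/a)m^2/2=m^2/(2a)-m/2$ edges.

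For (a), let $I$ be independent in $P_\eta$. Then $K[S_I]$ has no edge inside a colour class, since $\varphi$ is proper. It also has no $\eta$-monochromatic edge between two classes $i,j\in I$, since $ij\notin E(P_\eta)$. So $\eta$ restricted to $S_I$ is a proper $a$-colouring, and the observation gives $|I|\le a$. Notably, the hypothesis $h_r(K)<k$ is not needed for this part.

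For (b), let $I$ be independent in $T$. Any cycle $C$ of $K[S_I]$ has $\Pi(C)\subseteq\binom I2$, which is disjoint from $E(T)$. So no cycle of length less than $r$ lies in $K[S_I]$, i.e.\ $\girth(K[S_I])\ge r$. The hypothesis $h_r(K)<k$ then gives $\chi(K[S_I])\le k-1=a$, and the observation yields $|I|\le a$.

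For (c), I would choose greedily a maximal family $\calM$ of cycles of length less than $r$ whose projected sets $\Pi(C)$ are pairwise disjoint. Let $T$ be the graph on $[m]$ with edge set $\bigcup_{C\in\calM}\Pi(C)$. By maximality, every short cycle $C$ has $\Pi(C)$ meeting some $\Pi(C')$ with $C'\in\calM$ (including $C'=C$ when $C\in\calM$), so $T$ meets every $\Pi(C)$. Part (b) then gives $e(T)\ge m^2/(2a)-m/2$. Since $|\Pi(C)|\le|E(C)|\le r-1$, we have $e(T)\le(r-1)|\calM|$, which gives the stated bound. For edge-disjointness: if two cycles shared an edge $xy$ of $K$, the pair $\{\varphi(x),\varphi(y)\}$ would lie in both projected sets, contradicting their disjointness.

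I do not expect a real obstacle. The only points needing care are checking that the recolouring is proper across the boundary of $S_I$ (the colour sets used there are disjoint) and that the Turán computation gives exactly the stated constant.
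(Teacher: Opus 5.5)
Your proof is correct and follows the paper's argument essentially step for step: your recolouring observation is the paper's Lemma~\ref{lem:colour-subsets} in contrapositive form, and parts (a)--(c) then proceed as in the paper, via Tur\'an's bound and a maximal family of short cycles with pairwise disjoint projected sets. The one minor difference is that you argue (b) directly (independence of $I$ in $T$ forces $\girth(K[\varphi^{-1}(I)])\ge r$, hence $\chi\le a$, hence $|I|\le a$) rather than by contradiction, which also makes the paper's separate treatment of $k=2$ unnecessary.
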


For comparison with the Burling-graph lower-bound examples of Pettie, Tardos and Walczak, the theorem gives the following concrete saturation statement.  If a triangle-free $m$-chromatic graph satisfies $h_5(G)<k$, then, with respect to every proper surjective $m$-colouring, it has at least
\[
        \frac14\left(\frac{m^2}{2(k-1)}-\frac m2\right)
\]
edge-disjoint $4$-cycles whose projected colour-pair sets are pairwise disjoint.  If one only knows $h_5(G)\le k$, as in the usual formulation of the PTW lower-bound examples, the same statement applied with target $k+1$ gives the denominator $2k$ instead.

\subsection{Conventions and notation}

We close the introduction by fixing the conventions used throughout the paper.  All graphs are finite and simple.  A subgraph is not required to be induced unless this is explicitly stated.  Cycles are simple and counted as unoriented subgraphs; $C_\ell(G)$ denotes the number of $\ell$-cycles of $G$.  The girth of a forest is taken to be $\infty$.  All logarithms are natural logarithms, except where a base is displayed.  We use the conventions $\chi(\emptyset)=0$ and $\chi_{\mathrm f}(\emptyset)=0$; a non-empty edgeless graph has ordinary and fractional chromatic number $1$.  For $p\in(0,1)$, $G_p$ denotes the random spanning subgraph obtained by retaining each edge independently with probability $p$.  Whenever an optimal colouring of an $m$-chromatic graph is fixed, it is taken to be a proper surjective $m$-colouring, so no colour class is empty.

\section{Chromatic defect and deterministic deletion}\label{sec:defect}

For an integer $q\ge1$ and a graph $G$, define the $q$-colour defect
\[
        \mu_q(G)=\min_{\psi:V(G)\to[q]}|
        \{uv\in E(G):\psi(u)=\psi(v)\}|.
\]
Thus $G$ is $q$-colourable if and only if $\mu_q(G)=0$.

For integers $m\ge1$ and $q\ge1$, write $m=aq+b$ with $a\ge0$ and $0\le b<q$, and define
\[
        D_q(m)=(q-b)\binom a2+b\binom{a+1}2.
\]
Equivalently, $D_q(m)$ is the minimum of $\sum_{i=1}^q\binom{x_i}{2}$ over nonnegative integers $x_i$ with $\sum_i x_i=m$.

\begin{lemma}[Chromatic defect]\label{lem:defect}
For every graph $G$ with $\chi(G)=m$ and every integer $q\ge1$,
\[
        \mu_q(G)\ge D_q(m).
\]
In particular, if $m\ge q$, then
\[
        \mu_q(G)\ge \frac{(m-q)^2}{2q}.
\]
\end{lemma}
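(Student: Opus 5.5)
Fix an arbitrary map $\psi:V(G)\to[q]$ with classes $V_1,\dots,V_q$. We must show that $\psi$ has at least $D_q(m)$ monochromatic edges. For each $i$, write $G_i=G[V_i]$ and put $x_i=\chi(G_i)$. Colouring each $G_i$ properly with its own palette of $x_i$ fresh colours gives a proper colouring of $G$ with $\sum_i x_i$ colours. Hence $\sum_i x_i\ge m$.

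The key step is the standard fact that a graph with chromatic number $x$ has at least $\binom{x}{2}$ edges. To see this, take an optimal proper colouring with $x$ colours. If some two colour classes had no edge between them, merging them would give a proper $(x-1)$-colouring, a contradiction. So every pair of classes spans an edge, and distinct pairs give distinct edges. It follows that the number of monochromatic edges of $\psi$ is
\[
\sum_i e(G_i)\ge\sum_i\binom{x_i}{2}.
\]

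Since $t\mapsto\binom t2$ is nondecreasing on the nonnegative integers, we may lower some of the $x_i$ to get nonnegative integers $x_i'\le x_i$ with $\sum_i x_i'=m$. Then $\sum_i\binom{x_i}{2}\ge\sum_i\binom{x_i'}{2}\ge D_q(m)$, using the stated characterisation of $D_q(m)$ as a minimum. That characterisation follows from convexity: if $x_i\ge x_j+2$, replacing $(x_i,x_j)$ by $(x_i-1,x_j+1)$ changes the sum by $x_j-x_i+1<0$. Hence a minimiser is balanced, with $b$ parts equal to $a+1$ and $q-b$ parts equal to $a$. Minimising over $\psi$ gives $\mu_q(G)\ge D_q(m)$.

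For the explicit bound, take the balanced minimiser and apply Jensen's inequality to the convex function $t\mapsto t(t-1)/2$ with mean $m/q$:
\[
D_q(m)\ge q\binom{m/q}{2}=\frac{m(m-q)}{2q}\ge\frac{(m-q)^2}{2q}
\]
when $m\ge q$. Equivalently, $\sum_i x_i^2\ge m^2/q$ by Cauchy--Schwarz, so $\sum_i\binom{x_i}{2}\ge (m^2/q-m)/2$. No step is a real obstacle; the only point needing care is the merging argument for the $\binom x2$ edge bound, and that is routine.
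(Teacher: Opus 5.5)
Your proof is correct and follows the paper's argument essentially step for step: split by $\psi$, use $\sum_i\chi(G[V_i])\ge m$, bound each part below by $\binom{\chi}{2}$ edges, minimise to get $D_q(m)$, and finish with Cauchy--Schwarz/Jensen. The only differences are cosmetic: you prove the $\binom{x}{2}$ edge bound by merging colour classes of an optimal colouring, where the paper uses the minimum degree of a critical subgraph. You also spell out the monotonicity and exchange argument for the balanced minimiser, which the paper simply asserts.
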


\begin{proof}
Fix a map $\psi:V(G)\to[q]$ and let $V_i=\psi^{-1}(i)$.  Put $t_i=\chi(G[V_i])$.  Colouring each induced graph $G[V_i]$ with a disjoint palette gives a colouring of $G$ with $\sum_i t_i$ colours, so
\[
        \sum_{i=1}^q t_i\ge m.
\]
Every graph of chromatic number $t$ has at least $\binom t2$ edges: take a $t$-critical subgraph, whose minimum degree is at least $t-1$.  Hence $G[V_i]$ has at least $\binom{t_i}{2}$ edges, all $\psi$-monochromatic.  Therefore the number of $\psi$-monochromatic edges is at least
\[
        \sum_{i=1}^q\binom{t_i}{2}.
\]
This is minimized, under $\sum_i t_i\ge m$, when the $t_i$ are as equal as possible and have sum exactly $m$, giving $D_q(m)$.

For the quadratic bound, Cauchy's inequality gives
\[
\sum_i\binom{t_i}{2}
=\frac12\left(\sum_i t_i^2-\sum_i t_i\right)
\ge \frac12\left(\frac{m^2}{q}-m\right)
=\frac{m(m-q)}{2q}
\ge \frac{(m-q)^2}{2q}
\]
when $m\ge q$.
\end{proof}

A set $F\subseteq E(G)$ is an $r$-short-cycle transversal if $G-F$ has girth at least $r$.  Let $\tau_{<r}(G)$ be the minimum size of such a set, and put
\[
        C_{<r}(G)=\sum_{\ell=3}^{r-1}C_\ell(G).
\]

\begin{lemma}[Deletion criterion]\label{lem:deletion}
If $\tau_{<r}(G)<\mu_q(G)$, then $h_r(G)\ge q+1$.  In particular, if $C_{<r}(G)<\mu_q(G)$, then $h_r(G)\ge q+1$.
\end{lemma}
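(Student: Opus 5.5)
The plan is to take a minimum short-cycle transversal and argue directly that deleting it cannot destroy all the $q$-colour defect. Let $F\subseteq E(G)$ be an $r$-short-cycle transversal with $|F|=\tau_{<r}(G)$, and set $H=G-F$, which is a spanning subgraph of $G$. By definition $\girth(H)\ge r$, so $h_r(G)\ge\chi(H)$, and it suffices to show that $H$ is not $q$-colourable.

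Suppose instead that $\psi:V(G)\to[q]$ is a proper colouring of $H$. Then no edge of $H$ is $\psi$-monochromatic, so every $\psi$-monochromatic edge of $G$ lies in $F$. This gives
\[
        \mu_q(G)\le |\{uv\in E(G):\psi(u)=\psi(v)\}|\le |F|=\tau_{<r}(G),
\]
which contradicts the hypothesis $\tau_{<r}(G)<\mu_q(G)$. Hence $\chi(H)\ge q+1$, and so $h_r(G)\ge q+1$.

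For the second statement, choose one edge from each cycle of length $\ell$ with $3\le\ell<r$. The resulting set meets every such cycle, so deleting it leaves a graph of girth at least $r$, and its size is at most $C_{<r}(G)$. Therefore $\tau_{<r}(G)\le C_{<r}(G)<\mu_q(G)$, and the first part applies.

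There is no real obstacle here. The one point to get right is that the colouring $\psi$ is defined on all of $V(G)$, so that it is a valid competitor in the minimum defining $\mu_q(G)$. This holds because $H$ is spanning: we delete only edges, never vertices.
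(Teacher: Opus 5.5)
Your proof is correct and follows the paper's argument essentially verbatim: delete a minimum short-cycle transversal $F$, observe that a proper $q$-colouring of $G-F$ would have all its $G$-monochromatic edges in $F$, forcing $\mu_q(G)\le|F|$, and deduce the second statement from $\tau_{<r}(G)\le C_{<r}(G)$ by deleting one edge per short cycle. Your remark that $G-F$ is spanning, so the colouring is a valid competitor for $\mu_q(G)$, is exactly the point the paper uses implicitly.
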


\begin{proof}
Let $F$ be an $r$-short-cycle transversal with $|F|=\tau_{<r}(G)$, and set $G'=G-F$.  If $\chi(G')\le q$, then a proper $q$-colouring of $G'$ is a $q$-colouring of $G$ in which every monochromatic edge lies in $F$.  Hence $\mu_q(G)\le |F|$, a contradiction.  Therefore $\chi(G')\ge q+1$, and $G'$ has girth at least $r$.

For the final assertion, delete one edge from each cycle of length less than $r$.
\end{proof}

\begin{corollary}[A deterministic lower bound for $h_r$]\label{cor:deterministic-hr}
There is an absolute constant $c_0>0$ such that every graph $G$ with $m=\chi(G)$ and $T=C_{<r}(G)$ satisfies
\[
        h_r(G)\ge c_0\min\left\{m,\frac{m^2}{T+1}\right\}.
\]
\end{corollary}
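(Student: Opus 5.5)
Corollary \ref{cor:deterministic-hr} should follow directly from the deletion criterion (Lemma \ref{lem:deletion}) combined with the quadratic defect bound of Lemma \ref{lem:defect}. The plan is to choose an integer $q$ just large enough that $\mu_q(G)$ exceeds $T=C_{<r}(G)$, and then read off $h_r(G)\ge q+1$.

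\emph{Trivial regime.} We may take $c_0\le 1/4$, and we note that $h_r(G)\ge 1$ whenever $G$ is non-empty, since a single vertex has infinite girth; for $G=\emptyset$ both sides vanish. It therefore suffices to prove $h_r(G)\ge c_0\min\{m,m^2/(T+1)\}$ whenever this right-hand side exceeds $1$, because otherwise the bound is trivial.

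\emph{Main case.} Let $x=\min\{m,m^2/(T+1)\}$ and put $q=\lfloor x/4\rfloor$, assuming $q\ge1$ (the case $q=0$ means $x<4$, which the trivial bound covers once $c_0\le1/4$). Then $q\le m/4$, so $m-q\ge 3m/4$, and Lemma \ref{lem:defect} gives
\[
\mu_q(G)\ge \frac{(m-q)^2}{2q}\ge \frac{9m^2}{32q}.
\]
Since $q\le m^2/(4(T+1))$, we get
\[
\frac{9m^2}{32q}\ge \frac{9}{8}(T+1)>T=C_{<r}(G).
\]
Lemma \ref{lem:deletion} then yields $h_r(G)\ge q+1\ge x/4$. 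Hence $c_0=1/4$ works.

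The only point requiring care is the bookkeeping of the floor and the small cases. The argument needs $q\ge1$ so that Lemma \ref{lem:defect} applies with $m\ge q$, and it needs the factor $m-q$ to remain comparable to $m$. Both conditions are secured by taking $q\le m/4$ and by handling $x<4$ through the trivial bound $h_r\ge1$. No probabilistic input is needed; this is the deterministic prototype of the cycle-profile bound, where $T$ is compared directly against the quadratic defect.
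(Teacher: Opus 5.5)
Your proof is correct and follows essentially the paper's route: you choose $q$ as the floor of a constant multiple of $\min\{m,m^2/(T+1)\}$ (the paper uses $\lfloor\min\{m/2,\,m^2/(16(T+1))\}\rfloor$), use the quadratic bound of Lemma~\ref{lem:defect} to get $\mu_q(G)>T$, and conclude with Lemma~\ref{lem:deletion}. Your version has the minor advantage of making the constant explicit ($c_0=1/4$) and checking the small cases directly, rather than ``after a harmless adjustment of constants.''
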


\begin{proof}
If the displayed minimum is bounded by an absolute constant, the assertion is trivial after decreasing $c_0$.  Otherwise choose
\[
        q=\left\lfloor \min\left\{\frac m2,\frac{m^2}{16(T+1)}\right\}\right\rfloor\ge1.
\]
Since $q\le m/2$, Lemma \ref{lem:defect} gives
\[
        \mu_q(G)\ge \frac{(m-q)^2}{2q}\ge \frac{m^2}{8q}.
\]
By the choice of $q$, $m^2/(8q)>T$ after a harmless adjustment of constants.  Lemma \ref{lem:deletion} gives $h_r(G)\ge q+1$, which is a fixed positive multiple of $\min\{m,m^2/(T+1)\}$.
\end{proof}

\section{Random edge extraction}\label{sec:random}

The next lemma samples edges independently and then deletes one edge from each surviving short cycle.  The formulation is designed so that the chromatic number is preserved through chromatic defect.

\begin{theorem}[Robust random extraction]\label{thm:random-extraction}
Let $r\ge4$ and $q\ge1$.  Let $J$ be a graph with
\[
        n=|V(J)|,
        \qquad
        m=\chi(J)>q,
\]
and put
\[
        M_q=\frac{(m-q)^2}{2q}.
\]
Suppose there is $p\in(0,1]$ such that
\begin{align}
        pM_q&\ge 16(n\log q+\log 4),\label{eq:random-preserve}\\
        \sum_{\ell=3}^{r-1}p^\ell C_\ell(J)&\le \frac{pM_q}{16}.\label{eq:random-cycles}
\end{align}
Then $J$ contains a subgraph $H$ with $\girth(H)\ge r$ and $\chi(H)\ge q+1$.
\end{theorem}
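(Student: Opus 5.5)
We take the random spanning subgraph $J_p$ (retain each edge independently with probability $p$), delete one edge from every surviving cycle of length less than $r$, and apply the deletion criterion (Lemma \ref{lem:deletion}) to $J_p$ with parameter $q$. It suffices to exhibit an outcome of $J_p$ with
\[
        C_{<r}(J_p)<\mu_q(J_p),
\]
since then $h_r(J_p)\ge q+1$, and any subgraph of $J_p$ is a subgraph of $J$.

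\emph{Step 1: the defect survives sampling.} Fix a map $\psi:V(J)\to[q]$. By Lemma \ref{lem:defect} and $m>q$, the set $F_\psi$ of $\psi$-monochromatic edges of $J$ satisfies $|F_\psi|\ge \mu_q(J)\ge M_q$. The number of retained edges of $F_\psi$ is a binomial variable with mean $\mu_\psi=p|F_\psi|\ge pM_q$. The Chernoff lower-tail bound gives
\[
        \Prob\bigl(|F_\psi\cap E(J_p)|\le \tfrac12 p|F_\psi|\bigr)\le \exp\bigl(-p|F_\psi|/8\bigr)\le \exp(-pM_q/8).
\]
There are $q^n$ maps $\psi$, so a union bound bounds the probability that some $\psi$ keeps at most $pM_q/2$ monochromatic edges by
\[
        q^n e^{-pM_q/8}=\exp\bigl(n\log q-pM_q/8\bigr).
\]
By \eqref{eq:random-preserve}, $pM_q/8\ge 2n\log q+2\log 4$, so this probability is at most $e^{-2\log 4}=1/16$. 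On the complementary event, $\mu_q(J_p)>pM_q/2$. We must use $\psi$-monochromatic edges of $J_p$ with $\psi$ fixed in advance, rather than minimising inside $J_p$; the union bound over all $\psi$ handles this.

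\emph{Step 2: few short cycles survive.} An $\ell$-cycle survives with probability $p^\ell$, so by \eqref{eq:random-cycles}
\[
        \E\, C_{<r}(J_p)=\sum_{\ell=3}^{r-1} p^\ell C_\ell(J)\le pM_q/16.
\]
Markov's inequality then gives
\[
        \Prob\bigl(C_{<r}(J_p)\ge pM_q/2\bigr)\le 1/8.
\]

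\emph{Step 3: combine.} With probability at least $1-1/16-1/8>0$, both events hold, so
\[
        C_{<r}(J_p)<pM_q/2<\mu_q(J_p),
\]
and Lemma \ref{lem:deletion} yields a subgraph $H\subseteq J_p\subseteq J$ with $\girth(H)\ge r$ and $\chi(H)\ge q+1$.

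The main obstacle is Step 1. There the union bound over $q^n$ colourings must be absorbed by the concentration exponent; this is precisely why \eqref{eq:random-preserve} has the form it does. The only care needed is choosing a Chernoff form whose constant fits the factor $16$. If $p=1$, the argument is deterministic and Step 1 is trivial.
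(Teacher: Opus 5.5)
Your proof is correct and follows essentially the same route as the paper. You sample $J_p$, combine Lemma~\ref{lem:defect} with a Chernoff bound and a union bound over the $q^n$ maps so that every $\psi$ keeps more than $pM_q/2$ monochromatic edges, and use Markov's inequality to keep few short cycles; the only differences are cosmetic. You split the failure probability as $1/16+1/8$ where the paper uses $1/4+1/4$ (with a cycle threshold of $pM_q/4$). You also finish by quoting Lemma~\ref{lem:deletion} for $J_p$, whereas the paper deletes one edge per short cycle directly.
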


\begin{proof}
Let $J_p$ be the random spanning subgraph obtained from $J$ by keeping each edge independently with probability $p$.

Fix a map $\psi:V(J)\to[q]$.  By Lemma \ref{lem:defect}, the graph $J$ has an integer number $N_\psi\ge M_q$ of $\psi$-monochromatic edges.  The number of such edges surviving in $J_p$ is binomial with mean $\mu_\psi=pN_\psi\ge pM_q$.  Since $pM_q/2\le \mu_\psi/2$, Chernoff's inequality gives
\[
\Prob\bigl(\text{fewer than }pM_q/2\text{ such edges survive}\bigr)
        \le \Prob(\operatorname{Bin}(N_\psi,p)<\mu_\psi/2)
        \le \exp(-pM_q/8).
\]
There are $q^n$ maps $V(J)\to[q]$.  By \eqref{eq:random-preserve}, with probability greater than $3/4$, every $q$-colouring of $V(J)$ has at least $pM_q/2$ monochromatic edges in $J_p$.

The expected number of cycles of length less than $r$ in $J_p$ is
\[
        \sum_{\ell=3}^{r-1}p^\ell C_\ell(J)\le pM_q/16.
\]
By Markov's inequality, with probability at least $3/4$, the graph $J_p$ has at most $pM_q/4$ cycles of length less than $r$.  Hence some outcome satisfies both properties.  Delete one edge from each short cycle in this outcome, producing $H$.  Then $\girth(H)\ge r$.  Since at most $pM_q/4$ edges were deleted, every $q$-colouring still has a monochromatic edge in $H$.  Thus $\chi(H)\ge q+1$.
\end{proof}

\section{The cycle-profile bound}\label{sec:profile}

We now prove Theorem \ref{thm:cycle-profile}.  The key point is that if the vertex set is randomly partitioned into $t$ parts, a fixed $\ell$-cycle survives inside one part with probability $t^{1-\ell}$, while for every partition the sum of the chromatic numbers of the induced parts is at least $\chi(G)$.

\begin{proof}[Proof of Theorem \ref{thm:cycle-profile}]
Let $m=\chi(G)$ and define
\[
        Q=
        \min\left\{m,
        \min_{3\le\ell<r}\left(\frac{m^\ell}{C_\ell(G)+1}\right)^{1/(\ell-1)}
        \right\}.
\]
It is enough to show $h_r(G)\ge c_rQ$.  If $Q<1$, this is trivial.  Assume $Q\ge1$ and set
\[
        t=\left\lceil\frac mQ\right\rceil.
\]
Randomly partition $V(G)$ into $t$ labelled parts $V_1,\dots,V_t$, each vertex choosing its part independently and uniformly.  Let $G_i=G[V_i]$, $s_i=\chi(G_i)$, and $T_i=C_{<r}(G_i)$.

For a fixed $\ell$-cycle, the probability that all its vertices lie in a common part is $t^{1-\ell}$.  Hence
\[
        \E\left[\sum_iT_i\right]
        =\sum_{\ell=3}^{r-1}C_\ell(G)t^{1-\ell}.
\]
Since $t\ge m/Q$ and $C_\ell(G)+1\le m^\ell/Q^{\ell-1}$,
\[
        C_\ell(G)t^{1-\ell}
        \le C_\ell(G)\left(\frac Qm\right)^{\ell-1}
        \le m.
\]
Thus
\[
        \E\left[\sum_iT_i\right]\le (r-3)m.
\]
Choose a partition with $\sum_iT_i\le 2(r-3)m$.

For this fixed partition, colouring each $G_i$ with a disjoint palette gives a colouring of $G$, so
\[
        \sum_i s_i\ge m.
\]
Let $I=\{i:s_i\ge m/(2t)\}$.  The parts outside $I$ contribute less than $m/2$, so $\sum_{i\in I}s_i\ge m/2$.  Moreover,
\[
        \sum_{i\in I}(T_i+1)\le 2(r-3)m+t\le 2(r-2)m,
\]
since $t\le 2m/Q\le2m$.  Therefore some $i\in I$ satisfies
\[
        \frac{T_i+1}{s_i}\le 4(r-2).
\]
Corollary \ref{cor:deterministic-hr} gives
\[
        h_r(G)\ge h_r(G_i)
        \ge c_0\min\left\{s_i,\frac{s_i^2}{T_i+1}\right\}
        \ge \frac{c_0}{4(r-2)}s_i.
\]
Finally, $s_i\ge m/(2t)\ge Q/4$.  This proves the theorem with $c_r=c_0/(16(r-2))$.
\end{proof}

\begin{proof}[Proof of Corollary \ref{cor:cycle-savings}]
For each $3\le \ell<r$ put
\[
        \theta_\ell=\min\left\{1,\frac{\eps_\ell}{\ell-1}\right\}.
\]
We first prove the correctly capped estimate for each cycle length.  Since $m\ge1$ and $A_\ell\ge1$,
\[
        C_\ell(G)+1
        \le A_\ell m^{\ell-\eps_\ell}+1
        \le (A_\ell+1)m^{\ell-(\ell-1)\theta_\ell}.        \tag{1}
\]
Indeed, if $\eps_\ell\le \ell-1$, then $(\ell-1)\theta_\ell=\eps_\ell$ and (1) follows from $1\le m^{\ell-\eps_\ell}$.  If $\eps_\ell>\ell-1$, then $\theta_\ell=1$ and $m^{\ell-\eps_\ell}\le m$, so (1) again follows.
Consequently
\[
        \left(\frac{m^\ell}{C_\ell(G)+1}\right)^{1/(\ell-1)}
        \ge (A_\ell+1)^{-1/(\ell-1)}m^{\theta_\ell}.
\]
Theorem \ref{thm:cycle-profile} also contains the separate cap by $m$, which is exactly the exponent-$1$ cap.  Taking the minimum over $\ell$ and decreasing the constant to absorb the finitely many factors depending on $A_\ell$ gives
\[
        h_r(G)
        \ge c_{r,A}m^{\min\{1,\min_{3\le\ell<r}\eps_\ell/(\ell-1)\}}.
\]
This proves the claimed bound.
\end{proof}

\section{Compact and sparse chromatic cores}\label{sec:cores}

We shall use the following standard spectral estimate.

\begin{lemma}[Spectral cycle bound]\label{lem:spectral}
If $G$ has $e$ edges, then for every integer $\ell\ge3$,
\[
        C_\ell(G)\le (2e)^{\ell/2}.
\]
\end{lemma}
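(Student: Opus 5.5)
We will bound the number of $\ell$-cycles by the trace of the $\ell$-th power of the adjacency matrix $A$ of $G$. Every $\ell$-cycle, viewed as an unoriented subgraph, corresponds to exactly $2\ell$ closed walks of length $\ell$: one for each choice of starting vertex and direction. These walks are distinct and all of them are counted by $\operatorname{tr}(A^\ell)=\sum_i\lambda_i^\ell$, where $\lambda_1,\dots,\lambda_n$ are the real eigenvalues of the symmetric matrix $A$. Since every term of $\operatorname{tr}(A^\ell)$ counts walks and is therefore nonnegative, we get
\[
        2\ell\, C_\ell(G)\le \operatorname{tr}(A^\ell)=\sum_i\lambda_i^\ell\le \sum_i|\lambda_i|^\ell .
\]

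Next we bound the right-hand side using the spectral facts $\sum_i\lambda_i^2=\operatorname{tr}(A^2)=2e$ and $\max_i|\lambda_i|\le(\sum_i\lambda_i^2)^{1/2}=\sqrt{2e}$. For $\ell\ge3$ this gives
\[
        \sum_i|\lambda_i|^\ell\le \Bigl(\max_i|\lambda_i|\Bigr)^{\ell-2}\sum_i\lambda_i^2\le (2e)^{(\ell-2)/2}\cdot 2e=(2e)^{\ell/2}.
\]
Equivalently, this is the monotonicity of $\ell^p$ norms, $\|\lambda\|_\ell\le\|\lambda\|_2$ for $\ell\ge2$. Combining the two displays yields $C_\ell(G)\le (2e)^{\ell/2}/(2\ell)\le(2e)^{\ell/2}$, which is the stated bound with room to spare.

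There is essentially no obstacle here. The only point requiring care is justifying the first inequality: cycles inject, up to the factor $2\ell$, into the closed walks counted by the trace, and dropping the remaining nonnegative contributions from degenerate walks only helps. If $e=0$ both sides are $0$, so the bound holds trivially.
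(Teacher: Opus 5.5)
Your proposal is correct and follows essentially the paper's route: bound the number of $\ell$-cycles by the closed-walk count $\operatorname{tr}(A^\ell)$, then use $\sum_i|\lambda_i|^\ell\le(\sum_i\lambda_i^2)^{\ell/2}=(2e)^{\ell/2}$. The only difference is that you count each cycle exactly $2\ell$ times rather than at least once, which gives the sharper bound $C_\ell(G)\le(2e)^{\ell/2}/(2\ell)$ that the paper does not need.
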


\begin{proof}
Let $A$ be the adjacency matrix of $G$, with eigenvalues $\lambda_1,\dots,\lambda_n$.  The number of closed walks of length $\ell$ is at most $\sum_i|\lambda_i|^\ell$.  Since $\ell\ge2$,
\[
        \sum_i|\lambda_i|^\ell
        \le \left(\sum_i\lambda_i^2\right)^{\ell/2}
        =(2e)^{\ell/2}.
\]
Every $\ell$-cycle contributes at least one closed walk of length $\ell$, so the claimed bound follows.
\end{proof}

\begin{proof}[Proof of Theorem \ref{thm:compact-core}]
It suffices to work inside $J$.  Put $q=k-1$.  The case $k=2$ is trivial for large $m$, so assume $q\ge2$.  Let $n=|V(J)|\le Bm^\beta$.  Since $\beta<(2r-2)/(2r-3)$, we can choose $\alpha>0$ such that
\[
        \max_{3\le\ell<r}\frac{\beta\ell-2}{\ell-1}<\alpha<2-\beta.
\]
The maximum on the left is at $\ell=r-1$, and the displayed interval is nonempty exactly under the stated hypothesis on $\beta$.

Set $p=m^{-\alpha}$ and $M_q=(m-q)^2/(2q)$.  Since $\alpha<2-\beta$, we have $pM_q/n\to\infty$, so \eqref{eq:random-preserve} holds for all sufficiently large $m$.  For $3\le\ell<r$, the trivial bound $C_\ell(J)\le n^\ell$ gives
\[
        p^\ell C_\ell(J)\le B^\ell m^{\beta\ell-\alpha\ell}.
\]
The choice of $\alpha$ gives $\beta\ell-\alpha\ell<2-\alpha$, so $p^\ell C_\ell(J)=o(pM_q)$.  Hence \eqref{eq:random-cycles} holds for large $m$, and Theorem \ref{thm:random-extraction} applies.
\end{proof}

\begin{proof}[Proof of Theorem \ref{thm:edge-core}]
Again it suffices to work inside $J$.  Put $q=k-1$ and assume $q\ge2$, the case $k=2$ being trivial.  Replace $J$ by an $m$-critical subgraph.  Then $e(J)$ does not increase and $\chi(J)=m$.  Since an $m$-critical graph has minimum degree at least $m-1$,
\[
        |V(J)|\le \frac{2e(J)}{m-1}\le B'm^{1+\eps}
\]
for large $m$.

Since $\eps<2/(3r-5)$, choose $\alpha$ satisfying
\[
        \max_{3\le\ell<r}\frac{(1+\eps/2)\ell-2}{\ell-1}<\alpha<1-\eps.
\]
The left side is largest at $\ell=r-1$, and the interval is nonempty exactly when $\eps<2/(3r-5)$.

Set $p=m^{-\alpha}$ and $M_q=(m-q)^2/(2q)$.  Because $\alpha<1-\eps$, condition \eqref{eq:random-preserve} holds for all sufficiently large $m$.  By Lemma \ref{lem:spectral} and $e(J)\le Bm^{2+\eps}$,
\[
        C_\ell(J)\le (2B)^{\ell/2}m^{(1+\eps/2)\ell}.
\]
The choice of $\alpha$ gives $p^\ell C_\ell(J)=o(pM_q)$ for every $\ell<r$, so \eqref{eq:random-cycles} also holds for large $m$.  Theorem \ref{thm:random-extraction} completes the proof.
\end{proof}

\section{Bootstrapping to all polynomial edge exponents}\label{sec:poly}

This section proves Theorem \ref{thm:poly-sparse}.  The proof uses Theorem \ref{thm:edge-core} as a base case and then bootstraps the allowable edge exponent by increments of $1/(r-1)$.

Fix $r\ge4$ and $k\ge2$.  For a real number $A>0$, let $\mathbf S(A)$ denote the following statement:

\begin{quote}
For every $x<A$ and every $C>0$, there is $M=M(r,k,x,C)$ such that every graph $G$ with $\chi(G)\ge M$ and $e(G)\le C\chi(G)^x$ satisfies $h_r(G)\ge k$.
\end{quote}

\begin{lemma}[Base exponent]\label{lem:base-S}
The statement $\mathbf S(A_0)$ holds for
\[
        A_0=2+\frac{2}{3r-5}.
\]
\end{lemma}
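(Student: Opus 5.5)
The plan is to deduce the statement directly from Theorem \ref{thm:edge-core}, applied with the subgraph $J$ taken to be $G$ itself. Fix $x<A_0$ and $C>0$. We must produce $M=M(r,k,x,C)$ such that every graph $G$ with $\chi(G)\ge M$ and $e(G)\le C\chi(G)^x$ satisfies $h_r(G)\ge k$.

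First, split according to the size of $x$. If $x\le 2$, choose any auxiliary exponent, say $\eps=\frac1{3r-5}$. If $2<x<A_0$, put $\eps=x-2$. In both cases
\[
0<\eps<\frac{2}{3r-5}.
\]
In particular $\eps$ lies in the admissible range of Theorem \ref{thm:edge-core}. Moreover, since $x\le 2+\eps$ and $\chi(G)\ge 1$, we get
\[
e(G)\le C\chi(G)^x\le C\chi(G)^{2+\eps}.
\]

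Next, set $B=C$ (or $B=\max\{C,1\}$ to be safe) and let $M=M(r,k,B,\eps)$ be the constant provided by Theorem \ref{thm:edge-core}. Given $G$ with $\chi(G)\ge M$ and the edge bound, take $J=G$ and $m=\chi(G)$. The hypotheses $\chi(J)=m\ge M$ and $e(J)\le Bm^{2+\eps}$ then hold, so $G$ contains a subgraph $H$ with $\girth(H)\ge r$ and $\chi(H)\ge k$. That is, $h_r(G)\ge k$.

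There is essentially no obstacle. The only points needing care are that $\eps$ must be chosen strictly positive and strictly below $2/(3r-5)$, which the case split handles, and that the dependence of $M$ is only on $(r,k,x,C)$, as $\mathbf S(A_0)$ requires. All the substantive work lies in Theorem \ref{thm:edge-core}, via critical-subgraph reduction, the spectral cycle bound and random extraction.
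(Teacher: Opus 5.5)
Your proposal is correct and follows the paper's proof, which likewise reduces directly to Theorem \ref{thm:edge-core} with $J=G$ and $\eps=x-2$. The only difference is in the range $x<2$: the paper notes that the hypothesis is vacuous for large $\chi(G)$, because an $m$-critical graph has at least $m(m-1)/2$ edges. You instead absorb small $x$ through $\chi(G)^x\le\chi(G)^{2+\eps}$, which works equally well.
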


\begin{proof}
Let $x<A_0$.  If $x<2$, then no graph of sufficiently large chromatic number can satisfy $e(G)\le C\chi(G)^x$, because an $m$-critical graph has at least $m(m-1)/2$ edges.  If $x\ge2$, write $x=2+\eps$ with $\eps<2/(3r-5)$ and apply Theorem \ref{thm:edge-core}.
\end{proof}

The induction step is based on the next mixed vertex-edge proposition.

\begin{proposition}[Mixed peeling and thinning]\label{prop:mixed}
Assume $\mathbf S(A)$ for some $A>0$.  Let $x>A$, $y\ge0$, and $C>0$ satisfy
\[
        y<(r-1)A-(r-2)x.        \tag{\(*\)}\label{eq:mixed-condition}
\]
Then there is $M=M(r,k,A,x,y,C)$ such that every graph $G$ with
\[
        \chi(G)=t\ge M,
        \qquad
        e(G)\le Ct^x,
        \qquad
        |V(G)|\le Ct^y
\]
satisfies $h_r(G)\ge k$.
\end{proposition}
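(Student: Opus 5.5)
The plan is a one-step peeling dichotomy at a degree threshold $D=t^{d}$. Here $d$ is chosen just below $A-y$, so that the low-degree part is polynomially sparse enough for $\mathbf S(A)$. The high-degree part is then handled by random thinning via Theorem \ref{thm:random-extraction}. Put $q=k-1$ and assume $q\ge2$. Let $S=\{v:\deg_G(v)\ge D\}$ and $L=V(G)\setminus S$. Since $\chi(G[S])+\chi(G[L])\ge t$, one of the two parts has chromatic number at least $t/2$. Moreover $|S|\le 2e(G)/D\le 2Ct^{x-d}$.

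\emph{Low-degree case.} Suppose $\chi(G[L])\ge t/2$. Every vertex of $G[L]$ has degree less than $D$, so $e(G[L])\le |L|D/2\le Ct^{y+d}$. Fix $d$ with $y+d<A$, say $y+d=A-\eta$ for a small $\eta>0$. Then $e(G[L])\le C'\chi(G[L])^{A-\eta}$. Since $\chi(G[L])\ge t/2$, the hypothesis $\mathbf S(A)$, applied with exponent $A-\eta<A$, gives $h_r(G[L])\ge k$ once $t$ is large. Note that this uses $y<A$. That inequality follows from $(*)$ together with $x>A$, since $(r-1)A-(r-2)x<A$.

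\emph{High-degree case.} Suppose $J=G[S]$ has $m=\chi(J)\ge t/2$. Here $J$ has $n\le 2Ct^{x-d}=2Ct^{x-A+y+\eta}$ vertices and at most $Ct^x$ edges. I will apply Theorem \ref{thm:random-extraction} with $p=t^{-\alpha}$ and $M_q\asymp t^2$. To bound cycles I will not use the spectral bound, which ignores the vertex bound. Instead I use the mixed count $C_\ell(J)\le 2e(J)\,n^{\ell-2}$: choose one edge, then the remaining $\ell-2$ vertices in order. Condition \eqref{eq:random-preserve} requires $x-A+y+\eta<2-\alpha$ (up to logarithms). Condition \eqref{eq:random-cycles} requires, for every $3\le\ell<r$,
\[
        x+(\ell-2)(x-A+y+\eta)-(\ell-1)\alpha<2-\alpha .
\]

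\emph{Main obstacle: the exponent bookkeeping.} The hardest step is checking that these constraints on $\alpha$ are simultaneously satisfiable exactly when $(*)$ holds; this is where the increment $1/(r-1)$ should emerge. The binding case is $\ell=r-1$. Write $z=x-A+y$ for the exponent of $|S|$. The cycle condition becomes $\alpha>\bigl(x+(r-3)z-2\bigr)/(r-2)$, and the preservation condition becomes $\alpha<2-z$. An admissible $\alpha$ therefore exists iff $x+(r-3)z-2<(r-2)(2-z)$, i.e.\ $x+(2r-5)z<2r-2$. I will first check whether this reduces algebraically to $(*)$. If it does not, the fallback is to reoptimise the threshold instead of pinning $d$ just below $A-y$. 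The low-degree case only needs $y+d<A$, and the high-degree case improves as $d$ grows, so $d\to A-y$ is the natural choice. If a gap remains, I would replace the crude count $C_\ell\le 2en^{\ell-2}$ by a degree-weighted count inside $S$. Alternatively, I would iterate the peeling on $G[S]$: apply the same dichotomy again, with the vertex exponent reduced from $y$ to $x-d$, until the vertex exponent satisfies $2y<A$, at which point $\mathbf S(A)$ applies directly because $e\le n^2$. Once an admissible $\alpha$ is fixed, every inequality has polynomial slack in $t$. Hence there is a threshold $M(r,k,A,x,y,C)$ beyond which both cases yield $h_r(G)\ge k$.
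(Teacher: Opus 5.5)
\textbf{There is a genuine gap: the scheme cannot be completed, and the problem is structural rather than exponent bookkeeping.}

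\emph{The peeling does not shrink the high-degree part.} With $y+d<A$, your threshold $t^{d}$ lies below the average degree $\approx t^{x-y}$, because $d<A-y<x-y$. So $S$ is typically almost all of $V(G)$. Indeed, your bound $|S|\le 2Ct^{x-d}=2Ct^{x-A+y+\eta}$ is weaker than the trivial $|S|\le Ct^{y}$, since $x>A$. The high-degree case is therefore essentially the original problem. Iterating the peeling cannot lower the vertex exponent either: that would require $d>x-y$, which contradicts $d<A-y$.

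\emph{Your feasibility condition fails where the proposition is needed.} The condition you derived, $x+(2r-5)z<2r-2$ with $z=x-A+y$, is not implied by $(*)$. In Lemma~\ref{lem:increment} one has $y=x-1$ and $x>A\ge 2+2/(3r-5)$, and then
\[
x+(2r-5)z=(2r-4)A-(2r-5)+(4r-9)(x-A)>2r-3+\frac{4r-8}{3r-5}\ge 2r-2
\]
for every $r\ge4$. So no admissible $\alpha$ exists even at the first increment.

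\emph{Preservation itself breaks for larger $A$.} Once $y>2$ (for example $A\ge3$ and $y=x-1$), condition \eqref{eq:random-preserve} cannot hold for any $p\le1$. Lemma~\ref{lem:defect} guarantees only about $t^2/(2q)$ monochromatic edges per colouring. Yet you must union-bound over $q^{|S|}$ colourings with $|S|$ up to $Ct^{y}\gg t^2$. A degree-weighted cycle count does not touch this bottleneck.

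\emph{The missing idea: apply $\mathbf S(A)$ to the monochromatic-edge graph, not to a low-degree subgraph.} The paper's proof runs as follows.
\begin{enumerate}[label=\textup{(\arabic*)}]
\item Peel at the high threshold $t^{x-u+\eta}$, where $u$ is the current vertex exponent. The high-degree set then has at most $2Ct^{u-\eta}$ vertices. It is handled by a finite downward induction on the vertex exponent at the same edge exponent $x$, ending with Theorem~\ref{thm:compact-core} once $u<\beta_*=1+1/(4r)$.
\item The remainder $R$ has $\chi(R)>t/2$ and $\Delta(R)\le t^{x-u+\eta}$. Split on the $q$-colour defect of $R$.
\item \emph{Small defect.} If some $\psi:V(R)\to[q]$ has at most $t^{A-\eta}$ monochromatic edges, the product-colouring argument gives $\chi(M_\psi)\ge t/(2q)$ for the monochromatic graph $M_\psi$. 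Then $\mathbf S(A)$ applies to $M_\psi$.
\item \emph{Large defect.} Otherwise every colouring has more than $t^{A-\eta}$ monochromatic edges. This strong defect lets you thin with $p=Lt^{u-A+\eta}$ and still beat the $q^{Ct^u}$ union bound, even when $u>2$.
\item \emph{Cycle count.} The bounded maximum degree gives $C_\ell(R)\le 2e(R)\Delta(R)^{\ell-2}$. The expected short-cycle count is then of order $t^{2u+(\ell-1)x-\ell A+(2\ell-2)\eta}$, and requiring this to be $o(t^u)$ at $\ell=r-1$ is exactly $(*)$.
\end{enumerate}
Your scheme has neither the strong defect lower bound nor the degree cap on the part being thinned, and those are precisely what make $(*)$ the right condition.
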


\begin{proof}
The case $k=2$ is trivial for large $t$, so assume $q:=k-1\ge2$.  Since $x>A$, condition \eqref{eq:mixed-condition} implies $y<A$.  The slack in the cycle-count inequalities is completely explicit.  For $3\le \ell<r$, put
\[
        \sigma_\ell=\ell A-(\ell-1)x-y.
\]
Then
\[
        \sigma_{\ell+1}-\sigma_\ell=A-x<0,
\]
so the smallest slack occurs at the endpoint $\ell=r-1$.  Condition \eqref{eq:mixed-condition} says exactly that $\sigma_{r-1}>0$, and hence $\sigma_\ell>0$ for every $3\le \ell<r$, including the other endpoint $\ell=3$.

Choose $\eta>0$ such that
\begin{align}
        \eta&<A-y,\label{eq:eta1}\\
        (2\ell-2)\eta&<\sigma_\ell
        \qquad(3\le\ell<r).\label{eq:eta2}
\end{align}
Equivalently,
\[
        y+(\ell-1)x+(2\ell-2)\eta<\ell A
        \qquad(3\le\ell<r).
\]
Put
\[
        \gamma=A-\eta,
        \qquad
        d(u)=x-u+\eta.
\]
For every $u\le y$ we have $u<\gamma$ and $d(u)>0$.

We now state the finite inner induction with all parameters fixed.  In order to keep the later quantitative thresholds under control, the inner induction is terminated at a fixed compact-core exponent rather than by converting a small vertex bound into an edge bound.  Fix once and for all
\[
        \beta_*:=1+\frac1{4r}<\frac{2r-2}{2r-3}.
\]
For $u\in[0,y]$ and $C'>0$, let $\mathcal P(u,C')$ be the assertion that all sufficiently large graphs $G$ with
\[
        \chi(G)=t,\qquad e(G)\le C't^x,\qquad |V(G)|\le C't^u
\]
satisfy $h_r(G)\ge k$.  The threshold allowed in $\mathcal P(u,C')$ may depend on
\[
        r,k,A,x,y,\eta,u,C',
\]
but not on $t$.  We prove $\mathcal P(u,C')$ for every $u\in[0,y]$ by induction on
\[
        N(u)=\min\{N\ge0:u-N\eta<\beta_*\}.
\]
This is a finite induction for each fixed $u$.

If $N(u)=0$, then $u<\beta_*$.  Since $t\ge1$, we have $|V(G)|\le C't^{\beta_*}$.  The compact-core theorem, Theorem \ref{thm:compact-core}, applied with $B=C'$ and $\beta=\beta_*$, gives $\mathcal P(u,C')$.  Notice that this base case uses the vertex bound directly.  It does not replace $e(G)$ by $|V(G)|^2/2$, and therefore it does not square the constant $C'$.

Assume $N(u)>0$ and that $\mathcal P(u',C'')$ is already known whenever $N(u')<N(u)$ and $C''>0$ is arbitrary.  Let $G$ satisfy the hypotheses of $\mathcal P(u,C')$, with $t$ larger than all thresholds specified below.  Set
\[
        d=d(u)=x-u+\eta,
        \qquad
        Z=\{v\in V(G):d_G(v)>t^d\},
\]
and put $R=G-Z$.  Since $e(G)\le C't^x$,
\[
        |Z|\le 2C't^{x-d}=2C't^{u-\eta}.
\]
If $\chi(G[Z])\ge t/2$, write $t_Z=\chi(G[Z])$.  Then $t_Z\ge t/2$,
\[
        e(G[Z])\le C'2^x t_Z^x,
        \qquad
        |V(G[Z])|\le 2C'2^{u-\eta}t_Z^{u-\eta}.
\]
If $u-\eta<0$, then $|Z|<1$ for all sufficiently large $t$, so this high-degree branch is impossible.  Otherwise $u-\eta\in[0,y]$ and $N(u-\eta)<N(u)$, so the induction hypothesis $\mathcal P(u-\eta,C_Z)$, with
\[
        C_Z=\max\{C'2^x,2C'2^{u-\eta}\},
\]
gives $h_r(G[Z])\ge k$ once $t$ is sufficiently large.  Thus we may assume
\[
        \chi(G[Z])<t/2.
\]
Then
\[
        \chi(R)>t/2,
        \qquad
        \Delta(R)\le t^d,
        \qquad
        |V(R)|\le C't^u,
        \qquad
        e(R)\le C't^x.
\]

Suppose first that some map $\psi:V(R)\to[q]$ has at most $t^\gamma$ monochromatic edges.  Let $M_\psi$ be the graph of these monochromatic edges, and put $s=\chi(M_\psi)$.  If $M_\psi$ were $s$-colourable with $s<t/(2q)$, then the ordered pair consisting of the $\psi$-colour and the $s$-colour would properly colour $R$ with fewer than $t/2$ colours, contrary to $\chi(R)>t/2$.  Hence
\[
        s\ge t/(2q).
\]
Also
\[
        e(M_\psi)\le t^\gamma\le (2q+2)^\gamma s^\gamma.
\]
Since $\gamma=A-\eta<A$, the statement $\mathbf S(A)$, applied with exponent $\gamma$ and constant $(2q+2)^\gamma$, gives $h_r(M_\psi)\ge k$ for all sufficiently large $t$.

It remains to handle the case in which every map $V(R)\to[q]$ has more than $t^\gamma$ monochromatic edges.  Let
\[
        p=L t^{u-\gamma},
\]
where $L>16C'\log q+16$ is fixed.  Since $u<\gamma$, $p\le1$ for all large $t$.  Let $R_p$ be the random spanning subgraph of $R$ obtained by keeping each edge independently with probability $p$.

For a fixed map $\psi:V(R)\to[q]$, the number of surviving $\psi$-monochromatic edges has mean at least $Lt^u$.  By Chernoff's inequality, the probability that it is less than $Lt^u/2$ is at most $\exp(-Lt^u/8)$.  There are at most $q^{C't^u}$ maps $V(R)\to[q]$.  Our choice of $L$ ensures that, with probability at least $3/4$ for all large $t$, every $q$-colouring has at least $Lt^u/2$ monochromatic edges in $R_p$.

For $3\le\ell<r$, the number of $\ell$-cycles in $R$ is at most $2e(R)\Delta(R)^{\ell-2}$.  Therefore the expected number of cycles of length less than $r$ in $R_p$ is at most a constant depending on $r$ and $C'$ times
\[
\sum_{\ell=3}^{r-1}
        p^\ell t^x t^{d(\ell-2)}.
\]
The exponent of $t$ in the $\ell$th summand is
\begin{align*}
        \ell(u-\gamma)+x+d(\ell-2)
        &=\ell(u-A+\eta)+x+(x-u+\eta)(\ell-2)\\
        &=2u+(\ell-1)x-\ell A+(2\ell-2)\eta.
\end{align*}
Since $u\le y$, \eqref{eq:eta2} gives
\[
        u+(\ell-1)x+(2\ell-2)\eta
        \le y+(\ell-1)x+(2\ell-2)\eta
        <\ell A.
\]
Thus the displayed exponent is strictly smaller than $u$.  Consequently the expected number of short cycles in $R_p$ is $o(t^u)$.  For all large $t$, Markov's inequality gives probability at least $3/4$ that $R_p$ has fewer than $Lt^u/4$ cycles of length less than $r$.

Choose an outcome satisfying both high-probability events.  Delete one edge from each cycle of length less than $r$, obtaining $H$.  Then $\girth(H)\ge r$.  Every $q$-colouring of $V(R)$ had at least $Lt^u/2$ monochromatic edges before the deletion, and fewer than $Lt^u/4$ edges were deleted.  Hence every $q$-colouring still has a monochromatic edge in $H$, so $\chi(H)\ge q+1=k$.  This proves $\mathcal P(u,C')$.

Taking $u=y$ and $C'=C$ gives the proposition.
\end{proof}

\begin{lemma}[Exponent increment]\label{lem:increment}
If $\mathbf S(A)$ holds, then $\mathbf S(A+1/(r-1))$ holds.
\end{lemma}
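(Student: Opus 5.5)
We deduce $\mathbf S(A+1/(r-1))$ from Proposition \ref{prop:mixed}, using criticality to bound the number of vertices by the number of edges. Fix $x<A+1/(r-1)$ and $C>0$, and let $G$ satisfy $\chi(G)\ge M$ and $e(G)\le C\chi(G)^x$. If $x<A$, then $\mathbf S(A)$ already gives the conclusion, so assume $x\ge A$. Since only exponents strictly below $A+1/(r-1)$ are in play, we may raise $x$ slightly and assume $A<x<A+1/(r-1)$; raising $x$ only weakens the hypothesis $e(G)\le C\chi(G)^x$ when $\chi(G)\ge1$.

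First pass to a $t$-critical subgraph $G'\subseteq G$, where $t=\chi(G)$. Then $\chi(G')=t$ and $e(G')\le e(G)\le Ct^x$. Minimum degree at least $t-1$ gives
\[
|V(G')|\le \frac{2e(G')}{t-1}\le 4Ct^{x-1}
\]
for $t\ge2$. Since $h_r(G)\ge h_r(G')$, it suffices to treat $G'$. We therefore apply Proposition \ref{prop:mixed} to $G'$ with $y=x-1$ and constant $\max\{C,4C\}$.

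It remains to verify condition \eqref{eq:mixed-condition}, namely $x-1<(r-1)A-(r-2)x$. This rearranges to $(r-1)x<(r-1)A+1$, that is, $x<A+1/(r-1)$, which is exactly our assumption. We also need $y\ge0$, i.e.\ $x\ge1$. This holds whenever $A\ge1$, and in any case Lemma \ref{lem:base-S} supplies $A\ge A_0>2$ along the chain. If one wants the lemma for arbitrary $A>0$, the case $x<2$ is vacuous for large $t$, because an $m$-critical graph has at least $\binom m2$ edges. So we may assume $x\ge2$, and then $y\ge1$.

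The proposition then gives a threshold $M(r,k,A,x,x-1,4C)$ beyond which $h_r(G')\ge k$, and this threshold depends only on $x$ and $C$, as $\mathbf S$ requires. The only delicate point is the bookkeeping: the exponent must be nudged into the open interval $(A,A+1/(r-1))$ while keeping the threshold uniform. This is routine, since monotonicity in $x$ handles it.
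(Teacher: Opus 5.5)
Your proposal is correct and follows the paper's proof essentially step for step. Both arguments nudge the exponent into the open interval $(A,A+1/(r-1))$, pass to a critical subgraph to get a vertex bound with exponent $x-1$, and apply Proposition~\ref{prop:mixed}, whose condition $(*)$ reduces exactly to $x<A+1/(r-1)$. Your explicit check that $y\ge0$, using the vacuity of the case $x<2$, is a small point the paper leaves implicit.
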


\begin{proof}
Let $x<A+1/(r-1)$ and $C>0$.  If $x<A$, this is exactly $\mathbf S(A)$.  Assume $x\ge A$.  Choose a number $\bar x$ such that
\[
        \max\{A,x\}<\bar x<A+\frac1{r-1}.
\]
This harmless enlargement is only needed when $x=A$, because Proposition \ref{prop:mixed} assumes a strict inequality $\bar x>A$.  Let $G$ be a graph with $\chi(G)=m$ and $e(G)\le Cm^x$.  Pass to an $m$-critical subgraph $G_0\subseteq G$.  Then $e(G_0)\le Cm^x\le Cm^{\bar x}$ for $m\ge1$, and, since $\delta(G_0)\ge m-1$,
\[
        |V(G_0)|\le \frac{2e(G_0)}{m-1}\le C_1m^{x-1}
        \le C_1m^{\bar x-1}
\]
for a constant $C_1$ depending on $C$ and $x$.  Set $y=\bar x-1$ and $C_2=\max\{C,C_1\}$.
The inequality $\bar x<A+1/(r-1)$ is equivalent to
\[
        \bar x-1<(r-1)A-(r-2)\bar x.
\]
Thus Proposition \ref{prop:mixed}, applied with edge exponent $\bar x$, vertex exponent $y$, and constant $C_2$, gives $h_r(G_0)\ge k$ for all sufficiently large $m$.  Since $G_0\subseteq G$, this proves $\mathbf S(A+1/(r-1))$.
\end{proof}

\begin{proof}[Proof of Theorem \ref{thm:poly-sparse}]
By Lemma \ref{lem:base-S}, $\mathbf S(A_0)$ holds for $A_0=2+2/(3r-5)$.  By Lemma \ref{lem:increment}, $\mathbf S(A_0+j/(r-1))$ holds for every integer $j\ge0$.  Given $P$, choose $j$ with $P<A_0+j/(r-1)$.  Then $\mathbf S(A_0+j/(r-1))$ applied with $x=P$ gives the desired threshold $M(r,k,P,C)$.
\end{proof}

\begin{remark}[Effectivity]
The thresholds in Theorem \ref{thm:poly-sparse} are effective.  The proof gives them recursively: the base thresholds are obtained from the explicit inequalities in the proof of Theorem \ref{thm:edge-core}; each application of Proposition \ref{prop:mixed} introduces only the constants $\eta$, $L$, Chernoff bounds, Markov bounds and the previously constructed threshold at a smaller vertex exponent.  No compactness or infinitary argument is used.  We have not optimized the resulting bounds, and they should be expected to be very large.
\end{remark}

\begin{remark}[Relation to the Bukh random-subgraph question]
The proof above does not prove the Bukh-type conjectural lower bound $\E\chi(G_p)\gg \chi(G)/\log\chi(G)$ for arbitrary fixed $p$.  That question remains a separate random-subgraph problem; see \cite{BKN2023,BerkowitzDevlinLeeReichardTownley2018}.  Instead, the bootstrapping argument avoids the global $|V|$ loss in the polynomially sparse regime by repeatedly peeling high-degree vertices and applying random thinning only after either a smaller sparse subgraph or a large chromatic defect has been forced.  The fractional random-subgraph analogue of Bukh's question is known by work of Mohar and Wu \cite{MoharWuRandomFractional}.
\end{remark}

\section{The cases \texorpdfstring{$k=2$}{k=2} and \texorpdfstring{$k=3$}{k=3}}\label{sec:smallk}

The cases $k=2$ and $k=3$ are classical.  The $k=3$ statement is an immediate corollary of the Erd\H{o}s--Hajnal odd-cycle theorem: if the longest odd cycle of a non-bipartite graph has length $\ell$, then $\chi(G)\le \ell+1$ \cite{ErdosHajnal1966}.  The extremal equality case was later sharpened by Kenkre and Vishwanathan \cite{KenkreVishwanathan2007}.  We include the short deduction below for completeness.  Depth-colouring proofs of the underlying odd-cycle bounds appear in work of Diwan, Kenkre and Vishwanathan \cite{DiwanKenkreVishwanathan2008}.

\begin{theorem}[Classical exact small targets]\label{thm:small-k}
For every $r\ge4$,
\[
        f(2,r)=2,
\]
and
\[
        f(3,r)=2\left\lceil\frac{r-1}{2}\right\rceil+1
        =
        \begin{cases}
        r,& r\text{ odd},\\
        r+1,& r\text{ even}.
        \end{cases}
\]
\end{theorem}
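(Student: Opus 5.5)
For $k=2$ I will argue directly. For $k=3$ I will reduce the question to the existence of a long odd cycle and then quote the Erd\H{o}s--Hajnal odd-cycle theorem \cite{ErdosHajnal1966}. Throughout, $f(k,r)$ is the least integer $N$ such that $\chi(G)\ge N$ forces $h_r(G)\ge k$.

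\emph{The case $k=2$.} If $\chi(G)\ge2$, then $G$ has an edge. A single edge is a forest, so it has girth $\infty\ge r$ and chromatic number $2$. Hence $f(2,r)\le2$. Conversely, $K_1$ has $\chi=1$, and every subgraph of $K_1$ has chromatic number at most $1$. Hence $f(2,r)=2$.

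\emph{The case $k=3$: reduction.} Put $s=2\lceil (r-1)/2\rceil+1$, which is the least odd integer that is at least $r$. The key observation is that $h_r(G)\ge3$ holds if and only if $G$ contains an odd cycle of length at least $r$, equivalently of length at least $s$.

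\begin{enumerate}[label=\textup{(\roman*)}]
\item If such a cycle exists, the cycle itself is the required subgraph: it has girth equal to its length, which is at least $r$, and chromatic number $3$.
\item Conversely, suppose $H\subseteq G$ has $\girth(H)\ge r$ and $\chi(H)\ge3$. Then $H$ is non-bipartite, so it contains an odd cycle. The length of that cycle is at least $\girth(H)\ge r$.
\end{enumerate}

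\emph{Upper bound $f(3,r)\le s$.} Suppose $\chi(G)\ge s\ge5$. Then $G$ is non-bipartite, so it has odd cycles. Let $\ell$ be the length of a longest odd cycle. If $\ell\le s-2$, the Erd\H{o}s--Hajnal theorem gives $\chi(G)\le\ell+1\le s-1$, a contradiction. So $\ell\ge s\ge r$, and the reduction gives $h_r(G)\ge3$.

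\emph{Lower bound $f(3,r)>s-1$.} Take $G=K_{s-1}$, so $\chi(G)=s-1$. Every odd cycle in $K_{s-1}$ has length at most $s-1$. Since $s-1$ is even, every odd cycle in fact has length at most $s-2$. By minimality of $s$ we have $s-2<r$. The reduction therefore gives $h_r(K_{s-1})\le2$.

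Combining the two bounds gives $f(3,r)=s$. This value equals $r$ when $r$ is odd and $r+1$ when $r$ is even. The only step with real content is the appeal to the odd-cycle theorem, which is quoted from \cite{ErdosHajnal1966}. The one point that needs care is the parity bookkeeping showing that $K_{s-1}$ contains no odd cycle of length at least $r$.
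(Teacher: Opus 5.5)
Your proof is correct and follows the paper's proof essentially step for step. Both arguments reduce the case $k=3$ to finding an odd cycle of length at least $r$, get the upper bound from the Erd\H{o}s--Hajnal odd-cycle theorem, and get sharpness from the complete graph $K_{s-1}$ (this is the paper's $K_L$ with $L=2\lceil (r-1)/2\rceil$). The only addition on your side is that you spell out the trivial lower bound $f(2,r)>1$ via $K_1$, which the paper leaves implicit.
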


\begin{proof}
The case $k=2$ is immediate: every graph of chromatic number at least $2$ contains an edge, and an edge has girth $\infty$.

For $k=3$, a subgraph of girth at least $r$ and chromatic number at least $3$ is exactly an odd cycle of length at least $r$.  If $G$ contains no such odd cycle, then its longest odd cycle has length at most $r-2$ when $r$ is odd and at most $r-1$ when $r$ is even.  Erd\H{o}s--Hajnal's odd-cycle theorem gives
\[
        \chi(G)\le
        \begin{cases}
        r-1,& r\text{ odd},\\
        r,& r\text{ even}.
        \end{cases}
\]
Thus the displayed value of $f(3,r)$ is an upper bound.

For sharpness, set
\[
        L=2\left\lceil\frac{r-1}{2}\right\rceil.
\]
Then $L=r-1$ if $r$ is odd and $L=r$ if $r$ is even; in both cases $L$ is even.  The complete graph $K_L$ has chromatic number $L$, and its longest odd cycle has length $L-1$, which is $r-2$ for odd $r$ and $r-1$ for even $r$.  Thus $K_L$ has no odd cycle of length at least $r$, and so it has no subgraph of girth at least $r$ and chromatic number at least $3$.  Hence $f(3,r)>L$, proving equality.
\end{proof}

\section{Structural saturation theorems}\label{sec:structural}

We prove Theorems \ref{thm:moore-packing} and \ref{thm:projected}.

\begin{proof}[Proof of Theorem \ref{thm:moore-packing}]
Let $\mathcal P$ be a maximal family of vertex-disjoint $s$-cycles in $Y$, and let $U$ be the union of their vertex sets.  Then $Y-U$ has no $s$-cycle, so it is $q$-colourable by the first-failing hypothesis.  Hence
\[
        \chi(Y[U])\ge h-q.
\]
The graph $Y[U]$ contains a critical subgraph of chromatic number at least $h-q$, whose minimum degree is at least $h-q-1$.  Since $G$ has girth at least $s$, the Moore bound gives
\[
        |U|\ge M_s(h-q-1).
\]
Because $|U|=s|\mathcal P|$, the vertex-disjoint bound follows.

For the edge-disjoint bound, let $\mathcal Q$ be a maximal family of edge-disjoint $s$-cycles in $Y$, and let $F$ be the union of their edge sets.  Then $F$ meets every $s$-cycle of $Y$, so $Y-F$ has no $s$-cycle and is $q$-colourable.  If the graph $(V(Y),F)$ has chromatic number $b$, then the product of a $q$-colouring of $Y-F$ and a $b$-colouring of $(V(Y),F)$ is a proper colouring of $Y$.  Therefore $h\le qb$, and $b\ge c:=\lceil h/q\rceil$.

The graph $(V(Y),F)$ contains a critical subgraph of chromatic number at least $c$, hence of minimum degree at least $c-1$ and girth at least $s$.  It has at least $M_s(c-1)$ vertices and therefore at least $(c-1)M_s(c-1)/2$ edges.  Since $F$ is the disjoint union of $s$-edge cycles from $\mathcal Q$, we get
\[
        |\mathcal Q|=|F|/s\ge \frac{c-1}{2s}M_s(c-1).
\]
\end{proof}

We need one elementary fact about optimal colourings.

\begin{lemma}[Exact colour subsets]\label{lem:colour-subsets}
Let $K$ be $m$-chromatic and let $\varphi:V(K)\to[m]$ be a proper surjective $m$-colouring.  For every $I\subseteq[m]$,
\[
        \chi\bigl(K[\varphi^{-1}(I)]\bigr)=|I|.
\]
\end{lemma}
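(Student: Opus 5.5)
The plan is to prove the two inequalities $\chi(K[\varphi^{-1}(I)])\le|I|$ and $\chi(K[\varphi^{-1}(I)])\ge|I|$ separately. Write $U=\varphi^{-1}(I)$ and $W=\varphi^{-1}([m]\setminus I)$, so that $V(K)=U\sqcup W$.

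The upper bound is immediate. The restriction of $\varphi$ to $U$ is a proper colouring of $K[U]$ using only the colours in $I$, so $\chi(K[U])\le|I|$. In the same way, $\varphi$ restricted to $W$ shows $\chi(K[W])\le m-|I|$.

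For the lower bound, suppose for contradiction that $\chi(K[U])<|I|$. Take a proper colouring of $K[U]$ with at most $|I|-1$ colours, and give $W$ the colouring $\varphi|_W$ on a disjoint palette of $m-|I|$ colours. Edges inside $U$ and inside $W$ are properly coloured, and edges between $U$ and $W$ receive colours from disjoint palettes. This yields a proper colouring of $K$ with at most $m-1$ colours, contradicting $\chi(K)=m$. Hence $\chi(K[U])\ge|I|$, and the two bounds together give equality.

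There is no real obstacle here. The only points needing care are the degenerate cases. For $I=\emptyset$ the set $U$ is empty, and the convention $\chi(\emptyset)=0$ gives the claim. For a nonempty $I$, surjectivity of $\varphi$ guarantees that $U$ is nonempty, so that $K[U]$ is a genuine graph of chromatic number at least $1$. The disjoint-palette argument itself is the same subadditivity used in Lemma~\ref{lem:defect}.
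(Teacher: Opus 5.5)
Your proposal is correct and is essentially the paper's proof: the upper bound comes from restricting $\varphi$, and the lower bound from recolouring $K[\varphi^{-1}(I)]$ with fewer than $|I|$ colours while keeping the other colour classes, which would give a proper colouring of $K$ with fewer than $m$ colours. Your handling of the palettes and of the case $I=\emptyset$ is fine.
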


\begin{proof}
The left side is at most $|I|$.  If it were smaller for some $I$, then recolouring $K[\varphi^{-1}(I)]$ with fewer than $|I|$ colours and leaving all other colour classes unchanged would give a proper colouring of $K$ with fewer than $m$ colours, a contradiction.
\end{proof}

\begin{proof}[Proof of Theorem \ref{thm:projected}]
For part (a), let $I$ be an independent set in $P_\eta$.  Then no edge of $K[\varphi^{-1}(I)]$ is $\eta$-monochromatic, so $\eta$ is a proper $a$-colouring of this induced subgraph.  By Lemma \ref{lem:colour-subsets}, the subgraph has chromatic number $|I|$.  Thus $|I|\le a$, proving $\alpha(P_\eta)\le a$.  Turan's theorem in the form $e(F)\ge n^2/(2\alpha(F))-n/2$ gives the displayed edge lower bound.

For part (b), first consider $k=2$.  The hypothesis $h_r(K)<2$ implies that $K$ has no edge, so $m=1$ and the assertion is immediate.  We may therefore assume $k\ge3$.  Suppose that $I$ is an independent set in $T$ with $|I|\ge k$.  By Lemma \ref{lem:colour-subsets}, $K[\varphi^{-1}(I)]$ has chromatic number $|I|\ge k$.  If this induced subgraph had girth at least $r$, it would itself contradict $h_r(K)<k$.  Therefore it has girth less than $r$, i.e. it contains a cycle $C$ of length less than $r$.  Its projected set $\Pi(C)$ is contained in $\binom I2$, and hence is disjoint from $T$, contradicting that $T$ meets every projected short cycle.  Thus $\alpha(T)\le k-1=a$, and the same Turan bound applies.

For part (c), take a maximal family $\calM$ of cycles of length less than $r$ whose projected sets $\Pi(C)$ are pairwise disjoint.  Let
\[
        T=\bigcup_{C\in\calM}\Pi(C).
\]
By maximality, $T$ meets $\Pi(D)$ for every short cycle $D$ of $K$.  Part (b) gives
\[
        |T|\ge \frac{m^2}{2a}-\frac m2.
\]
Each cycle of length less than $r$ contributes at most $r-1$ projected pairs, so
\[
        |\calM|\ge \frac{|T|}{r-1}
        \ge \frac1{r-1}\left(\frac{m^2}{2a}-\frac m2\right).
\]
If two cycles in $\calM$ shared an actual edge $uv$, then both projected sets would contain the pair $\{\varphi(u),\varphi(v)\}$, contrary to pairwise disjointness.  Hence the cycles are edge-disjoint.
\end{proof}

\section{Limits of density reduction and fractional random surgery}\label{sec:limits}

Theorem \ref{thm:poly-sparse} naturally suggests trying to reduce the full Erd\H{o}s--Hajnal problem to polynomially sparse subgraphs.  The most naive form of such a reduction is false.  This is not a paradox: high-girth graphs themselves already satisfy the desired high-girth conclusion, but their high-chromatic subgraphs can be forced by Moore bounds to have many more than any prescribed polynomial number of edges in their chromatic number.

\begin{proposition}[No universal fixed-polynomial density reduction]\label{prop:no-density-bridge}
Fix $P>0$ and let $g:\mathbb N\to\mathbb R$ satisfy $g(n)\to\infty$.  There are graphs $G$ of arbitrarily large chromatic number such that every subgraph $H\subseteq G$ with
\[
        \chi(H)\ge g(\chi(G))
\]
satisfies
\[
        e(H)>\chi(H)^P.
\]
Consequently, there is no fixed exponent $P$ and no function $g\to\infty$ such that every graph $G$ contains a subgraph $H$ with $\chi(H)\ge g(\chi(G))$ and $e(H)\le \chi(H)^P$.
\end{proposition}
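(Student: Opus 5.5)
The plan is to take $G$ to be a graph of large girth and large chromatic number, with the girth chosen as a function of $P$, and to use the Moore bound to force every high-chromatic subgraph to have many edges. By Erd\H{o}s's classical theorem (or any explicit construction), for every $m$ and every $s$ there is a graph $G$ with $\chi(G)\ge m$ and $\girth(G)\ge s$. I fix an integer $s$ with $s=2a+1$ and $a$ large compared with $P$; it suffices that $a+1>P$.

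Now let $H\subseteq G$ with $\chi(H)=h\ge g(\chi(G))$. Pass to an $h$-critical subgraph $H_0\subseteq H$. Then $\delta(H_0)\ge h-1$ and $\girth(H_0)\ge s$, so the Moore bound recalled before Theorem \ref{thm:moore-packing} gives
\[
|V(H_0)|\ge M_s(h-1)\ge (h-2)^{a}.
\]
Hence
\[
e(H)\ge e(H_0)\ge \tfrac12(h-1)|V(H_0)|\ge \tfrac12 (h-2)^{a+1}.
\]
Since $a+1>P$, this exceeds $h^P$ once $h$ is larger than a threshold $h_0(P,a)$. Because $g(n)\to\infty$, there is $n_0$ such that $g(n)\ge h_0$ for all $n\ge n_0$. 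So I take $\chi(G)\ge n_0$, and $\chi(G)$ can be made arbitrarily large. Every $H$ with $\chi(H)\ge g(\chi(G))$ then has $h\ge h_0$ and therefore $e(H)>\chi(H)^P$.

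The consequence is immediate: these graphs $G$ form a family with no subgraph $H$ satisfying both $\chi(H)\ge g(\chi(G))$ and $e(H)\le\chi(H)^P$.

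The only point needing care is the elementary estimate $M_s(d)\ge (d-1)^a$ for $s=2a+1$, which follows from the leading term $d(d-1)^{a-1}$. The other delicate point is the order of quantifiers. The girth $s$ depends only on $P$, and $\chi(G)$ is taken large enough that $g(\chi(G))$ exceeds the threshold beyond which $\tfrac12(h-2)^{a+1}>h^P$. There is also the degenerate case where $g(\chi(G))$ is tiny, but this disappears once $\chi(G)\ge n_0$. I expect no real obstacle beyond this bookkeeping.
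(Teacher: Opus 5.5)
Your proposal is correct and follows the paper's proof: girth $2a+1$ with $a$ large relative to $P$, passing to an $h$-critical subgraph of minimum degree at least $h-1$, and applying the Moore bound to get $e(H)\ge c_a h^{a+1}>h^P$. You take $a+1>P$ where the paper takes $a>P$; both suffice, and your order of quantifiers (fix $a$ from $P$, then take $\chi(G)$ large enough that $g(\chi(G))\ge h_0$) matches the paper's.
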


\begin{proof}
Choose an integer $a>P$.  By Erd\H{o}s's high-girth high-chromatic theorem \cite{Erdos1959}, there are graphs of arbitrarily large chromatic number and girth at least $2a+1$.  Let $G$ be one such graph, chosen so that $g(\chi(G))$ is larger than a constant depending only on $a$ and $P$.

Let $H\subseteq G$ have chromatic number $m\ge g(\chi(G))$.  Take an $m$-critical subgraph $H_0\subseteq H$.  Then $\delta(H_0)\ge m-1$, and $H_0$ has girth at least $2a+1$.  The Moore bound applied to the ball of radius $a$ gives
\[
        |V(H_0)|\ge 1+(m-1)\sum_{i=0}^{a-1}(m-2)^i.
\]
Thus
\[
        e(H)\ge e(H_0)
        \ge \frac{m-1}{2}|V(H_0)|
        \ge c_a m^{a+1}
\]
for some constant $c_a>0$.  Since $a+1>P$, this is larger than $m^P$ for all sufficiently large $m$.  Our choice of $G$ ensures $m\ge g(\chi(G))$ is sufficiently large, completing the proof.
\end{proof}

The preceding proposition shows that the bridge to the full Erd\H{o}s--Hajnal problem cannot be a girth-blind fixed-polynomial density reduction.  Any successful bridge must either allow the exponent to grow, exploit the target girth parameter, or use structural information beyond edge density.

For comparison with the Burling examples, the standard recursive Burling construction is itself super-polynomially large in its chromatic parameter.  We record the elementary calculation because it is a useful compatibility check with Theorem \ref{thm:poly-sparse}.

We use the following standard recursive model.  Start with $G_1$ a single vertex and $\mathcal S(G_1)=\{V(G_1)\}$.  Given $(G_i,\mathcal S(G_i))$, form $G_{i+1}$ by taking a master copy $H$ of $G_i$; for every $S\in\mathcal S(H)$ take a copy $H_S$ of $G_i$; and for every $X\in\mathcal S(H_S)$ add a vertex $v_{S,X}$ adjacent to all vertices of $X$ and to no other vertices.  The new stable-set family consists of all sets $S\cup X$ and $S\cup\{v_{S,X}\}$ obtained in this way.  This is the formulation used by Felsner, Joret, Micek, Trotter and Wiechert.  Since every new vertex is joined to a stable set and no edges are added inside that stable set, induction shows that all graphs in this standard sequence are triangle-free.  We shall use the following standard Burling invariant: for every $j$, every proper colouring of $G_j$ has a set $S\in\mathcal S(G_j)$ on which at least $j$ colours appear.  The assertion is trivial for $G_1$.  Suppose it holds for $G_j$, and colour $G_{j+1}$.  Apply the invariant to the master copy $H$ and choose $S\in\mathcal S(H)$ receiving at least $j$ colours.  Apply the invariant to the copy $H_S$ and choose $X\in\mathcal S(H_S)$ receiving at least $j$ colours.  If the colour set on $X$ is not contained in the colour set on $S$, then $S\cup X\in\mathcal S(G_{j+1})$ receives at least $j+1$ colours.  If the two colour sets are equal, then the new vertex $v_{S,X}$ adjacent to all of $X$ must receive a colour outside this common set, so $S\cup\{v_{S,X}\}\in\mathcal S(G_{j+1})$ receives at least $j+1$ colours.  Thus the invariant holds for $G_{j+1}$, and in particular $\chi(G_j)\ge j$.  The matching upper bound follows by colouring every old copy of $G_{j-1}$ with colours $1,\ldots,j-1$ and giving every vertex added at the last step colour $j$.  Hence $\chi(G_j)=j$.  We use the normalization $B_j=G_j$ for the standard sequence.

\begin{proposition}[Size growth in the standard Burling recursion]\label{prop:burling-growth}
Let $G_j$ be the recursive Burling graphs defined with their stable-set collections $\mathcal S(G_j)$ as in Felsner, Joret, Micek, Trotter and Wiechert \cite{FelsnerJoretMicekTrotterWiechert2018}.  Then
\[
        |\mathcal S(G_j)|=2^{2^{j-1}-1},
\]
and, for $j\ge2$,
\[
        e(G_j)\ge 2^{2^{j-1}-2}.
\]
In particular, for the usual Burling sequence $B_j$ with $\chi(B_j)=j$, the number of edges is at least
\[
        \exp(\Omega(2^{\chi(B_j)})),
\]
and hence is super-polynomial in the chromatic parameter.
\end{proposition}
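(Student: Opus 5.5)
We plan to set up recursions for $s_j=|\mathcal S(G_j)|$ and for the vertex and edge counts, directly from the construction. First, $s_1=1$. In forming $G_{j+1}$, the new stable sets are indexed by pairs $(S,X)$ with $S\in\mathcal S(H)$ and $X\in\mathcal S(H_S)$. Each such pair contributes exactly two sets, $S\cup X$ and $S\cup\{v_{S,X}\}$. Hence
\[
        s_{j+1}=2s_j^2 .
\]
Writing $s_j=2^{a_j}$, this becomes $a_{j+1}=2a_j+1$ with $a_1=0$, so $a_j=2^{j-1}-1$. This gives $|\mathcal S(G_j)|=2^{2^{j-1}-1}$.

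One point needs checking before the sets are counted as distinct members of the family. We would either read $\mathcal S$ as a multiset, as in the recursive description, or observe that distinct pairs give distinct sets. The second option works because $S$ determines the master-copy part and $X$ lies in the distinct copy $H_S$. Also, $S\cup X\neq S\cup\{v_{S,X}\}$, since $v_{S,X}\notin X$ and every $X$ is nonempty; this nonemptiness holds by induction, because $V(G_1)$ is nonempty. This bookkeeping is the only real subtlety, and it is minor.

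For the edge bound, note that each added vertex $v_{S,X}$ is joined to every vertex of the nonempty set $X$. Added vertices are distinct for distinct pairs, so their edges are distinct and disjoint from the edges inside the copies of $G_j$. Therefore
\[
        e(G_{j+1})\ge \#\{(S,X)\}=s_j^2=2^{2^{j}-2}.
\]
Shifting the index, $e(G_j)\ge 2^{2^{j-1}-2}$ for $j\ge2$. One could add the edges from the copies, but that is unnecessary.

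Finally, since $\chi(B_j)=j$ was shown above, we get $e(B_j)\ge 2^{2^{j-1}-2}=\exp(\Omega(2^{\chi(B_j)}))$. This exceeds $C\chi(B_j)^P$ for any fixed $P$ and $C$ once $j$ is large, which is the claimed super-polynomial growth.
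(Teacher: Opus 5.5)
Your proposal is correct and follows the paper's proof essentially step for step. The paper uses the same recurrence $s_{j+1}=2s_j^2$ with the same distinctness check: the master-copy part determines $S$, and the nonemptiness of $X$ separates the two types of sets. It then takes one edge per new vertex $v_{S,X}$ to get $e(G_{j+1})\ge s_j^2$, and uses the normalization $\chi(B_j)=j$ for the final assertion.
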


\begin{proof}
The construction starts with one stable set, so $s_1:=|\mathcal S(G_1)|=1$.  At step $j$, the construction forms two stable sets $S\cup X$ and $S\cup\{v_{S,X}\}$ for every pair $S\in\mathcal S(H)$ and $X\in\mathcal S(H_S)$, where both $H$ and $H_S$ are copies of $G_{j-1}$.  These $2s_{j-1}^2$ stable sets are distinct.  Sets of the form $S\cup X$ contain vertices in the copy $H_S$ and no new vertex $v_{S,X}$, whereas sets of the form $S\cup\{v_{S,X}\}$ contain the unique new vertex $v_{S,X}$.  Distinct choices of $S$ use distinct master-copy parts, and, for fixed $S$, distinct choices of $X$ are distinct by induction.  Hence
\[
        s_j=2s_{j-1}^2.
\]
Solving this recurrence gives $s_j=2^{2^{j-1}-1}$.

For every pair $(S,X)$ the construction adds a vertex $v_{S,X}$ adjacent to all vertices of the nonempty stable set $X$, and hence adds at least one edge.  Therefore
\[
        e(G_j)\ge s_{j-1}^2=2^{2^{j-1}-2}.
\]
The final assertion follows from the standard normalization $\chi(B_j)=j$ used for Burling's sequence; see, for example, the formulation in Pettie--Tardos--Walczak \cite{PTW2026}.
\end{proof}

The fractional analogue has a different bottleneck.  The integer proof loses a factor depending on $|V|$ because it must union-bound over all ordinary colourings.  Fractionally, the corresponding random-subgraph preservation theorem is already known.  The missing issue is instead how expensive it is, in fractional chromatic number, to remove the edges responsible for short cycles.

\begin{definition}[Fractional short-cycle cost]
For a graph $X$ and an integer $r\ge4$, define
\[
        \rho_{<r}(X)=\min\{\chif(V(X),F):F\subseteq E(X),\ X-F\text{ has girth at least }r\}.
\]
Equivalently, $\rho_{<r}(X)$ is the minimum fractional chromatic number of an edge set whose deletion hits every cycle of length less than $r$.
\end{definition}

\begin{proposition}[Fractional product deletion]\label{prop:fractional-deletion}
Let $G$ be a non-empty graph and let $F\subseteq E(G)$.  Then
\[
        \chif(G)\le \chif(G-F)\,\chif(V(G),F).
\]
Consequently,
\[
        h^{\mathrm f}_r(G)
        \ge
        \frac{\chif(G)}{\rho_{<r}(G)}.
\]
Here the quotient is well-defined because every admissible deletion set has
$\chif(V(G),F)\ge1$, and hence $\rho_{<r}(G)\ge1$.  For the empty graph the assertions are interpreted as trivial.
In particular, if $\tau_{<r}(G)$ is the minimum number of edges meeting all cycles of length less than $r$, then
\[
        h^{\mathrm f}_r(G)
        \ge
        \frac{\chif(G)}{1+\sqrt{2\tau_{<r}(G)}}.
\]
\end{proposition}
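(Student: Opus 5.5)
The plan is to prove the product inequality by multiplying fractional colourings, then read off the two consequences. Write $G_1=G-F$ and $G_2=(V(G),F)$, both on vertex set $V(G)$. I would use the LP characterisation of $\chif$: a fractional colouring of weight $w$ is a probability distribution on independent sets in which each vertex is covered with probability at least $1/w$.

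Take an optimal distribution $\mu_1$ on independent sets of $G_1$, covering each vertex with probability at least $1/\chif(G_1)$. Take an optimal distribution $\mu_2$ on independent sets of $G_2$, covering each vertex with probability at least $1/\chif(G_2)$. Sample $I_1\sim\mu_1$ and $I_2\sim\mu_2$ independently and set $I=I_1\cap I_2$.

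Every edge of $G$ lies either in $F$ or in $E(G)\setminus F$. The first kind cannot have both ends in $I_2$, and the second kind cannot have both ends in $I_1$. Hence $I$ is independent in $G$. By independence, each vertex lies in $I$ with probability at least $1/(\chif(G_1)\chif(G_2))$, so $\chif(G)\le\chif(G_1)\chif(G_2)$.

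The edge cases need brief care:
\begin{itemize}
\item If $F=\emptyset$, then $G_2$ is a non-empty edgeless graph, so $\chif(G_2)=1$.
\item If $F=E(G)$, then $G_1$ is edgeless with $\chif=1$.
\end{itemize}
Since $G$ is non-empty, the vertex set is non-empty and both factors are at least $1$. This also justifies the remark that $\rho_{<r}(G)\ge1$.

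For the $h_r^{\mathrm f}$ bound, let $F$ attain $\rho_{<r}(G)$. Then $G-F$ has girth at least $r$, so $h_r^{\mathrm f}(G)\ge\chif(G-F)\ge\chif(G)/\chif(V(G),F)=\chif(G)/\rho_{<r}(G)$.

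For the last bound, let $F$ be a minimum transversal with $|F|=\tau:=\tau_{<r}(G)$. Then
\[
\rho_{<r}(G)\le\chif(V(G),F)\le\chi(V(G),F).
\]
It remains to bound the chromatic number of a graph with $\tau$ edges. A $t$-chromatic graph has at least $\binom t2$ edges, as used in Lemma~\ref{lem:defect} via a critical subgraph. So $t(t-1)/2\le\tau$, which gives $(t-1)^2<t(t-1)\le 2\tau$ and hence $t\le 1+\sqrt{2\tau}$. When $\tau=0$ this reads $t\le1$, consistent with the edgeless case.

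I expect no real obstacle. The only delicate point is stating the fractional-colouring normalisation cleanly, so that the intersection argument yields exactly the product of the two weights. I would phrase it via the probabilistic form of the LP rather than via $a{:}b$-colourings, which avoids integrality bookkeeping.
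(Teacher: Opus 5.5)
Your proof is correct and follows the paper's argument. Your independent-sampling step is the paper's "multiply fractional colourings and intersect independent sets of $G-F$ and $(V(G),F)$" written as a product measure, and the two consequences are obtained as in the paper: take an optimal admissible $F$, then bound $\chi(V(G),F)\le 1+\sqrt{2\tau_{<r}(G)}$ using the $\binom t2$ edges of a $t$-critical subgraph. One cosmetic slip: the strict inequality $(t-1)^2<t(t-1)$ fails at $t=1$, where both sides vanish, so write $\le$ there; the conclusion is unaffected.
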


\begin{proof}
The first inequality is the standard product inequality for fractional colourings.  If $G_1$ and $G_2$ are two graphs on the same vertex set, then intersections $I_1\cap I_2$, where $I_i$ is independent in $G_i$, are independent in $G_1\cup G_2$.  Multiplying fractional colourings of $G_1$ and $G_2$ gives
\[
        \chif(G_1\cup G_2)\le \chif(G_1)\chif(G_2).
\]
Apply this with $G_1=G-F$ and $G_2=(V(G),F)$.

If $F$ is chosen so that $G-F$ has girth at least $r$, then
\[
        h_r^{\mathrm f}(G)\ge \chif(G-F)
        \ge \frac{\chif(G)}{\chif(V(G),F)}.
\]
Taking the minimum over all such $F$ gives the bound with $\rho_{<r}(G)$.

Finally, if $|F|=s$, then $\chif(V(G),F)\le \chi(V(G),F)\le 1+\sqrt{2s}$, because every $c$-critical graph has at least $\binom c2$ edges.  Applying this to a minimum short-cycle transversal gives the last inequality.
\end{proof}

We next record the small-retention fractional random-subgraph estimate that will be used below.  This is not claimed as a new preservation theorem: it is the principal-set proof of Mohar and Wu \cite{MoharWuRandomFractional}, written in the edge-retention convention and with the dependence on the retention probability displayed.  This is the point at which the fractional theory differs sharply from the integral one: the lower bound below depends on $\chif(G)$ and $p$, but not on $|V(G)|$.

\begin{lemma}[Principal-set sparse majorization]\label{lem:principal-majorization}
Let $G$ be a graph with a nonnegative vertex weight $w$, and order the vertices as
\[
        v_1,v_2,\ldots
\]
in nonincreasing order of weight.  Let $s\ge1$ be an integer.  A set $X$ of size $a$ is called $s$-principal if
\[
        X\subseteq \{v_1,\ldots,v_{sa}\},
\]
where the right side is interpreted as $V(G)$ if $sa>|V(G)|$.  Suppose that $Y\subseteq V(G)$ has no $s$-principal subset of average degree at least $x$ in $G[Y]$, and suppose that every independent subset of $Y$ in $G$ has weight at most $1$.  If $w(V(G))=t$, then
\[
        w(Y)\le 2(\lfloor x\rfloor+1)+\frac{2t}{s}.
\]
\end{lemma}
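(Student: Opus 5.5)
The plan is to split $Y$ according to how its vertices sit in the global weight order, and to bound the two resulting pieces separately. Write $Y=\{y_1,y_2,\ldots\}$ in nonincreasing weight order, consistent with the order $v_1,v_2,\ldots$. Let $p_j$ be the global position of $y_j$, so $y_j=v_{p_j}$ and $p_1<p_2<\cdots$. Call an index $j$ \emph{good} if $p_j\le sj$ and \emph{bad} otherwise. The key observation is that $j$ is good exactly when the prefix $Y_j=\{y_1,\ldots,y_j\}$ is $s$-principal. Indeed, all of $y_1,\ldots,y_j$ have positions at most $p_j$, so they lie in $\{v_1,\ldots,v_{sj}\}$ precisely when $p_j\le sj$.

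First I will bound the bad indices by a block-averaging argument. Suppose $j$ is bad, so $p_j>sj$. Then every vertex $v_i$ in the block $B_j=\{v_{s(j-1)+1},\ldots,v_{sj}\}$ precedes $y_j$ and has weight at least $w(y_j)$. Averaging over the block gives
\[
w(y_j)\le \tfrac1s\,w(B_j).
\]
The blocks $B_j$ are pairwise disjoint, so the bad vertices contribute at most $t/s$ in total. Blocks running past $|V(G)|$ cause no problem: if $p_j>sj$ then the block $B_j$ exists in full. A second $t/s$ is kept in reserve. It will absorb the rounding at the boundary between the last good prefix and the bad tail, for instance if one needs to compare with the block ending at $s(a+1)$ rather than at $sa$.

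Next I will bound the good part. Let $a$ be the largest good index. Then $Y_a$ is $s$-principal, so by hypothesis $G[Y_a]$ has average degree less than $x$. Moreover every good $j\le a$ gives another principal prefix $Y_j$, with average degree less than $x$ as well. The goal is
\[
w(Y_a)\le 2(\lfloor x\rfloor+1),
\]
using only that independent subsets of $Y$ have weight at most $1$. I would first try a weighted greedy argument inside $Y_a$. Because $G[Y_a]$ has fewer than $xa/2$ edges, at least half of its vertices have degree at most $\lfloor x\rfloor$ in $G[Y_a]$. I then want to cover the whole weight $w(Y_a)$ by about $\lfloor x\rfloor+1$ independent sets, losing a factor $2$. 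To do this I would use the monotonicity of weights: a low-degree vertex $y_j$ can be charged to heavier vertices in the prefix $Y_j$, and the degree bound applied to the principal prefixes $Y_j$ controls that charging. Combining the good part, the bad part, and the boundary slack then gives
\[
w(Y)\le 2(\lfloor x\rfloor+1)+\frac{2t}{s}.
\]

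The main obstacle is the good part, namely turning an average-degree bound for the principal prefixes into a weighted covering by $O(x)$ independent sets. A single average-degree bound on $Y_a$ controls only counting, not weight, so the argument must use all principal prefixes $Y_j$ and not just $Y_a$. I expect to proceed by induction on $a$: delete a vertex of degree at most $\lfloor x\rfloor$ among the heavier half of $Y_a$ and invoke the bound for a shorter prefix. The difficulty is that shorter prefixes need not all be good. Where that happens, the bad prefixes have to be absorbed into the second $t/s$ term, and getting this bookkeeping right is the delicate step.
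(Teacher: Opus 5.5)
Your block argument for the bad indices is correct; it is exactly how the paper bounds one of its two exceptional sets. But the proof has a genuine gap at the step you flag as the main obstacle. No argument is given for the good part, and the intermediate goal $w(Y_a)\le 2(\lfloor x\rfloor+1)$ is false.

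Here is a counterexample. Take $x=1$ and $s=2$, with global order as follows: $120$ vertices outside $Y$ of weight $1$; then a clique $c_1,\dots,c_{10}$ in $Y$, each of weight $0.98$; then $200$ isolated vertices $f_1,\dots,f_{200}$ of $Y$, each of weight $10^{-4}$. Every independent subset of $Y$ has weight at most $0.98+0.02=1$.

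Consider a subset of $Y$ with $c$ clique vertices and $e$ fillers. It is $2$-principal only if $e\ge 130-2c$ (when $e\ge1$), or only if $c\ge 120$ (when $e=0$). It has average degree at least $1$ only if $c\ge2$ and $e\le c^2-2c$. For $c\le 10$ these conditions are incompatible, so the hypothesis holds. Yet $Y_j$ is principal for every $j\ge 120$. Hence $a=210$, $Y_a=Y$, and $w(Y_a)=9.82>4$.

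So heavy vertices with bad indices below $a$ can carry essentially all the weight of $Y_a$. Splitting $Y$ at its last principal prefix does not isolate anything the hypothesis controls. Your auxiliary Markov step is also wrong. Average degree below $x$ only guarantees that at least half the vertices have degree less than $2x$, not at most $\lfloor x\rfloor$. In any case, counting low-degree vertices gives no bound on weight.

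The missing idea is degeneracy with respect to the global order. Let $L$ be the set of $v\in Y$ having at least $x$ neighbours in $G[Y]$ that precede $v$ globally.

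First bound $Y\setminus L$. Every vertex of $Y\setminus L$ has at most $\lfloor x\rfloor$ earlier neighbours, so $G[Y\setminus L]$ is $(\lfloor x\rfloor+1)$-colourable. Some colour class is an independent subset of $Y$ of weight at least $w(Y\setminus L)/(\lfloor x\rfloor+1)$, so $w(Y\setminus L)\le \lfloor x\rfloor+1$.

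Next bound $L$. If $u_j$ is the $j$th vertex of $L$ in the global order, the prefix of $Y$ ending at $u_j$ spans at least $xj$ edges. There are two cases.
\begin{enumerate}[label=\textup{(\roman*)}]
\item The prefix has more than $2j$ vertices. Then $u_j$ sits at position greater than $2j$ in $Y$. Pairing consecutive terms of the nonincreasing weight sequence of $Y$ bounds the total weight of such $u_j$ by $w(Y)/2$.
\item The prefix has at most $2j$ vertices. Then it has average degree at least $x$, so it is not principal. That is, $u_j$ has a bad index in your sense, and your block argument bounds the total weight of such vertices by $t/s$.
\end{enumerate}

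Hence $w(Y)\le \lfloor x\rfloor+1+w(Y)/2+t/s$, which rearranges to the claim. Note that the $2t/s$ comes from this rearrangement, not from a reserve term absorbing boundary rounding.
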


\begin{proof}
For $v\in Y$, let $d^>(v)$ be the number of neighbours of $v$ in $G[Y]$ that precede $v$ in the global weight order, and put
\[
        L=\{v\in Y:d^>(v)\ge x\}.
\]
Write $L=\{u_1,\ldots,u_m\}$ in the global order.  For each $j$, the prefix
\[
        Y_j=Y\cap\{v_1,\ldots,u_j\}
\]
spans at least $xj$ edges, because each of $u_1,
\ldots,u_j$ has at least $x$ earlier neighbours in $Y_j$.  If $Y_j$ had at most $2j$ vertices, its average degree would be at least $x$.  Since $Y$ has no dense $s$-principal subset, one of two alternatives must hold: either $u_j$ is not among the first $2j$ vertices of $Y$, or the full global prefix $\{v_1,\ldots,u_j\}$ has more than $s|Y_j|$ vertices.  Split $L=L_1\cup L_2$ according to these alternatives.

Let $z_1,z_2,\ldots$ be the vertices of $Y$ in the global order.  Let $u_{j_1},u_{j_2},\ldots$ be the vertices of $L_1$ in the global order.  Since $u_{j_a}\in L_1$, at least $2j_a\ge 2a$ vertices of $Y$ occur no later than $u_{j_a}$; hence $w(u_{j_a})\le w(z_{2a})$.  Pairing consecutive terms of the nonincreasing sequence $w(z_i)$ gives
\[
        w(L_1)\le \sum_{a\ge1} w(z_{2a})\le \frac12 w(Y).
\]
Similarly, let $u_{j'_1},u_{j'_2},\ldots$ be the vertices of $L_2$ in the global order.  Since $u_{j'_a}\in L_2$, the full global prefix ending at $u_{j'_a}$ has more than $s|Y_{j'_a}|$ vertices; in particular at least $s j'_a\ge s a$ vertices of $G$ occur no later than $u_{j'_a}$.  Hence $w(u_{j'_a})\le w(v_{sa})$, and partitioning the global nonincreasing sequence into consecutive blocks of length $s$ gives
\[
        w(L_2)\le \sum_{a\ge1}w(v_{sa})\le \frac1s w(V(G))=\frac ts.
\]
Thus, with $S=Y\setminus L$,
\[
        w(S)\ge \frac12w(Y)-\frac ts.
\]
The graph $G[S]$ is $\lfloor x\rfloor$-degenerate, because every nonempty induced subgraph has a last vertex in the global order with fewer than $x$ earlier neighbours inside that subgraph.  Hence $G[S]$ is $(\lfloor x\rfloor+1)$-colourable.  Some independent colour class of $G[S]$ has weight at least $w(S)/(\lfloor x\rfloor+1)$, and by hypothesis this weight is at most $1$.  Therefore
\[
        1\ge \frac{\frac12w(Y)-t/s}{\lfloor x\rfloor+1},
\]
which rearranges to the claimed bound.
\end{proof}

\begin{theorem}[Mohar--Wu preservation, edge-retention form]\label{thm:mohar-wu-random}
Let $G$ be a graph with $\chif(G)=t\ge2$, and let $G_p$ be obtained from $G$ by retaining each edge independently with probability $p\in(0,1)$.  Put
\[
        \lambda=-\log(1-p).
\]
For every $c>\log 2$, with probability at least
\[
        1-\frac{e^{-c}}{1-e^{-c}},
\]
one has
\[
        \chif(G_p)
        \ge
        \frac{t}{4(\log(2et)+c)/\lambda+4}.
\]
Consequently,
\[
        \Prob\left(
        \chif(G_p)
        \ge
        \frac{t}{8\log_{1/(1-p)}(2et)+4}
        \right)
        > 1-\frac1{2t}.
\]
In particular, for $0<p\le1/2$,
\[
        \chif(G_p)
        \ge
        \frac{p t}{12\log(16et^2)}
\]
with probability at least $1-1/(2t)$.
\end{theorem}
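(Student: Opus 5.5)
The plan is to lower-bound $\chif(G_p)$ through the LP dual. Pick an optimal fractional clique of $G$, i.e. a weight $w\ge0$ with $w(V(G))=t$ and $w(I)\le1$ for every independent set $I$ of $G$. Put $x:=\lfloor 2(\log(2et)+c)/\lambda\rfloor$ and choose $s$ of order $t$ (say $s=\lceil t\rceil$, or $s$ tuned so that $2t/s\le 2$). Suppose that, with the stated probability, every $s$-principal set $X$ with $|X|=a$ has average degree less than $x$ in $G_p[X]$ whenever $G[X]$ is dense. Then we can rescale $w$ and get a fractional clique of $G_p$.

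Concretely, let $I$ be independent in $G_p$ and set $Y=I$. The set $Y$ spans no edges of $G_p$, but we must apply Lemma \ref{lem:principal-majorization} inside $G$ itself. So the event we need is that no $s$-principal subset $X$ of any $G_p$-independent set has average degree at least $x$ in $G$. For such an $X$, all of $G[X]$'s edges, at least $xa/2$ of them, must be deleted, which has probability $(1-p)^{xa/2}=e^{-\lambda xa/2}$.

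Now union-bound over $a\ge1$. The number of $s$-principal sets of size $a$ is at most $\binom{sa}{a}\le (esa)^a/a^a=(es)^a$. The failure probability is therefore at most
\[
\sum_{a\ge1}\bigl(es\,e^{-\lambda x/2}\bigr)^a .
\]
With $\lambda x/2\ge \log(2et)+c$ and $s\le 2t$ (taking $s=\lceil t\rceil$), each ratio is at most $e^{-c}$. The sum is then $\le e^{-c}/(1-e^{-c})$, which matches the stated probability and explains the condition $c>\log2$. The floor and rounding issues should be absorbed by using $x$ with $\lambda x/2\ge\log(2et)+c$ and then $\lfloor x\rfloor+1\le 2(\log(2et)+c)/\lambda+1$, possibly after a harmless adjustment of the definition of $x$.

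On the good event, every $G_p$-independent set $Y$ satisfies the lemma's hypotheses. Since $Y\subseteq V(G)$, any $G$-independent subset has weight $\le1$. The lemma gives
\[
w(Y)\le 2(\lfloor x\rfloor+1)+2t/s\le 4(\log(2et)+c)/\lambda+4 .
\]
Dividing $w$ by this quantity gives a fractional clique of $G_p$ of value $t/(4(\log(2et)+c)/\lambda+4)$, which proves the main bound.

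For the consequences, take $c=\log(2et)$ (valid since $t\ge2$ gives $c>\log2$). The failure probability is $e^{-c}/(1-e^{-c})=1/(2et-1)<1/(2t)$, and $2\log(2et)/\lambda=2\log_{1/(1-p)}(2et)$, which yields the $8\log_{1/(1-p)}(2et)+4$ form. For $p\le1/2$, use $\lambda\ge p$ and $4\le 4\log(2et)/p$ to bound the denominator by
\[
12\log(2et)/p\le 12\log(16et^2)/p .
\]

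The main obstacle is the counting and rounding step: the choice of $s$, the bound on the number of principal sets, and the constants $4$ and $4$. These must line up exactly with the lemma's $2(\lfloor x\rfloor+1)+2t/s$. If $s=\lceil t\rceil$ makes $es\le 2et$ fail for non-integer $t$ near $2$, I would instead bound $\binom{sa}{a}$ more carefully, or take $s=\lfloor t\rfloor$ and accept $2t/s\le 3$ with slack from the floor in $x$.
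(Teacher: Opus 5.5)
Your proposal is correct and is essentially the paper's proof: the optimal dual weight $w$, the choice $s=\lceil t\rceil$, the union bound over $s$-principal sets via $\binom{sa}{a}\le(es)^a$ and $(1-p)^{xa/2}$, Lemma~\ref{lem:principal-majorization} applied to each independent set of $G_p$, and finally $c=\log(2et)$ together with $\lambda\ge p$. Both loose ends you flag resolve as in the paper: take $x=2(\log(2et)+c)/\lambda$ with no floor, since the lemma's $\lfloor x\rfloor+1\le x+1$ absorbs the rounding; and $\lceil t\rceil<t+1\le 2t$ for every $t\ge1$, so $es\le 2et$ never fails and no alternative choice of $s$ is needed.
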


\begin{proof}
We give the proof because the small-$p$ form is central here.  By LP duality for fractional colouring, choose a nonnegative weight function $w:V(G)\to\mathbb R_{\ge0}$ such that
\[
        w(V(G))=t,
        \qquad
        w(I)\le1
\]
for every independent set $I$ of $G$.  Order the vertices as $v_1,\ldots,v_n$ so that $w(v_i)\ge w(v_{i+1})$.  For an integer $s\ge1$, a nonempty set $X$ of size $a$ is called $s$-principal if
\[
        X\subseteq \{v_1,\ldots,v_{sa}\}.
\]
If $sa>n$, the right side is interpreted as $V(G)$.  The number of $s$-principal sets of size $a$ is at most
\[
        \binom{sa}{a}\le (es)^a.
\]

Let
\[
        s=\lceil t\rceil,
        \qquad
        x=2\frac{\log(es)+c}{\lambda}.
\]
Then $t\le s\le2t$, so $\log(es)\le\log(2et)$.  If an $s$-principal set $X$ has size $a$ and average degree at least $x$ in $G$, then $e_G(X)\ge xa/2$, and the probability that $X$ is independent in $G_p$ is at most
\[
        (1-p)^{xa/2}
        =\exp(-\lambda xa/2)
        =\exp(-a(\log(es)+c)).
\]
Thus the probability that some $s$-principal $a$-set of average degree at least $x$ is independent in $G_p$ is at most $e^{-ca}$.  Since $c>\log 2$, the geometric sum below is finite.  Summing over $a\ge1$, with probability at least
\[
        1-\sum_{a\ge1}e^{-ca}
        =1-\frac{e^{-c}}{1-e^{-c}},
\]
no $s$-principal set of average degree at least $x$ is independent in $G_p$.

Assume this good event holds, and let $A$ be any independent set of $G_p$.  Since $A$ is independent in $G_p$, no dense $s$-principal subset of $A$ exists on the good event.  Also every independent subset of $A$ in $G$ has weight at most $1$.  Applying Lemma \ref{lem:principal-majorization} with $Y=A$, and using $s\ge t$, we obtain
\[
        w(A)\le 2(\lfloor x\rfloor+1)+2\le 2x+4.
\]
Thus $w/(2x+4)$ assigns weight at most $1$ to every independent set of $G_p$.  By fractional-colouring duality,
\[
        \chif(G_p)\ge \frac{w(V(G))}{2x+4}
        \ge \frac{t}{4(\log(2et)+c)/\lambda+4}.
\]
Taking $c=\log(2et)$ gives the displayed probability bound, since
\[
        \frac{e^{-c}}{1-e^{-c}}=\frac1{2et-1}<\frac1{2t}.
\]
Finally, if $0<p\le1/2$, then $\lambda\ge p$ and $\log(16et^2)\ge2\log(2et)$ for $t\ge2$, so the denominator $8\log(2et)/\lambda+4$ is at most $12\log(16et^2)/p$.  This gives the last assertion.
\end{proof}

\begin{theorem}[Fractional random-surgery criterion]\label{thm:fractional-surgery}
Let $r\ge4$, let $G$ satisfy $\chif(G)=t\ge2$, and let $0<p<1$.  Suppose that for some $R\ge1$,
\[
        \Prob\bigl(\rho_{<r}(G_p)\le R\bigr)>\frac1{2t}.
\]
Then
\[
        h_r^{\mathrm f}(G)
        \ge
        \frac{t}{R\bigl(8\log_{1/(1-p)}(2et)+4\bigr)}.
\]
In particular, if $0<p\le1/2$, then
\[
        h_r^{\mathrm f}(G)
        \ge
        c\frac{p t}{R\log(2et)}
\]
for an absolute constant $c>0$.
\end{theorem}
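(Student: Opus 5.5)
The plan is to combine the preservation statement of Theorem \ref{thm:mohar-wu-random} with the product-deletion inequality of Proposition \ref{prop:fractional-deletion}, using a union bound on the two events. Let $E_1$ be the event
\[
        \chif(G_p)\ge \frac{t}{8\log_{1/(1-p)}(2et)+4},
\]
and let $E_2$ be the event $\rho_{<r}(G_p)\le R$. By Theorem \ref{thm:mohar-wu-random}, $\Prob(E_1)>1-1/(2t)$, and by hypothesis $\Prob(E_2)>1/(2t)$. Hence
\[
        \Prob(E_1\cap E_2)\ge \Prob(E_1)+\Prob(E_2)-1>0,
\]
so some outcome $G_p=X$ satisfies both events at once. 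This probabilistic step is the only place where the exact constant $1/(2t)$ matters. That is why I will use the ``consequently'' form of the Mohar--Wu bound, whose failure probability is strictly below $1/(2t)$, rather than the general $c$-parametrised form.

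Fix such an outcome $X\subseteq G$. The graph $X$ is non-empty as a graph on $V(G)$. In fact $\chif(X)\ge t/(\cdots)>0$ forces $V(X)\ne\emptyset$, and this suffices to avoid the empty-graph convention. Choose $F\subseteq E(X)$ attaining $\rho_{<r}(X)\le R$, so that $X-F$ has girth at least $r$ and $\chif(V(X),F)\le R$. Proposition \ref{prop:fractional-deletion} then gives
\[
        \chif(X-F)\ge \frac{\chif(X)}{R}.
\]
Since $X-F\subseteq G$ has girth at least $r$, we obtain
\[
        h_r^{\mathrm f}(G)\ge \frac{t}{R\bigl(8\log_{1/(1-p)}(2et)+4\bigr)}.
\]
The quotient is legitimate because $R\ge1$.

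For the ``in particular'' statement with $0<p\le 1/2$, write $\log_{1/(1-p)}(2et)=\log(2et)/\lambda$ with $\lambda=-\log(1-p)\ge p$. Then
\[
        8\log_{1/(1-p)}(2et)+4\le \frac{8\log(2et)}{p}+4\le \frac{12\log(2et)}{p},
\]
where the last step uses $p\le1$ and $\log(2et)\ge\log(4e)\ge1$ for $t\ge2$. This gives the bound with $c=1/12$.

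I do not expect a genuine obstacle. The only delicate point is making sure the two probabilities really add to more than $1$, with a strict inequality on the Mohar--Wu side. A secondary point is noting that the minimiser in the definition of $\rho_{<r}$ exists because $E(X)$ is finite.
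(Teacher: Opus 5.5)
Your proposal is correct and follows the paper's proof: you intersect the Mohar--Wu preservation event with the hypothesised event $\rho_{<r}(G_p)\le R$ to get a common outcome $X$, then apply the product-deletion inequality of Proposition \ref{prop:fractional-deletion} inside $X$. Your explicit derivation of the small-$p$ form with $c=1/12$ is a valid, slightly more direct substitute for the paper's appeal to the computation in Theorem \ref{thm:mohar-wu-random}.
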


\begin{proof}
By Theorem \ref{thm:mohar-wu-random}, the event
\[
        \chif(G_p)
        \ge
        \frac{t}{8\log_{1/(1-p)}(2et)+4}
\]
has probability at least $1-1/(2t)$.  By hypothesis, the event $\rho_{<r}(G_p)\le R$ has probability greater than $1/(2t)$.  Therefore the two events have a common outcome $X\subseteq G$.  Proposition \ref{prop:fractional-deletion} applied to $X$ gives
\[
        h_r^{\mathrm f}(G)
        \ge h_r^{\mathrm f}(X)
        \ge \frac{\chif(X)}{\rho_{<r}(X)}
        \ge
        \frac{t}{R\bigl(8\log_{1/(1-p)}(2et)+4\bigr)}.
\]
The small-$p$ form follows as in Theorem \ref{thm:mohar-wu-random}.
\end{proof}

\begin{theorem}[Fractional random extraction criterion]\label{thm:fractional-random-extraction}
Let $r\ge4$, let $G$ be a graph with $t=\chif(G)\ge2$, and let $0<\rho<1$.  Put
\[
        L_\rho(t)=\frac{t}{8\log_{1/(1-\rho)}(2et)+4}
\]
and
\[
        S_\rho(G)=\sum_{\ell=3}^{r-1} C_\ell(G)\rho^\ell.
\]
Then
\[
        h^{\mathrm f}_r(G)
        \ge
        \frac{L_\rho(t)}{1+2\sqrt{S_\rho(G)}}.
\]
In particular, if
\[
        1+2\sqrt{S_\rho(G)}<\frac{L_\rho(t)}q,
\]
equivalently if $L_\rho(t)>q$ and
\[
        S_\rho(G)<\frac14\left(\frac{L_\rho(t)}q-1\right)^2,
\]
then $G$ contains a subgraph of girth at least $r$ and fractional chromatic number greater than $q$.
\end{theorem}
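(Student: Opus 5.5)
The plan is to combine the Mohar--Wu preservation theorem (Theorem \ref{thm:mohar-wu-random}) with a Markov bound on surviving short cycles, and then pay for those cycles through the edge-count form of fractional product deletion (Proposition \ref{prop:fractional-deletion}). Let $G_\rho$ be the random spanning subgraph keeping each edge independently with probability $\rho$. The number $Z$ of cycles of length less than $r$ in $G_\rho$ has expectation exactly $S_\rho(G)$, since an $\ell$-cycle survives with probability $\rho^\ell$. Also $\tau_{<r}(G_\rho)\le Z$, because deleting one edge from each surviving short cycle leaves a graph of girth at least $r$.

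First, Theorem \ref{thm:mohar-wu-random} shows that $\chif(G_\rho)\ge L_\rho(t)$ with probability greater than $1-1/(2t)\ge 3/4$, using $t\ge2$. Second, Markov's inequality gives $\Prob(Z\ge 2S_\rho(G))\le 1/2$ when $S_\rho(G)>0$. Since $3/4+1/2>1$, some outcome $X$ satisfies both events. If $S_\rho(G)=0$, then $G$ has no short cycles at all and the claim is immediate, because $h^{\mathrm f}_r(G)\ge\chif(G)=t\ge L_\rho(t)$.

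On the common outcome, the last inequality of Proposition \ref{prop:fractional-deletion} gives
\[
h^{\mathrm f}_r(G)\ge h^{\mathrm f}_r(X)\ge \frac{\chif(X)}{1+\sqrt{2\tau_{<r}(X)}}\ge\frac{L_\rho(t)}{1+\sqrt{4S_\rho(G)}},
\]
which is exactly the displayed bound since $\sqrt{4S}=2\sqrt S$. The ``in particular'' clause is then pure algebra. If $1+2\sqrt{S}<L_\rho(t)/q$, the extracted subgraph of girth at least $r$ has fractional chromatic number greater than $q$. This condition is equivalent to $L_\rho(t)/q>1$ together with $\sqrt S<\tfrac12(L_\rho(t)/q-1)$, and squaring the latter gives the stated condition on $S_\rho(G)$.

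I expect no deep obstacle; the only delicate point is the probability bookkeeping. The Markov threshold has to be matched with the Mohar--Wu failure probability so the two events overlap, and the factor $2$ inside $2\tau$ has to produce exactly the constant $2$ in front of $\sqrt{S_\rho(G)}$. The threshold $2S_\rho(G)$ works for both. If the edge case of a non-strict inequality at $Z=2S$ causes trouble, I would fall back on the slightly lossier threshold $4S$ and absorb the difference. That should not be needed, since Markov gives the non-strict event $Z\le 2S$ with probability at least $1/2$.
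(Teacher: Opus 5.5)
Your proof is correct and follows the paper's own argument. You combine Mohar--Wu preservation with Markov's inequality at threshold $2S_\rho(G)$, then pay for a one-edge-per-short-cycle transversal through the bound $\chif(V(X),F)\le 1+\sqrt{2|F|}$ from Proposition~\ref{prop:fractional-deletion}. Where the paper argues $N=0$ almost surely, you observe that $G$ itself already has girth at least $r$; that difference in the $S_\rho(G)=0$ case is purely cosmetic.
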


\begin{proof}[Proof of Theorem \ref{thm:fractional-random-extraction}]
Let $X=G_\rho$ be the random retained-edge subgraph.  By Theorem \ref{thm:mohar-wu-random},
\[
        \chif(X)\ge L_\rho(t)
\]
holds with probability at least $1-1/(2t)$.  Let $N$ be the number of cycles of $X$ of length less than $r$.  Then
\[
        \E N=S_\rho(G).
\]
If $S_\rho(G)=0$, then $N=0$ almost surely.  If $S_\rho(G)>0$, Markov's inequality gives
\[
        \Pr\bigl(N\le 2S_\rho(G)\bigr)\ge \frac12.
\]
Thus in either case there is an outcome for which the Mohar--Wu lower bound holds and $N\le 2S_\rho(G)$; when $S_\rho(G)=0$ this means $N=0$.  Choose one edge from each short cycle of $X$ and call the chosen set $F$.  Then $|F|\le 2S_\rho(G)$ and $X-F$ has girth at least $r$.  Proposition \ref{prop:fractional-deletion}, applied inside $X$, gives
\[
        h_r^{\mathrm f}(G)
        \ge h_r^{\mathrm f}(X)
        \ge \frac{\chif(X)}{\chif(V(X),F)}.
\]
Since a graph with $s$ edges has chromatic number, and hence fractional chromatic number, at most $1+\sqrt{2s}$, we have
\[
        \chif(V(X),F)\le 1+\sqrt{2|F|}
        \le 1+2\sqrt{S_\rho(G)}.
\]
This proves the first displayed bound.  The final assertion follows from the sign-sensitive condition $1+2\sqrt{S_\rho(G)}<L_\rho(t)/q$.
\end{proof}

\begin{corollary}[A precise reduction for the fractional conjecture]\label{cor:fractional-reduction}
Fix $r\ge4$.  Suppose that for every $K>0$ there is $T$ such that every graph $G$ with $t=\chif(G)\ge T$ admits a number $p=p(G)\in(0,1)$ for which
\[
        \Prob\left(
        \rho_{<r}(G_p)
        \le
        \frac{t}{K\bigl(8\log_{1/(1-p)}(2et)+4\bigr)}
        \right)>\frac1{2t}.
\]
Then every graph of sufficiently large fractional chromatic number contains a subgraph of girth at least $r$ and fractional chromatic number at least $K$.
\end{corollary}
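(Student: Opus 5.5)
The plan is to apply the fractional random-surgery criterion, Theorem \ref{thm:fractional-surgery}, directly, with the target fractional chromatic number set to $K$. Fix $K>0$, and let $T$ be the threshold supplied by the hypothesis for this $K$; enlarging it if necessary, assume also $T\ge2$. Let $G$ have $t=\chif(G)\ge T$, and let $p=p(G)\in(0,1)$ be the retention probability provided by the hypothesis. Put
\[
        R=\frac{t}{K\bigl(8\log_{1/(1-p)}(2et)+4\bigr)}.
\]
The hypothesis then says exactly that $\Prob(\rho_{<r}(G_p)\le R)>1/(2t)$.

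Before invoking Theorem \ref{thm:fractional-surgery}, I check its side condition $R\ge1$. By Proposition \ref{prop:fractional-deletion}, every admissible deletion set $F$ has $\chif(V(G_p),F)\ge1$ whenever $G_p$ is non-empty, so $\rho_{<r}(G_p)\ge1$. An empty $G_p$ cannot occur for a nontrivial $G$, since $G_p$ is spanning and $V(G)\neq\emptyset$. The event $\rho_{<r}(G_p)\le R$ has positive probability, so it occurs for some outcome, and for that outcome $1\le\rho_{<r}(G_p)\le R$. Hence $R\ge1$ holds automatically, and Theorem \ref{thm:fractional-surgery} applies with this $R$.

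Theorem \ref{thm:fractional-surgery} then gives
\[
        h_r^{\mathrm f}(G)\ge\frac{t}{R\bigl(8\log_{1/(1-p)}(2et)+4\bigr)}=K,
\]
by the choice of $R$. So $G$ has a subgraph of girth at least $r$ whose fractional chromatic number is at least $K$. This is the conclusion, with threshold $\max\{T,2\}$.

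I expect no real obstacle here; the statement is essentially a repackaging of Theorem \ref{thm:fractional-surgery}. The only points needing care are verifying $R\ge1$, which I derive from the positive-probability event together with $\rho_{<r}\ge1$ as above, and ensuring $t\ge2$ so that Theorem \ref{thm:mohar-wu-random} is available.
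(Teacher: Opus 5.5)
Your proposal is correct and follows the paper's proof essentially verbatim. You define $R$ as in the hypothesis, deduce $R\ge1$ from $\rho_{<r}\ge1$ on non-empty graphs together with the positive-probability event, and then apply Theorem~\ref{thm:fractional-surgery} to get $h_r^{\mathrm f}(G)\ge K$. Your extra step of enlarging $T$ so that $t\ge2$ (needed for Theorem~\ref{thm:fractional-surgery}) is a small piece of care that the paper leaves implicit.
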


\begin{proof}
Let
\[
        R=\frac{t}{K\bigl(8\log_{1/(1-p)}(2et)+4\bigr)}.
\]
If $R<1$, then the event $\rho_{<r}(G_p)\le R$ is empty for non-empty $G$, because $\rho_{<r}(X)\ge1$ for every non-empty graph $X$.  This contradicts the displayed hypothesis.  Hence $R\ge1$, and Theorem \ref{thm:fractional-surgery} applies with this value of $R$.  The conclusion gives $h_r^{\mathrm f}(G)\ge K$.
\end{proof}

\begin{remark}[What remains open fractionally]\label{rem:fractional-gap}
Mohar and Wu proved the fractional Erd\H{o}s--Hajnal statement for $r=4$ and proposed the general-girth fractional conjecture \cite{MoharWuTriangleFreeFractional}.  They also proved the conjectured conclusion for Kneser graphs of sufficiently large fractional chromatic number \cite{MoharWuKneser}.  Theorem \ref{thm:mohar-wu-random} shows that small-$p$ fractional preservation is already available with no dependence on $|V(G)|$.  Therefore, within this random-surgery framework, the remaining obstacle is not preservation of fractional chromatic number under thinning; it is a bound on the fractional short-cycle cost $\rho_{<r}(G_p)$ for a suitably chosen sparse random subgraph.
\end{remark}

\begin{remark}[Why the deterministic deletion bound is not enough]
The estimate using $\tau_{<r}$ in Proposition \ref{prop:fractional-deletion} is often much too crude.  For instance, when $G=K_t$ and $r=4$, any triangle-free spanning subgraph $Y$ of $K_t$ has $\omega(Y)\le2$, so the complement $K_t-E(Y)$ has independence number at most $2$ and hence fractional chromatic number at least $t/2$.  Yet classical random constructions contain triangle-free subgraphs of $K_t$ with fractional chromatic number tending to infinity.  Thus a proof of the full fractional Erd\H{o}s--Hajnal conjecture must exploit the structure of the short cycles in the random subgraph, not merely count them or delete an arbitrary transversal.
\end{remark}

\subsection{Cheap cycle killing and a complete-graph proof of concept}\label{subsec:cheap-cycle-killing}

We now isolate one concrete way to make the fractional surgery criterion effective.  For a graph $X$ define the local short-cycle multiplicity
\[
        \lambda_{<r}(X)=\max_{v\in V(X)}
        \#\{C: C\text{ is a cycle of }X,\ 3\le |C|<r,\ v\in V(C)\}.
\]
Low local multiplicity gives a cheap transversal in the fractional sense.

\begin{lemma}[Local spread gives cheap cycle killing]\label{lem:local-spread-cheap}
For every graph $X$,
\[
        \rho_{<r}(X)\le \lambda_{<r}(X)+1.
\]
In particular, if the cycles of length less than $r$ in $X$ are pairwise vertex-disjoint, then $\rho_{<r}(X)\le2$.
\end{lemma}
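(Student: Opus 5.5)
We need to show $\rho_{<r}(X)\le \lambda+1$, where $\lambda=\lambda_{<r}(X)$. The plan is to exhibit an edge set $F$ meeting every short cycle such that the graph $(V(X),F)$ has chromatic number at most $\lambda+1$. Since $\chif\le\chi$, this gives the bound.

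\textbf{Construction of $F$.} For each cycle $C$ of $X$ with $3\le|C|<r$, select one edge $e_C\in E(C)$ and put $F=\{e_C\}$. Then $X-F$ contains no cycle of length less than $r$, so it has girth at least $r$ and $F$ is admissible in the definition of $\rho_{<r}$. If $X$ has no short cycles we take $F=\emptyset$; then $\chif(V(X),\emptyset)\le 1\le\lambda+1$, with the empty-graph convention if $V(X)=\emptyset$.

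\textbf{Bounding the degrees of $(V(X),F)$.} Each edge of $F$ incident to a vertex $v$ is $e_C$ for some short cycle $C$ with $v\in V(C)$. Distinct edges of $F$ come from distinct cycles, because each cycle contributes exactly one chosen edge. Several cycles may choose the same edge, but that only reduces the count. Hence the degree of $v$ in $(V(X),F)$ is at most the number of short cycles through $v$, which is at most $\lambda$. So $\Delta(V(X),F)\le\lambda$.

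\textbf{Conclusion.} A greedy colouring shows $\chi(V(X),F)\le\Delta+1\le\lambda+1$, and therefore $\rho_{<r}(X)\le\chif(V(X),F)\le\lambda+1$. For the special case, if the short cycles are pairwise vertex-disjoint, then every vertex lies on at most one short cycle, so $\lambda\le1$ and $\rho_{<r}(X)\le 2$.

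No step here is a real obstacle. The only point needing care is that the edge-to-cycle assignment is injective, which holds because each chosen edge is indexed by the cycle that selected it.
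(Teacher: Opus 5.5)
Your proof is correct and follows the paper's argument: select one edge from each cycle of length less than $r$, observe that each chosen edge at $v$ comes from a distinct short cycle through $v$, so $\Delta(V(X),F)\le\lambda_{<r}(X)$, and conclude $\chif\le\chi\le\Delta+1$. For the special case the paper notes directly that $F$ is a matching, which is the same observation as your $\lambda_{<r}(X)\le1$.
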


\begin{proof}
Choose one edge from each cycle of length less than $r$, and let $F$ be the set of chosen edges.  Then $X-F$ has girth at least $r$.  A vertex $v$ can be incident with a chosen edge only if that edge was chosen from a short cycle containing $v$.  Since we choose only one edge per cycle, each short cycle containing $v$ contributes at most one selected edge incident with $v$.  Hence
\[
        \Delta(V(X),F)\le \lambda_{<r}(X).
\]
Therefore
\[
        \chif(V(X),F)\le \chi(V(X),F)\le \lambda_{<r}(X)+1,
\]
which proves the first claim.  If the short cycles are pairwise vertex-disjoint, choose one edge from each of them.  The chosen edges are then pairwise vertex-disjoint, so $(V(X),F)$ is a matching and has fractional chromatic number at most $2$.
\end{proof}

Combining this deterministic observation with Mohar--Wu preservation gives the following spread version of the fractional random-surgery criterion.

\begin{proposition}[Fractional spread-thinning criterion]\label{prop:fractional-spread-criterion}
Let $G$ be a graph with $t=\chif(G)\ge2$, and let $p\in(0,1)$.  Suppose that for some $D\ge0$,
\[
        \Prob\bigl(\lambda_{<r}(G_p)\le D\bigr)>\frac1{2t}.
\]
Then
\[
        h_r^{\mathrm f}(G)
        \ge
        \frac{t}{(D+1)\bigl(8\log_{1/(1-p)}(2et)+4\bigr)}.
\]
If, with probability greater than $1/(2t)$, the short cycles of $G_p$ are pairwise vertex-disjoint, then the denominator may be replaced by
\[
        2\bigl(8\log_{1/(1-p)}(2et)+4\bigr).
\]
\end{proposition}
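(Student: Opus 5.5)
The plan is to combine Lemma \ref{lem:local-spread-cheap} with Theorem \ref{thm:fractional-surgery}, using the fact that the hypothesis is an event on the same random subgraph $G_p$.

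First, Lemma \ref{lem:local-spread-cheap} gives the deterministic inclusion of events
\[
        \{\lambda_{<r}(G_p)\le D\}\subseteq\{\rho_{<r}(G_p)\le D+1\}.
\]
Consequently
\[
        \Prob\bigl(\rho_{<r}(G_p)\le D+1\bigr)\ge\Prob\bigl(\lambda_{<r}(G_p)\le D\bigr)>\frac1{2t}.
\]
We then apply Theorem \ref{thm:fractional-surgery} with $R=D+1$. This is admissible because $D\ge0$ gives $R\ge1$. The conclusion is exactly the displayed lower bound
\[
        \frac{t}{(D+1)\bigl(8\log_{1/(1-p)}(2et)+4\bigr)}.
\]

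For the second assertion, the vertex-disjoint clause of Lemma \ref{lem:local-spread-cheap} gives the inclusion
\[
        \{\text{short cycles of }G_p\text{ pairwise vertex-disjoint}\}\subseteq\{\rho_{<r}(G_p)\le2\}.
\]
Hence this event has probability greater than $1/(2t)$. Theorem \ref{thm:fractional-surgery} with $R=2$ then gives the denominator $2\bigl(8\log_{1/(1-p)}(2et)+4\bigr)$.

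One degenerate case needs checking. If $G_p$ is edgeless, then $\rho_{<r}$ is evaluated with $F=\emptyset$, which gives value $1$ for a non-empty vertex set. The bound $\le D+1$ therefore still holds. This is consistent with the convention in Proposition \ref{prop:fractional-deletion}, so the application of Theorem \ref{thm:fractional-surgery} needs no extra care.

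I expect no real obstacle. The only point needing care is the event inclusion, which is deterministic, and the requirement $R\ge1$.
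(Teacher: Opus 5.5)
Your proposal is correct and is essentially the paper's argument. The paper inlines the proof of Theorem~\ref{thm:fractional-surgery}: it takes a common outcome of the Mohar--Wu event and the event $\lambda_{<r}(G_p)\le D$, then applies Lemma~\ref{lem:local-spread-cheap} and Proposition~\ref{prop:fractional-deletion}; you instead record the deterministic event inclusion first and then cite Theorem~\ref{thm:fractional-surgery} with $R=D+1$ (resp.\ $R=2$), which is the same proof packaged as a citation.
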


\begin{proof}
The event in Theorem \ref{thm:mohar-wu-random}, namely
\[
        \chif(G_p)
        \ge
        \frac{t}{8\log_{1/(1-p)}(2et)+4},
\]
has probability at least $1-1/(2t)$.  By the hypothesis, it has a common outcome with the event $\lambda_{<r}(G_p)\le D$.  For that outcome $X$, Lemma \ref{lem:local-spread-cheap} gives $\rho_{<r}(X)\le D+1$, and Proposition \ref{prop:fractional-deletion} gives
\[
        h_r^{\mathrm f}(G)
        \ge
        h_r^{\mathrm f}(X)
        \ge
        \frac{\chif(X)}{D+1}.
\]
The pairwise vertex-disjoint case is identical, using $\rho_{<r}(X)\le2$.
\end{proof}

The next proposition verifies this mechanism in the densest possible ambient graph.  It is not needed for the integer sparse theorem; it is included to show that the fractional obstruction is genuinely a spread problem for short cycles, rather than a failure of small-retention fractional preservation.

\begin{proposition}[Cheap cycle killing in random thinnings of complete graphs]\label{prop:complete-graph-cheap}
Fix $r\ge4$ and choose
\[
        0<\delta<\frac1{2(r-1)}.
\]
Then there is a constant $c=c(r,\delta)>0$ such that, for all sufficiently large $n$, the complete graph $K_n$ contains a subgraph $Y$ and a set $F\subseteq E(Y)$ such that
\[
        \chif(Y)\ge c\frac{n^\delta}{\log n},
        \qquad
        \Delta(V(Y),F)\le1,
        \qquad
        \girth(Y-F)\ge r.
\]
Consequently
\[
        h_r^{\mathrm f}(K_n)
        \ge
        c\frac{n^\delta}{\log n}.
\]
\end{proposition}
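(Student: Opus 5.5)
Take $Y\subseteq K_n$ to be a random thinning $G_p$ of $K_n$ with $p=n^{-\alpha}$, and use Proposition \ref{prop:fractional-spread-criterion} in its vertex-disjoint form. Since $\chif(K_n)=n=t$, Theorem \ref{thm:mohar-wu-random} shows that with probability at least $1-1/(2n)$,
\[
\chif(G_p)\ge \frac{pn}{12\log(16en^2)}=\Omega\!\left(\frac{n^{1-\alpha}}{\log n}\right).
\]
We want $1-\alpha\ge\delta$, so we take $\alpha=1-\delta$, i.e.\ $p=n^{\delta-1}$. It then remains to show that, with probability greater than $1/(2n)$, the cycles of length less than $r$ in $G_p$ are pairwise vertex-disjoint. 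With that in hand, we choose one edge from each short cycle; the set $F$ of chosen edges is a matching, so $\Delta(V(Y),F)\le1$ and $\girth(Y-F)\ge r$. The bound on $h^{\mathrm f}_r(K_n)$ then follows from Proposition \ref{prop:fractional-deletion}, because a matching has fractional chromatic number at most $2$.

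Rather than vertex-disjointness with small positive probability, we aim for a stronger statement: with probability $1-o(1)$, $G_p$ contains no two distinct cycles of lengths in $[3,r-1]$ sharing a vertex. Any such pair lies in a connected subgraph $U$ formed by the union of the two cycles. Such a $U$ has $v$ vertices and at least $v+1$ edges, and $v\le 2r-3$. Indeed, two distinct cycles sharing a vertex have a union with cyclomatic number at least $2$, unless the union is a single cycle, which cannot happen for distinct cycles. So the expected number of such configurations is at most
\[
O_r\Bigl(\sum_{v\le 2r-3} n^v p^{v+1}\Bigr)=O_r\Bigl(\sum_{v\le 2r-3} n^{v\delta-(1-\delta)}\Bigr).
\]
The worst term is $v=2r-3$, which gives the exponent $(2r-2)\delta-1$. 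This is negative exactly when $\delta<1/(2(r-1))$, which matches the hypothesis. We should check that the edge count $v+1$ is really attained for the maximal $v$: two cycles meeting in a single vertex have $2r-3$ vertices and $2r-2$ edges, so the exponent is $v\delta-(1-\delta)$ with $v=2r-3$, as claimed. Markov's inequality then gives the probability $1-o(1)>1/(2n)$.

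The main obstacle is the enumeration step, since only configurations with excess at least one contribute. We must bound the number of unions of two overlapping short cycles by $O_r(n^v)$ labelled copies per isomorphism type and confirm that every type has $e(U)\ge v(U)+1$. The number of types is bounded in terms of $r$, so summing over them is harmless. Finally, $c=c(r,\delta)$ absorbs the constants from Mohar--Wu and the factor $2$.
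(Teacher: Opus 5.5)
Your proposal is correct and follows essentially the same argument as the paper: take $Y=(K_n)_p$ with $p=n^{-1+\delta}$, get $\chif(Y)=\Omega(n^\delta/\log n)$ from Theorem~\ref{thm:mohar-wu-random} with probability at least $1-1/(2n)$, and run a first-moment count over unions of two intersecting short cycles. Such a union has $e\ge v+1$ and $v\le 2r-3$, so the expected count is $O_r(n^{-1+2\delta(r-1)})=o(1)$; the short cycles are then vertex-disjoint, one edge per cycle gives a matching $F$, and Proposition~\ref{prop:fractional-deletion} finishes, exactly as in the paper. The only differences are cosmetic: you use the $p\le 1/2$ form of the Mohar--Wu bound instead of estimating $\log_{1/(1-p)}(2en)$ directly, and your check that the edge count $v+1$ is attained is not needed for the upper bound.
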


\begin{proof}
Let $p=n^{-1+\delta}$ and let $Y=(K_n)_p$.  By Theorem \ref{thm:mohar-wu-random}, with probability at least $1-1/(2n)$,
\[
        \chif(Y)
        \ge
        \frac{n}{8\log_{1/(1-p)}(2en)+4}.
\]
For all large $n$ we have $p<1/2$ and $-\log(1-p)\ge p$, so
\[
        \log_{1/(1-p)}(2en)
        =\frac{\log(2en)}{-\log(1-p)}
        \le \frac{2\log n}{p}.
\]
Thus, after adjusting constants,
\[
        \chif(Y)\ge c_1\frac{pn}{\log n}
        = c_1\frac{n^\delta}{\log n}
\]
with probability at least $1-1/(2n)$.

It remains to show that, with probability tending to $1$, no two cycles of length less than $r$ in $Y$ share a vertex.  Consider two distinct cycles $C,C'$ of lengths between $3$ and $r-1$ whose union is connected, equivalently whose vertex sets intersect.  If their union has $v$ vertices and $e$ edges, then it has cyclomatic number at least $2$, and hence $e\ge v+1$.  Also $v\le |C|+|C'|-1\le 2(r-1)-1$.  For each fixed isomorphism type of such a union, the expected number of labelled copies in $Y$ is at most
\[
        O_r(n^v p^e)
        \le O_r(n^v p^{v+1})
        = O_r\bigl(n^{-1+\delta(v+1)}\bigr)
        \le O_r\bigl(n^{-1+2\delta(r-1)}\bigr)=o(1).
\]
There are only finitely many relevant isomorphism types, depending on $r$.  Hence the expected number of intersecting pairs of short cycles is $o(1)$, and Markov's inequality shows that, with probability tending to $1$, no such pair exists.

For a common outcome of the two events above, choose one edge from each cycle of length less than $r$ and let $F$ be the chosen set.  The short cycles are pairwise vertex-disjoint, so $F$ is a matching.  Deleting $F$ kills every cycle of length less than $r$, and therefore $Y-F$ has girth at least $r$.  Since $\chif(V(Y),F)\le2$, Proposition \ref{prop:fractional-deletion} gives
\[
        \chif(Y-F)\ge \frac{\chif(Y)}2
        \ge \frac{c_1}{2}\frac{n^\delta}{\log n}.
\]
This proves the proposition.
\end{proof}

\subsection{Clique-organised cheap cycle killing}\label{subsec:clique-organised}

The complete graph and the projective-plane line-graph example below are instances of a common mechanism.  Short cycles are organised inside a controlled family of cliques, and after thinning each clique is locally sparse enough that no vertex is contained in too many surviving short cycles.  We record the general form because it gives a reusable test for other structured families.

\begin{definition}[Clique covers of short cycles]
Let $r\ge4$.  A family $\mathcal K$ of cliques of a graph $G$ is a \emph{$(<r)$-cycle clique cover} if every cycle of $G$ of length less than $r$ is contained in some member of $\mathcal K$.  Its vertex-incidence number and local overlap are
\[
        I(\mathcal K)=\sum_{K\in\mathcal K}|V(K)|,
        \qquad
        b(\mathcal K)=\max_{v\in V(G)}|\{K\in\mathcal K:v\in V(K)\}|.
\]
For a clique $K\in\mathcal K$, a vertex $v\in K$, and a retention probability $p$, let $Z_{K,v}$ be the number of cycles of length less than $r$ in $K[p]$ that contain $v$.
\end{definition}

\begin{theorem}[Clique-covered cheap cycle killing]\label{thm:clique-covered-cheap}
Let $G$ be a graph with $t=\chif(G)\ge2$, let $\mathcal K$ be a $(<r)$-cycle clique cover of $G$, and let $p\in(0,1)$.  Suppose that $b=b(\mathcal K)<\infty$ and that, for some $D\ge0$ and $0<\xi<1/(2t)$,
\[
        \sum_{K\in\mathcal K}\sum_{v\in V(K)}
        \Pr(Z_{K,v}>D)
        \le \xi .
\]
Then
\[
        h_r^{\mathrm f}(G)
        \ge
        \frac{t}{(bD+1)\bigl(8\log_{1/(1-p)}(2et)+4\bigr)}.
\]
In particular, along any sequence of such triples $(G,p,D)$ with $t=\chif(G)\to\infty$, if
\[
        bD+1=o\left(\frac{pt}{\log t}\right),
\]
then $h_r^{\mathrm f}(G)\to\infty$.
\end{theorem}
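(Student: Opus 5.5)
The plan is to reduce to Proposition \ref{prop:fractional-spread-criterion}, or more directly to Theorem \ref{thm:fractional-surgery} combined with Lemma \ref{lem:local-spread-cheap}, by showing that with probability at least $1-\xi>1-1/(2t)$ the random subgraph $G_p$ satisfies $\lambda_{<r}(G_p)\le bD$. Since $1-\xi>1/(2t)$ whenever $t\ge2$ (indeed $\xi<1/(2t)\le 1/4$), the hypothesis of Proposition \ref{prop:fractional-spread-criterion} holds with $D$ replaced by $bD$. This immediately yields
\[
        h_r^{\mathrm f}(G)\ge \frac{t}{(bD+1)\bigl(8\log_{1/(1-p)}(2et)+4\bigr)} .
\]

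The coupling step is to realise $G_p$ and all the thinned cliques $K[p]$ on a single probability space. For each $K\in\mathcal K$, take $K[p]$ to be the restriction of $G_p$ to the edges of $K$; this is exactly the graph with the distribution used to define $Z_{K,v}$. A union bound over all pairs $(K,v)$ with $v\in V(K)$ then shows that, with probability at least $1-\xi$, every $Z_{K,v}\le D$ simultaneously. Note that the pairs are counted with multiplicity $I(\mathcal K)$, and this is precisely the double sum in the hypothesis.

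The deterministic step bounds $\lambda_{<r}(G_p)$ on this event. Let $v$ be a vertex and $C$ a cycle of $G_p$ of length less than $r$ through $v$. Then $C$ is also a short cycle of $G$, so by the cover property $C\subseteq K$ for some $K\in\mathcal K$ with $v\in V(K)$. All edges of $C$ lie in $G_p\cap E(K)=K[p]$, so $C$ is one of the cycles counted by $Z_{K,v}$. Assigning each such $C$ to one covering clique containing it gives
\[
        \#\{C\ni v\}\le\sum_{K\ni v}Z_{K,v}\le b(\mathcal K)\,D .
\]
Hence $\lambda_{<r}(G_p)\le bD$ on the good event.

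For the asymptotic statement, the displayed bound has denominator of order $(bD+1)\log t/p$, up to absolute constants. When $p\le1/2$ this follows from $\log_{1/(1-p)}(2et)\le 2\log(2et)/p$. When $p>1/2$ the logarithm base exceeds $2$ and the denominator is even smaller, being $O((bD+1)\log t)$, which is still at most $O((bD+1)\log t/p)$. So $h_r^{\mathrm f}(G)\gtrsim pt/((bD+1)\log t)\to\infty$.

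The only point needing care is the coupling: the $Z_{K,v}$ must be defined from the same random edge set as $G_p$, and a cycle may lie in several cliques. Both issues are harmless, since we only need an upper bound via some covering clique. I therefore expect no serious obstacle beyond checking that $1-\xi>1/(2t)$.
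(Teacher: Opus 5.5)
Your proposal is correct and matches the paper's proof: a union bound over the vertex--clique incidences gives $\lambda_{<r}(G_p)\le bD$ with probability at least $1-\xi>1/(2t)$, and Proposition~\ref{prop:fractional-spread-criterion} then gives the bound. Your explicit coupling of each $K[p]$ with $G_p$ and your check of the asymptotic clause only spell out details the paper leaves implicit; the case split there is unnecessary, since $-\log(1-p)\ge p$ for every $p\in(0,1)$ already gives $\log_{1/(1-p)}(2et)\le\log(2et)/p$.
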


\begin{proof}
By the union bound, with probability at least $1-\xi$ every vertex-clique occurrence $(K,v)$ has at most $D$ cycles of length less than $r$ through $v$ inside $K[p]$.  On this event, every short cycle of $G_p$ is contained in some $K[p]$, and each vertex lies in at most $b$ cliques of $\mathcal K$.  Hence
\[
        \lambda_{<r}(G_p)\le bD.
\]
Since $\xi<1/(2t)$, this event has probability greater than $1/(2t)$.  Proposition \ref{prop:fractional-spread-criterion} gives the displayed bound.
\end{proof}

The next lemma shows that polynomially many clique incidences are harmless when each clique is thinned to a genuinely sparse random graph.  It is deliberately stated with a non-optimized constant $D$; the important point is that $D$ is independent of the clique size.

\begin{lemma}[Ambient random-clique tail bound]\label{lem:random-clique-tail}
Fix $r\ge4$, $A>0$, and $0<\delta<1/(r-1)$.  There are an integer $D=D(r,A,\delta)$ and a constant $N_0=N_0(r,A,\delta)$ such that the following holds.  Let $N\ge N_0$, let $K$ be any clique of size at most $N$, let $p\le N^{-1+\delta}$, and let $X=K[p]$.  For every fixed vertex $v$ of $K$,
\[
        \Pr\bigl(v\text{ lies on more than }D\text{ cycles of }X\text{ of length }<r\bigr)
        \le N^{-A}.
\]
\end{lemma}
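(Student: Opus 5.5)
The plan is a first-moment argument over rooted subgraphs of bounded size and large excess (cyclomatic number). Fix an integer $J$, to be chosen at the end, and call a cycle \emph{short} if its length is less than $r$. Run a greedy process in $X=K[p]$. Start with one short cycle $C_1$ through $v$ and set $H_1=C_1$. Given $H_i$, if some short cycle through $v$ is not contained in $H_i$, add it to get $H_{i+1}$. Stop when no such cycle exists or when $i=J$.

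Two facts about this process are needed.
\begin{enumerate}[label=\textup{(\roman*)}]
\item Each added cycle raises the cyclomatic number $e-|V|+1$ by at least one. The new cycle meets $H_i$ (at least at $v$), and its part outside $H_i$ splits into ears with both endpoints in $H_i$; each ear adds one more edge than vertices. So $H_j$ is connected, contains $v$, and has $e(H_j)\ge |V(H_j)|-1+j$.
\item $|V(H_j)|-1\le j(r-2)$. The first cycle contributes at most $r-2$ vertices other than $v$. Each later cycle passes through $v$, which is already in $H_i$, so it adds at most $r-2$ new vertices.
\end{enumerate}

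The case split is by where the process stops. If it stops before reaching $J$ cycles, then every short cycle through $v$ lies in $H_j$ with $j<J$. This graph has at most $J(r-2)+1$ vertices, so the number of short cycles through $v$ is at most a constant $D_0(r,J)$ (for instance the number of cycles in a complete graph on $J(r-2)+1$ vertices). Choose $D=D_0(r,J)$. Then the event that $v$ lies on more than $D$ short cycles forces the process to reach $J$ cycles. In that case $X$ contains a connected subgraph $H\ni v$ with $h$ vertices, $h-1\le J(r-2)$ and $e(H)\ge h-1+J$.

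The first-moment bound handles this event. The number of rooted isomorphism types of such $H$ is bounded in terms of $r$ and $J$. For a fixed type, the number of embeddings into $K$ with $v$ fixed is at most $N^{h-1}$, and each embedding survives with probability $p^{e(H)}\le p^{h-1+J}$. Since $p\le N^{-1+\delta}$, the expected number of copies is at most
\[
N^{h-1}N^{-(1-\delta)(h-1+J)}
=N^{\delta(h-1)-(1-\delta)J}
\le N^{J(\delta(r-2)-1+\delta)}
=N^{-J(1-\delta(r-1))}.
\]
Because $\delta<1/(r-1)$, the quantity $1-\delta(r-1)$ is positive. Choose $J$ with $J(1-\delta(r-1))>A+1$. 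Summing over the boundedly many types gives probability $O_{r,J}(N^{-A-1})\le N^{-A}$ for $N\ge N_0$.

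I expect the main obstacle to be justifying the ear count in (i). One must check that a new cycle not contained in $H_i$ decomposes into ears with endpoints in $H_i$, and the argument should use only edges of $X$. The rest is bookkeeping. A bound on $p$ from below is never needed, since only the upper bound $p\le N^{-1+\delta}$ enters the first-moment estimate.
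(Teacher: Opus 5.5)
Your proof is correct. It uses the same first-moment framework as the paper: a union bound over rooted, bounded-size unions of short cycles through $v$ whose excess $e-|V|+1$ is large while their size is at most linear in that excess. The difference lies in how you convert ``more than $D$ cycles through $v$'' into ``large excess''.

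The paper takes the union $F$ of $D+1$ short cycles through $v$. It uses the cycle-space bound (a graph of cyclomatic number $c$ has at most $2^c-1$ cycles) to get $c(F)\ge\log_2(D+2)$. It then bounds $e(F)\le(r-1)c(F)$ by adding the cycles of a minimal subfamily one at a time. You instead run a greedy closure and use a dichotomy. Either every short cycle through $v$ lies in a graph of order at most $J(r-2)+1$, which bounds their number. Or $X$ contains a rooted graph of excess at least $J$ with at most $J(r-2)+1$ vertices. Your exponent $-J(1-\delta(r-1))$ matches the paper's $-(1-\delta(r-1))c(F)$, so the two routes are equally sharp in the probability estimate.

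What differs is the constant. The paper gets $D\approx 2^{(A+1)/(1-\delta(r-1))}$. Your $D$ is the number of cycles in $K_{J(r-2)+1}$, which is super-exponential in $J$. This costs nothing here, because the lemma and its uses in Corollary~\ref{cor:polynomial-clique-cover} and Proposition~\ref{prop:projective-linegraph-cheap} only need $D$ to be a constant. In exchange, your route avoids the cycle-space fact and the minimal-subfamily argument, replacing them with the direct vertex count in (ii).

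The ear argument you flag as the main obstacle is the same fact the paper uses, and your justification is right. The vertices of $V(C)\cap V(H_i)$, which include $v$, cut $C$ into segments. Every segment with an interior vertex, or consisting of a single edge outside $H_i$, is entirely new and raises the excess by exactly one. At least one such segment exists because $C\not\subseteq H_i$.
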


\begin{proof}
The point of the statement is that the tail is measured in the ambient parameter $N$, not in the actual clique size.  Fix $D$ for the moment.  If $v$ lies on more than $D$ short cycles, choose $D+1$ distinct short cycles through $v$ and let $F$ be the union of their edges and non-isolated vertices, rooted at $v$.  Then $F$ is connected, every cycle used in its construction has length less than $r$, and $F$ contains at least $D+1$ distinct simple cycles through the root.  There are only finitely many rooted isomorphism types of such unions, depending on $r$ and $D$.  For such an $F$, write $v(F)$ and $e(F)$ for its numbers of vertices and edges, and put
\[
        c(F)=e(F)-v(F)+1.
\]
The binary cycle space of a graph of cyclomatic number $c$ has dimension $c$.  Each simple cycle gives a distinct nonzero vector in this space, so the graph has at most $2^c-1$ distinct simple cycles.  Thus the presence of $D+1$ rooted cycles forces
\[
        c(F)\ge \log_2(D+2).
\]
For the edge count, choose a minimal subfamily of the rooted cycles whose union is $F$.  Add these cycles one by one.  Each newly added cycle contains at least one edge that was not present in the previous union; since it is itself a cycle, adding it increases the cyclomatic number of the current union by at least one.  If $m$ cycles are added, then $m\le c(F)$, and because each added cycle has length at most $r-1$,
\[
        e(F)\le (r-1)m\le (r-1)c(F).
\]

If the clique has size $K\le N$, the expected number of rooted labelled copies of $F$ in $K[p]$ is at most
\[
        C_F K^{v(F)-1}p^{e(F)}
        \le C_F N^{v(F)-1-(1-\delta)e(F)}
        = C_F N^{-c(F)+\delta e(F)}
        \le C_F N^{-(1-\delta(r-1))c(F)}.
\]
Choose $D$ so large that
\[
        (1-\delta(r-1))\log_2(D+2)>A+1.
\]
After summing over the finite set of rooted union types, the probability is at most $N^{-A}$ for all sufficiently large $N$.
\end{proof}

\begin{corollary}[Polynomial clique covers]\label{cor:polynomial-clique-cover}
Fix $r\ge4$, $a>0$, $b\ge1$, and $0<\delta<1/(r-1)$.  Let $(G_N)$ be graphs with $t_N=\chif(G_N)\to\infty$.  Suppose each $G_N$ has a $(<r)$-cycle clique cover $\mathcal K_N$ such that, for all sufficiently large $N$,
\[
        b(\mathcal K_N)\le b,
        \qquad
        I(\mathcal K_N)\le N^a,
        \qquad
        |K|\le N\quad(K\in\mathcal K_N).
\]
Assume additionally that the subgraph induced by the vertices not lying in any clique of $\mathcal K_N$ has fractional chromatic number $o(t_N)$.  If
\[
        \frac{N^{-1+\delta}t_N}{\log t_N}\to\infty,
\]
then
\[
        h_r^{\mathrm f}(G_N)\to\infty.
\]
More quantitatively, for $p_N=N^{-1+\delta}$,
\[
        h_r^{\mathrm f}(G_N)
        \ge
        c\frac{p_Nt_N}{\log t_N}
\]
for all large $N$, where $c>0$ depends only on $r,a,b,\delta$.
\end{corollary}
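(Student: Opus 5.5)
The plan is to apply Theorem \ref{thm:clique-covered-cheap} to $G=G_N$ with $p=p_N=N^{-1+\delta}$ and $\mathcal K=\mathcal K_N$. The tail condition $\sum_{K}\sum_{v}\Pr(Z_{K,v}>D)\le\xi<1/(2t_N)$ will come from Lemma \ref{lem:random-clique-tail}, with $D$ depending only on $r,a,\delta$. Since $b(\mathcal K_N)\le b$, the factor $bD+1$ is then a constant depending only on $r,a,b,\delta$. The quantitative bound follows from the small-$p$ estimate of the denominator, as in Theorem \ref{thm:mohar-wu-random}.

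\emph{Upper bound on $t_N$.} The tail bound in Lemma \ref{lem:random-clique-tail} is measured in $N$, whereas the theorem needs $\xi<1/(2t_N)$. So I first need a polynomial upper bound on $t_N$ in terms of $N$, and this is where the hypothesis on uncovered vertices enters. Let $U_N$ be the set of vertices lying in some clique of $\mathcal K_N$. Then $|U_N|\le I(\mathcal K_N)\le N^a$, so $\chif(G_N[U_N])\le N^a$. By subadditivity of $\chif$ over a vertex partition,
\[
t_N\le N^a+\chif\bigl(G_N-U_N\bigr)=N^a+o(t_N).
\]
Hence $t_N\le 2N^a$ for all large $N$.

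\emph{Tail estimate.} Apply Lemma \ref{lem:random-clique-tail} with $A=2a+2$. This gives $D=D(r,a,\delta)$ and $N_0$ such that, for $N\ge N_0$, every $K\in\mathcal K_N$ (which has $|K|\le N$) and every $v\in V(K)$, we have $\Pr(Z_{K,v}>D)\le N^{-A}$. The number of pairs $(K,v)$ is $I(\mathcal K_N)\le N^a$, so
\[
\xi:=N^{a-A}=N^{-a-2}<\frac{1}{4N^a}\le\frac1{2t_N}
\]
for large $N$. Also $p_N<1/2$ for large $N$, since $\delta<1$.

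\emph{Conclusion.} Theorem \ref{thm:clique-covered-cheap} now gives
\[
h_r^{\mathrm f}(G_N)\ge \frac{t_N}{(bD+1)\bigl(8\log_{1/(1-p_N)}(2et_N)+4\bigr)}.
\]
Since $-\log(1-p)\ge p$, we have $\log_{1/(1-p_N)}(2et_N)\le \log(2et_N)/p_N$. Moreover $4\le 4\log(2et_N)/p_N$, so the denominator is $O\bigl((bD+1)\log t_N/p_N\bigr)$ once $t_N\ge2$. This yields $h_r^{\mathrm f}(G_N)\ge c\,p_Nt_N/\log t_N$ with $c$ depending only on $r,a,b,\delta$. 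Under the hypothesis $N^{-1+\delta}t_N/\log t_N\to\infty$, it follows that $h_r^{\mathrm f}(G_N)\to\infty$.

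The only delicate step is the upper bound on $t_N$: it converts the $N$-measured tail into one beating $1/(2t_N)$. Everything else is direct substitution into Theorem \ref{thm:clique-covered-cheap} and Lemma \ref{lem:random-clique-tail}.
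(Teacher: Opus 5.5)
Your proof is correct and follows the paper's route: Lemma~\ref{lem:random-clique-tail} with $A$ of order $2a$, a union bound over the at most $N^a$ vertex--clique incidences, and Theorem~\ref{thm:clique-covered-cheap}. The only difference is minor. The paper applies the theorem to $G_N[U_N]$ and uses the $o(t_N)$ hypothesis to get $\chif(G_N[U_N])=(1-o(1))t_N$. You apply it to $G_N$ itself, which is legitimate because uncovered vertices lie on no short cycle, and use the same hypothesis to get $t_N\le 2N^a$.
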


\begin{proof}
Let $U_N$ be the union of the cliques in $\mathcal K_N$ and let $W_N=V(G_N)\setminus U_N$.  Fractional chromatic number is subadditive over vertex partitions: combining fractional colourings of $G_N[U_N]$ and $G_N[W_N]$ gives
\[
        \chif(G_N)\le \chif(G_N[U_N])+\chif(G_N[W_N]).
\]
Put
\[
        t'_N=\chif(G_N[U_N]).
\]
The assumption $\chif(G_N[W_N])=o(t_N)$ implies $t'_N=(1-o(1))t_N$.  We now apply Theorem~\ref{thm:clique-covered-cheap} to $G_N[U_N]$, whose fractional chromatic number is $t'_N$.  Since $t'_N\asymp t_N$, the hypotheses
\[
        \frac{N^{-1+\delta}t_N}{\log t_N}\to\infty,
        \qquad
        t_N\le N^a
\]
remain valid after replacing $t_N$ by $t'_N$, up to changing constants.  Also every vertex of $G_N[U_N]$ lies in at least one clique of the cover, and $t'_N\le |U_N|\le I(\mathcal K_N)\le N^a$.

Choose
\[
        A=2a+3.
\]
Apply Lemma \ref{lem:random-clique-tail} with this value of $A$ to every vertex-clique incidence in the cover, and let $D=D(r,A,\delta)$ be the resulting bound.  Since $I(\mathcal K_N)\le N^a$, the total probability of any bad vertex-clique incidence is at most
\[
        N^a\cdot N^{-A}=N^{-(a+3)}<\frac1{2N^a}\le \frac1{2t'_N}
\]
for all sufficiently large $N$.  Theorem~\ref{thm:clique-covered-cheap}, applied to $G_N[U_N]$, gives
\[
        h_r^{\mathrm f}(G_N)
        \ge h_r^{\mathrm f}(G_N[U_N])
        \ge c\frac{p_Nt'_N}{\log t'_N}
        \ge c'\frac{p_Nt_N}{\log t_N}.
\]
This proves both the quantitative lower bound and the divergence statement.
\end{proof}

\begin{remark}[Scope of the clique-cover theorem]
The complete graph is covered by one clique, so Proposition \ref{prop:complete-graph-cheap} is the extremal one-clique instance.  The projective-plane line graphs in Section \ref{subsec:projective-linegraphs} are covered by the point- and line-cliques, with bounded local overlap and polynomially many incidences.  Kneser graphs, however, are not generally covered by this hypothesis for $r\ge5$: a $4$-cycle in a Kneser graph need not be contained in a clique.  Mohar--Wu's Kneser-graph verification therefore uses additional Kneser-specific structure rather than the clique-cover mechanism isolated here.
\end{remark}

\begin{question}[Cheap cycle killing]\label{q:cheap-cycle-killing-main}
Fix $r\ge5$.  Does there exist a choice of $p=p(t)$ and a function $D(t)\ge0$ with $D(t)+1=o(p(t)t/\log t)$ such that, for every graph $G$ with $\chif(G)=t$,
\[
        \Prob\bigl(\lambda_{<r}(G_p)\le D(t)\bigr)>\frac1{2t}
\]
for all sufficiently large $t$?
\end{question}

A positive answer to Question \ref{q:cheap-cycle-killing-main} would imply the full higher-girth fractional Erd\H{o}s--Hajnal conjecture.  Indeed, Proposition \ref{prop:fractional-spread-criterion} would give
\[
        h_r^{\mathrm f}(G)
        \ge
        \Omega\left(\frac{p(t)t}{(D(t)+1)\log t}\right)\to\infty.
\]
Proposition \ref{prop:complete-graph-cheap} proves this type of dispersion for the complete graph after thinning.  The general problem is whether every high-fractional-chromatic graph admits a useful thinning in which short cycles are similarly spread, or whether some family keeps short cycles locally concentrated at every retention probability that preserves fractional chromatic number.

\section{Applications, obstructions, and quantitative refinements}

Theorem \ref{thm:poly-sparse} rules out counterexamples that contain arbitrarily high-chromatic critical cores with edge count bounded by a fixed power of their chromatic number.  Thus any remaining counterexample sequence must avoid such polynomial-density critical cores.  The lower-bound examples of Pettie, Tardos and Walczak show that tower-type phenomena already occur at $r=5$ \cite{PTW2026}; their Burling-graph framework is therefore a natural test case for any further sharpening.

\subsection{Sparse locally dense test cases: projective-plane line graphs}\label{subsec:projective-linegraphs}

We next examine the opposite extreme from the complete graph.  The incidence graph itself of a projective plane is not a test case for high fractional chromatic number: it is bipartite, and hence has fractional chromatic number $2$.  The relevant locally dense object associated with a projective plane is instead the line graph of its incidence graph.  This graph is sparse globally, but each point and each line of the plane creates a large clique in the line graph.

Let $\Pi_q$ be a projective plane of order $q$ (for example, with $q$ a prime power), let $B_q$ be its point-line incidence graph, and put
\[
        X_q=L(B_q).
\]
The graph $B_q$ is $(q+1)$-regular and bipartite, has $2(q^2+q+1)$ vertices and girth $6$.  The vertices of $X_q$ are the incidences of $\Pi_q$.  Each point and each line of the plane gives a clique of size $q+1$ in $X_q$, and every edge of $X_q$ lies in exactly one of these cliques.

\begin{lemma}[Fractional chromatic number of the line-graph test case]\label{lem:linegraph-chif}
For the graph $X_q=L(B_q)$,
\[
        \chif(X_q)=q+1.
\]
\end{lemma}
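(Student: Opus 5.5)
We prove the two inequalities separately. For the lower bound, $X_q$ contains a clique of size $q+1$: the $q+1$ incidences through a fixed point form the point-clique. Hence
\[
        \chif(X_q)\ge\omega(X_q)\ge q+1.
\]

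For the upper bound we use the fractional chromatic number of a line graph. The fractional chromatic number of $L(B)$ equals the fractional chromatic index $\chi'_{\mathrm f}(B)$. For a bipartite graph the integer chromatic index already equals the maximum degree. This is K\H{o}nig's edge-colouring theorem: a $(q+1)$-regular bipartite graph decomposes into $q+1$ perfect matchings, by repeated application of Hall's theorem.

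Each perfect matching of $B_q$ is a set of incidences, no two sharing a point or a line. It is therefore an independent set in $X_q$. The $q+1$ matchings partition $V(X_q)$, so they give a proper $(q+1)$-colouring of $X_q$. Thus
\[
        \chif(X_q)\le\chi(X_q)\le q+1.
\]

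Combining the two bounds gives $\chif(X_q)=q+1$. In fact the argument gives the stronger statement $\chi(X_q)=\omega(X_q)=q+1$.

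There is no real obstacle. The only point needing care is the adjacency in $X_q$: two incidences $(P,\ell)$ and $(P',\ell')$ are adjacent exactly when they share the point ($P=P'$) or share the line ($\ell=\ell'$). Under this description, independent sets in $X_q$ are exactly matchings in $B_q$. The regularity of $B_q$ is stated before the lemma, and the decomposition into perfect matchings follows from König's theorem.
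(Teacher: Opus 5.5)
Your proof is correct, and it takes a somewhat different route from the paper.

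The paper identifies $\chif(L(B_q))$ with the fractional edge-chromatic number of $B_q$: independent sets of $L(B_q)$ are matchings, so the two linear programs coincide. It then uses integrality of the bipartite matching polytope to conclude that this value equals $\Delta(B_q)=q+1$, so both inequalities come from a single LP statement.

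You instead sandwich
\[
        \omega(X_q)\le\chif(X_q)\le\chi(X_q),
\]
with a point-clique giving the lower bound and K\H{o}nig's edge-colouring theorem (via repeated Hall) giving the upper bound. This avoids LP duality and polytope integrality altogether, and it yields the stronger integral statement $\chi(X_q)=\omega(X_q)=q+1$. The lower bound that the paper leaves implicit in ``fractional edge-chromatic number equals maximum degree'' is exactly your clique, that is, a star of $B_q$.

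Your route gives up no generality. A star at a maximum-degree vertex always gives a clique of size $\Delta(B)$ in $L(B)$, and K\H{o}nig gives $\chi'(B)=\Delta(B)$ for every bipartite $B$. Hence $\chif(L(B))=\Delta(B)$ for every bipartite graph with an edge, regular or not. That covers the later uses in Proposition~\ref{prop:regular-linegraph-clique} and the generalized-polygon corollary.

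Your opening sentence that $\chif(L(B))$ equals the fractional chromatic index is never used in your argument and can be dropped.
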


\begin{proof}
The fractional chromatic number of a line graph is the fractional edge-chromatic number of the original graph: independent sets in $L(B_q)$ are matchings of $B_q$, so the fractional colouring linear program for $L(B_q)$ is precisely the fractional edge-colouring program for $B_q$.  Since $B_q$ is bipartite, the matching polytope is integral and the fractional edge-chromatic number equals the maximum degree.  Here $\Delta(B_q)=q+1$.
\end{proof}

For girth $5$, all short cycles in $X_q$ are internal to the point- and line-cliques.

\begin{lemma}[Short cycles in $L(B_q)$]\label{lem:linegraph-short-cycles}
Every triangle and every $4$-cycle of $X_q$ is contained in one of the cliques corresponding to a single point or a single line of the projective plane.
\end{lemma}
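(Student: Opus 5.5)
We translate cycles of $X_q=L(B_q)$ back into closed structures in $B_q$ and use that $B_q$ is bipartite of girth $6$. Each vertex of $X_q$ is an edge (incidence) $e=(P,\ell)$ of $B_q$. Two vertices of $X_q$ are adjacent exactly when the corresponding edges of $B_q$ share an endpoint. That shared endpoint is either a common point or a common line, so every edge of $X_q$ carries a well-defined \emph{label}: a vertex of $B_q$, and it is unique because two distinct edges of a simple graph share at most one endpoint. An edge lies in the clique of a point or line $z$ exactly when its label is $z$. So a cycle of $X_q$ lies in a single clique if and only if all its edges have the same label. Consider a cycle $e_1e_2\cdots e_k e_1$ in $X_q$ with $k\in\{3,4\}$ and labels $z_1,\dots,z_k$, where $z_i$ is the common endpoint of $e_i$ and $e_{i+1}$ (indices mod $k$). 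We aim to show that all $z_i$ are equal.

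The main step is to extract a closed walk in $B_q$ from the label sequence. For each $i$, the edge $e_{i+1}$ contains both $z_i$ and $z_{i+1}$. So either $z_i=z_{i+1}$, or $e_{i+1}$ is exactly the edge $z_iz_{i+1}$ of $B_q$. Delete repeated consecutive labels cyclically to get a cyclic sequence $w_1,\dots,w_j$ with $j\le k$, in which consecutive entries are adjacent in $B_q$. Suppose the labels are not all equal. Then $j\ge2$, and this is a closed walk of length $j\le 4$ in $B_q$. Each step of this walk is one of the edges $e_{i+1}$, and distinct steps come from distinct $e$'s, so the steps are distinct edges of $B_q$.

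Now split by $j$. If $j=2$, the walk goes $w_1\to w_2\to w_1$ using two distinct edges between the same pair of vertices, which is impossible in a simple graph. If $j=3$, there is an odd closed walk in a bipartite graph, which is impossible. If $j=4$, there is a closed walk of length $4$ with four distinct edges. If its vertices $w_1,\dots,w_4$ are distinct, it is a $4$-cycle, contradicting girth $6$. Otherwise there is a repeat such as $w_1=w_3$, and then the walk splits into two closed walks of length $2$. Each of these uses two distinct edges between the same pair of vertices, again impossible. Hence $j=1$, all labels are equal, and the cycle lies in the clique of that point or line.

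The main obstacle is bookkeeping for degenerate cases: when consecutive labels coincide, and when the reduced walk revisits vertices. I expect the case analysis above, which relies on the distinct-edges observation, to handle this cleanly. The only facts about $B_q$ that are used are simplicity, bipartiteness and girth $6$.
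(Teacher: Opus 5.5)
Your proof is correct and follows the same route as the paper. A triangle or $4$-cycle of $X_q$ that does not lie in a single point- or line-clique is converted, via the common endpoints $z_i$ of consecutive incidences, into a short closed walk in $B_q$, which contradicts simplicity, bipartiteness or girth $6$. Your case analysis fills in the paper's one-line claim that the edges ``contain a cycle of length at most $4$''. In particular, your observation that the steps of the reduced walk are pairwise distinct edges of $B_q$ is what rules out a degenerate back-and-forth walk.
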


\begin{proof}
A cycle in a line graph corresponds to a cyclic sequence of edges of the original graph, consecutive members of the sequence being incident.  If such a triangle or $4$-cycle in $L(B_q)$ is not contained in the clique generated by one vertex of $B_q$, then the corresponding edges of $B_q$ contain a cycle of length at most $4$.  This is impossible because $B_q$ has girth $6$.  Hence the line-graph cycle is contained in one star of $B_q$, which is precisely one of the point- or line-cliques of $X_q$.
\end{proof}

\begin{proposition}[Cheap cycle killing for projective-plane line graphs]\label{prop:projective-linegraph-cheap}
Fix $B>1$ and put
\[
        t=q+1,
        \qquad
        p=\frac{(\log t)^B}{t}.
\]
Let $Y=(X_q)_p$ be the random retained-edge subgraph of $X_q$.  Then there is a constant $D_B$ such that, with probability tending to $1$ as $q\to\infty$ through orders for which such a projective plane exists,
\[
        \lambda_{<5}(Y)\le D_B.
\]
Consequently,
\[
        h^{\mathrm f}_5(X_q)
        \ge c_B (\log t)^{B-1}
\]
for all sufficiently large orders $q$ for which such a projective plane exists, where $c_B>0$ depends only on $B$.
\end{proposition}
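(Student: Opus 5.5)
The plan is to apply Proposition \ref{prop:fractional-spread-criterion} (equivalently Theorem \ref{thm:clique-covered-cheap}) with the clique cover $\mathcal K$ consisting of the $2(q^2+q+1)$ point- and line-cliques of $X_q$. By Lemma \ref{lem:linegraph-short-cycles} this is a $(<5)$-cycle clique cover. Each vertex of $X_q$ (an incidence) lies in exactly two cliques, one for its point and one for its line, so $b(\mathcal K)=2$. Each clique has size $t=q+1$, and the incidence number is $I(\mathcal K)=2t(q^2+q+1)\le 2t^3$.

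The probabilistic step is a tail bound for $Z_{K,v}$, the number of triangles and $4$-cycles through $v$ in $K[p]$, where $K\cong K_t$. I will invoke Lemma \ref{lem:random-clique-tail} with $r=5$, $N=t$ and some fixed $\delta<1/4$ (say $\delta=1/8$). For large $t$ we have $p=(\log t)^B/t\le t^{-1+\delta}$, and we take $A=5$. The lemma then yields a constant $D=D(5,5,\delta)$, depending on nothing else, with $\Pr(Z_{K,v}>D)\le t^{-5}$.

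A union bound over the at most $2t^3$ vertex-clique incidences gives total failure probability at most $2t^{-2}\to0$. On the complementary event, every short cycle of $Y$ lies inside some $K[p]$, and each vertex lies in at most two cliques, so $\lambda_{<5}(Y)\le 2D=:D_B$. This proves the first claim. The constant $D_B$ in fact does not depend on $B$, and the dependence on $B$ enters only through the size of $t$ needed for the bound $p\le t^{-1+\delta}$.

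For the consequence, note $\chif(X_q)=t$ by Lemma \ref{lem:linegraph-chif}. The event above has probability tending to $1$, hence greater than $1/(2t)$ for large $t$. Proposition \ref{prop:fractional-spread-criterion} then gives
\[
h_5^{\mathrm f}(X_q)\ge \frac{t}{(D_B+1)\bigl(8\log_{1/(1-p)}(2et)+4\bigr)}.
\]
Since $p\le1/2$ for large $t$, we have $-\log(1-p)\ge p$, so $\log_{1/(1-p)}(2et)\le \log(2et)/p\le 2\log t/p$. The denominator is therefore $O(\log t/p)=O(t/(\log t)^{B-1})$, which yields $h_5^{\mathrm f}(X_q)\ge c_B(\log t)^{B-1}$.

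The main point requiring care is the tail bound. Lemma \ref{lem:random-clique-tail} already controls it uniformly in the clique size through the cyclomatic-number argument, so the step reduces to checking its hypotheses: $\delta<1/(r-1)=1/4$ and $p\le N^{-1+\delta}$. Both hold for large $t$ because a polylogarithmic factor is eventually smaller than $t^{\delta}$. Everything else is bookkeeping with Proposition \ref{prop:fractional-spread-criterion}.
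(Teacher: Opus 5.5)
Your proof is correct and follows essentially the same route as the paper: the point/line clique cover, Lemma \ref{lem:random-clique-tail} with $r=5$, $N=t$ and a fixed $\delta<1/4$, then a union bound over the $O(t^3)$ vertex--clique incidences to get $\lambda_{<5}(Y)\le 2D$. The paper takes $A=6$ where you take $A=5$, which is immaterial, and it applies Theorem \ref{thm:mohar-wu-random}, Lemma \ref{lem:local-spread-cheap} and Proposition \ref{prop:fractional-deletion} separately on a common outcome, whereas you package that final step through Proposition \ref{prop:fractional-spread-criterion}.
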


\begin{proof}
There are $2(q^2+q+1)=O(t^2)$ point- and line-cliques, each of size $t$, and every edge of $X_q$ belongs to exactly one such clique.  By Lemma \ref{lem:linegraph-short-cycles}, every triangle and every $4$-cycle of $X_q$ is contained in one of these cliques.

Choose a fixed number $\delta$ with $0<\delta<1/4$.  Since $p=(\log t)^B/t$, we have $p\le t^{-1+\delta}$ for all sufficiently large $t$.  Apply Lemma~\ref{lem:random-clique-tail} with $r=5$ and $A=6$ to a clique of size $t$ in ambient parameter $N=t$.  It gives a constant $D=D(B)$ only through the fixed choices just made, and for each vertex-clique occurrence the probability of lying on more than $D$ retained triangles or $4$-cycles inside that clique is at most $t^{-6}$.  There are $O(t^3)$ vertex-clique occurrences, so with probability tending to $1$ no occurrence is bad.  Since each vertex of $X_q$ belongs to exactly two of the point/line cliques, on this event
\[
        \lambda_{<5}(Y)\le 2D=:D_B.
\]
This avoids any binomial domination assertion for rooted $4$-cycles; such cycles can cluster through a common intermediate vertex, and the rooted-union bound in Lemma~\ref{lem:random-clique-tail} is designed to handle that clustering.

By Lemma \ref{lem:linegraph-chif}, $\chif(X_q)=t$.  The Mohar--Wu preservation theorem, Theorem \ref{thm:mohar-wu-random}, gives with probability at least $1-o(1)$ that
\[
        \chif(Y)
        \ge
        c\frac{pt}{\log t}
        =c(\log t)^{B-1}.
\]
For a common outcome, Lemma \ref{lem:local-spread-cheap} gives
\[
        \rho_{<5}(Y)\le \lambda_{<5}(Y)+1\le D_B+1.
\]
Proposition \ref{prop:fractional-deletion} then yields a subgraph of $Y$, and hence of $X_q$, of girth at least $5$ and fractional chromatic number at least a constant multiple of $(\log t)^{B-1}$.
\end{proof}

The abstract clique-cover theorem gives a stronger polynomial lower bound for the same family if one uses the larger retention probability $p=t^{-1+\theta}$.

\begin{corollary}[Polynomial fractional growth for projective-plane line graphs]\label{cor:projective-linegraph-polynomial}
Let $X_q=L(B_q)$ and $t=q+1$.  For every $0<\theta<1/4$ there is a constant $c=c(\theta)>0$ such that
\[
        h^{\mathrm f}_5(X_q)
        \ge c\frac{t^\theta}{\log t}
\]
for all sufficiently large orders $q$ for which such a projective plane exists.
\end{corollary}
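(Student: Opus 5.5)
The plan is to apply Corollary~\ref{cor:polynomial-clique-cover} with $r=5$, taking $G_N=X_q$ and $N=t=q+1$. Fix $\theta\in(0,1/4)$ and put $\delta=\theta$, so that $0<\delta<1/(r-1)=1/4$. For the cover $\mathcal K_N$ we take the $2(q^2+q+1)$ point- and line-cliques of $X_q$. The point/line cliques are given by the stars of $B_q$ and do not depend on the random edge set, so the cover is fixed.

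We then check the hypotheses in turn.
\begin{enumerate}[label=\textup{(\arabic*)}]
\item By Lemma~\ref{lem:linegraph-short-cycles}, every triangle and every $4$-cycle of $X_q$ lies inside one of these cliques, so $\mathcal K_N$ is a $(<5)$-cycle clique cover.
\item Each vertex of $X_q$ is an incidence (point, line), so it lies in exactly two cliques. Hence $b(\mathcal K_N)=2$.
\item Each clique has size $q+1=N$.
\item The vertex-incidence number is $I(\mathcal K_N)=2(q^2+q+1)(q+1)\le N^3$ for large $N$, so we may take $a=3$.
\item Every vertex lies in some clique, so the set of uncovered vertices is empty. Its induced subgraph has fractional chromatic number $0=o(t_N)$.
\item By Lemma~\ref{lem:linegraph-chif}, $t_N=\chif(X_q)=N$. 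Then $N^{-1+\delta}t_N/\log t_N=N^{\theta}/\log N\to\infty$.
\end{enumerate}

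With these checks, the quantitative form of Corollary~\ref{cor:polynomial-clique-cover} with $p_N=N^{-1+\theta}$ gives
\[
        h^{\mathrm f}_5(X_q)\ge c\,\frac{p_Nt_N}{\log t_N}=c\,\frac{t^\theta}{\log t},
\]
where $c$ depends only on $r=5$, $a=3$, $b=2$ and $\delta=\theta$, that is, only on $\theta$.

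There is one technical point. The corollary is stated for a sequence $(G_N)$, but the orders $q$ for which a projective plane exists form only a subsequence. I would handle this by applying the corollary to any sequence of such orders. Alternatively, one can rerun its proof directly: Lemma~\ref{lem:random-clique-tail} with $A=2a+3$, then Theorem~\ref{thm:clique-covered-cheap}. That route makes clear that the threshold and the constant $c$ are uniform in $q$.

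The step needing the most care is exactly this uniformity, i.e.\ confirming that the constant in the corollary depends only on $(r,a,b,\delta)$ and not on the particular plane. The quantitative statement of the corollary asserts this, so it should go through.
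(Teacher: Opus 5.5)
Your proposal is correct and is essentially the paper's proof: you apply Corollary~\ref{cor:polynomial-clique-cover} to the point/line-clique cover with $N=t$, $r=5$, $b=2$, $\delta=\theta$, using Lemmas~\ref{lem:linegraph-short-cycles} and~\ref{lem:linegraph-chif}. One arithmetic slip needs fixing: $I(\mathcal K_N)=2(q^2+q+1)(q+1)=2N^3-2N^2+2N>N^3$ for every $N\ge2$, so $a=3$ does not satisfy the hypothesis $I(\mathcal K_N)\le N^a$; take $a=4$ instead (the paper's ``$O(t^3)$, $a=3$'' has the same looseness), which changes nothing else, since $c$ then depends only on $(5,4,2,\theta)$, i.e.\ only on $\theta$.
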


\begin{proof}
The point- and line-cliques of the projective plane form a $(<5)$-cycle clique cover by Lemma \ref{lem:linegraph-short-cycles}.  Each vertex of $X_q$ lies in exactly two of these cliques, the total incidence is $2(q^2+q+1)(q+1)=O(t^3)$, and the maximum clique size is $t$.  Also $\chif(X_q)=t$ by Lemma \ref{lem:linegraph-chif} and every vertex of $X_q$ lies in the clique cover.  Corollary \ref{cor:polynomial-clique-cover}, with $N=t$, $a=3$, $b=2$, $r=5$, and the given $\theta<1/4$, gives the result.

\end{proof}

\begin{lemma}[Cycles in line graphs]\label{lem:linegraph-cycle-cover}
Let $B$ be a graph and let $C$ be a cycle of length $m$ in the line graph $L(B)$.  If the vertices of $C$, viewed as edges of $B$, are not all incident with one common vertex of $B$, then $B$ contains a cycle of length at most $m$.
\end{lemma}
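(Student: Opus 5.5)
Write the cycle $C$ in $L(B)$ as a cyclic sequence of edges $e_1,e_2,\dots,e_m$ of $B$, with indices taken mod $m$. These edges are distinct, and consecutive ones $e_i,e_{i+1}$ share an endpoint. For each $i$, let $x_i$ be a common endpoint of $e_i$ and $e_{i+1}$; if they share two endpoints they would be parallel edges, which is impossible since $B$ is simple. Then $e_{i+1}$ joins $x_i$ and $x_{i+1}$, for every $i$. I will argue in two steps.

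\emph{Step 1: build a closed walk.} The sequence $x_0,x_1,\dots,x_{m-1},x_0$ is a closed walk in $B$ of length at most $m$. The possible degenerate case is $x_i=x_{i+1}$, which happens when $e_{i+1}$ is not traversed. Formally, $e_{i+1}=x_ix_{i+1}$ whenever $x_i\neq x_{i+1}$. If $x_i=x_{i+1}$, then $e_i,e_{i+1},e_{i+2}$ all contain $x_i$, so that step is contracted. Contract every repeated consecutive vertex to get a closed walk $W$ whose steps are distinct edges among the $e_j$. The edges are distinct because each step $x_i\to x_{i+1}$ with $x_i\ne x_{i+1}$ uses the specific edge $e_{i+1}$.

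\emph{Step 2: use the hypothesis and extract a cycle.} If all the $x_i$ were equal to a single vertex $v$, every $e_j$ would contain $v$, contradicting the hypothesis. So $W$ has at least one genuine step. Suppose $W$ uses a set $E'$ of distinct edges, with $|E'|\le m$, and this set is nonempty. A closed walk in which every edge is traversed exactly once is an Eulerian trail of the subgraph $(V,E')$. That subgraph therefore has all degrees even and at least one edge, so it contains a cycle. This cycle has length at most $|E'|\le m$.

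The main obstacle is the single-step degeneracy: $W$ could consist of one edge traversed back and forth, which is not a cycle. I handle this by the edge-distinctness noted above. Each traversed edge appears once, so the edge multiset is a nonempty even-degree graph rather than a doubled edge. This requires checking that the map $i\mapsto e_{i+1}$ on non-degenerate steps is injective, which holds because the $e_j$ are distinct vertices of the cycle $C$.
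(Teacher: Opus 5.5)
Your proof is correct and follows essentially the same route as the paper: take $x_i\in e_i\cap e_{i+1}$, form the closed walk through the $x_i$, contract immediate repetitions, and note that a trivial walk forces every $e_j$ to pass through one common vertex. Your explicit use of the distinctness of the $e_j$ (so the walk is a closed trail, and its edge set is a nonempty graph with all degrees even) usefully tightens the paper's terse claim that a closed walk of positive length contains a cycle, since that claim fails for a walk that goes back and forth along a single edge.
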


\begin{proof}
Let the cycle in $L(B)$ have vertices $e_1,\ldots,e_m$, where the $e_i$ are distinct edges of $B$ and $e_i$ meets $e_{i+1}$, indices modulo $m$.  For each $i$, choose $x_i\in e_i\cap e_{i+1}$.  Then
\[
        x_1,e_2,x_2,e_3,\ldots,x_m,e_1,x_1
\]
is a closed alternating walk in $B$, after deleting any immediate repetitions.  If this closed walk has positive length, it contains a cycle; deleting repeated closed subwalks gives a cycle of length at most $m$.

It remains to consider the case in which every such choice gives the trivial closed walk.  Then, for each $i$, the two intersections $e_{i-1}\cap e_i$ and $e_i\cap e_{i+1}$ must be the same vertex; otherwise choosing them differently would give a nontrivial segment through $e_i$.  Going around the cycle, all consecutive intersections are therefore the same vertex.  Hence all edges $e_1,\ldots,e_m$ are incident with one common vertex of $B$, contrary to the hypothesis.  Thus the nontrivial closed-walk case must occur, and $B$ contains a cycle of length at most $m$.
\end{proof}

\begin{proposition}[Line graphs of high-girth regular bipartite graphs]\label{prop:regular-linegraph-clique}
Fix $r\ge4$, $A>0$, and $0<\theta<1/(r-1)$.  Let $(B_N)$ be a sequence of $\Delta_N$-regular bipartite graphs with
\[
        \Delta_N\to\infty,
        \qquad
        \girth(B_N)\ge r,
        \qquad
        |V(B_N)|\le \Delta_N^A.
\]
Put $X_N=L(B_N)$.  Then there is a constant $c=c(r,A,\theta)>0$ such that
\[
        h_r^{\mathrm f}(X_N)
        \ge c\frac{\Delta_N^\theta}{\log \Delta_N}
\]
for all sufficiently large $N$.
\end{proposition}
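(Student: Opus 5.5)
The plan is to apply Corollary \ref{cor:polynomial-clique-cover} to $X_N=L(B_N)$ with ambient parameter $N$ replaced by $\Delta_N$, exactly as in the proof of Corollary \ref{cor:projective-linegraph-polynomial}. This requires three inputs: the value of $\chif(X_N)$, a $(<r)$-cycle clique cover with bounded overlap and polynomial incidence, and the growth condition.

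First, the proof of Lemma \ref{lem:linegraph-chif} uses only that $B_q$ is bipartite with maximum degree $q+1$. The same reasoning therefore gives $\chif(X_N)=\Delta_N=:t_N$: fractional colourings of $L(B_N)$ are fractional edge-colourings of $B_N$, and bipartite integrality of the matching polytope gives fractional edge-chromatic number $\Delta(B_N)$.

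Second, take $\mathcal K_N$ to be the family of stars of $B_N$. Each vertex $w\in V(B_N)$ gives a clique of $X_N$ on the $\Delta_N$ edges at $w$. To see that this is a $(<r)$-cycle clique cover, let $C$ be a cycle of $X_N$ of length $m<r$. If its vertices are not all incident with a common vertex of $B_N$, then Lemma \ref{lem:linegraph-cycle-cover} produces a cycle in $B_N$ of length at most $m<r$, contradicting $\girth(B_N)\ge r$. Hence the vertex set of $C$ lies in a single star clique, and since that vertex set is a clique, $C$ itself is contained in this member of $\mathcal K_N$. The remaining cover parameters are as follows.
\begin{itemize}
\item Every edge of $B_N$ has exactly two endpoints, so $b(\mathcal K_N)=2$.
\item Every clique has size $\Delta_N$.
\item The incidence number is $I(\mathcal K_N)=|V(B_N)|\,\Delta_N\le \Delta_N^{A+1}$, so we may take $a=A+1$.
\item Every vertex of $X_N$ lies in the cover, so the leftover-vertex hypothesis holds trivially, since the uncovered vertex set is empty.
\end{itemize}

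Third, set $\delta=\theta<1/(r-1)$ and $p_N=\Delta_N^{-1+\theta}$. Then
\[
\frac{p_Nt_N}{\log t_N}=\frac{\Delta_N^{\theta}}{\log\Delta_N}\to\infty,
\]
so Corollary \ref{cor:polynomial-clique-cover} yields $h_r^{\mathrm f}(X_N)\ge c\,\Delta_N^\theta/\log\Delta_N$, with $c$ depending only on $r$, $a=A+1$, $b=2$ and $\theta$, as required.

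The main point needing care is the cover step. Lemma \ref{lem:linegraph-cycle-cover} must be applied to every cycle of length less than $r$, not just to triangles and $4$-cycles as in Lemma \ref{lem:linegraph-short-cycles}. We also need that a cycle whose vertex set lies in a clique is contained in that clique, which holds because the cliques are complete subgraphs on their vertex sets.

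A secondary check is the indexing. Corollary \ref{cor:polynomial-clique-cover} is stated for a sequence indexed by its ambient parameter $N$, whereas here the natural parameter is $\Delta_N$. Its proof only uses the bounds "for all sufficiently large" values of the parameter along a sequence tending to infinity, so replacing $N$ by $\Delta_N\to\infty$ throughout is harmless.
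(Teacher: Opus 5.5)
Your proposal is correct and is essentially the paper's own proof: you get $\chif(X_N)=\Delta_N$ from bipartite fractional edge-colouring, take the star cliques as a $(<r)$-cycle clique cover via Lemma~\ref{lem:linegraph-cycle-cover} with overlap $b=2$ and total incidence at most $\Delta_N^{A+1}$, and then apply Corollary~\ref{cor:polynomial-clique-cover} with ambient parameter $\Delta_N$, $a=A+1$ and $\delta=\theta$. Your extra checks are details the paper leaves implicit: that a short cycle whose vertices lie in a star clique is contained in that clique, and that reindexing the corollary by $\Delta_N\to\infty$ is harmless.
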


\begin{proof}
The fractional chromatic number of $X_N=L(B_N)$ is the fractional edge-chromatic number of $B_N$.  Since $B_N$ is bipartite and $\Delta_N$-regular,
\[
        \chif(X_N)=\Delta_N.
\]
For each vertex $v$ of $B_N$, the set of edges of $B_N$ incident with $v$ gives a clique $K_v$ of size $\Delta_N$ in $X_N$.  These star-cliques cover all edges of $X_N$, and every vertex of $X_N$ belongs to exactly two of them.

The family $\{K_v:v\in V(B_N)\}$ is a $(<r)$-cycle clique cover of $X_N$.  Indeed, if a cycle of length less than $r$ in $X_N=L(B_N)$ is not contained in one of these star-cliques, Lemma~\ref{lem:linegraph-cycle-cover} gives a cycle of $B_N$ of no greater length, contradicting $\girth(B_N)\ge r$.

The total clique incidence is
\[
        \sum_{v\in V(B_N)} |K_v|=|V(B_N)|\Delta_N\le \Delta_N^{A+1}.
\]
Apply Corollary \ref{cor:polynomial-clique-cover} with $N=\Delta_N$, clique-incidence exponent $A+1$, vertex-incidence number $b=2$, and the given $\theta<1/(r-1)$.  Every vertex of $X_N$ lies in the clique cover, so the exceptional outside fractional chromatic number is $0$.  This proves the proposition.
\end{proof}

\begin{corollary}[Equal-order generalized polygons as fractional test cases]\label{cor:generalized-polygons-linegraphs}
For every sequence of existing finite generalized quadrangles or generalized hexagons of equal order $(s,s)$ with $s\to\infty$, the line graphs of their incidence graphs satisfy
\[
        h_r^{\mathrm f}(L(B_s))\to\infty
\]
for every fixed $r$ not exceeding the girth of the incidence graph.  More quantitatively, if $\Delta_s=s+1$, then for every $0<\theta<1/(r-1)$,
\[
        h_r^{\mathrm f}(L(B_s))\ge c\frac{\Delta_s^\theta}{\log\Delta_s}.
\]
\end{corollary}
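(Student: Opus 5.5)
This is a direct application of Proposition \ref{prop:regular-linegraph-clique}. The plan is to check that incidence graphs of equal-order generalized quadrangles and hexagons satisfy its three hypotheses, with $\Delta_N=s+1$.

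\emph{Regularity and bipartiteness.} A generalized $n$-gon of order $(s,s)$ has every line on $s+1$ points and every point on $s+1$ lines. Its incidence graph $B_s$ is therefore bipartite, with points on one side and lines on the other, and $(s+1)$-regular. Since $s\to\infty$, we have $\Delta_s=s+1\to\infty$.

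\emph{Girth.} By definition, the incidence graph of a generalized $n$-gon has girth $2n$: it is $8$ for quadrangles and $12$ for hexagons. The statement only concerns $r\le\girth(B_s)$, so the hypothesis $\girth(B_s)\ge r$ holds automatically. Note that $r\ge4$ is fixed, and that $\theta<1/(r-1)$ is exactly the range permitted by the proposition.

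\emph{Polynomial size.} I will use the standard counts. A generalized quadrangle of order $(s,s)$ has $(s+1)(s^2+1)$ points and equally many lines. A generalized hexagon of order $(s,s)$ has $(s+1)(s^4+s^2+1)$ points and equally many lines. This gives $|V(B_s)|\le 2(s+1)^5\le\Delta_s^{6}$ once $s$ is large, so $A=6$ works uniformly for both families. If I wanted to avoid quoting these counts, I would instead use the Moore-type argument: in a generalized $n$-gon any two elements are at distance at most $n$ in $B_s$, so counting the ball of radius $n$ gives $|V(B_s)|\le 1+\Delta_s\sum_{i<n}(\Delta_s-1)^i\le 2\Delta_s^{n}$.

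Applying Proposition \ref{prop:regular-linegraph-clique} with this $A$, the given $r$ and the given $\theta$ yields $h_r^{\mathrm f}(L(B_s))\ge c\,\Delta_s^\theta/\log\Delta_s$ for all large $s$. Choosing any fixed $\theta\in(0,1/(r-1))$ then gives divergence. I expect no real obstacle. The only points needing care are quoting the girth and size facts correctly, and noting that the sequence is indexed by $s$ instead of $N$, which the proposition allows since its constant depends only on $(r,A,\theta)$.
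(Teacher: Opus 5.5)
Your proposal is correct and is essentially the paper's proof: the paper also checks that the incidence graphs are $(s+1)$-regular and bipartite, with girth $8$ or $12$ and polynomially many vertices in $s+1$, and then invokes Proposition~\ref{prop:regular-linegraph-clique}. Your explicit point and line counts, which give the uniform choice $A=6$ (or the Moore-type diameter bound as an alternative), simply spell out the ``polynomially many vertices'' step that the paper states without detail.
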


\begin{proof}
For generalized quadrangles and generalized hexagons of order $(s,s)$ the incidence graph is $(s+1)$-regular, has girth $8$ and $12$ respectively, and has polynomially many vertices in $s+1$.  Proposition \ref{prop:regular-linegraph-clique} applies.  Biregular generalized polygons can be treated by the same argument after replacing regularity by the corresponding fractional edge-chromatic number, but the equal-order form is the clean statement used here.
\end{proof}

This proposition verifies cheap cycle killing for a sparse, locally dense family quite different from complete graphs and Kneser graphs.  It also clarifies why the incidence graph itself is not the right stress test: its fractional chromatic number is always $2$, while the line graph has fractional chromatic number $q+1$ and contains large local cliques.

\subsection{A rooted spread criterion for other sparse families}\label{subsec:rooted-spread}

For triangle-free Ramsey-type graphs and for Burling-type constructions, the issue is not the preservation term; Mohar--Wu already gives that.  What must be checked is whether short cycles in the thinned graph remain locally spread.  The following elementary criterion isolates a directly verifiable sufficient condition.

For a graph $G$ and a vertex $v$, let $Z_v=Z_v(G_p)$ be the number of cycles of length less than $r$ in $G_p$ that contain $v$.

\begin{lemma}[Factorial-moment spread test]\label{lem:factorial-spread}
Let $G$ be a graph with $t=\chif(G)\ge2$, and let $0<p<1$.  Suppose that for some integers $D\ge1$ and some $\xi>0$,
\[
        \sum_{v\in V(G)} \mathbb E\binom{Z_v}{D+1}\le \xi.
\]
Then
\[
        \Pr\bigl(\lambda_{<r}(G_p)\le D\bigr)
        \ge 1-\xi.
\]
Consequently, if $\xi<1/(2t)$, then
\[
        h_r^{\mathrm f}(G)
        \ge
        \frac{\chif(G)}{(D+1)\bigl(8\log_{1/(1-p)}(2e\chif(G))+4\bigr)}.
\]
\end{lemma}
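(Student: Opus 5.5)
The plan is a factorial-moment version of Markov's inequality followed by a union bound over vertices. After that, the spread-thinning criterion already proved earlier in the paper gives the conclusion.

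First fix a vertex $v$. The event $Z_v>D$ is the same as $Z_v\ge D+1$. Since $Z_v$ is a nonnegative integer, on this event
\[
        \binom{Z_v}{D+1}\ge 1,
\]
and $\binom{Z_v}{D+1}$ is always nonnegative. Hence
\[
        \mathbf 1\{Z_v>D\}\le \binom{Z_v}{D+1}
\]
pointwise. Taking expectations gives $\Pr(Z_v>D)\le \E\binom{Z_v}{D+1}$.

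Next observe that $\lambda_{<r}(G_p)=\max_v Z_v(G_p)$, because every cycle of $G_p$ is a cycle of $G_p$ through its own vertices. So the event $\lambda_{<r}(G_p)>D$ is the union of the events $Z_v>D$ over $v\in V(G)$. The union bound and the hypothesis then give
\[
        \Pr\bigl(\lambda_{<r}(G_p)>D\bigr)\le\sum_v\E\binom{Z_v}{D+1}\le\xi.
\]
This is the first assertion.

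For the consequence, assume $\xi<1/(2t)$. Then
\[
        \Pr\bigl(\lambda_{<r}(G_p)\le D\bigr)\ge 1-\xi>1-\frac1{2t}\ge\frac1{2t},
\]
where the last step uses $t\ge2$, so that $1-1/(2t)\ge 3/4>1/(2t)$. The hypothesis of Proposition~\ref{prop:fractional-spread-criterion} therefore holds with this $D$. That proposition gives exactly the displayed lower bound on $h_r^{\mathrm f}(G)$ with $t=\chif(G)$.

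There is no real obstacle here. The only points to check are that $\binom{Z_v}{D+1}$ is a nonnegative integer-valued indicator majorant, and that $\lambda_{<r}$ counts cycles through a vertex in exactly the same way as $Z_v$, so the maximum identification is exact.
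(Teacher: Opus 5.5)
Your proposal is correct and follows essentially the same route as the paper: bound $\Pr(\lambda_{<r}(G_p)>D)$ by $\sum_v\E\binom{Z_v}{D+1}$ using the pointwise majorant $\mathbf 1\{Z_v\ge D+1\}\le\binom{Z_v}{D+1}$ and a union bound (the paper phrases this as Markov), then invoke Proposition~\ref{prop:fractional-spread-criterion}. Your explicit check that $1-\xi>1-1/(2t)\ge1/(2t)$ supplies the strict inequality that proposition requires, which the paper leaves implicit.
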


\begin{proof}
If $\lambda_{<r}(G_p)>D$, then some vertex $v$ lies on at least $D+1$ short cycles in $G_p$, and hence $\binom{Z_v}{D+1}\ge1$.  Therefore Markov's inequality gives
\[
        \Pr(\lambda_{<r}(G_p)>D)
        \le \sum_v\mathbb E\binom{Z_v}{D+1}
        \le \xi.
\]
The final assertion follows from Proposition \ref{prop:fractional-spread-criterion}.
\end{proof}

For $r=5$ and triangle-free graphs this criterion has a particularly transparent form.  Only $4$-cycles need to be killed.  If $\mathcal Q_v$ denotes the family of $4$-cycles of $G$ containing $v$, then
\[
        \mathbb E\binom{Z_v}{D+1}
        =
        \sum_{\{Q_1,\ldots,Q_{D+1}\}\subseteq\mathcal Q_v}
        p^{|E(Q_1\cup\cdots\cup Q_{D+1})|}.
\]
Thus the test is exactly a rooted codegree or spread condition for the hypergraph of $4$-cycles through each vertex.  Pseudorandom triangle-free Ramsey graphs should be attacked by estimating this displayed sum.  In particular, it is not enough to count $4$-cycles through a vertex; one must also rule out large clusters of $4$-cycles supported on a small edge set.

\begin{corollary}[A usable Ramsey-type sufficient condition]\label{cor:ramsey-spread}
Fix $r=5$.  Let $G_n$ be triangle-free graphs with $t_n=\chif(G_n)\to\infty$.  Suppose there exist $p_n\in(0,1)$ and integers $D_n\ge1$ such that
\[
        \frac{p_n t_n}{\log t_n}\to\infty,
        \qquad
        D_n+1=o\!\left(\frac{p_n t_n}{\log t_n}\right),
\]
while
\[
        \sum_{v\in V(G_n)} \mathbb E\binom{Z_v(G_{n,p_n})}{D_n+1}=o(1/t_n).
\]
Then
\[
        h_5^{\mathrm f}(G_n)\to\infty.
\]
\end{corollary}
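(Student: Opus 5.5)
This is a direct combination of Lemma \ref{lem:factorial-spread} with Proposition \ref{prop:fractional-spread-criterion}, followed by a check that the resulting lower bound tends to infinity. Fix $n$ large and put $t=t_n$, $p=p_n$, $D=D_n$, $G=G_n$.

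The first step is to apply Lemma \ref{lem:factorial-spread} with
\[
        \xi_n=\sum_{v\in V(G_n)}\mathbb E\binom{Z_v(G_{n,p_n})}{D_n+1}.
\]
Since $\xi_n=o(1/t_n)$, we have $\xi_n<1/(2t_n)$ for all large $n$. This holds trivially if $\xi_n=0$, and otherwise the lemma is applied with a slightly larger $\xi$. The lemma then gives
\[
        h_5^{\mathrm f}(G_n)\ge\frac{t_n}{(D_n+1)\bigl(8\log_{1/(1-p_n)}(2et_n)+4\bigr)}.
\]
Triangle-freeness is not needed for this step. It only makes $Z_v$ count $4$-cycles, which matches the displayed formula for $\mathbb E\binom{Z_v}{D+1}$.

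The second step is to simplify the denominator. Since $-\log(1-p)\ge p$ for $p\in(0,1)$,
\[
        \log_{1/(1-p)}(2et)=\frac{\log(2et)}{-\log(1-p)}\le\frac{\log(2et)}{p}.
\]
For $t\ge2$ we have $\log(2et)\le C\log t$, and $4\le 4\log(2et)/p$. Hence the denominator factor is at most $C'\log t/p$ for an absolute constant $C'$, and so
\[
        h_5^{\mathrm f}(G_n)\ge\frac{1}{C'}\cdot\frac{p_nt_n/\log t_n}{D_n+1}.
\]
The hypothesis $D_n+1=o(p_nt_n/\log t_n)$ says exactly that this ratio tends to infinity. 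This completes the argument.

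There is no real obstacle. The only points needing care are the uniform bound on $\log_{1/(1-p)}$ when $p_n$ is not assumed small, which the inequality $-\log(1-p)\ge p$ handles for every $p\in(0,1)$, and the strict inequality $\xi<1/(2t)$. The hypothesis $p_nt_n/\log t_n\to\infty$ is implied by the condition on $D_n$ (since $D_n+1\ge2$), so it is not used separately.
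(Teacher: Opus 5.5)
Your proposal is correct and matches the paper's proof. Both apply Lemma~\ref{lem:factorial-spread}, which already packages Proposition~\ref{prop:fractional-spread-criterion} and the Mohar--Wu preservation bound, then use $-\log(1-p)\ge p$ to bound the denominator by $O(\log t_n/p_n)$, so that $D_n+1=o(p_nt_n/\log t_n)$ forces the lower bound to infinity. Your side remarks, on the case $\xi_n=0$ and on the scale condition following from the condition on $D_n$, are accurate details the paper leaves implicit.
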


\begin{proof}
Apply Lemma \ref{lem:factorial-spread} with $D=D_n$ and combine it with Theorem \ref{thm:mohar-wu-random}.  For every $0<p<1$ we have $-\log(1-p)\ge p$, and hence
\[
        \log_{1/(1-p)}(2et)=\frac{\log(2et)}{-\log(1-p)}\le \frac{\log(2et)}p.
\]
Thus the denominator in the preservation term is $O(\log t_n/p_n)$, while the additional deletion cost is $D_n+1=o(p_nt_n/\log t_n)$.
\end{proof}

This gives a concrete program for triangle-free Ramsey-type graphs: compute the rooted factorial moments of the thinned $4$-cycle hypergraph.  If they satisfy Corollary \ref{cor:ramsey-spread}, cheap cycle killing succeeds.  If they fail at every retention probability with $p_nt_n/\log t_n\to\infty$, then the obstruction is a genuinely clustered rooted $4$-cycle structure rather than mere abundance of $4$-cycles.

\subsection{\texorpdfstring{A polylogarithmic-codegree pseudorandom $r=5$ theorem}{A polylogarithmic-codegree pseudorandom r=5 theorem}}\label{subsec:pseudorandom}

We now carry out the factorial-moment computation in a genuinely non-clique regime.  The goal is to cover triangle-free graphs at Ramsey scale, where
\[
        |V(G)|\approx \chi_f(G)^2\operatorname{polylog}\chi_f(G),
        \qquad
        \Delta(G)\approx \chi_f(G)\operatorname{polylog}\chi_f(G).
\]
The previous maximum-codegree entropy estimate reached only $L=O(\log t/\log\log t)$.  The next lemma uses the special rooted structure of $4$-cycles in triangle-free graphs and improves this beyond the logarithmic scale: any fixed polylogarithmic maximum codegree is harmless.

For a graph $G$, put
\[
        \operatorname{codeg}(G)=\max_{x\ne y}|N_G(x)\cap N_G(y)|.
\]

If $G$ is triangle-free, a $4$-cycle through a root $v$ has the form
\[
        v a u b v,
        \qquad a,b\in N(v),\quad u\notin N[v].
\]
For a family $\mathcal Q$ of rooted $4$-cycles through $v$, let $A(\mathcal Q)\subseteq N(v)$ be the set of neighbours of $v$ used by the cycles, and let $J(\mathcal Q)$ be the bipartite support graph between $A(\mathcal Q)$ and the opposite vertices $U(\mathcal Q)$, containing all support edges away from $v$.

\begin{lemma}[Root-neighbour support moment bound]\label{lem:root-neighbour-moment}
Let $G$ be triangle-free, let $v\in V(G)$, and let
\[
        \Delta=\Delta(G),\qquad L=\operatorname{codeg}(G).
\]
For $0<p<1$, let $Z_v$ be the number of $4$-cycles through $v$ in $G_p$.  For every integer $q\ge1$,
\[
        \mathbb E\binom{Z_v}{q}
        \le
        \sum_{h=2}^{2q}
        \Delta^h (Lh^2)^q
        p^{h+M(h,q)},                                    \tag{\ref{lem:root-neighbour-moment}.1}
\]
where
\[
        M(h,q)=\max\left\{h,\frac{2q}{h-1}\right\}.
\]
\end{lemma}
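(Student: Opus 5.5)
The plan is to expand the factorial moment as a sum over unordered $q$-sets of distinct $4$-cycles through $v$, as in the display preceding Lemma \ref{lem:factorial-spread}:
\[
        \mathbb E\binom{Z_v}{q}=\sum_{\{Q_1,\ldots,Q_q\}}p^{|E(Q_1\cup\cdots\cup Q_q)|},
\]
and to group the terms by $h=|A(\mathcal Q)|$, the number of root neighbours used. Since $G$ is triangle-free, each rooted $4$-cycle has the form $vaubv$ with $a\ne b$ in $N(v)$ and $u\notin N[v]$. Hence every cycle uses exactly two root neighbours, so $2\le h\le 2q$. The union of the cycles splits into the $h$ root edges $va$, $a\in A(\mathcal Q)$, together with the edges of the support graph $J(\mathcal Q)$. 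These two sets are disjoint, because $J$ avoids $v$. Thus $|E(\bigcup Q_i)|=h+e(J)$. Since $p<1$, it suffices to prove a lower bound $e(J)\ge M(h,q)$ and an upper bound on the number of configurations with a given $h$.

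For the edge bound I will prove two inequalities. First, every $a\in A(\mathcal Q)$ lies on some chosen cycle, so $a$ has degree at least $1$ in $J$. As $J$ is bipartite between $A$ and $U$, this gives $e(J)\ge h$. Second, every chosen cycle is determined by an opposite vertex $u\in U$ together with an unordered pair of $J$-neighbours of $u$ in $A$. So
\[
        q\le\sum_{u\in U}\binom{d_J(u)}2\le\sum_{u\in U}d_J(u)\frac{h-1}{2}=\frac{(h-1)e(J)}2,
\]
using $d_J(u)\le|A|=h$. This yields $e(J)\ge 2q/(h-1)$. Together the two inequalities give $e(J)\ge M(h,q)$, and hence $p^{h+e(J)}\le p^{h+M(h,q)}$. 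The fact that $e(J)$ is an integer while $M$ need not be causes no trouble.

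For the count with fixed $h$, I will first choose $A(\mathcal Q)$ as a sequence of $h$ neighbours of $v$, giving at most $\Delta^h$ choices. Then I choose the $q$ cycles one by one. Each cycle is specified by an ordered pair $(a,b)$ from $A$ (at most $h^2$ choices) and a common neighbour $u$ of $a$ and $b$ (at most $L$ choices, by the definition of $\operatorname{codeg}(G)$). This gives at most $(Lh^2)^q$ ordered choices. Every unordered $q$-set with $|A(\mathcal Q)|=h$ arises at least once, so this overcount is harmless. Multiplying and summing over $2\le h\le 2q$ gives (\ref{lem:root-neighbour-moment}.1).

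The only step needing care is the second edge inequality: the bound $\binom{d}{2}\le d(h-1)/2$ must use $d_J(u)\le h$ rather than the weaker bound $d_J(u)\le\Delta$. It is also important that the counting step never pays for $U$ separately. Each $u$ is charged only through the codegree factor $L$ inside a cycle choice, so the total count involves no $\Delta$ beyond $\Delta^h$.
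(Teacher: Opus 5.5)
Your proposal is correct and follows the paper's proof essentially step for step. It uses the same grouping by $h=|A(\mathcal Q)|$, the same $\Delta^h$ choice of root neighbours, the same $(Lh^2)^q$ count via a pair in $A$ plus a common neighbour, and the same two support-edge bounds $e(J)\ge h$ and $q\le\sum_{u}\binom{d_J(u)}2\le\frac{h-1}2e(J)$; the only cosmetic difference is that the paper counts unordered pairs (giving $L\binom h2\le Lh^2$ candidate cycles for a fixed $A$), whereas you count ordered choices, which is an equally valid overcount.
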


\begin{proof}
A $q$-set $\mathcal Q$ of rooted $4$-cycles through $v$ uses some set
\[
        A=A(\mathcal Q)\subseteq N(v)
\]
of root-neighbours.  Put $h=|A|$.  Since each rooted $4$-cycle uses two root-neighbours, $2\le h\le 2q$.  There are at most $\Delta^h$ choices for $A$.

For fixed $A$, each possible rooted $4$-cycle is obtained by choosing an unordered pair $\{a,b\}\subseteq A$ and then choosing a common neighbour $u\in N(a)\cap N(b)$.  There are at most $L\binom h2\le Lh^2$ such cycles.  Hence the number of possible $q$-sets of cycles with root-neighbour set contained in $A$ is at most $(Lh^2)^q$.

It remains to lower-bound the number of distinct graph edges in the union of the cycles.  The $h$ root edges $va$, $a\in A$, are all present.  Let $I=e(J(\mathcal Q))$ be the number of non-root support edges.  Since every $a\in A$ is used by at least one cycle, $I\ge h$.  Also the $q$ cycles are length-two paths in the support graph $J(\mathcal Q)$ with middle vertex in $U(\mathcal Q)$.  Thus
\[
        q
        \le \sum_{u\in U(\mathcal Q)}\binom{d_J(u)}2
        \le \frac{h-1}{2}\sum_{u\in U(\mathcal Q)}d_J(u)
        =\frac{h-1}{2}I.
\]
Therefore $I\ge 2q/(h-1)$, and so
\[
        |E(\bigcup_{Q\in\mathcal Q}Q)|
        \ge h+\max\left\{h,\frac{2q}{h-1}\right\}=h+M(h,q).
\]
Multiplying the count of possible $\mathcal Q$ by the survival probability of their edge union gives (\ref{lem:root-neighbour-moment}.1).

\end{proof}

\begin{lemma}[Rooted codegree-profile moment bound]\label{lem:codegree-profile-moment}
Let $G$ be triangle-free, let $v\in V(G)$, and for $A\subseteq N(v)$ put
\[
        W_v(A)=\sum_{\{a,b\}\subseteq A}|(N(a)\cap N(b))\setminus\{v\}|.
\]
If $Z_v$ is the number of $4$-cycles through $v$ in $G_p$, then for every integer $q\ge1$,
\[
        \mathbb E\binom{Z_v}{q}
        \le
        \sum_{h=2}^{2q}
        p^{h+M(h,q)}
        \sum_{\substack{A\subseteq N(v)\\ |A|=h}}
        W_v(A)^q,                                      \tag{\ref{lem:codegree-profile-moment}.1}
\]
where $M(h,q)=\max\{h,2q/(h-1)\}$.
Consequently, let $(G_n)$ be a sequence of triangle-free graphs with $t_n=\chif(G_n)\to\infty$.  Suppose that there are retention probabilities $p_n\in(0,1)$ and integers $q_n\ge1$ such that
\[
        \sum_{v\in V(G_n)}
        \sum_{h=2}^{2q_n}
        p_n^{h+M(h,q_n)}
        \sum_{\substack{A\subseteq N(v)\\ |A|=h}}
        W_v(A)^{q_n}=o(1/t_n),                         \tag{CP}
\]
and
\[
        q_n=o\!\left(\frac{p_nt_n}{\log t_n}\right).                    \tag{scale}
\]
Then $h^{\mathrm f}_5(G_n)\to\infty$.  In particular, when $q_n$ is fixed and $p_nt_n/\log t_n\to\infty$, the scale condition is automatic; condition \(CP\) must still be verified.
\end{lemma}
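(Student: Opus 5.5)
We follow the proof of Lemma \ref{lem:root-neighbour-moment} almost verbatim, replacing the crude count $\Delta^h(Lh^2)^q$ by an exact count organised by the root-neighbour set. First we write $\mathbb E\binom{Z_v}{q}$ as the sum, over all $q$-sets $\mathcal Q$ of distinct $4$-cycles of $G$ through $v$, of $p^{|E(\bigcup\mathcal Q)|}$. We then group these $q$-sets according to $A=A(\mathcal Q)\subseteq N(v)$ and put $h=|A|$, so that $2\le h\le 2q$. Since $G$ is triangle-free, every rooted $4$-cycle $vaubv$ with $a,b\in A$ is determined by the unordered pair $\{a,b\}\subseteq A$ together with the vertex $u\in (N(a)\cap N(b))\setminus\{v\}$. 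Hence the number of rooted $4$-cycles whose root-neighbours lie in $A$ is exactly $W_v(A)$. The number of $q$-sets of such cycles is therefore at most $\binom{W_v(A)}{q}\le W_v(A)^q$. This also covers every $\mathcal Q$ with $A(\mathcal Q)=A$, at the cost of overcounting, which is harmless for an upper bound.

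Next we bound the survival probability. The edge-count bound $|E(\bigcup\mathcal Q)|\ge h+M(h,q)$ from the proof of Lemma \ref{lem:root-neighbour-moment} uses only triangle-freeness and the fact that every $a\in A$ is used by some cycle. That argument gives $h$ root edges, at least $h$ support edges, and, via $q\le\sum_u\binom{d_J(u)}2\le\frac{h-1}2 I$, at least $2q/(h-1)$ support edges. This transfers unchanged, and since $p<1$ we get $p^{|E|}\le p^{h+M(h,q)}$. Summing over $A$ with $|A|=h$ and then over $h$ yields (\ref{lem:codegree-profile-moment}.1).

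For the consequence, we sum (\ref{lem:codegree-profile-moment}.1) over $v$ with $q=q_n=D_n+1$. Condition \(CP\) then says that $\sum_v\mathbb E\binom{Z_v}{D_n+1}=o(1/t_n)$. For $r=5$ in a triangle-free graph, the short cycles are exactly the $4$-cycles, so $Z_v$ is the $Z_v$ of Lemma \ref{lem:factorial-spread}. We need $D_n\ge1$, i.e. $q_n\ge2$. If $q_n=1$ we simply replace $q_n$ by $2$, which is legitimate because $\binom{Z}{2}$-moments are controlled by \(CP\) at $q=2$ only if that is assumed. The cleaner route is Markov at level $q_n$ directly: $\Pr(\lambda_{<5}(G_p)\ge q_n)\le\sum_v\mathbb E\binom{Z_v}{q_n}$, giving $\lambda_{<5}\le q_n-1$ with probability $1-o(1/t_n)>1-1/(2t_n)$. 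Proposition \ref{prop:fractional-spread-criterion}, with $D=q_n-1$, then gives
\[
h_5^{\mathrm f}(G_n)\ge \frac{t_n}{q_n\bigl(8\log_{1/(1-p_n)}(2et_n)+4\bigr)}.
\]
As in Corollary \ref{cor:ramsey-spread}, $\log_{1/(1-p)}(2et)\le\log(2et)/p$, so the denominator is $O(q_n\log t_n/p_n)$ provided the additive $4$ is dominated. The ratio is therefore $\Omega\bigl(p_nt_n/(q_n\log t_n)\bigr)\to\infty$ by \(scale\). The additive $4$ is harmless, because \(scale\) with $q_n\ge1$ forces $p_nt_n/\log t_n\to\infty$, so the main term dominates. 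The final remark about fixed $q_n$ is immediate.

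The main obstacle is the bookkeeping at the interface with earlier results. We must make sure the Markov step uses $q_n$ rather than $D_n+1$ with $D_n\ge1$, since Lemma \ref{lem:factorial-spread} as stated requires $D\ge1$. The reproved Markov bound handles $q_n=1$ as well. We must also check that the counting by $W_v(A)$ excludes $u=v$ and double counting correctly, so that the overcount is only in the upper-bound direction.
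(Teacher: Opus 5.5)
Your proposal is correct and follows the paper's proof essentially step for step: you count rooted $4$-cycles on a fixed root-neighbour set exactly by $W_v(A)$, reuse the support-edge bound $h+M(h,q)$, and then combine Markov's inequality on the $q_n$th factorial moments with Proposition~\ref{prop:fractional-spread-criterion} and the estimate $\log_{1/(1-p)}(2et)\le\log(2et)/p$. Your direct Markov step at level $q_n$ is slightly more careful than the paper's citation of Lemma~\ref{lem:factorial-spread} (stated only for $D\ge1$), since it also covers $q_n=1$; however, you should delete the self-contradictory sentence about replacing $q_n=1$ by $2$, which is not legitimate and is not needed.
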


\begin{proof}
Fix a $q$-set $\mathcal Q$ of rooted $4$-cycles through $v$, and let $A=A(\mathcal Q)$ be its set of root-neighbours.  If $|A|=h$, then every possible rooted $4$-cycle with both root-neighbours in $A$ is determined by a pair $\{a,b\}\subseteq A$ and a vertex of $(N(a)\cap N(b))\setminus\{v\}$.  Thus the number of possible $q$-sets of rooted cycles supported on a fixed $A$ is at most $W_v(A)^q$.  The support-edge lower bound from Lemma~\ref{lem:root-neighbour-moment} gives
\[
        |E(\cup_{Q\in\mathcal Q}Q)|\ge h+M(h,q).
\]
Multiplying by the survival probability of this edge union and summing over $h$ and $A$ proves (\ref{lem:codegree-profile-moment}.1).

For each $n$, condition (CP) gives $\sum_v\mathbb E\binom{Z_v}{q_n}=o(1/t_n)$.  Lemma~\ref{lem:factorial-spread} gives
\[
        \lambda_{<5}((G_n)_{p_n})\le q_n-1
\]
with probability $1-o(1/t_n)$.  Mohar--Wu preservation, Theorem~\ref{thm:mohar-wu-random}, gives
\[
        \chif((G_n)_{p_n})\gg \frac{p_nt_n}{\log t_n}
\]
with probability at least $1-1/(2t_n)$.  On a common outcome, Proposition~\ref{prop:fractional-spread-criterion} deletes a short-cycle transversal with fractional cost at most $q_n$, and therefore
\[
        h^{\mathrm f}_5(G_n)
        \gg \frac{p_nt_n}{q_n\log t_n}.
\]
This tends to infinity by (scale).  If $q_n$ is fixed, the scale condition reduces to $p_nt_n/\log t_n\to\infty$; the profile condition \(CP\) remains an independent hypothesis.
\end{proof}

\begin{remark}[Use and limitation of the codegree-profile bound]\label{rem:codegree-profile-use}
Lemma~\ref{lem:codegree-profile-moment} is a profile criterion, not merely a reformulation of the maximum-codegree argument.  It can replace a maximum-codegree hypothesis whenever one has genuine control of the sums in \textup{(CP)}.  A bound on the total number of cherries alone does not imply \textup{(CP)}: in a $K_{2,M}$, for a root $z$ on the $M$-side and the two opposite vertices $a,b$, one has $W_z(\{a,b\})=M-1$, so the $q$th moment is dominated by one highly concentrated pair.  This is why the pseudorandom applications still require genuine control of rooted codegree profiles, or at least a uniform maximum-codegree input.  The profile lemma is retained as the appropriate future tool for settings where high-codegree pairs are few or sufficiently dispersed.
\end{remark}

\begin{lemma}[The logarithmic codegree exponent]\label{lem:log-codeg-exponent}
Fix $0<\delta<1/2$.  For every $R>0$ there is an integer $q=q(\delta,R)$ such that, for every integer $h$ with $2\le h\le 2q$,
\[
        (1-\delta)\max\left\{h,\frac{2q}{h-1}\right\}-\delta h>R.
\]
\end{lemma}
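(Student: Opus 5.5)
We need to choose $q$ large and split the range of $h$ at a threshold near $\sqrt{q}$. Write $F(h)=(1-\delta)\max\{h,2q/(h-1)\}-\delta h$. In both regimes we exploit $\delta<1/2$, which gives $1-2\delta>0$, so the positive term dominates the negative term $-\delta h$ by a fixed proportion.

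\emph{Large $h$, i.e.\ $h\ge \sqrt{q}$.} Use the first entry of the maximum. Then
\[
F(h)\ge (1-\delta)h-\delta h=(1-2\delta)h\ge(1-2\delta)\sqrt q .
\]

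\emph{Small $h$, i.e.\ $2\le h<\sqrt q$.} Use the second entry of the maximum, and bound $2q/(h-1)\ge 2q/h$. This gives
\[
F(h)\ge \frac{2(1-\delta)q}{h}-\delta h .
\]
The right side is decreasing in $h$, so over this range it is at least its value at $h=\sqrt q$:
\[
F(h)\ge 2(1-\delta)\sqrt q-\delta\sqrt q=(2-3\delta)\sqrt q\ge(1-2\delta)\sqrt q .
\]

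In both cases $F(h)\ge(1-2\delta)\sqrt q$. It therefore suffices to choose the integer $q$ with
\[
q>\left(\frac{R}{1-2\delta}\right)^2 \qquad\text{and}\qquad q\ge 1 .
\]
This gives the strict inequality $F(h)>R$ for every $h$ with $2\le h\le 2q$, and $q$ depends only on $\delta$ and $R$.

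The only real check is the monotonicity and endpoint step in the small-$h$ case. Both terms $2(1-\delta)q/h$ and $-\delta h$ are decreasing in $h$, so the bound at the endpoint $h=\sqrt q$ is valid even though $\sqrt q$ need not be an integer. The argument never uses $h\le 2q$ beyond knowing that $h$ is a positive integer at least $2$.
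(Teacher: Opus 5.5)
Your proof is correct and follows essentially the same route as the paper: split at $h=\sqrt q$, use the first entry of the maximum for large $h$ and the second for small $h$, and get a lower bound of order $\sqrt q$ with positive coefficient because $\delta<1/2$. Your only departures are harmless refinements. You weaken $2q/(h-1)$ to $2q/h$, and you make the choice of $q$ explicit as $q>\bigl(R/(1-2\delta)\bigr)^2$.
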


\begin{proof}
Write
\[
        F_q(h)=(1-\delta)\max\left\{h,\frac{2q}{h-1}\right\}-\delta h.
\]
If $h\ge\sqrt q$, then
\[
        F_q(h)\ge (1-2\delta)h\ge (1-2\delta)\sqrt q.
\]
If $2\le h<\sqrt q$, then
\[
        F_q(h)\ge (1-\delta)\frac{2q}{h-1}-\delta h
        \ge 2(1-\delta)\sqrt q-\delta\sqrt q.
\]
Both lower bounds tend to infinity with $q$ because $\delta<1/2$.  Choose $q$ large enough.
\end{proof}

\begin{theorem}[Polylogarithmic-codegree pseudorandom cheap killing]\label{thm:log-codegree-pseudorandom}
Fix constants $C_0,C_1,C_2>0$, $a,b,\rho\ge0$, and $0<\delta<1/2$.  There are constants $D_0,c>0$ and $t_0$ such that the following holds.  Let $G$ be triangle-free with
\[
        t:=\chif(G)\ge t_0,
        \qquad
        |V(G)|\le C_0t^2(\log t)^a,
        \qquad
        \Delta(G)\le C_1t(\log t)^b,
\]
and
\[
        \operatorname{codeg}(G)\le C_2(\log t)^\rho.
\]
Then
\[
        h_5^{\mathrm f}(G)
        \ge
        c\frac{t^\delta}{\log t}.
\]
More precisely, with $p=t^{-1+\delta}$,
\[
        \Pr\bigl(\lambda_{<5}(G_p)\le D_0\bigr)>1-\frac1{2t}
\]
for all sufficiently large $t$.
\end{theorem}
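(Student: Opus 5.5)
We will apply Lemma~\ref{lem:root-neighbour-moment} at every vertex with the retention probability $p=t^{-1+\delta}$ and a fixed integer $q$, and then combine Lemma~\ref{lem:factorial-spread} with the spread criterion, Proposition~\ref{prop:fractional-spread-criterion}. Since $G$ is triangle-free, the only cycles of length less than $5$ are $4$-cycles, so $\lambda_{<5}(G_p)=\max_v Z_v$.

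The first step bounds a single summand of (\ref{lem:root-neighbour-moment}.1). With $\Delta\le C_1t(\log t)^b$, $L\le C_2(\log t)^\rho$ and $h\le 2q$, the $h$th term is at most
\[
        O_q\bigl((\log t)^{bh+\rho q}\bigr)\,t^{h}\,t^{-(1-\delta)(h+M(h,q))}
        =O_q\bigl((\log t)^{bh+\rho q}\bigr)\,t^{-\left[(1-\delta)M(h,q)-\delta h\right]}.
\]
The point is that the $t^h$ from choosing root-neighbours is exactly cancelled by $p^h$ from the root edges, up to the loss $t^{\delta h}$. What remains is governed by the exponent of Lemma~\ref{lem:log-codeg-exponent}.

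The second step chooses the parameters. We take $R=4$ (any $R>3$ with room to spare works) and let $q=q(\delta,R)$ be given by Lemma~\ref{lem:log-codeg-exponent}. Then every summand is $O_q\bigl((\log t)^{O_q(1)}t^{-R}\bigr)$. Since there are at most $2q$ summands, $\mathbb E\binom{Z_v}{q}\le (\log t)^{O(1)}t^{-R}$. Summing over the at most $C_0t^2(\log t)^a$ vertices gives
\[
        \sum_v \mathbb E\binom{Z_v}{q}\le (\log t)^{O(1)}t^{2-R}=o(1/t).
\]
Hence by Lemma~\ref{lem:factorial-spread}, with $D_0=q-1$, we obtain $\Pr(\lambda_{<5}(G_p)\le D_0)>1-1/(2t)$ once $t\ge t_0$. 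This is the "more precisely" assertion.

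The third step derives the chromatic bound. Proposition~\ref{prop:fractional-spread-criterion} gives
\[
        h_5^{\mathrm f}(G)\ge \frac{t}{q\bigl(8\log_{1/(1-p)}(2et)+4\bigr)}.
\]
Since $-\log(1-p)\ge p$, the denominator is $O(\log t/p)$, and therefore $h_5^{\mathrm f}(G)\ge c\,pt/\log t=c\,t^\delta/\log t$.

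The only delicate point is the exponent bookkeeping in the first two steps. First, we must check that the polylogarithmic factors, whose exponents depend on $q$ (hence on $\delta$ and $R$) but not on $t$, are absorbed by the polynomial saving $t^{-R}$. Second, the constant in Lemma~\ref{lem:log-codeg-exponent} must leave slack beyond $3$, to cover both the vertex count $t^{2+o(1)}$ and the required $o(1/t)$. Both hold because $q$ is fixed before $t\to\infty$.
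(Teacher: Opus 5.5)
Your proposal is correct and follows the paper's proof essentially step for step. The shared steps are the root-neighbour moment bound at $p=t^{-1+\delta}$, a fixed $q$ from Lemma~\ref{lem:log-codeg-exponent}, the factorial-moment union bound over the at most $C_0t^2(\log t)^a$ vertices via Lemma~\ref{lem:factorial-spread}, and then Proposition~\ref{prop:fractional-spread-criterion}. The only difference is cosmetic: you take $R=4$ where the paper takes $R=5+a$. Your choice already suffices, since every polylogarithmic factor (including $(\log t)^a$) is absorbed once $R>3$, and any $q$ meeting the lemma with $R=4$ automatically has $q\ge 2$, so $D_0=q-1\ge1$ as Lemma~\ref{lem:factorial-spread} requires.
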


\begin{proof}
Choose $q\ge2$ so large that Lemma \ref{lem:log-codeg-exponent} holds with
\[
        R=5+a.
\]
Set $D_0=q-1$ and $p=t^{-1+\delta}$.  For a fixed vertex $v$, Lemma \ref{lem:root-neighbour-moment} gives
\[
        \mathbb E\binom{Z_v}{q}
        \le
        \sum_{h=2}^{2q}
        \Delta^h(Lh^2)^q p^{h+M(h,q)},                       \tag{1}
\]
where $L=\operatorname{codeg}(G)$ and $M(h,q)=\max\{h,2q/(h-1)\}$.  Using
\[
        \Delta\le C_1t(\log t)^b,
        \qquad
        L\le C_2(\log t)^\rho,
        \qquad
        p=t^{-1+\delta},
\]
the $h$th term in (1) is at most
\[
        C_{h,q}(\log t)^{bh+\rho q}
        t^{\delta h-(1-\delta)M(h,q)}.
\]
By the choice of $q$,
\[
        (1-\delta)M(h,q)-\delta h>5+a
        \tag{2}
\]
for every $2\le h\le2q$.  Since $h\le2q$, the logarithmic factor in the displayed bound is $t^{o(1)}$.  Hence, uniformly in $v$,
\[
        \mathbb E\binom{Z_v}{q}
        \le t^{-5-a+o(1)}\le t^{-4-a}
\]
for all sufficiently large $t$.
As $|V(G)|\le C_0t^2(\log t)^a$, we obtain
\[
        \sum_{v\in V(G)}\mathbb E\binom{Z_v}{q}
        =o(1/t).
\]
Lemma \ref{lem:factorial-spread} therefore gives $\lambda_{<5}(G_p)\le q-1=D_0$ with probability $1-o(1/t)$.

On the other hand, the Mohar--Wu preservation theorem, Theorem \ref{thm:mohar-wu-random}, gives with probability at least $1-1/(2t)$ that
\[
        \chif(G_p)\ge c_1\frac{pt}{\log t}
        =c_1\frac{t^\delta}{\log t}.
\]
For a common outcome, Proposition \ref{prop:fractional-spread-criterion} deletes a bounded-degree short-cycle transversal with multiplicative fractional cost at most $D_0+1$.  This proves the theorem.
\end{proof}

\begin{corollary}[Kim-scale polylogarithmic-codegree form]\label{cor:kim-log-codegree}
Let $(G_n)$ be triangle-free graphs on $N_n$ vertices with $t_n=\chif(G_n)\to\infty$.  Suppose that, for some constants $C_0,C_1,C_2>0$ and $\rho\ge0$, one has
\[
        \alpha(G_n)\le C_0\sqrt{N_n\log N_n},
        \qquad
        \Delta(G_n)\le C_1\sqrt{N_n\log N_n},
        \qquad
        \operatorname{codeg}(G_n)\le C_2(\log N_n)^\rho.
\]
Then, for every $0<\delta<1/2$,
\[
        h_5^{\mathrm f}(G_n)
        \ge c_\delta\frac{t_n^\delta}{\log t_n}
        \to\infty .
\]
\end{corollary}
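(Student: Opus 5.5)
The plan is to reduce the corollary to Theorem \ref{thm:log-codegree-pseudorandom}. For that we need three inputs in terms of $t=t_n=\chif(G_n)$: a bound on $|V(G_n)|$ of the form $C_0't^2(\log t)^{a}$, a bound on $\Delta(G_n)$ of the form $C_1't(\log t)^{b}$, and a codegree bound $C_2'(\log t)^{\rho'}$. Once these are in place, the theorem applies with the given $\delta<1/2$ and yields $h_5^{\mathrm f}(G_n)\ge c_\delta t_n^\delta/\log t_n$ for large $n$. The only work is converting hypotheses stated in $N=N_n$ into hypotheses stated in $t_n$.

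First I relate $t$ to $N$. Since $\chif(G)\ge |V(G)|/\alpha(G)$, the independence bound gives
\[
t\ge \frac{N}{C_0\sqrt{N\log N}}=\frac{1}{C_0}\sqrt{\frac{N}{\log N}}.
\]
In the other direction, triangle-freeness gives $t\le \chi(G)\le \Delta+1\le 2C_1\sqrt{N\log N}$. Together these show $\log N=\Theta(\log t)$, so $\log N\le 3\log t$ for large $n$. Inverting the lower bound gives $N\le C_0^2t^2\log N\le 3C_0^2t^2\log t$, which is the vertex hypothesis with $a=1$.

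For the degree, I write $\sqrt{N\log N}=\sqrt{N/\log N}\cdot\log N\le C_0t\cdot 3\log t$. Hence $\Delta\le 3C_0C_1t\log t$, which is the degree hypothesis with $b=1$. For the codegree, $(\log N)^\rho\le 3^\rho(\log t)^\rho$. All the constants depend only on $C_0,C_1,C_2,\rho$, so the theorem's constants $D_0,c,t_0$ are uniform along the sequence, and $t_n\to\infty$ eventually exceeds $t_0$.

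There is no real obstacle here. The one point to watch is the direction of the inequalities: we need an upper bound on $N$ in terms of $t$, and that comes from the independence-number hypothesis, not the degree one. The degree bound is used only to show $\log N=O(\log t)$, and even that already follows from $t\ge\sqrt{N/\log N}/C_0$. I also need to check that the $\log N$ versus $\log t$ comparison holds once $N$ is large, which follows because $t_n\to\infty$ forces $N_n\to\infty$.
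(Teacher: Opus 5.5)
Your proposal is correct and follows the paper's proof. You invert $\chif(G_n)\ge N_n/\alpha(G_n)$ to get $\log N_n=O(\log t_n)$ and $N_n\le C't_n^2\log t_n$, rewrite the degree and codegree bounds in terms of $t_n$, and apply Theorem~\ref{thm:log-codegree-pseudorandom}. One small correction: the bound $t\le\chi\le\Delta+1$ holds for every graph, not because of triangle-freeness, and it only gives the direction $\log t=O(\log N)$, which the argument never uses.
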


\begin{proof}
Since $t_n=\chif(G_n)\ge N_n/\alpha(G_n)$,
\[
        t_n\ge C_0^{-1}\sqrt{N_n/\log N_n}.
\]
It follows first that
\[
        N_n\le C t_n^2\log N_n .
\]
Taking logarithms gives $\log N_n\le O(1)+2\log t_n+\log\log N_n$, and since $t_n\to\infty$ this implies $\log N_n=O(\log t_n)$.  Substituting this back yields
\[
        N_n\le C't_n^2\log t_n,
        \qquad
        \sqrt{N_n\log N_n}\le C''t_n(\log t_n)
\]
for constants $C',C''$.  The codegree assumption gives $\operatorname{codeg}(G_n)\le C'''(\log t_n)^{\rho}$ after increasing $C'''$.  Theorem \ref{thm:log-codegree-pseudorandom} applies with fixed logarithmic exponents.
\end{proof}

\begin{remark}[FGM stopping-time graphs]
The deterministic Corollary~\ref{cor:kim-log-codegree} applies to any triangle-free-process stopping-time graph for which one has the three inputs
\[
        \alpha(G)\le C\sqrt{n\log n},\qquad
        \Delta(G)\le C\sqrt{n\log n},\qquad
        \operatorname{codeg}(G)\le (\log n)^{O(1)}.
\]
Fiz Pontiveros--Griffiths--Morris prove Ramsey-scale degree and independence estimates for their stopping-time graph.  We do not state a separate FGM theorem here because the all-pairs maximum-codegree estimate must be quoted or proved in exactly that notation before applying Corollary~\ref{cor:kim-log-codegree}.
\end{remark}

\begin{proposition}[Bohman--Keevash tracking-time inputs]\label{prop:bk-inputs}
Let $G(i)$ be the triangle-free process on $n$ vertices, in the notation of Bohman--Keevash.  Let
\[
        L=\sqrt{\log n},\qquad
        t_{\max}=\frac12\sqrt{(1/2-\varepsilon)\log n},
        \qquad
        i_{\max}=t_{\max}n^{3/2}.
\]
Then, with high probability,
\[
        I>i_{\max},
\]
and, for every pair $u\ne v$,
\[
        Z_{uv}(i_{\max})
        :=|N_{G(i_{\max})}(u)\cap N_{G(i_{\max})}(v)|
        \le L^4=(\log n)^2.
\]
Moreover there is an absolute constant $C$ such that, with high probability,
\[
        \Delta(G(i_{\max}))\le C\sqrt{n\log n},
        \qquad
        \alpha(G(i_{\max}))\le C\sqrt{n\log n}.
\]
\end{proposition}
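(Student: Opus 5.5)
This proposition is essentially a transcription of the Bohman--Keevash tracking theorem into our notation, so the plan is to quote their dynamic-concentration results rather than reprove them. The proof will recall the Bohman--Keevash setup: time is scaled as $t=i/n^{3/2}$, with $p(t)=e^{-4t^2}$ and error function $f(t)$. Their main tracking theorem states that, with high probability, the process survives to step $i_{\max}=t_{\max}n^{3/2}$, i.e.\ the stopping time $I$ exceeds $i_{\max}$. Along the way it tracks the open-degree variables $Y_u$ and the codegree variables $Z_{uv}$ up to $i_{\max}$. I will check that our choice $t_{\max}=\tfrac12\sqrt{(1/2-\varepsilon)\log n}$ and $L=\sqrt{\log n}$ matches their parameters, adjusting constants if their normalisation differs by a fixed factor.

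For the codegree bound, I plan to use their crude codegree estimate, which holds for every pair $u\ne v$ throughout $i\le i_{\max}$ on the good event. Bohman--Keevash control $Z_{uv}$ by a polylogarithmic quantity: the expected codegree at time $t$ is about $4t^2 p$, which is $O(\log n)$ at $t_{\max}$, and a separate martingale/branching argument gives the a.s.\ bound $Z_{uv}(i)\le L^4$. I will state this in our notation, $Z_{uv}(i_{\max})\le L^4=(\log n)^2$, after a union bound over the $\binom n2$ pairs, which is already built into their statement.

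The degree bound follows from tracking $Y_u$, or more directly from the tracked degree of $u$: $d_u(i)=(2t+o(1))\sqrt n$. At $t_{\max}=O(\sqrt{\log n})$ this gives $\Delta(G(i_{\max}))\le C\sqrt{n\log n}$. For the independence number I will invoke their independence-number theorem: at time $i_{\max}$ (indeed, already at a constant fraction of the final time), every set of size $C\sqrt{n\log n}$ spans an edge with high probability. This comes from their tracking of the open-edge counts inside each large set, together with a union bound over sets.

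The main obstacle is bookkeeping. I need to confirm that each cited bound is stated at time $i_{\max}$ exactly, rather than at the process end or at a slightly different $t_{\max}$. I also need to confirm that the codegree bound is the all-pairs maximum and not only a typical value, since this is exactly the input missing for the FGM graph. If their independence bound is stated only at a smaller time $i'\le i_{\max}$, I will use monotonicity: $\alpha$ can only decrease as edges are added, so the bound transfers to $i_{\max}$. No such monotonicity is available for the codegree, so for it I will quote the uniform-in-time form.
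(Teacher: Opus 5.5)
Your proposal matches the paper's proof. Both simply quote Bohman--Keevash: their main tracking theorem gives $I>i_{\max}$ and the degree estimate, their crude codegree lemma gives $Z_{uv}\le L^4$ on the tracked event (applied at $i_{\max}$ since $I>i_{\max}$), and their independence-number bound gives the $\alpha$ estimate at the tracking time. You left open whether the codegree bound covers all pairs; the paper settles this by noting that adjacent pairs have no common neighbour, because every process graph is triangle-free.
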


\begin{proof}
This is a quoted consequence of Bohman--Keevash, translated into the notation used here.  Their Theorem~2.13 gives $I>i_{\max}$ with high probability and supplies the tracked degree estimate at time $t_{\max}$.  In particular all degrees at time $i_{\max}$ are $O(\sqrt{n\log n})$ on the tracked event.  Their Lemma~3.10 gives, on the same high-probability tracked event, the estimate $Z_{uv}(i')\le L^4$ for every relevant pair $uv$ whenever $i'-1<I$.  Taking $i'=i_{\max}$ is valid on the event $I>i_{\max}$, and gives the displayed codegree bound.  If one uses a formulation of the tracked estimate in which the variable is stated only for open or non-adjacent pairs, then adjacent pairs are harmless: every process graph is triangle-free, so an adjacent pair has no common neighbour.  Finally, the independence estimate is the bound proved in their Section~7.2 at the same tracking time: the event that $I>i_{\max}$ and $G(i_{\max})$ has an independent set larger than $C\sqrt{n\log n}$ has probability $o(1)$ for a suitable absolute constant $C$.  See \cite[Theorem~2.13, Lemma~3.10, and Section~7.2]{BohmanKeevash2013}.
\end{proof}

\begin{corollary}[Triangle-free process at the Bohman--Keevash tracking time]\label{cor:bk-imax-process}
Let $G(i_{\max})$ be the triangle-free-process graph at the Bohman--Keevash tracking time.  Then, with high probability,
\[
        h^{\mathrm f}_5(G(i_{\max}))\to\infty .
\]
More quantitatively, for every fixed $0<\delta<1/2$,
\[
        h^{\mathrm f}_5(G(i_{\max}))
        \ge n^{\delta/2-o(1)}.
\]
\end{corollary}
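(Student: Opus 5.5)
We combine Proposition~\ref{prop:bk-inputs} with Corollary~\ref{cor:kim-log-codegree}, or more directly with Theorem~\ref{thm:log-codegree-pseudorandom}.

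Let $\mathcal E_n$ be the high-probability event from Proposition~\ref{prop:bk-inputs}. On it:
\begin{enumerate}[label=\textup{(\roman*)}]
\item $G=G(i_{\max})$ is triangle-free, since every graph produced by the triangle-free process is;
\item $\Delta(G)\le C\sqrt{n\log n}$ and $\alpha(G)\le C\sqrt{n\log n}$;
\item $\operatorname{codeg}(G)\le(\log n)^2$, because adjacent pairs have no common neighbours and non-adjacent pairs are bounded by $Z_{uv}(i_{\max})$.
\end{enumerate}
These are exactly the hypotheses of Corollary~\ref{cor:kim-log-codegree} with $N_n=n$ and $\rho=2$. One point needs care: Corollary~\ref{cor:kim-log-codegree} is stated for sequences with $t_n\to\infty$, whereas here the graph is random. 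We therefore apply the deterministic statement, Theorem~\ref{thm:log-codegree-pseudorandom}, to each fixed outcome in $\mathcal E_n$. Its constants depend only on $C_0,C_1,C_2,a,b,\rho,\delta$, so the conclusion holds uniformly on $\mathcal E_n$ once $t$ is large.

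The next step is to locate $t=\chif(G)$ on $\mathcal E_n$. The lower bound is
\[
        t\ge \frac n{\alpha(G)}\ge C^{-1}\sqrt{n/\log n},
\]
so $t\to\infty$ deterministically on $\mathcal E_n$. For the upper bound we use $t\le\chi(G)\le\Delta(G)+1=O(\sqrt{n\log n})$, which gives $\log t=\Theta(\log n)$. As in the proof of Corollary~\ref{cor:kim-log-codegree}, this yields
\[
        |V(G)|=n\le C't^2\log t,\qquad
        \Delta(G)\le C''t\log t,\qquad
        \operatorname{codeg}(G)\le C'''(\log t)^2.
\]
Theorem~\ref{thm:log-codegree-pseudorandom} then gives, on $\mathcal E_n$ and for large $n$,
\[
        h_5^{\mathrm f}(G)\ge c_\delta\frac{t^\delta}{\log t}.
\]

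It remains to convert this to a bound in $n$. Since $t\ge n^{1/2-o(1)}$, we get $t^\delta/\log t\ge n^{\delta/2-o(1)}$; the constant $c_\delta$ and the logarithmic loss are absorbed into the $o(1)$ in the exponent. This gives the quantitative claim, and divergence follows by taking any fixed $\delta$.

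The main obstacle is not the computation but the bookkeeping. First, the codegree bound from Bohman--Keevash must be valid for all pairs at time $i_{\max}$; this relies on $I>i_{\max}$ together with the remark on adjacent pairs. Second, the uniformity in the deterministic theorem must be genuinely independent of the random outcome, so that "with high probability" transfers directly from Proposition~\ref{prop:bk-inputs}. I would check explicitly that $t_0$ in Theorem~\ref{thm:log-codegree-pseudorandom} depends only on the fixed constants, so that $t\ge C^{-1}\sqrt{n/\log n}\ge t_0$ holds on all of $\mathcal E_n$ for $n$ large.
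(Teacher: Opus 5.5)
Your proposal is correct and follows essentially the same route as the paper. On the high-probability event of Proposition~\ref{prop:bk-inputs} you check that $G(i_{\max})$ is triangle-free with $t=\chif(G(i_{\max}))\ge n^{1/2-o(1)}$, $n\le t^2(\log t)^{O(1)}$, $\Delta\le t(\log t)^{O(1)}$ and polylogarithmic codegree, then apply Theorem~\ref{thm:log-codegree-pseudorandom} and convert $t^\delta/\log t$ into $n^{\delta/2-o(1)}$. You also check explicitly that the threshold $t_0$ and the constants in that theorem depend only on fixed parameters, so the conclusion holds uniformly on the event; the paper leaves this implicit.
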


\begin{proof}
The graph $G(i_{\max})$ is triangle-free.  Proposition~\ref{prop:bk-inputs} gives the degree and independence estimates at the tracking time.  Hence
\[
        t:=\chif(G(i_{\max}))\ge n^{1/2-o(1)},
        \qquad
        |V(G(i_{\max}))|\le t^2(\log t)^{O(1)},
        \qquad
        \Delta(G(i_{\max}))\le t(\log t)^{O(1)}.
\]
Proposition~\ref{prop:bk-inputs} supplies the polylogarithmic maximum-codegree bound.  Theorem~\ref{thm:log-codegree-pseudorandom} applies and gives the result.
\end{proof}

\begin{remark}[Terminal triangle-free process]
The terminal graph $G_{\mathrm{fin}}$ of the triangle-free process has Ramsey-scale degree and independence bounds by Bohman--Keevash.  The deterministic Corollary~\ref{cor:kim-log-codegree} would apply to $G_{\mathrm{fin}}$ if one had the final-time bound
\begin{equation}\tag{TC}
        \max_{x\ne y}\left|N_{G_{\mathrm{fin}}}(x)\cap N_{G_{\mathrm{fin}}}(y)\right|\le (\log n)^{O(1)}.
\end{equation}
We do not state a terminal-process theorem here, because Proposition~\ref{prop:bk-inputs} proves the needed maximum-codegree bound at the Bohman--Keevash tracking time $i_{\max}$, whereas (TC) is a separate final-time assertion after the post-tracking edges have been added.
\end{remark}

\begin{remark}[On process codegree inputs]
The deterministic content behind the process applications is Theorem~\ref{thm:log-codegree-pseudorandom}.  Corollary~\ref{cor:bk-imax-process} is unconditional because Proposition~\ref{prop:bk-inputs} supplies the required all-pairs maximum-codegree bound at the Bohman--Keevash tracking time.  We do not promote other process times, such as the FGM stopping-time graph or the terminal graph, to theorem statements here, because they require separate all-pairs maximum-codegree estimates in their own notation.
\end{remark}

\begin{proposition}[Ramsey-scale triangle-free graphs need logarithmic codegree]\label{prop:ramsey-codegree-obstruction}
Let $C>0$.  There is a constant $c=c(C)>0$ such that every sufficiently large triangle-free graph $G$ on $N$ vertices with
\[
        \alpha(G)\le C\sqrt{N\log N}
\]
satisfies
\[
        \operatorname{codeg}(G)\ge c\log N.
\]
Consequently, if $t=\chif(G)$, then
\[
        \operatorname{codeg}(G)\ge c'\log t
\]
for another constant $c'=c'(C)>0$ and all sufficiently large $N$.
\end{proposition}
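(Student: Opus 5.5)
The plan is a cherry-counting (double-counting) argument driven by Shearer's independence-number bound for triangle-free graphs. I will show that the \emph{average} codegree over all pairs of vertices is already $\Omega(\log N)$. Since the maximum codegree is at least the average, this gives the first assertion.

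First, I lower-bound the average degree $d=2e(G)/N$. By the Caro--Wei/Tur\'an bound, $\alpha(G)\ge N/(d+1)$. Combined with $\alpha(G)\le C\sqrt{N\log N}$, this gives $d\ge \sqrt N/(C\sqrt{\log N})-1$. In particular $d\to\infty$ and $\log d\ge(\tfrac12-o(1))\log N$. Next I invoke Shearer's theorem for triangle-free graphs of average degree $d$:
\[
\alpha(G)\ge N\,\frac{d\log d-d+1}{(d-1)^2}=(1-o(1))\frac{N\log d}{d}.
\]
Comparing with the hypothesis gives
\[
d\ge (1-o(1))\frac{N\log d}{C\sqrt{N\log N}}\ge c_1\sqrt{N\log N}
\]
for a constant $c_1=c_1(C)>0$ and all large $N$.

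Second, I count cherries, that is, paths $a\,v\,b$ with centre $v$. Counting by centre and using convexity (Jensen),
\[
\sum_v\binom{d(v)}2\ge N\binom d2\ge \tfrac{c_1^2}{3}N^2\log N.
\]
Counting by endpoints, the same quantity equals $\sum_{\{a,b\}}|N(a)\cap N(b)|$, which is at most $\binom N2\operatorname{codeg}(G)$. Dividing gives $\operatorname{codeg}(G)\ge c\log N$ with, say, $c=c_1^2/2$. For the consequence, note that $t=\chif(G)\le N$, so $\log t\le\log N$ and one may take $c'=c$.

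The only delicate step is the first one. Shearer's bound must be applied with the average degree rather than the maximum degree, and one must first confirm that $\log d$ is comparable to $\log N$. Otherwise the inversion of $N\log d/d\lesssim\sqrt{N\log N}$ would lose the logarithmic factor. The preliminary Caro--Wei estimate supplies exactly this comparison. Everything else is routine double counting.
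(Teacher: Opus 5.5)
Your proposal is correct and follows essentially the same route as the paper: Caro--Wei to force the average degree to infinity, Shearer's average-degree bound to get $\bar d\ge c_1\sqrt{N\log N}$, cherry double-counting to show the average codegree is $\Omega(\log N)$, and $t\le N$ for the consequence. The only cosmetic difference is that you extract $\log d\ge(\tfrac12-o(1))\log N$ from Caro--Wei before inverting Shearer's bound, whereas the paper argues by contradiction using that $\log x/x$ is decreasing.
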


\begin{proof}
Let $\bar d=2e(G)/N$ be the average degree.  If $\bar d$ is bounded, then the Caro--Wei bound gives $\alpha(G)\ge N/(\bar d+1)=\Omega(N)$, contradicting $\alpha(G)\le C\sqrt{N\log N}$ for all large $N$.  Hence we may assume $\bar d\to\infty$.  We use Shearer's theorem \cite{Shearer1983} in the following standard asymptotic form: every triangle-free graph with average degree $\bar d\to\infty$ has
\[
        \alpha(G)\ge c_0\frac{N\log \bar d}{\bar d}
\]
for an absolute constant $c_0>0$ and all sufficiently large $\bar d$.  If $\bar d\le c_1\sqrt{N\log N}$ with $c_1=c_1(C)>0$ sufficiently small, then
\[
        \alpha(G)
        \ge
        c_0\frac{N\log(c_1\sqrt{N\log N})}{c_1\sqrt{N\log N}}
        \ge
        2C\sqrt{N\log N}
\]
for all large $N$, a contradiction.  Thus $\bar d\ge c_1\sqrt{N\log N}$.

Counting cherries gives
\[
        \sum_{\{x,y\}\subseteq V(G)}|N(x)\cap N(y)|
        =\sum_{v\in V(G)}\binom{d(v)}2
        \ge N\binom{\bar d}{2}.
\]
Dividing by $\binom N2$ shows that the average codegree is at least
\[
        (1-o(1))\frac{\bar d^2}{N}\ge c_2\log N,
\]
so the maximum codegree is at least $c_2\log N$.  Since $t=\chif(G)\le N$, we have $\log t\le \log N$, and therefore the maximum codegree is at least $c_2\log t$ after decreasing $c_2$ if necessary.
\end{proof}

\begin{remark}[The logarithmic-to-polylogarithmic codegree window]
Proposition \ref{prop:ramsey-codegree-obstruction} shows that logarithmic maximum codegree is necessary at the Ramsey scale, while Corollary \ref{cor:kim-log-codegree} shows that any fixed polylogarithmic maximum codegree is sufficient for fractional cheap killing, together with the usual Ramsey-scale order and degree bounds.  Thus the previous $\log\log t$ gap is closed up to fixed polylogarithmic powers at the level of deterministic hypotheses.  Any triangle-free graph satisfying the explicit order, degree and polylogarithmic maximum-codegree hypotheses is covered by Corollary \ref{cor:kim-log-codegree}; in particular, the Bohman--Keevash tracking time is covered unconditionally by Corollary~\ref{cor:bk-imax-process}.
\end{remark}

\begin{corollary}[A spectral polylogarithmic-codegree form]
Let $(G_n)$ be triangle-free $d_n$-regular graphs with $t_n=\chif(G_n)\to\infty$.  Suppose that for some constants $C_0,C_1,C_2,a,b,\rho$,
\[
        |V(G_n)|\le C_0t_n^2(\log t_n)^a,
        \qquad
        d_n\le C_1t_n(\log t_n)^b,
        \qquad
        \operatorname{codeg}(G_n)\le C_2(\log t_n)^\rho.
\]
Then $h_5^{\mathrm f}(G_n)\to\infty$.  In spectral applications, the lower bound $t_n\ge 1+d_n/\lambda_n$ follows from Hoffman whenever the least eigenvalue is at least $-\lambda_n$.
\end{corollary}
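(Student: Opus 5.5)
The plan is to verify the hypotheses of Theorem~\ref{thm:log-codegree-pseudorandom} directly, using a fixed $\delta$. The order, maximum-degree and codegree hypotheses of the corollary are exactly those required by that theorem, with constants $C_0,C_1,C_2$ and logarithmic exponents $a,b,\rho$. So the only work is to check that the theorem's conclusion $h_5^{\mathrm f}(G_n)\ge c\,t_n^\delta/\log t_n$ applies and tends to infinity. Triangle-freeness is assumed.

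First I will fix any $0<\delta<1/2$, say $\delta=1/4$. I then invoke Theorem~\ref{thm:log-codegree-pseudorandom} with the given constants $C_0,C_1,C_2,a,b,\rho$ (taking the maximum with $0$ where needed, since the theorem requires $a,b,\rho\ge0$; enlarging exponents only weakens the hypotheses once $\log t_n\ge1$). This yields constants $c,t_0$. Because $t_n\to\infty$, we have $t_n\ge t_0$ for all large $n$, and the theorem gives
\[
        h_5^{\mathrm f}(G_n)\ge c\,\frac{t_n^{\delta}}{\log t_n}\to\infty .
\]
Regularity is not needed for this step; regular graphs are simply a special case of the bounded-maximum-degree hypothesis, with $\Delta(G_n)=d_n$.

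The only delicate point is the spectral remark, which is a side comment rather than part of the conclusion. Hoffman's bound gives $\alpha(G_n)\le |V(G_n)|\,\lambda_n/(d_n+\lambda_n)$. Combined with $\chif\ge |V|/\alpha$, this yields $t_n\ge (d_n+\lambda_n)/\lambda_n=1+d_n/\lambda_n$. I would record this as the way one verifies $t_n\to\infty$, and also the lower-order comparison $d_n\le C_1t_n(\log t_n)^b$, in spectral examples: if $\lambda_n$ is polylogarithmic relative to $d_n$, then $t_n$ is within a polylogarithmic factor of $d_n$.

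I expect no real obstacle. The main care is in the exponent signs: if any of $a,b,\rho$ is negative, replace it by $0$ before applying the theorem. With that, the proof is a direct substitution into Theorem~\ref{thm:log-codegree-pseudorandom}.
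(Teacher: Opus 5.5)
Your proposal is correct and is essentially the paper's argument. The corollary is a direct instance of Theorem~\ref{thm:log-codegree-pseudorandom} with $\Delta(G_n)=d_n$, and the final sentence is the Hoffman ratio bound combined with $\chif\ge |V|/\alpha$; your replacement of any negative $a,b,\rho$ by $0$ is harmless extra care that the paper leaves implicit.
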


\begin{proof}
This is Theorem \ref{thm:log-codegree-pseudorandom}.  The final sentence is the standard Hoffman fractional-colouring bound: if the least eigenvalue is at least $-\lambda_n$, then
\[
        \alpha(G_n)\le \frac{|V(G_n)|\lambda_n}{d_n+\lambda_n},
        \qquad
        \chif(G_n)\ge \frac{|V(G_n)|}{\alpha(G_n)}\ge 1+\frac{d_n}{\lambda_n}.
\]
\end{proof}

\begin{proposition}[The Burling factorial-moment test in explicit form]\label{prop:burling-factorial-test}
Let $B_j$ be a standard triangle-free Burling graph, put $t_j=\chif(B_j)$, let $p_j\in(0,1)$, and for $v\in V(B_j)$ let $\mathcal C_v(B_j)$ be the family of $4$-cycles of $B_j$ containing $v$.  Suppose $t_j>1$ eventually.  If
\[
        \frac{p_jt_j}{\log t_j}\to\infty
\]
and there are integers $D_j\ge0$ with $D_j+1=o(p_jt_j/\log t_j)$ such that
\[
        \sum_{v\in V(B_j)}
        \sum_{\substack{\mathcal Q\subseteq \mathcal C_v(B_j)\\ |\mathcal Q|=D_j+1}}
        p_j^{|E(\cup_{Q\in\mathcal Q}Q)|}=o(1/t_j),        \tag{B$'$}
\]
then
\[
        h^{\mathrm f}_5(B_j)\to\infty .
\]
Moreover
\[
        \E Z_v=p_j^4|\mathcal C_v(B_j)|,
\]
where $Z_v$ is the number of retained $4$-cycles through $v$ in $(B_j)_{p_j}$.
\end{proposition}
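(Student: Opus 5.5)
The statement is a direct instantiation of the factorial-moment spread test, so the plan is to reduce (B$'$) to the hypothesis of Lemma~\ref{lem:factorial-spread} and then feed the outcome into Proposition~\ref{prop:fractional-spread-criterion} together with Theorem~\ref{thm:mohar-wu-random}.

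\emph{Step 1: triangle-freeness.} Since the standard Burling graphs are triangle-free, as recorded before Proposition~\ref{prop:burling-growth}, every cycle of length less than $5$ in $(B_j)_{p_j}$ is a retained $4$-cycle. Hence $Z_v$, the number of retained $4$-cycles through $v$, is exactly the rooted short-cycle count of Lemma~\ref{lem:factorial-spread} with $r=5$.

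\emph{Step 2: the factorial moment equals the sum in (B$'$).} I expand $\binom{Z_v}{D_j+1}$ as the number of $(D_j+1)$-subsets $\mathcal Q\subseteq\mathcal C_v(B_j)$ all of whose cycles are retained. The event that all cycles of $\mathcal Q$ are retained is the event that every edge of $\bigcup_{Q\in\mathcal Q}Q$ is retained, which has probability $p_j^{|E(\cup Q)|}$ by independence. Linearity of expectation then gives
\[
\E\binom{Z_v}{D_j+1}=\sum_{\substack{\mathcal Q\subseteq\mathcal C_v(B_j)\\ |\mathcal Q|=D_j+1}}p_j^{|E(\cup_{Q\in\mathcal Q}Q)|}.
\]
Summing over $v$, (B$'$) is exactly the hypothesis of Lemma~\ref{lem:factorial-spread} with $\xi=o(1/t_j)$. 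The lemma is stated for $D\ge1$, but its Markov argument works verbatim for $D=0$, since then $\binom{Z_v}{1}=Z_v$. So for large $j$ we have $\xi<1/(2t_j)$ and $\Pr(\lambda_{<5}\le D_j)>1/(2t_j)$.

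\emph{Step 3: combine.} Proposition~\ref{prop:fractional-spread-criterion} yields
\[
h_5^{\mathrm f}(B_j)\ge \frac{t_j}{(D_j+1)\bigl(8\log_{1/(1-p_j)}(2et_j)+4\bigr)}.
\]
Using $-\log(1-p)\ge p$ as in Corollary~\ref{cor:ramsey-spread}, the logarithmic factor is $O(\log t_j/p_j)$; the additive $4$ is absorbed because $\log(2et_j)/p_j$ is bounded below. So the bound is
\[
\Omega\!\left(\frac{p_jt_j}{(D_j+1)\log t_j}\right),
\]
which tends to infinity since $D_j+1=o(p_jt_j/\log t_j)$.

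\emph{Step 4: the expectation identity.} The identity $\E Z_v=p_j^4|\mathcal C_v(B_j)|$ is the case $|\mathcal Q|=1$ of Step~2, since each $4$-cycle has four edges.

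The only point needing care is that Theorem~\ref{thm:mohar-wu-random} requires $t_j\ge2$. The hypothesis $p_jt_j/\log t_j\to\infty$ with $p_j<1$ forces $t_j\to\infty$, so this holds eventually; the one-sided hypothesis $t_j>1$ is then just a guard against $\log t_j=0$.
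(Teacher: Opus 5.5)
Your argument is correct and is essentially the paper's proof: the paper expands $\E\binom{Z_v}{D_j+1}$ exactly as in your Step~2 and then cites Corollary~\ref{cor:ramsey-spread}, which is just the composition of Lemma~\ref{lem:factorial-spread}, Theorem~\ref{thm:mohar-wu-random} and Proposition~\ref{prop:fractional-spread-criterion} that you unfold by hand. Your remark that the Markov step also covers $D_j=0$ usefully patches the fact that the corollary is stated only for $D_n\ge1$.

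One small correction to your last paragraph: for real $t_j>1$, the condition $p_jt_j/\log t_j\to\infty$ does not by itself force $t_j\to\infty$, since it also holds if $t_j\downarrow1$. However, a graph with $\chif>1$ has an edge, so $t_j>1$ already gives $t_j\ge\omega(B_j)\ge2$; this is all Theorem~\ref{thm:mohar-wu-random} needs, and it then does yield $t_j\to\infty$.
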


\begin{proof}
The first displayed identity is just the first moment of the rooted $4$-cycle count after independent edge-retention.  For the factorial moment, expand
\[
        \E\binom{Z_v}{D_j+1}
        =
        \sum_{\substack{\mathcal Q\subseteq \mathcal C_v(B_j)\\ |\mathcal Q|=D_j+1}}
        \Pr\bigl(E(Q)\subseteq E((B_j)_{p_j})\text{ for every }Q\in\mathcal Q\bigr).
\]
Since edges are retained independently, the probability in the summand is
\[
        p_j^{|E(\cup_{Q\in\mathcal Q}Q)|}.
\]
Thus (B$'$) is exactly the condition
\[
        \sum_v \E\binom{Z_v}{D_j+1}=o(1/t_j)
\]
from Corollary~\ref{cor:ramsey-spread}.  The additional assumption $p_jt_j/\log t_j\to\infty$ is the Mohar--Wu fractional-preservation scale.  Since $D_j+1=o(p_jt_j/\log t_j)$, Corollary~\ref{cor:ramsey-spread} yields $h^{\mathrm f}_5(B_j)\to\infty$.
\end{proof}

We shall use the following standard recursive model of the triangle-free Burling graphs, in the form recorded by Felsner--Joret--Micek--Trotter--Wiechert.  The graph $G_1$ consists of one vertex, and $\mathcal S(G_1)$ consists of its one singleton stable set.  Having constructed $G_i$ together with a family $\mathcal S(G_i)$ of stable sets, construct $G_{i+1}$ as follows.  Start with one master copy $H$ of $G_i$.  For every $S\in\mathcal S(H)$ add a copy $H_S$ of $G_i$.  For every $X\in\mathcal S(H_S)$ add a new vertex $v_{S,X}$ adjacent exactly to the vertices of $X$.  The new stable-set family $\mathcal S(G_{i+1})$ contains the sets $S\cup X$ and $S\cup\{v_{S,X}\}$ for all such pairs $(S,X)$.  Since each $X$ is stable, each new vertex is adjacent to a stable set, and the copies are otherwise disjoint; induction therefore shows that every $G_i$ is triangle-free.  This is the model used in Proposition~\ref{prop:burling-growth} and below.

\begin{proposition}[Tight rooted clusters in the standard Burling recursion]\label{prop:burling-tight-clusters}
Let $B_j=G_j$ be the standard Burling sequence with the stable-set collections $\mathcal S(G_j)$ from Proposition~\ref{prop:burling-growth}.  For $j\ge4$, the graph $B_j$ contains a copy of
\[
        K_{2,M_j},
        \qquad
        M_j=2^{2^{j-2}-2}.
\]
Consequently there is a vertex $z\in V(B_j)$ with at least $M_j-1$ rooted $4$-cycles, and these cycles contain a subfamily with the following extremal overlap property: for every $1\le s\le M_j-1$, some $s$ rooted $4$-cycles through $z$ have union with exactly $2s+2$ edges.
\end{proposition}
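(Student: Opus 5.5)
The plan is to find two vertices $a,b$ of $B_j$ that lie together in many members of the stable-set family at level $j-1$. Each such member then receives, at the last step of the recursion, a new vertex adjacent to all of it, and all these new vertices are common neighbours of $a$ and $b$. The $K_{2,M_j}$ is $\{a,b\}$ versus these new vertices. It is automatically a subgraph (we do not need it induced), and $a,b$ are nonadjacent because they lie in a common stable set.

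Concretely, write $B_j=G_j$ as built from a master copy $H\cong G_{j-1}$ together with the copies $H_S$, $S\in\mathcal S(H)$. For every $S\in\mathcal S(H)$ and $X\in\mathcal S(H_S)$, the vertex $v_{S,X}$ is adjacent to all of $X$. The $K_{2,M}$ will be placed inside some copy $H_S$, with $a,b$ taken from its stable sets $X$.

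The first key step is to show that every member of $\mathcal S(G_i)$ has size at least $2$ for $i\ge2$. This follows by induction, since each new set has the form $S\cup X$ or $S\cup\{v_{S,X}\}$ with $S\neq\emptyset$ and the added part nonempty.

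The second key step is a counting recursion. For a fixed pair $\{a,b\}$, let $N_i(a,b)$ be the number of sets in $\mathcal S(G_i)$ containing both $a$ and $b$. If $\{a,b\}\subseteq S$ for some $S$ belonging to the stable-set family of the master copy inside $G_i$, then every extension $S\cup X$ and every extension $S\cup\{v_{S,X}\}$ contains the pair. This gives
\[
N_i(a,b)\ \ge\ 2\,|\mathcal S(G_{i-1})|\ =\ 2s_{i-1}.
\]
More generally, if the pair sits in the master-of-master, the extension counts multiply along the nested masters.

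I would choose $\{a,b\}$ inside a set of the deepest master copy of $G_2$. Iterating the recursion down the chain of master copies, and using $s_i=2s_{i-1}^2$ and $s_i=2^{2^{i-1}-1}$ from Proposition~\ref{prop:burling-growth}, I expect to obtain a number of sets in the relevant copy of $G_{j-1}$ containing $\{a,b\}$ of order at least $M_j=2^{2^{j-2}-2}=s_{j-1}/2$. This is the step I expect to be the main obstacle. The naive one-level count only gives $2s_{j-2}=2^{2^{j-3}}$, which is much smaller than $M_j$. So the bookkeeping must carefully track how a fixed pair propagates through all the master-copy levels. This means verifying that the number of sets containing a fixed pair in the nested master grows like a constant fraction of the full family, roughly half of $s_{j-1}$. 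If the exact exponent does not come out, the fallback is to prove the statement with whatever doubly exponential $M_j$ the recursion actually yields.

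The condition $j\ge4$ enters so that the pair comes from a level-$2$ set and there are at least two further levels of recursion above it.

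Finally, I would deduce the cluster statement. Let $w_1,\dots,w_M$ be the common neighbours and take the root $z=w_1$. Then the walks $z\,a\,w_i\,b\,z$ for $i=2,\dots,M$ are $M-1$ distinct $4$-cycles through $z$; they are genuine cycles because $B_j$ is triangle-free. Any $s$ of them share the two root edges $za$ and $zb$, and each contributes exactly two further edges $aw_i$ and $w_ib$, which are distinct for distinct $i$. Hence the union of any $s$ of these cycles has exactly $2s+2$ edges.
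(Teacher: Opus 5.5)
Your approach is the paper's: locate a pair $\{a,b\}$ in the nested master copies inside some $H_S\cong G_{j-1}$ that lies in many members of $\mathcal S(H_S)$, and take the new vertices $v_{S,X}$ as the $M_j$-side. Your deduction of the cluster property is also exactly the paper's.

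There is, however, a gap at the counting step you yourself flag. You only prove the one-level bound $N_i(a,b)\ge 2s_{i-1}$. You then say the counts ``multiply along the nested masters'' and that you \emph{expect} roughly $s_{j-1}/2$, with a fallback to a weaker $M_j$. That fallback would not prove the statement, which fixes $M_j=2^{2^{j-2}-2}$.

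What is missing is the one-line multiplicative recurrence
\[
N_i(a,b)\ \ge\ 2s_{i-1}\,N_{i-1}(a,b),
\]
where $N_{i-1}(a,b)$ counts members of $\mathcal S(H)$, for the master copy $H$, that contain the pair. It holds because each such $S$ yields the $2s_{i-1}$ sets $S\cup X$ and $S\cup\{v_{S,X}\}$, all of which contain $a,b$. These sets are distinct for fixed $S$, by the count in Proposition~\ref{prop:burling-growth}. Sets coming from different $S$ are also distinct, since each has trace exactly $S$ on $V(H)$.

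Start from $N_2=1$, using the master and copy vertices of $G_2$. Since $2s_{i-1}=2^{2^{i-2}}$, the recurrence telescopes to
\[
N_{j-1}(a,b)\ \ge\ \prod_{i=3}^{j-1}2^{2^{i-2}}=2^{2^{j-2}-2}=M_j .
\]
Equivalently, because $s_i=2s_{i-1}^2$, the ratio $N_i/s_i$ is preserved by the recursion and equals $1/2$ for every $i\ge2$, which confirms your guess exactly. This is precisely the paper's induction $\mu_{i+1}\ge 2|\mathcal S(G_i)|\,\mu_i$, giving $m_{i+1}=2^{2^i-2}$, followed by one final step from $G_{j-1}$ to $G_j$.

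Some minor points:
\begin{itemize}
\item The lemma that every stable set has size at least $2$ is not needed.
\item The walks $z\,a\,w_i\,b\,z$ are genuine $4$-cycles simply because the four vertices are distinct; triangle-freeness plays no role.
\item The argument already works for $j\ge3$, so $j\ge4$ is not what makes it go through.
\end{itemize}
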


\begin{proof}
For two vertices $a,b\in V(G_i)$, write
\[
        \mu_i(a,b)=|\{S\in\mathcal S(G_i):\{a,b\}\subseteq S\}|.
\]
We first prove that for every $i\ge2$ there are distinct nonadjacent vertices $a_i,b_i\in V(G_i)$ such that
\[
        \mu_i(a_i,b_i)\ge m_i:=2^{2^{i-1}-2}.       \label{eq:burling-pair-multiplicity}
\]
For $i=2$, the graph $G_2$ is formed from a master vertex, a copy vertex, and one new vertex.  One of the two stable sets in $\mathcal S(G_2)$ contains the master vertex and the copy vertex, so $\mu_2\ge1=m_2$.

Assume that $a_i,b_i$ lie in a master copy $H$ of $G_i$ and are contained in at least $m_i$ members of $\mathcal S(H)$.  In the construction of $G_{i+1}$, for every stable set $S\in\mathcal S(H)$ containing $a_i,b_i$ and every $X\in\mathcal S(H_S)$, both
\[
        S\cup X
        \quad\text{and}\quad
        S\cup\{v_{S,X}\}
\]
are members of $\mathcal S(G_{i+1})$ and contain $a_i,b_i$.  Therefore the same pair in the master copy of $G_{i+1}$ is contained in at least
\[
        2|\mathcal S(G_i)|m_i
        =2\cdot 2^{2^{i-1}-1}\cdot 2^{2^{i-1}-2}
        =2^{2^i-2}=m_{i+1}
\]
stable sets.  This proves \eqref{eq:burling-pair-multiplicity}.

Now pass from $G_{j-1}$ to $G_j$.  Choose a copy $H_S$ of $G_{j-1}$ and a pair $a,b\in V(H_S)$ contained in $m_{j-1}=M_j$ stable sets of $\mathcal S(H_S)$, say $X_1,\ldots,X_{M_j}$.  The construction adds vertices $v_{S,X_1},\ldots,v_{S,X_{M_j}}$, each adjacent to both $a$ and $b$.  These vertices are pairwise nonadjacent, and $a,b$ are nonadjacent because every $X_i$ is stable.  Hence they span a $K_{2,M_j}$.

Fix one vertex $z=v_{S,X_1}$ on the $M_j$-side.  For each $i=2,\ldots,M_j$ the four vertices
\[
        z,a,v_{S,X_i},b
\]
form a $4$-cycle.  If we choose $s$ of these cycles, their union consists of the two edges $za,zb$ and, for each chosen auxiliary vertex $v_{S,X_i}$, the two edges $av_{S,X_i},bv_{S,X_i}$.  Thus the union has exactly $2+2s$ edges.
\end{proof}

\begin{corollary}[The Burling factorial-moment test fails on the tight clusters]\label{cor:burling-test-fails}
Let $B_j$ be the standard Burling sequence and put $t_j=\chif(B_j)$.  Suppose $t_j>1$ eventually, and suppose $p_j\in(0,1)$ and $D_j\ge0$ satisfy
\[
        \frac{p_jt_j}{\log t_j}\to\infty,
        \qquad
        D_j+1=o\!\left(\frac{p_jt_j}{\log t_j}\right).
\]
Then the cluster sum in (B$'$) is not small.  In fact,
\[
        \sum_{v\in V(B_j)}
        \sum_{\substack{\mathcal Q\subseteq \mathcal C_v(B_j)\\ |\mathcal Q|=D_j+1}}
        p_j^{|E(\cup_{Q\in\mathcal Q}Q)|}\to\infty.
\]
Consequently the sufficient condition (B$'$) from Proposition~\ref{prop:burling-factorial-test} cannot prove fractional cheap cycle killing for the standard Burling sequence.
\end{corollary}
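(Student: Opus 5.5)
The plan is to lower-bound the cluster sum in (B$'$) by the contribution of a single root, namely the vertex $z$ on the large side of the $K_{2,M_j}$ supplied by Proposition~\ref{prop:burling-tight-clusters}. All summands are nonnegative, so it is enough to show that
\[
        \sum_{\substack{\mathcal Q\subseteq\mathcal C_z(B_j)\\ |\mathcal Q|=D_j+1}} p_j^{|E(\cup\mathcal Q)|}\to\infty .
\]
Put $s=D_j+1$ and restrict to the $M_j-1$ rooted $4$-cycles $z\,a\,v_{S,X_i}\,b$. Any $s$ of them have union with exactly $2s+2$ edges. This requires $s\le M_j-1$, which we verify below. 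The restricted sum therefore equals
\[
        \binom{M_j-1}{s}p_j^{2s+2}\;\ge\;\Bigl(\frac{(M_j-1)p_j^2}{s}\Bigr)^{s}p_j^{2}.
\]

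Next we compare sizes. By Proposition~\ref{prop:burling-growth} and the normalization $\chi(B_j)=j$, we have $t_j=\chif(B_j)\le j$. On the other hand $M_j=2^{2^{j-2}-2}$, so $\log M_j=\Theta(2^{j})$ grows doubly exponentially relative to $t_j$. The hypotheses give
\[
        s=D_j+1=o\bigl(p_jt_j/\log t_j\bigr)\le o(t_j)\le j,
\]
and $p_j\ge \omega(\log t_j/t_j)\ge 1/j$ for large $j$, since $t_j\le j$ and $p_jt_j/\log t_j\to\infty$. In fact $p_jt_j/\log t_j\to\infty$ together with $p_j<1$ forces $t_j\to\infty$, and $t_j\le j$ then forces $j\to\infty$. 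We conclude that $s\le j\ll M_j$, and that
\[
        (M_j-1)p_j^2/s\ \ge\ M_j/(2j^3)\ \ge\ 2
\]
for large $j$. Hence the restricted sum is at least $2^{s}p_j^2\ge 2p_j^2$. This alone is not divergent, so we sharpen it by using
\[
        \Bigl(\frac{M_jp_j^2}{2s}\Bigr)^{s}p_j^2\ \ge\ \frac{M_jp_j^2}{2s}\cdot p_j^2\ \ge\ \frac{M_j}{2j^5}\to\infty,
\]
which holds because the base is at least $1$ and $s\ge1$.

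The main care point is the relation between $t_j$ and $j$. I will use only the upper bound $\chif(B_j)\le\chi(B_j)=j$, which is all that is needed to keep $s$ and $1/p_j$ polynomial in $j$ while $M_j$ is doubly exponential. I also need $j\ge4$ for Proposition~\ref{prop:burling-tight-clusters}, which holds eventually, and $s\ge1$, which is automatic since $D_j\ge0$. The final sentence of the statement then follows at once: (B$'$) requires the sum to be $o(1/t_j)$, yet we have shown that it tends to infinity.
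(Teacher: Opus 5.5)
Your proof is correct and follows essentially the paper's route. Both arguments lower-bound the sum in (B$'$) using the tight $K_{2,M_j}$ cluster from Proposition~\ref{prop:burling-tight-clusters}, where any $s$ rooted $4$-cycles span exactly $2s+2$ edges, and then use $t_j\le\chi(B_j)=j$ to pit the doubly exponential $M_j$ against quantities polynomial in $j$.

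The differences are cosmetic. The paper sums over all $M_j$ roots and tracks $A_j=p_jt_j/\log t_j$, whereas you keep a single root and use the cruder bounds $s\le j$ and $p_j\ge 1/j$. Those bounds, and your claim that $t_j\to\infty$, need $t_j$ bounded away from $1$, not merely $t_j>1$. This holds because $B_j$ has an edge for $j\ge2$, so $t_j\ge2$.
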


\begin{proof}
Let
\[
        M_j=2^{2^{j-2}-2},
        \qquad
        s_j=D_j+1.
\]
By Proposition~\ref{prop:burling-tight-clusters}, $B_j$ contains a $K_{2,M_j}$ with bipartition $\{a,b\}\cup W$.  For every root $z\in W$ and every $s_j$-set $U\subseteq W\setminus\{z\}$, the family of $4$-cycles
\[
        \{zau bz: u\in U\}
\]
has union with $2s_j+2$ edges.  Therefore the left hand side of (B$'$) is at least
\[
        M_j\binom{M_j-1}{s_j}p_j^{2s_j+2}.        \label{eq:burling-cluster-lower}
\]
The standard Burling graph satisfies $\chi(B_j)=j$: the inductive proof gives $\chi(B_j)\ge j$, while a $j$-colouring is obtained by colouring every copy of $G_{j-1}$ with colours $1,\ldots,j-1$ and giving all vertices added at the last step colour $j$.  Thus $t_j\le j$.

Put $A_j=p_jt_j/\log t_j$.  By assumption $A_j\to\infty$ and $s_j=o(A_j)+1$, so $s_j\le A_j$ for all large $j$.  Also $p_j\ge A_j\log t_j/t_j$.  Hence
\[
        \frac{M_jp_j^2}{s_j}
        \ge
        \frac{M_j A_j^2(\log t_j)^2}{t_j^2s_j}
        \ge
        \frac{M_j A_j(\log t_j)^2}{t_j^2}\to\infty,
\]
because $t_j\le j$ while $M_j$ is doubly exponential in $j$.
For large $j$ we have $s_j\le M_j/2$, and hence
\[
        \binom{M_j-1}{s_j}\ge \left(\frac{M_j}{2s_j}\right)^{s_j}.
\]
Substituting this in \eqref{eq:burling-cluster-lower} gives
\[
        M_j\binom{M_j-1}{s_j}p_j^{2s_j+2}
        \ge
        M_jp_j^2\left(\frac{M_jp_j^2}{2s_j}\right)^{s_j}\to\infty.
\]
This proves the claim.
\end{proof}

The preceding corollary has preservation-scale hypotheses displayed in its statement.  For the unweighted standard sequence it is therefore vacuous if $\chif(B_j)$ does not tend to infinity.  The point of the result is to identify the rooted cluster obstruction in the standard recursion whenever the fractional parameter is large enough for Mohar--Wu preservation to be relevant.  Walczak's weighted Burling construction gives such fractional relevance for Burling-type objects; the next elementary lemma records the standard conversion from weighted independence certificates to unweighted blow-ups.

\begin{lemma}[Weighted certificates and unweighted blow-ups]\label{lem:weighted-blowup}
Let $X$ be a finite graph and let $w:V(X)\to\mathbb Q_{>0}$.  Put $W=w(V(X))$.  Suppose that every independent set $I$ of $X$ satisfies
\[
        w(I)\le W/T.
\]
Let $N$ clear all denominators and let $X^{(Nw)}$ be the blow-up obtained by replacing each vertex $v$ by an independent set of size $Nw(v)$ and replacing every edge $uv\in E(X)$ by the complete bipartite graph between the two corresponding parts.  Then
\[
        \chif(X^{(Nw)})\ge T.
\]
The same conclusion holds with an arbitrary positive real weight function after rational approximation, with $T$ replaced by $T-o(1)$.
\end{lemma}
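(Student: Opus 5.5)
The plan is to use LP duality for the fractional chromatic number: it suffices to exhibit a nonnegative weight function $\omega$ on $V(X^{(Nw)})$ whose independent sets all have weight at most $\omega(V(X^{(Nw)}))/T$. We take the uniform weight $\omega\equiv1$ on the blow-up, so the total weight is $NW$. Then $\chif(X^{(Nw)})\ge |V(X^{(Nw)})|/\alpha(X^{(Nw)})$, and the statement reduces to showing $\alpha(X^{(Nw)})\le NW/T$.

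The key step is to understand independent sets of the blow-up. Let $\pi:V(X^{(Nw)})\to V(X)$ send each vertex to the vertex of $X$ whose part contains it. Let $J$ be independent in $X^{(Nw)}$. Then $\pi(J)$ is independent in $X$. Indeed, if $u,v\in\pi(J)$ were adjacent in $X$, then any $u'\in J\cap\pi^{-1}(u)$ and $v'\in J\cap\pi^{-1}(v)$ would be adjacent, because the two parts are joined completely. We also have $u\ne v$, since $X$ is simple and has no loops. Hence
\[
        |J|\le \sum_{v\in\pi(J)}|\pi^{-1}(v)|=\sum_{v\in\pi(J)}Nw(v)=N\,w(\pi(J))\le N\,\frac WT,
\]
by the hypothesis on $w$. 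With $|V(X^{(Nw)})|=N\sum_v w(v)=NW$, this gives $\chif(X^{(Nw)})\ge NW/(NW/T)=T$.

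For the real-weight extension, choose rational $w'$ with $(1-\eps)w\le w'\le w$ pointwise. Then every independent set satisfies $w'(I)\le w(I)\le W/T\le W'/((1-\eps)T)$, where $W'=w'(V(X))\ge(1-\eps)W$. The rational case now applies with $T$ replaced by $(1-\eps)T$, and letting $\eps\to0$ gives $T-o(1)$.

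There is no real obstacle. The only points needing care are that parts have positive integer sizes, because $N$ clears denominators and $w>0$, and that $\pi(J)$ is genuinely independent. The latter uses that distinct vertices of $X$ in $\pi(J)$ give complete bipartite joins in the blow-up.
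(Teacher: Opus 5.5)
Your proposal is correct and follows the paper's proof essentially verbatim: you project an independent set of the blow-up to an independent set of $X$ to get $\alpha(X^{(Nw)})\le NW/T$, then apply $\chif\ge |V|/\alpha$. Your one-sided approximation $(1-\eps)w\le w'\le w$ for real weights is a slightly more explicit version of the paper's brief rational-approximation remark.
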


\begin{proof}
The blow-up has $NW$ vertices.  An independent set of the blow-up projects to an independent set $I$ of $X$, and therefore has size at most $Nw(I)\le NW/T$.  Since every graph $Y$ satisfies $\chif(Y)\ge |V(Y)|/\alpha(Y)$, we get
\[
        \chif(X^{(Nw)})\ge \frac{NW}{NW/T}=T.
\]
For real weights, approximate them by positive rational weights so that the ratios $w(I)/w(V(X))$ change by an arbitrarily small amount for all finitely many independent sets.
\end{proof}

\begin{remark}[Weighted Burling relevance]
Walczak constructs weighted Burling graphs for which every stable set has small weight compared with the total weight \cite{Walczak2015}.  Lemma~\ref{lem:weighted-blowup} turns such weighted certificates into unweighted blow-ups with large fractional chromatic number.  We do not identify these blow-ups with the standard unweighted Burling sequence $B_j$ used in Proposition~\ref{prop:burling-tight-clusters}; assertions about the standard sequence are stated separately and remain conditional on the displayed parameter $t_j=\chif(B_j)$ tending to infinity.
\end{remark}

\begin{lemma}[Acyclic one-out deletion]\label{lem:acyclic-one-out}
Let $X$ be a graph with an acyclic orientation $\vec X$, and let $D$ be the maximum outdegree of $\vec X$.  There is a set of edges $F\subseteq E(X)$ such that
\[
        \chi_f(V(X),F)\le D+1
        \qquad\text{and}\qquad
        X-F\text{ is a forest}.
\]
Consequently $\rho_{<r}(X)\le D+1$ for every $r\ge4$.
\end{lemma}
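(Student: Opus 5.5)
The plan is to let each vertex keep exactly one of its out-edges and delete the rest. For every vertex $v$ with positive outdegree in $\vec X$, choose one out-edge $e_v=(v,w_v)$. Let $E_0=\{e_v\}$ be the set of chosen edges, and put $F=E(X)\setminus E_0$.

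\emph{Step 1: $X-F=(V(X),E_0)$ is a forest.} Each vertex has outdegree at most one in the oriented subgraph $E_0$. A cycle in the underlying graph, with $k$ vertices, would use $k$ edges of $E_0$. Each of these edges is the chosen out-edge of one of its endpoints, and distinct edges belong to distinct vertices. Only $k$ vertices are available, so every vertex of the cycle has its chosen edge on the cycle. An underlying cycle in which every vertex has exactly one out-edge on the cycle must be consistently oriented, and this contradicts acyclicity of $\vec X$. Even without that last remark, an acyclic functional graph (outdegree at most one) is a forest of in-trees, by the standard argument that following out-edges from any vertex terminates.

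\emph{Step 2: $\chi(V(X),F)\le D+1$.} The orientation $\vec X$ restricted to $F$ is still acyclic, and every vertex has outdegree at most $D-1$ in it when it had positive outdegree; in any case the outdegree is at most $D$. A graph with an acyclic orientation of maximum outdegree $D$ is $D$-degenerate: take a topological order and colour greedily in reverse, so that when a vertex is coloured its already-coloured neighbours are among its out-neighbours. Hence $\chi(V(X),F)\le D+1$, and therefore $\chi_f(V(X),F)\le D+1$. In fact we even get the bound $D$, since each vertex loses one out-edge. The bound $D+1$ stated in the lemma suffices, and it also covers the case $D=0$, where $F=\emptyset$.

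\emph{Step 3: the consequence.} A forest has infinite girth, so $X-F$ has girth at least $r$ for every $r\ge4$. By the definition of $\rho_{<r}$, this gives $\rho_{<r}(X)\le\chi_f(V(X),F)\le D+1$.

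The only step needing care is Step 1. The cleanest route is the functional-graph argument: in the oriented graph $(V,E_0)$ every vertex has outdegree at most one, so each component with $k$ vertices has at most $k$ edges. A component with exactly $k$ edges would contain a directed cycle, and acyclicity of $\vec X$ forbids this. So every component has at most $k-1$ edges, and being connected it is a tree.
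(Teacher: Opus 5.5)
Your proposal is correct and follows essentially the paper's proof: the same set $F$ (keep one out-edge at each vertex, delete all other out-edges) and the same degeneracy colouring of $(V(X),F)$ via a topological order. The only difference is cosmetic. You prove $X-F$ is acyclic by a counting/functional-graph argument that forces any cycle to be directed, whereas the paper takes the earliest vertex of a putative cycle in the topological order and notes that both cycle edges there would be kept out-edges. Your side remark that the bound improves to $D$ when $D\ge1$ is also correct, and it would give the paper's $\rho_{<5}(K_{2,M})\le2$ directly from the lemma.
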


\begin{proof}
Fix a topological ordering of the acyclic orientation, with all edges oriented from earlier to later.  For every vertex $v$, keep at most one outgoing edge of $v$, and put every other outgoing edge of $v$ into $F$.  Then $F$, oriented as in $\vec X$, has maximum outdegree at most $D$.  In every non-empty subgraph of $(V(X),F)$, the earliest vertex in the topological order has undirected degree equal to its outdegree in that subgraph, and hence at most $D$.  Thus $(V(X),F)$ is $D$-degenerate and $\chi_f(V(X),F)\le D+1$.

It remains to show that $X-F$ is acyclic.  If $C$ were a cycle in $X-F$, let $v$ be the earliest vertex of $C$ in the topological ordering.  Both edges of $C$ incident with $v$ are outgoing from $v$.  But $X-F$ keeps at most one outgoing edge at $v$, a contradiction.  Hence $X-F$ is a forest.
\end{proof}

\begin{corollary}[The $K_{2,M}$ cluster is cheap for $\rho_{<5}$]\label{cor:k2m-star-cheap}
For every $M\ge1$,
\[
        \rho_{<5}(K_{2,M})\le2.
\]
\end{corollary}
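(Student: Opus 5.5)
The plan is to derive the bound directly from Lemma~\ref{lem:acyclic-one-out}: it suffices to exhibit an acyclic orientation of $K_{2,M}$ with maximum outdegree at most $1$. Write the bipartition as $\{a,b\}\cup W$ with $|W|=M$, and take the topological order that lists all vertices of $W$ first and then $a$, $b$. Orienting every edge from earlier to later gives an acyclic orientation in which each $w\in W$ has outdegree exactly $2$ (edges $wa,wb$), while $a,b$ have outdegree $0$. Thus $D=2$ here, which would only give $\rho_{<5}\le3$. So this particular orientation needs adjusting.

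Instead I will use the order $a$, then all of $W$, then $b$. Now $a$ has outdegree $M$ and each $w$ has outdegree $1$, so $D=M$, which is worse. Since no choice of orientation has maximum outdegree $1$ (an acyclic orientation of a graph with $2M$ edges on $M+2$ vertices needs average outdegree near $2$), I will not quote the lemma's bound verbatim. Instead I will rerun its construction directly on the order $a,W,b$. At $a$, keep one outgoing edge $aw_1$ and put the edges $aw$ with $w\ne w_1$ into $F$. At each $w$, keep its single outgoing edge $wb$. The proof of Lemma~\ref{lem:acyclic-one-out} shows that $X-F$ is a forest. In this case $X-F$ is the star at $b$ together with the edge $aw_1$, which is a tree.

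The cost is then computed directly rather than through degeneracy. The deleted set is $F=\{aw:w\in W\setminus\{w_1\}\}$, which is a star centred at $a$. A star is bipartite, so $\chif(V(X),F)\le\chi(V(X),F)\le2$. Because $X-F$ is a forest, it has girth $\infty\ge5$, so by the definition of $\rho_{<5}$ we get $\rho_{<5}(K_{2,M})\le2$. Equivalently, the one-line version is to delete all edges at $a$ except one and observe that the deleted edges form a star.

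The only subtle point is that Lemma~\ref{lem:acyclic-one-out} as stated gives $D+1$, which is not sharp here. The proof must therefore bound $\chif(V(X),F)$ directly: the deleted edges all share the endpoint $a$, rather than being counted through the outdegree bound. For $M=1$ the graph is a path and $F=\emptyset$, which is trivial.
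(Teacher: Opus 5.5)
Your proof is correct and is essentially the paper's argument. The paper also notes that Lemma~\ref{lem:acyclic-one-out} only gives $3$, and then deletes all but one edge at a vertex of the $2$-side (it uses $b$ where you use $a$), so that $F$ is a star with $\chif(V,F)\le 2$. Your check that $K_{2,M}-F$ is a tree, namely the star at $b$ plus $aw_1$, does the same job as the paper's remark that at most one vertex of $W$ stays adjacent to both $a$ and $b$.
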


\begin{proof}
Orient every edge from the $M$-side to the $2$-side.  The maximum outdegree is $2$, so Lemma \ref{lem:acyclic-one-out} gives $\rho_{<5}(K_{2,M})\le3$.  The sharper bound $2$ is obtained directly: if the $2$-side is $\{a,b\}$ and the $M$-side is $W$, choose one $w_0\in W$ and delete
\[
        F=\{bw:w\in W\setminus\{w_0\}\}.
\]
Then $(V,F)$ is a star, so $\chi_f(V,F)\le2$, and $K_{2,M}-F$ has no $4$-cycle because at most one vertex of $W$ remains adjacent to both $a$ and $b$.
\end{proof}

\begin{remark}[Burling graphs]
Burling graphs are a canonical hard family for the integral problem: Pettie--Tardos--Walczak use the Burling framework to obtain tower-type lower bounds at $r=5$ \cite{PTW2026}.  They are also relevant fractionally in a weighted sense: Walczak's weighted version of the Burling construction gives weighted Burling graphs with arbitrarily large fractional obstruction certificates \cite{Walczak2015}, and Lemma~\ref{lem:weighted-blowup} converts such certificates into unweighted blow-ups with large fractional chromatic number.  Thus Proposition~\ref{prop:burling-factorial-test} is best read as the explicit test for the standard sequence, while the weighted blow-up lemma explains why Burling-type constructions remain fractionally relevant.

Proposition~\ref{prop:burling-factorial-test} gave a possible sufficient condition for the rooted factorial-moment method to prove $h^{\mathrm f}_5(B_j)\to\infty$.  Corollary~\ref{cor:burling-test-fails} shows that this particular sufficient condition fails as strongly as possible on the standard Burling recursion: the graphs contain $K_{2,M}$ subgraphs with $M$ doubly exponential in the chromatic parameter, and the rooted $4$-cycles in those subgraphs share two root edges.  This failure should not be mistaken for a proof that Burling graphs are hard for fractional cheap killing.  Corollary~\ref{cor:k2m-star-cheap} shows that an isolated $K_{2,M}$ cluster has fractional short-cycle deletion cost at most $2$; the factorial-moment method is lossy precisely because it measures such a cluster by its rooted multiplicity rather than by its star-deletability.  Thus the remaining Burling problem is to globalize this star-deletion economy through the whole recursion.  A positive solution would prove $h^{\mathrm f}_5(B_j)\to\infty$ for Burling graphs; a negative solution would have to show that the many overlapping $K_{2,M}$ clusters cannot be hit by any edge set of low fractional chromatic number.
\end{remark}

\subsection{The fractional Erd\H{o}s--Hajnal conjecture}

Mohar and Wu proved a fractional analogue of R\"odl's triangle-free subgraph theorem, namely the case $r=4$ of the fractional Erd\H{o}s--Hajnal problem \cite{MoharWuTriangleFreeFractional}.  They also verified higher-girth statements for Kneser graphs \cite{MoharWuKneser}.  The following general form remains the cleanest fractional target.

\begin{question}[Fractional Erd\H{o}s--Hajnal, higher girth]\label{q:fractional-EH}
For every $r\ge4$ and every $q>1$, is there $F_{\mathrm f}(r,q)$ such that every graph $G$ with
\[
        \chif(G)\ge F_{\mathrm f}(r,q)
\]
contains a subgraph $H$ with
\[
        \girth(H)\ge r,
        \qquad
        \chif(H)>q
\]
?
\end{question}

Theorem \ref{thm:fractional-surgery} gives a concrete route toward Question \ref{q:fractional-EH}.  Mohar--Wu already provide the small-retention preservation term of order $p t/\log t$.  Thus a positive solution would follow from a density-free bound on the fractional short-cycle cost $\rho_{<r}(G_p)$ in the thinned graph.  One possible formulation is the following.

\begin{question}[Fractional thinning with cheap cycle killing]\label{q:cheap-cycle-killing}
Fix $r\ge5$.  Do there exist functions $p(t)\in(0,1)$ and $B(t)\ge0$ such that
\[
        \frac{p(t)t}{\log t}\to\infty,
        \qquad
        B(t)=o\left(\left(\frac{p(t)t}{\log t}\right)^2\right),
\]
and for every graph $G$ with $\chif(G)=t$, the random edge-subgraph $G_{p(t)}$ has, with probability bounded away from zero, an edge set $F$ meeting all cycles of length less than $r$ with
\[
        \chif(V(G),F)\le 1+\sqrt{2B(t)}
\]
?
\end{question}

A positive answer to Question \ref{q:cheap-cycle-killing}, combined with Theorem \ref{thm:mohar-wu-random} and Proposition \ref{prop:fractional-deletion}, would imply Question \ref{q:fractional-EH}.  This formulation emphasizes the point of Remark \ref{rem:fractional-gap}: in the fractional setting the logarithmic random-subgraph preservation is known; within this random-surgery framework, what is missing is a density-free way to pay for the deletion of short cycles.

We close this subsection with a precise obstruction theorem.  It does not prove Question \ref{q:fractional-EH}; rather, it says that any counterexample sequence must fail cheap cycle killing in a quantitatively robust way.  This is the form in which the remaining ``neither clique-organised nor codegree-controlled'' case should be attacked, with Burling graphs as the canonical test family.

\begin{theorem}[Density-free surgery dichotomy]\label{thm:fractional-obstruction-dichotomy}
Fix $r\ge5$ and let $(G_n)$ be graphs with $t_n=\chif(G_n)\to\infty$.  Put
\[
        A(t,p)=8\log_{1/(1-p)}(2et)+4 .
\]
This is the denominator in the Mohar--Wu preservation lower bound from Theorem~\ref{thm:mohar-wu-random}.
Let $p_n\in(0,1)$ and $R_n\ge1$ satisfy
\[
        R_n A(t_n,p_n)=o(t_n).
\]
Then either
\[
        \limsup_{n\to\infty} h_r^{\mathrm f}(G_n)=\infty,
\]
or, for all sufficiently large $n$,
\[
        \Pr\bigl(\rho_{<r}((G_n)_{p_n})\le R_n\bigr)
        \le \frac1{2t_n} .                         \tag{O}
\]
Equivalently, for each $n$ for which
\[
        \Pr\bigl(\rho_{<r}((G_n)_{p_n})\le R_n\bigr)>\frac1{2t_n},
\]
one has the explicit lower bound
\[
        h_r^{\mathrm f}(G_n)
        \ge \frac{t_n}{R_nA(t_n,p_n)}.
\]
Consequently, if a sequence $(G_n)$ has bounded $h_r^{\mathrm f}$, then every useful thinning, namely every choice with $R_nA(t_n,p_n)=o(t_n)$, has fractional short-cycle cost larger than $R_n$ with probability at least $1-1/(2t_n)$ for all sufficiently large $n$.
\end{theorem}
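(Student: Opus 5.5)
The plan is to read the dichotomy as a direct contrapositive packaging of Theorem~\ref{thm:fractional-surgery}, so that essentially no new estimate is needed. I would first prove the ``equivalently'' clause, which is the real content. Fix $n$ with $\Pr(\rho_{<r}((G_n)_{p_n})\le R_n)>1/(2t_n)$. Since $t_n\to\infty$, we may assume $t_n\ge2$, and we have $R_n\ge1$ and $0<p_n<1$. Theorem~\ref{thm:fractional-surgery}, applied with $G=G_n$, $p=p_n$ and $R=R_n$, then gives
\[
        h_r^{\mathrm f}(G_n)\ge \frac{t_n}{R_n\bigl(8\log_{1/(1-p_n)}(2et_n)+4\bigr)}=\frac{t_n}{R_nA(t_n,p_n)}.
\]
Unwinding the proof, this amounts to a common outcome of the Mohar--Wu event, which holds with probability at least $1-1/(2t_n)$, and the cheap-cost event, followed by Proposition~\ref{prop:fractional-deletion}. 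So this step is a citation.

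Next I would derive the dichotomy. Suppose (O) fails for infinitely many $n$. Along that subsequence the explicit bound applies, and the hypothesis $R_nA(t_n,p_n)=o(t_n)$ forces $t_n/(R_nA(t_n,p_n))\to\infty$. Hence $\limsup h_r^{\mathrm f}(G_n)=\infty$. Otherwise (O) fails only finitely often, which is exactly the statement that (O) holds for all sufficiently large $n$. I would note explicitly that (O) failing infinitely often (rather than eventually) is what the $\limsup$ alternative requires, so that the logical split is exhaustive.

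Finally, for the closing consequence, assume that $h_r^{\mathrm f}(G_n)$ is bounded. Then the first alternative is excluded, so (O) holds eventually, and taking complements gives $\Pr(\rho_{<r}((G_n)_{p_n})>R_n)\ge 1-1/(2t_n)$. The only point needing care, which is the ``main obstacle'' though it is mild, is the well-definedness of $\rho_{<r}$ on the random graph. If $(G_n)_{p_n}$ happens to be edgeless, then $\rho_{<r}$ equals $\chif$ of an edgeless graph, which is $1\le R_n$. This is harmless for the argument, since in the proof of Theorem~\ref{thm:fractional-surgery} the common outcome has large $\chif$ and so is non-empty. I would also remark that $r\ge5$ is used only through the statement's framing; the argument works verbatim for $r\ge4$.
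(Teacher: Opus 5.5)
Your proposal is correct and follows the paper's proof. You get the per-$n$ bound by applying Theorem~\ref{thm:fractional-surgery} directly, and the dichotomy because $R_nA(t_n,p_n)=o(t_n)$ makes that bound diverge along any subsequence on which (O) fails; your side remarks ($t_n\ge2$ eventually, the edgeless outcome, and that $r\ge4$ would suffice) are accurate.
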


\begin{proof}
The per-$n$ assertion is exactly Theorem \ref{thm:fractional-surgery}.  Indeed, if the displayed probability is greater than $1/(2t_n)$, then Theorem \ref{thm:fractional-surgery} gives
\[
        h_r^{\mathrm f}(G_n)
        \ge \frac{t_n}{R_nA(t_n,p_n)}.
\]
Since $R_nA(t_n,p_n)=o(t_n)$, this lower bound tends to infinity along every subsequence on which the probability condition holds.  Hence if (O) fails infinitely often, then \(\limsup_{n\to\infty} h_r^{\mathrm f}(G_n)=\infty\).
\end{proof}

The same statement has a local rooted-cluster version obtained from Lemma \ref{lem:factorial-spread}.  This is the version most directly relevant to triangle-free Ramsey graphs and to Burling graphs, where the short cycles at girth target $5$ are $4$-cycles.

\begin{corollary}[Rooted cluster obstruction]\label{cor:rooted-cluster-obstruction}
Fix $r\ge5$ and let $(G_n)$ be graphs with $t_n=\chif(G_n)\to\infty$.  Let $p_n\in(0,1)$ and integers $D_n\ge1$ satisfy
\[
        (D_n+1) A(t_n,p_n)=o(t_n).
\]
For each vertex $v$, let $Z_v=Z_v((G_n)_{p_n})$ be the number of cycles of length less than $r$ in $(G_n)_{p_n}$ containing $v$.  Then either
\[
        \limsup_{n\to\infty} h_r^{\mathrm f}(G_n)=\infty,
\]
or, for all sufficiently large $n$,
\[
        \sum_{v\in V(G_n)}
        \mathbb E\binom{Z_v}{D_n+1}
        \ge \frac1{2t_n}.                         \tag{C}
\]
Equivalently, whenever for some $n$ the left side of (C) is less than $1/(2t_n)$, one has
\[
        h_r^{\mathrm f}(G_n)
        \ge
        \frac{t_n}{(D_n+1)A(t_n,p_n)}.
\]
In particular, for $r=5$, every bounded-$h_5^{\mathrm f}$ sequence must have, at every useful retention probability, aggregate rooted $(D_n+1)$st factorial moment for retained $4$-cycles at least of order $1/t_n$.
\end{corollary}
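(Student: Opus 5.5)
The plan is to derive the corollary from Lemma \ref{lem:factorial-spread} and Proposition \ref{prop:fractional-spread-criterion}, exactly as Theorem \ref{thm:fractional-obstruction-dichotomy} was derived from Theorem \ref{thm:fractional-surgery}. I will first prove the per-$n$ ``equivalently'' assertion, and then obtain the dichotomy from it by a subsequence argument.

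For the per-$n$ statement, fix $n$ and suppose
\[
        \xi_n:=\sum_{v}\mathbb E\binom{Z_v}{D_n+1}<\frac1{2t_n}.
\]
By the Markov argument of Lemma \ref{lem:factorial-spread}, $\Pr(\lambda_{<r}((G_n)_{p_n})>D_n)\le \xi_n$. Hence
\[
        \Pr\bigl(\lambda_{<r}((G_n)_{p_n})\le D_n\bigr)\ge 1-\xi_n>1-\frac1{2t_n}\ge \frac1{2t_n},
\]
where the last inequality uses $t_n\ge1$. The inequality is strict, as required in Proposition \ref{prop:fractional-spread-criterion}. That proposition, whose denominator is exactly $(D_n+1)A(t_n,p_n)$, then gives
\[
        h_r^{\mathrm f}(G_n)\ge \frac{t_n}{(D_n+1)A(t_n,p_n)}.
\]
Equivalently, we may go through Lemma \ref{lem:local-spread-cheap}, which gives $\rho_{<r}\le D_n+1$, and then apply Theorem \ref{thm:fractional-surgery} with $R=D_n+1\ge1$. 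The one point to check is that Lemma \ref{lem:factorial-spread} states the hypothesis $t\ge2$. This holds for all large $n$ because $t_n\to\infty$, which is why the dichotomy is only claimed for large $n$.

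For the dichotomy, suppose (C) fails for infinitely many $n$. Along that subsequence the per-$n$ bound applies. Since $(D_n+1)A(t_n,p_n)=o(t_n)$, the right-hand side $t_n/((D_n+1)A(t_n,p_n))$ tends to infinity, so $\limsup_n h_r^{\mathrm f}(G_n)=\infty$. If instead $h_r^{\mathrm f}(G_n)$ stays bounded, (C) must hold for all sufficiently large $n$.

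The $r=5$ specialization is just a restatement of (C): only the cycles of length $3$ and $4$ are counted, and in the triangle-free setting these are exactly the retained $4$-cycles. ``Useful retention probability'' here means the scale condition $(D_n+1)A(t_n,p_n)=o(t_n)$. No step is a real obstacle. The only care needed is with the strict versus non-strict probability threshold and with the requirement $t_n\ge2$, both handled above.
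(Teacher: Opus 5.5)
Your proposal is correct and follows the paper's own proof. In both, Lemma \ref{lem:factorial-spread} turns a failure of (C) into $\Pr\bigl(\lambda_{<r}((G_n)_{p_n})\le D_n\bigr)>1/(2t_n)$; Proposition \ref{prop:fractional-spread-criterion} then gives the per-$n$ bound, and the scale condition $(D_n+1)A(t_n,p_n)=o(t_n)$ forces divergence along any subsequence where (C) fails. Your handling of the strict probability threshold and of the requirement $t_n\ge2$ is slightly more careful than the paper's one-line version.
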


\begin{proof}
If the left side of (C) is less than $1/(2t_n)$, Lemma \ref{lem:factorial-spread} gives
\[
        \Pr\bigl(\lambda_{<r}((G_n)_{p_n})\le D_n\bigr)
        > 1-\frac1{2t_n}
        > \frac1{2t_n}.
\]
Proposition \ref{prop:fractional-spread-criterion} then yields
\[
        h_r^{\mathrm f}(G_n)
        \ge
        \frac{t_n}{(D_n+1)A(t_n,p_n)}.
\]
Since $(D_n+1)A(t_n,p_n)=o(t_n)$, this lower bound tends to infinity along every subsequence on which (C) fails.  Hence bounded $h_r^{\mathrm f}$ forces (C) eventually.
\end{proof}

\begin{remark}[How this frames the endgame]
Theorem \ref{thm:fractional-obstruction-dichotomy} and Corollary \ref{cor:rooted-cluster-obstruction} are deliberately density-free.  They say that the positive families in this paper---clique-organised graphs, projective-plane line graphs, and the Bohman--Keevash tracking-time triangle-free-process graph---succeed because the true fractional-cost obstruction \textup{(O)} fails.  The rooted obstruction \textup{(C)} is a sufficient local certificate for failure of the present spread method, but it is not equivalent to \textup{(O)}.  Burling graphs illustrate the distinction: Corollary~\ref{cor:burling-test-fails} proves that their tight $K_{2,M}$ clusters realize the factorial-moment obstruction \textup{(C)}, while Corollary~\ref{cor:k2m-star-cheap} shows that an isolated $K_{2,M}$ has very small fractional deletion cost.  Thus a genuinely hard Burling sequence would have to realize the stronger obstruction \textup{(O)} by preventing these cheap star deletions from being globalized through the recursion.  Determining whether that happens is a concrete recursive obstruction problem for the Burling construction, not a consequence of Proposition~\ref{prop:burling-factorial-test} alone.
\end{remark}

\subsection{\texorpdfstring{Beyond polynomial density in the integer problem}{Beyond polynomial density in the integer problem}}\label{subsec:quasipoly-density}

The proof of Theorem \ref{thm:poly-sparse} is effective.  The quantitative estimate below records how far the present method extends beyond fixed polynomial density.  The proof is included with an explicit threshold ledger, because the quasi-polynomial corollary depends on distinguishing a quadratic threshold in the edge exponent from a larger iterated threshold.  The key technical choice is that the inner induction terminates by the fixed compact-core theorem rather than by the crude inequality $e\le |V|^2/2$; the latter would square the current density constant at every outer level.

The base exponent used in the quantitative subsection below is deliberately chosen strictly inside the qualitative near-quadratic sparse-core range.  The qualitative base theorem allows a larger endpoint exponent, but the fixed gap in
\[
        A_b=2+\frac1{3r-5}
\]
provides uniform threshold slack in the compact-core, edge-core, and random-thinning estimates.  This choice is only for effectivity and does not change Theorem~\ref{thm:poly-sparse}.

For fixed $r,k$, let $M_{r,k}(P,C)$ be the least threshold, if it exists, such that every graph $G$ with
\[
        \chi(G)\ge M_{r,k}(P,C),
        \qquad
        e(G)\le C\chi(G)^P
\]
contains a subgraph of girth at least $r$ and chromatic number at least $k$.

\begin{lemma}[Uniform slack constants]\label{lem:uniform-slack}
Fix $r\ge4$ and put
\[
        \eta_0=\frac1{16(r-1)},\qquad
        \zeta_0=\frac14,\qquad
        \xi_0=\frac38 .
\]
Let $x>A$ and let $u\ge0$ satisfy
\[
        \sigma_\ell:=\ell A-(\ell-1)x-u\ge\frac12
        \qquad(3\le \ell<r).
\]
Set $\gamma=A-\eta_0$.  Then $\eta_0<A-u$, $\gamma-u\ge\zeta_0$, and, for every $3\le\ell<r$,
\[
        \ell(u-\gamma)+x+(x-u+\eta_0)(\ell-2)\le u-\xi_0 .
\]
\end{lemma}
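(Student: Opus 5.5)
All three claims follow from the hypothesis $\sigma_\ell\ge\frac12$ by direct algebra. The first two use only the case $\ell=3$ together with $x>A$. The third reuses the exponent identity already computed in the proof of Proposition~\ref{prop:mixed}.

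\emph{First claim.} We rewrite the slack at $\ell=3$ as
\[
\sigma_3=3A-2x-u=(A-u)-2(x-A).
\]
Since $x>A$, this gives
\[
A-u\ge \sigma_3+2(x-A)>\tfrac12.
\]
As $r\ge4$, we have $\eta_0=\frac1{16(r-1)}\le\frac1{48}<\frac12$, and therefore $\eta_0<A-u$.

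\emph{Second claim.} From the first step,
\[
\gamma-u=(A-u)-\eta_0>\tfrac12-\tfrac1{48}>\tfrac14=\zeta_0 .
\]

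\emph{Third claim.} We expand the left side exactly as in the proof of Proposition~\ref{prop:mixed}, with $\eta$ replaced by $\eta_0$:
\[
\ell(u-\gamma)+x+(x-u+\eta_0)(\ell-2)=2u+(\ell-1)x-\ell A+(2\ell-2)\eta_0 .
\]
The right-hand side equals $u-\sigma_\ell+(2\ell-2)\eta_0$. For $3\le\ell\le r-1$ we have $2\ell-2\le 2(r-2)<2(r-1)$. Hence
\[
(2\ell-2)\eta_0<\frac{2(r-1)}{16(r-1)}=\frac18 .
\]
Combining this with $\sigma_\ell\ge\frac12$ bounds the expression by $u-\frac12+\frac18=u-\frac38=u-\xi_0$.

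There is no genuine obstacle in this argument. The only points needing care are the two sign facts: $x>A$ makes $\sigma_3$ a lower bound for $A-u$, and $\eta_0$ is small enough that $(2\ell-2)\eta_0\le\frac18$ uniformly in $\ell<r$.
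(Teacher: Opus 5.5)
Your proof is correct and follows essentially the paper's argument. The paper likewise writes $\sigma_\ell=(A-u)-(\ell-1)(x-A)$ to get $A-u\ge\tfrac12$, of which your $\ell=3$ case is an instance. It then bounds the cycle exponent by $u-\sigma_\ell+(2\ell-2)\eta_0\le u-\tfrac12+\tfrac18$, exactly as you do.
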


\begin{proof}
Since $x>A$,
\[
        \sigma_\ell
        =\ell A-(\ell-1)x-u
        = A-u-(\ell-1)(x-A).
\]
The hypothesis therefore implies $A-u\ge 1/2$.  Hence
\[
        \eta_0<A-u,
        \qquad
        \gamma-u=A-u-\eta_0
        \ge \frac12-\frac1{16(r-1)}\ge\frac14.
\]
Moreover,
\begin{align*}
 u-\{\ell(u-\gamma)+x+(x-u+\eta_0)(\ell-2)\}
 &=\ell A-(\ell-1)x-u-(2\ell-2)\eta_0 \\
 &=\sigma_\ell-(2\ell-2)\eta_0 \\
 &\ge \frac12-\frac{2(r-1)}{16(r-1)}
 =\frac38 .
\end{align*}
This proves the three assertions.
\end{proof}

\begin{lemma}[Quantitative input thresholds]\label{lem:quant-input-thresholds}
Fix $r\ge4$ and $k\ge2$, and put $q=k-1$.  Let
\[
        \beta_*:=1+\frac1{4r},
        \qquad
        A_b=2+\frac1{3r-5}.
\]
There are positive constants
\[
        \eps_{\rm comp},\ \eps_{\rm edge},\ \eps_{\rm rand}>0
\]
which depend only on $r$, and constants
\[
        K_{\rm comp},\ K_{\rm edge},\ K_{\rm rand}>1
\]
which depend only on $r,k$, such that the following three assertions hold.  Put
\[
        \eps_*=
        \min\{\eps_{\rm comp},\eps_{\rm edge},\eps_{\rm rand}\},
        \qquad
        K_*=
        \max\{K_{\rm comp},K_{\rm edge},K_{\rm rand}\}.
\]
Then the following uniform threshold implications hold.
\begin{enumerate}[label=\textup{(\roman*)}]
\item If $\chi(G)=m$ and $|V(G)|\le Bm^{\beta_*}$, then $h_r(G)\ge k$ whenever
\[
        m^{\eps_*}\ge K_*(B+2)^{K_*}.        \tag{QI1}
\]
\item If $\chi(G)=m$ and $e(G)\le Bm^{A_b}$, then $h_r(G)\ge k$ whenever
\[
        m^{\eps_*}\ge K_*(B+2)^{K_*}.        \tag{QI2}
\]
\item In the random-thinning branch of Proposition~\ref{prop:mixed}, assume the endpoint slack
\[
        \min_{3\le \ell<r}\{\ell A-(\ell-1)x-u\}\ge \frac12
\]
for the current edge exponent $x$, vertex exponent $u$, and previous exponent level $A$.  If the current edge and vertex constants are both at most $B$, then the Chernoff union bound over all $q$-colourings, the condition $p\le1$, and the short-cycle Markov estimate used in that branch all hold whenever
\[
        m^{\eps_*}\ge K_*(B+2)^{K_*}.        \tag{QI3}
\]
\end{enumerate}
\end{lemma}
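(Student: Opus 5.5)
The plan is to go back through the three earlier arguments: the proofs of Theorem~\ref{thm:compact-core}, Theorem~\ref{thm:edge-core}, and the random-thinning branch of Proposition~\ref{prop:mixed}. In each I fix the auxiliary exponent once and for all, depending only on $r$, and record every inequality required for large $m$ in the form ``a fixed positive power of $m$ beats a polynomial in $B$ times a constant depending on $r,k$.'' Each requirement will look like $m^{\eps}\ge K(B+2)^{K}$. The constants are then combined by taking the minimum of the $\eps$'s and the maximum of the $K$'s. This is legitimate because $m^{\eps_*}\ge K_*(B+2)^{K_*}$ implies each individual condition, as long as $m\ge1$ and $K_*\ge K$, $\eps_*\le\eps$. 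To see this, note that the individual condition follows once $m^{\eps}\ge K(B+2)^K$. That in turn follows from the stronger $m^{\eps_*}\ge K_*(B+2)^{K_*}$ after raising to the power $\eps/\eps_*\ge1$, since the right side is at least $1$.

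\textbf{Compact case (QI1).} Take $\beta=\beta_*=1+1/(4r)$ and fix $\alpha$ strictly inside the interval $\bigl((\beta_*(r-1)-2)/(r-2),\,2-\beta_*\bigr)$, say at its midpoint. The interval's gap is a positive number depending only on $r$, and so are all slacks below. Put $p=m^{-\alpha}$. The two conditions of Theorem~\ref{thm:random-extraction} are handled as follows.
\begin{itemize}
\item Condition \eqref{eq:random-preserve} reads $pM_q\ge16(n\log q+\log4)$. Since $M_q\ge m^2/(8q)$ once $m\ge2q$, and $n\le Bm^{\beta_*}$, it suffices that $m^{2-\alpha-\beta_*}\ge c_k(B+1)$.
\item Condition \eqref{eq:random-cycles} requires $\sum_\ell B^\ell m^{\ell(\beta_*-\alpha)}\le pM_q/16$. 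It suffices that $m^{s}\ge c_{r,k}(B+1)^{r}$, where $s=\min_\ell(2-\alpha-\ell(\beta_*-\alpha))>0$.
\item The requirement $m\ge 2q$ is absorbed into $K$, which depends on $k$.
\end{itemize}

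\textbf{Edge case (QI2).} Write $A_b=2+\eps_b$ with $\eps_b=1/(3r-5)$, which lies strictly below $2/(3r-5)$. Pass to a critical subgraph, so that $n\le 2Bm^{A_b}/(m-1)\le 4Bm^{1+\eps_b}$. Bound the cycles by Lemma~\ref{lem:spectral}: $C_\ell\le(2B)^{\ell/2}m^{(1+\eps_b/2)\ell}$. Then fix $\alpha$ at the midpoint of the nonempty interval from that proof and repeat the bookkeeping of the compact case. Again every slack depends only on $r$, and every $B$-dependence is at most $(B+2)^{r}$.

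\textbf{Thinning branch (QI3).} Use Lemma~\ref{lem:uniform-slack} with $\eta_0$, which gives $\gamma-u\ge1/4$ and cycle-exponent slack $3/8$. Take $p=Lm^{u-\gamma}$ with $L=16B\log q+17$.
\begin{itemize}
\item The condition $p\le1$ holds once $m^{1/4}\ge L$.
\item The Chernoff union bound requires $q^{Bm^u}e^{-Lm^u/8}\le1/4$. By the choice of $L$ this holds as soon as $m^u$ exceeds an absolute constant. When $u$ is small, the factor $q^{|V|}$ is at most $q^{B}$ and $e^{-Lm^u/8}\le e^{-L/8}$; the inequality still holds with the choice of $L$ and $m\ge1$, though this case needs a careful check.
\item The Markov estimate needs $c_r B^{r}L^{r}m^{u-3/8}\le Lm^u/4$, i.e.\ $m^{3/8}\ge c_rB^rL^{r}$. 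This is polynomial in $B+2$ times a constant depending on $k$.
\end{itemize}

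\textbf{Main obstacle.} The main obstacle is bookkeeping rather than ideas. Every hidden ``for large $m$'' in the earlier proofs must be shown to need only polynomial dependence on $B$. The crucial point is that $B$ never appears in an exponent: in the Chernoff step it enters only through the choice of $L$, and I will double-check that point.
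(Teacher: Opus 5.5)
Your proposal is correct and follows essentially the same route as the paper. Both fix the sampling exponents for the compact and edge cores strictly inside the admissible intervals, so that every slack depends only on $r$, and both use Lemma~\ref{lem:uniform-slack} for the thinning branch. Each large-$m$ requirement is then recorded as a fixed power of $m$ beating a polynomial in $B+2$, and the conditions are combined by taking the minimum exponent and the maximum constant.

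The Chernoff case you flagged needs no separate check. With $L=16B\log q+17$, the union bound is at most $\exp\bigl(-m^u(B\log q+17/8)\bigr)\le e^{-17/8}<1/4$ for every $m\ge1$ and $u\ge0$.
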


\begin{proof}
We give the parameter ledger explicitly.  Constants denoted $C_i$ may depend on $r$ and $k$, but not on $B$, $m$, or the graph.  Throughout the proof $q=k-1$ is fixed.

\smallskip
\noindent\emph{Compact-core input.}
Choose a sampling exponent $\alpha_c$ strictly inside the interval allowed in the proof of Theorem~\ref{thm:compact-core} with $\beta=\beta_*$.  Equivalently,
\[
        \delta_{c,0}:=2-\alpha_c-\beta_*>0,
        \qquad
        \delta_{c,\ell}:=2-\alpha_c-\beta_*\ell+\alpha_c\ell>0
        \quad(3\le\ell<r).
\]
Let
\[
        \delta_c=\min\{\delta_{c,0},\delta_{c,3},\ldots,\delta_{c,r-1}\}>0.
\]
With $p=m^{-\alpha_c}$ and $m\ge 2q$, the defect bound gives
\[
        p\mu_q(G)\ge \frac1{8q}m^{2-\alpha_c}.        \tag{1}
\]
The number of $q$-colourings is at most $q^{|V(G)|}\le \exp(Bm^{\beta_*}\log q)$, so the preservation part of Theorem~\ref{thm:random-extraction} follows from
\[
        m^{\delta_{c,0}}\ge C_1(B+2)(q\log q+2).        \tag{2}
\]
For $3\le\ell<r$ we use the trivial estimate
\[
        C_\ell(G)\le |V(G)|^\ell\le B^\ell m^{\beta_*\ell}.
\]
Thus
\[
        p^\ell C_\ell(G)
        \le B^\ell m^{\beta_*\ell-\alpha_c\ell}
        =B^\ell m^{2-\alpha_c-\delta_{c,\ell}},
\]
and the short-cycle deletion part is implied by
\[
        m^{\delta_c}\ge C_2(B+2)^{r-1}(q+2).          \tag{3}
\]
Equations (2) and (3) are consequences of (QI1), for example with
\[
        \eps_{\rm comp}=\delta_c/2
\]
and $K_{\rm comp}$ chosen large enough to dominate the fixed powers of $q+2$ and the constants $C_1,C_2$.

\smallskip
\noindent\emph{Near-quadratic sparse-core input.}
Write
\[
        A_b=2+\eps_b,
        \qquad
        \eps_b=\frac1{3r-5}.
\]
This lies a fixed positive distance below the endpoint $2+2/(3r-5)$ in Theorem~\ref{thm:edge-core}.  In the proof of that theorem one first passes to an $m$-critical subgraph, so
\[
        |V(G)|\le \frac{2e(G)}{m-1}\le 4B m^{1+\eps_b}
\]
for all large $m$.  Choosing the edge-retention exponent strictly inside the allowed interval gives a fixed gap $\delta_e=\delta_e(r)>0$.  The preservation inequality has the form
\[
        m^{\delta_e}\ge C_3(B+2)(q\log q+2),          \tag{4}
\]
and the spectral cycle estimate
\[
        C_\ell(G)\le (2e(G))^{\ell/2}
\]
gives, for each $3\le\ell<r$, a short-cycle condition implied by
\[
        m^{\delta_e}\ge C_4(B+2)^r(q+2)^{C_5}.        \tag{5}
\]
Thus (QI2) holds with $\eps_{\rm edge}=\delta_e/2$ and $K_{\rm edge}$ sufficiently large.  Notice that this proves the base uniformly for every edge exponent at most $A_b$: if $e(G)\le Bm^x$ with $x\le A_b$, then also $e(G)\le Bm^{A_b}$.

\smallskip
\noindent\emph{Random branch inside the mixed step.}
Use the notation of Proposition~\ref{prop:mixed}.  At the current stage the graph has at most $Bm^u$ vertices, at most $Bm^x$ edges, maximum degree at most $Bm^{x-u+\eta}$ after the high-degree pruning, and every $q$-colouring has more than $m^\gamma$ monochromatic edges.  Set
\[
        p=Lm^{u-\gamma}.
\]
Choose
\[
        L\ge 32(B+1)\log q+32.                       \tag{6}
\]
Then for each fixed $q$-colouring the expected number of retained monochromatic edges is at least $Lm^u$, and Chernoff gives failure probability at most $\exp(-Lm^u/8)$.  Since the graph has at most $Bm^u$ vertices, there are at most $q^{Bm^u}$ colourings, and (6) makes the union bound valid.

By Lemma~\ref{lem:uniform-slack}, with the fixed choice $\eta=\eta_0$ and $\gamma=A-\eta_0$, the endpoint slack in (QI3) gives
\[
        \gamma-u\ge \zeta_0                                      \tag{7}
\]
and, for every $3\le\ell<r$,
\[
        \ell(u-\gamma)+x+(x-u+\eta_0)(\ell-2)
        \le u-\xi_0.                                             \tag{8}
\]
Here (8) is exactly the exponent of
\[
        p^\ell\,e(G)\,\Delta(G)^{\ell-2}
\]
after substituting $p=Lm^{u-\gamma}$ and $\Delta(G)\le Bm^{x-u+\eta_0}$.  The constants must also be tracked: for each $3\le\ell<r$, the maximum-degree cycle count gives
\[
        \mathbb E C_\ell(G_p)
        \le 2 B^{\ell-1}L^\ell
        m^{\ell(u-\gamma)+x+(x-u+\eta)(\ell-2)}.
\]
Condition $p\le1$ follows from
\[
        m^{\zeta_0}\ge L.                                      \tag{9}
\]
By (8), and since $B,L\ge1$ and $\ell\le r-1$, summing over $3\le\ell<r$ gives
\[
        \mathbb E C_{<r}(G_p)\le C_6(r) B^{r-2} L^{r-1}m^{u-\xi_0}.       \tag{10}
\]
Thus Markov's inequality gives $C_{<r}(G_p)<Lm^u/4$ with probability at least $3/4$ whenever
\[
        m^{\xi_0}\ge 4C_6(r) B^{r-2}L^{r-2}.                            \tag{11}
\]
Since (6) allows $L\le C_8(B+2)(q\log q+2)$, the requirements (6), (9), and (11) are all implied by
\[
        m^{\min\{\zeta_0,\xi_0\}/2}
        \ge C_9(B+2)^{2r-4}(q\log q+2)^{r-2}.
\]
Thus (QI3) holds with
\[
        \eps_{\rm rand}=\frac12\min\{\zeta_0,\xi_0\}
\]
and $K_{\rm rand}$ sufficiently large.  Taking the minimum of the three exponent gaps and the maximum of the three threshold constants gives the displayed $\eps_*$ and $K_*$.  This completes the proof.
\end{proof}

\begin{lemma}[Quantitative bookkeeping for one exponent step]\label{lem:quant-bookkeeping}
Fix $r\ge4$ and $k\ge2$.  Let $q=k-1$, let
\[
        \beta_*:=1+\frac1{4r},
        \qquad
        A_b=2+\frac1{3r-5},
        \qquad
        \Delta=\frac1{2(r-1)},
\]
and set $A_j=A_b+j\Delta$.  Suppose that, for some $j\ge0$, every graph with $\chi(G)=m$, $e(G)\le C m^x$ and $x\le A_j$ has $h_r(G)\ge k$ once
\[
        m\ge \exp T_j(C).
\]
Then there is a constant $C_0=C_0(r,k)$ such that the same assertion for all $x\le A_{j+1}$ holds with a logarithmic threshold $T_{j+1}$ satisfying
\[
        T_{j+1}(C)
        \le
        \max\left\{
        C_0\bigl((j+1)^2+1+\log(C\vee2)\bigr),
        T_j\bigl((2q+2)^{C_0(j+1)}\bigr)
        \right\}.
\]
\end{lemma}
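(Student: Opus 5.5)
We rerun the proof of Lemma~\ref{lem:increment} and Proposition~\ref{prop:mixed} with the fixed constants of Lemma~\ref{lem:uniform-slack} ($\eta=\eta_0$, $\gamma=A_j-\eta_0$), and record the threshold at every branch. Let $x\le A_{j+1}$ and $e(G)\le Cm^x$. If $x\le A_j$ we simply invoke the hypothesis, whose threshold $T_j(C)$ is dominated by the right side once $T_j$ is monotone (we may assume monotonicity after replacing $T_j$ by its running maximum). Otherwise we set $\bar x=A_{j+1}$ and pass to an $m$-critical subgraph $G_0$, so that $e(G_0)\le Cm^{\bar x}$ and $|V(G_0)|\le 4Cm^{\bar x-1}$. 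With $y=\bar x-1$, the step size $\Delta=1/(2(r-1))$ is half the qualitative increment. Hence
\[
        \sigma_\ell=\ell A_j-(\ell-1)\bar x-y=A_j-y-(\ell-1)\Delta\ge (r-1)A_j-(r-2)A_{j+1}-A_{j+1}+1=\tfrac12 ,
\]
so the endpoint slack assumed in Lemma~\ref{lem:uniform-slack} and (QI3) holds at $u=y$. The slack persists at every smaller $u$ reached during peeling, because $\sigma_\ell$ only increases as $u$ decreases.

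Next comes the inner peeling induction, from $u=y$ down to below $\beta_*$ in steps of $\eta_0$. The number of levels is $N\le (y-\beta_*)/\eta_0+1\le C_1(j+1)$, because $y\le A_{j+1}\le 3+(j+1)\Delta$. At level $i$ the constants grow by a bounded factor ($C_Z=\max\{C'2^x,2C'2^{u-\eta}\}$), so $B_i\le C\cdot 2^{O(i(x+1))}\le C\exp(O((j+1)^2))$, since $x=O(j+1)$. Each level has three branches. The high-degree branch descends to the next level. The low-defect branch applies the level-$j$ hypothesis to $M_\psi$ with exponent $\gamma<A_j$ and constant $(2q+2)^\gamma\le(2q+2)^{C_0(j+1)}$, at chromatic number $s\ge t/(2q)$; this gives the term $T_j((2q+2)^{C_0(j+1)})$, the factor $2q$ loss in $s$ being absorbed into $C_0$ in the logarithm. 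The random branch needs (QI3) with $B=B_i$. Finally, the terminal level with $u<\beta_*$ needs (QI1) with $B=B_N$. At level $i$ the chromatic number is at least $m/2^{i}\ge m/2^{N}$.

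Each of (QI1) and (QI3) reads $\log m\ge \eps_*^{-1}(\log K_*+K_*\log(B+2))+N\log 2$. Substituting $\log B_N\le \log(C\vee2)+O((j+1)^2)$ and $N=O(j+1)$ gives $\log m\ge C_0((j+1)^2+1+\log(C\vee2))$. Since $\eps_*,K_*$ depend only on $r,k$, the maximum of this with the low-defect term gives the claimed bound on $T_{j+1}$.

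The main obstacle is the constant bookkeeping across the $N=O(j+1)$ peeling levels. The point is that constants multiply by $2^{O(x)}$ per level, rather than being squared or composed, so the logarithm of the constant stays quadratic in $j$. This works only because the base case uses (QI1) directly on vertices, as emphasized in Proposition~\ref{prop:mixed}. A second point to verify is that the low-defect branch invokes $T_j$ only at a constant depending on $(q,j)$ and not on $C$, which holds because $e(M_\psi)\le t^\gamma$ is independent of $C'$.
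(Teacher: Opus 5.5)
Your proof is essentially the paper's. You rerun Proposition~\ref{prop:mixed} with the fixed $\eta_0$ and $\gamma=A_j-\eta_0$, and count $O(j+1)$ high-degree descents, each inflating the constants by $2^{O(j+1)}$, so every constant before the terminal step is $C e^{O((j+1)^2)}$. You discharge the compact-core and random branches by (QI1) and (QI3) with the $2^{-i}$ chromatic loss. You also note that the small-defect branch feeds the level-$j$ threshold a constant $(2q+2)^{O(j+1)}$ that does not depend on $C$. Enlarging to $\bar x=A_{j+1}$, rather than working with $x$ itself as the paper does, is harmless and makes the slack computation uniform.

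One step is wrong as justified. For $x\le A_j$ you say the old threshold $T_j(C)$ is dominated by the right-hand side ``once $T_j$ is monotone''. Monotonicity gives $T_j(C)\le T_j\bigl((2q+2)^{C_0(j+1)}\bigr)$ only when $C\le (2q+2)^{C_0(j+1)}$. For larger $C$, nothing in the hypothesis prevents $T_j(C)$ from exceeding both terms of the maximum.

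The repair is already in your argument. Drop the case split and run the peeling with $\bar x=A_{j+1}$ for every $x\le A_{j+1}$; the bounds $e(G_0)\le Cm^x\le Cm^{\bar x}$ and $|V(G_0)|\le 4Cm^{\bar x-1}$ hold verbatim. The paper's ``only the range $A_j<x\le A_{j+1}$ is new'' tacitly has the same issue and the same fix.

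Finally, like the paper, you absorb the chromatic losses $\log(2q)+N\log 2$ of the small-defect branch ``into $C_0$''. Strictly, these add to $T_j(\cdot)$, not to the first term of the maximum, so the honest second slot is $T_j\bigl((2q+2)^{C_0(j+1)}\bigr)+C_0(j+1)$. This is harmless for Theorem~\ref{thm:quantitative-threshold}, whose solved bound $B\bigl(A_*(j+1)^2+\log(C\vee2)\bigr)$ has linear-in-$j$ slack.
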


\begin{proof}
Only the range $A_j<x\le A_{j+1}$ is new.  Pass to an $m$-critical subgraph.  Then
\[
        |V(G)|\le \frac{2e(G)}{m-1}\le 2C m^{x-1}
\]
for all large $m$.  Apply the mixed-peeling proof, Proposition~\ref{prop:mixed}, with previous exponent $A=A_j$, edge exponent $x$, vertex exponent $u=x-1$, and initial constant $2C$.  Because $x-A_j\le\Delta$, the endpoint slack is at least
\[
        1-(r-1)(x-A_j)\ge \frac12.
\]
Use the fixed constant $\eta_0=1/(16(r-1))$ from Lemma~\ref{lem:uniform-slack}, and set
\[
        \eta=\eta_0,
        \qquad
        \gamma=A_j-\eta_0,
        \qquad
        d(u)=x-u+\eta_0.
\]
The endpoint slack hypothesis needed for Lemma~\ref{lem:uniform-slack} holds at the start of the outer step and at each later inner-descent stage, because the edge exponent $x$ is fixed while the current vertex exponent only decreases.  Thus the inequalities required in the random branch of Proposition~\ref{prop:mixed} hold uniformly with the constants $\zeta_0,\xi_0$ from Lemma~\ref{lem:uniform-slack}.  With this fixed choice, one high-degree descent lowers the current vertex exponent by exactly $\eta_0$.  If $N_0(j)$ denotes the maximum possible number of such descents before the vertex exponent reaches the compact-core threshold $\beta_*$, then
\[
        N_0(j)\le
        \left\lceil \frac{A_{j+1}-1-\beta_*}{\eta_0}\right\rceil_+
        \le C_r(j+1),
\]
where $\lceil y\rceil_+=\max\{0,\lceil y\rceil\}$.

During a high-degree descent, the current subgraph has chromatic number at least half of the previous one.  Rewriting the edge and vertex bounds in the new chromatic parameter multiplies the current constants by at most
\[
        \max\{2^x,2^{u'+1}\}\le \exp\{C_1(r)(j+1)\},
\]
since all relevant exponents are $O_r(j+1)$.  More explicitly, after $s$ high-degree descents in this outer increment, let $t_s$ be the current chromatic number, $C_s$ the current density constant, and $u_s$ the current vertex exponent.  Then
\[
        t_s\ge 2^{-s}m,
        \qquad
        C_s\le C\exp\{C_1(r)(j+1)s\},
        \qquad
        u_s=x-1-s\eta_0.
\]
The descent lowers the vertex exponent by $\eta_0$, and hence at most $C_2(r)(j+1)$ such descents occur before the fixed compact exponent $\beta_*$ is reached.  Therefore every constant appearing before the terminal compact-core step is at most
\[
        C\exp\{C_3(r)(j+1)^2\}.                     \tag{5}
\]
The terminal compact-core requirement and the large-defect random-thinning requirement are then covered by Lemma~\ref{lem:quant-input-thresholds}.  By (5), these branches require only
\[
        \log m\ge C_4(r,k)\bigl((j+1)^2+1+\log(C\vee2)\bigr).      \tag{6}
\]

In the small-defect branch, the graph of monochromatic edges has exponent $\gamma=A_j-\eta<A_j$ and chromatic number at least a factor $1/(2q)$ of the current chromatic number.  If this chromatic number is $s_\psi$, then $s_\psi\ge t'/(2q)$ and, after the harmless losses already accounted for in the current parameter,
\[
        e(M_\psi)\le (t')^\gamma\le (2q)^\gamma s_\psi^\gamma
        \le (2q+2)^{C_5(r)(j+1)}s_\psi^\gamma .
\]
Thus the density constant sent to the previous exponent level is independent of the original constant $C$ and is at most $(2q+2)^{C_5(r)(j+1)}$.

There is also a harmless loss in the chromatic parameter itself, and we record it explicitly.  All inequalities above are measured in the current chromatic parameter.  If the current graph has chromatic number $t'$ and it arose after $s$ high-degree descents from the original graph of chromatic number $m$, then $t'\ge 2^{-s}m$.  A threshold requirement $t'\ge \exp U$ is therefore implied by
\[
        \log m\ge U+s\log2.
\]
After the small-defect branch the monochromatic-edge graph has chromatic number at least $1/(2q)$ times the current chromatic number.  Since $s\le N_0(j)\le C_r(j+1)$ during one outer increment, the total multiplicative loss before applying the previous level is at most $(2q)2^{C_6(r)(j+1)}$.  Hence it is enough to require
\[
        \log m\ge T_j((2q+2)^{C_5(r)(j+1)})+C_7(r)(j+1)+\log(2q+2).
\]
After increasing $C_0$, the additive term $C_7(r)(j+1)+\log(2q+2)$ is absorbed by $C_0((j+1)^2+1+\log(C\vee2))$.  Taking $C_0$ larger than the constants in (6) and in this paragraph gives the stated recurrence.
\end{proof}

\begin{theorem}[Quantitative sparse threshold]\label{thm:quantitative-threshold}
Fix $r\ge4$ and $k\ge2$.  There is a constant $B>0$, depending only on $r$ and $k$, such that for every $P\ge2$ and $C\ge2$,
\[
        M_{r,k}(P,C)
        \le
        \exp\bigl(B(P+\log C)^2\bigr).
\]
\end{theorem}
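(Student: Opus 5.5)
The plan is to iterate the recurrence of Lemma~\ref{lem:quant-bookkeeping} from the base level $A_0=A_b$ up to the first level $A_J\ge P$, and then solve the recurrence explicitly.

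\emph{Base level.} By Lemma~\ref{lem:quant-input-thresholds}(ii), every graph with $e(G)\le Cm^x$ and $x\le A_b$ has $h_r(G)\ge k$ once $m^{\eps_*}\ge K_*(C+2)^{K_*}$. So we may take
\[
        T_0(C)=\eps_*^{-1}\bigl(\log K_*+K_*\log(C+2)\bigr)\le B_0\bigl(1+\log(C\vee2)\bigr),
\]
with $B_0=B_0(r,k)$.

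\emph{Number of levels.} Since $\Delta=1/(2(r-1))$ and $A_b>2$, it suffices to take
\[
        J=\lceil (P-A_b)/\Delta\rceil_+\le 2(r-1)P .
\]
Then $J+1\le C_r P$ for $P\ge2$, and $M_{r,k}(P,C)\le\exp T_J(C)$.

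\emph{Unrolling the recurrence.} Lemma~\ref{lem:quant-bookkeeping} bounds $T_{j+1}(C)$ by the maximum of two terms. The first is a term linear in $\log C$. The second is $T_j$ evaluated at a constant $C^{(j)}:=(2q+2)^{C_0(j+1)}$ that does not depend on $C$. I would prove by induction on $j$ that
\[
        T_j(C)\le D\bigl((j+1)^2+\log(C\vee2)\bigr)
\]
for a suitable $D=D(r,k)\ge\max\{B_0,C_0\}$.

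The base case $j=0$ holds because $D\ge B_0$. For the inductive step, the first branch is at most $C_0\bigl((j+1)^2+1+\log(C\vee2)\bigr)$, which is at most $D\bigl((j+2)^2+\log(C\vee2)\bigr)$ once $D\ge C_0$. For the second branch, the inductive hypothesis gives
\[
        T_j(C^{(j)})\le D\bigl((j+1)^2+C_0(j+1)\log(2q+2)\bigr).
\]
Since $(j+2)^2-(j+1)^2=2j+3$, we need
\[
        C_0(j+1)\log(2q+2)\le 2j+3 .
\]
This fails in general because $C_0\log(2q+2)$ may exceed $2$.

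This is the main obstacle: the exponent of the transferred constant grows linearly in $j$ while the available slack $D(2j+3)$ is also linear in $j$, so the comparison depends on constants. I would fix it by strengthening the induction hypothesis to
\[
        T_j(C)\le D(j+1)^2+D'\log(C\vee2),
\]
with $D'=\max\{B_0,C_0\}$ chosen first and $D$ chosen second. The second branch then gives
\[
        D(j+1)^2+D'C_0(j+1)\log(2q+2),
\]
and this is at most $D(j+2)^2$ once $D\ge D'C_0\log(2q+2)$. The first branch gives
\[
        C_0(j+1)^2+C_0+C_0\log(C\vee2)\le D(j+2)^2+D'\log(C\vee2).
\]
Both inequalities close, and the key point is that the two branches never compound, since the recurrence takes a maximum rather than a sum.

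\emph{Conclusion.} With $J+1\le C_rP$ we get
\[
        T_J(C)\le DC_r^2P^2+D'\log C\le B(P+\log C)^2
\]
for $P,C\ge2$, after enlarging $B$. The quadratic in $\log C$ is more than needed here. Hence $M_{r,k}(P,C)\le\exp\bigl(B(P+\log C)^2\bigr)$.

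The remaining checks are routine. One is that the hypothesis of Lemma~\ref{lem:quant-bookkeeping} is phrased with $x\le A_j$ uniformly, which the base supplies, as noted at the end of the proof of Lemma~\ref{lem:quant-input-thresholds}. The other is that the threshold $\exp T_J(C)$ also covers the trivial case $k=2$.
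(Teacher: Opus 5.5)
Your proposal is correct and follows the paper's proof. You take the base level from Lemma~\ref{lem:quant-input-thresholds}(ii), induct on the recurrence of Lemma~\ref{lem:quant-bookkeeping}, and finish with $J=O_r(P)$. Your repair of decoupling the coefficient of $(j+1)^2$ from that of $\log(C\vee2)$ is exactly what the paper does with its induction hypothesis $T_j(C)\le B\bigl(A_*(j+1)^2+\log(C\vee2)\bigr)$, where $A_*=1+C_0\log(2q+2)+C_0$.
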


\begin{proof}
Let $A_b=2+1/(3r-5)$ and $\Delta=1/(2(r-1))$, and put $A_j=A_b+j\Delta$.  For each $j\ge0$ we choose $T_j:[2,\infty)\to[0,\infty)$ to be a nondecreasing majorant of all logarithmic thresholds certified up to edge exponent $A_j$ and density constant $C$.  Thus, if a statement is valid above $\exp U(C)$, we are free to replace $U$ by any larger nondecreasing function of $C$.  All threshold functions below are understood after this monotone-majorant normalization.  Lemma~\ref{lem:quant-input-thresholds}(ii) gives
\[
        T_0(C)\le B_0(1+\log(C\vee2))                 \tag{7}
\]
for a constant $B_0=B_0(r,k)$.  Lemma~\ref{lem:quant-bookkeeping} gives, for all $j\ge0$,
\[
        T_{j+1}(C)
        \le
        \max\left\{
        C_0((j+1)^2+1+\log(C\vee2)),
        T_j((2q+2)^{C_0(j+1)})
        \right\}.                                      \tag{8}
\]
We now solve (8).  Appendix~\ref{app:threshold-ledger} records the same bookkeeping in a single consolidated ledger for the reader who wants every threshold dependency in one place.  Put $K_0=C_0\log(2q+2)$ and choose $A_*=1+K_0+C_0$.  Taking $B$ sufficiently large in terms of $B_0,C_0,A_*$, we prove by induction that
\[
        T_j(C)\le B\bigl(A_*(j+1)^2+\log(C\vee2)\bigr)       \tag{9}
\]
for all $j\ge0$.  The case $j=0$ follows from (7).  If (9) holds at level $j$, then the first term in (8) is dominated by the right side of (9) with $j+1$ in place of $j$.  For the second term,
\[
        T_j((2q+2)^{C_0(j+1)})
        \le B\bigl(A_*(j+1)^2+K_0(j+1)\bigr).
\]
Since
\[
        A_*\bigl((j+2)^2-(j+1)^2\bigr)=A_*(2j+3)\ge K_0(j+1),
\]
this is also bounded by the right side of (9) at level $j+1$.  Thus (9) holds for all $j$.

Choose $j$ minimal with $P\le A_j$.  Then $j=O_r(P)$.  Estimate (9) gives
\[
        \log M_{r,k}(P,C)
        \le O_{r,k}(P^2+\log C)
        \le O_{r,k}((P+\log C)^2),
\]
for $P,C\ge2$.  This proves the theorem.
\end{proof}

\begin{corollary}[An unconditional quasi-polynomial density range]\label{cor:quasipoly-density}
Fix $r\ge4$ and $k\ge2$.  For every $1<a<3/2$ and every $C_0>0$, every graph $G$ of sufficiently large chromatic number satisfying
\[
        e(G)\le \exp\bigl(C_0(\log\chi(G))^a\bigr)
\]
contains a subgraph $H$ with $\girth(H)\ge r$ and $\chi(H)\ge k$.
\end{corollary}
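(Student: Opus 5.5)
The plan is to deduce the corollary from Theorem~\ref{thm:quantitative-threshold} by choosing, for each graph, a polynomial exponent $P$ that depends on $m=\chi(G)$. Put $\lambda=\log m$, and write the hypothesis as $e(G)\le \exp(C_0\lambda^a)=m^{C_0\lambda^{a-1}}$. So $G$ satisfies $e(G)\le C\,m^P$ with $C=2$ and
\[
        P=P(m)=\max\{2,\,C_0\lambda^{a-1}\}.
\]
Theorem~\ref{thm:quantitative-threshold} then gives $h_r(G)\ge k$ once $m\ge M_{r,k}(P,2)$. It is enough to check that $m\ge \exp\bigl(B(P+\log 2)^2\bigr)$, which after taking logarithms reads
\[
        \lambda\ge B\bigl(C_0\lambda^{a-1}+\log 2\bigr)^2 .
\]

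The key step is comparing growth rates. The right side is $O_{r,k,C_0}(\lambda^{2a-2})$ for large $\lambda$. Since $a<3/2$ we have $2a-2<1$, so the right side is $o(\lambda)$, and the inequality holds for all sufficiently large $\lambda$, that is, for all sufficiently large $\chi(G)$. The role of $a>1$ is only to make this a genuinely super-polynomial range. For $a\le1$ the statement already follows from Theorem~\ref{thm:poly-sparse}, so that case needs no separate treatment.

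The only point that needs care, and the one I expect to be the main obstacle, is that $P$ depends on $m$. Theorem~\ref{thm:quantitative-threshold} must therefore be used as a uniform statement over all pairs $(P,C)$ with $P,C\ge2$, with $B$ depending only on $r,k$; it cannot be treated as an existence statement for each fixed $P$. This is exactly how that theorem is stated: $M_{r,k}(P,C)$ is the least threshold for the pair $(P,C)$, and the bound $\exp(B(P+\log C)^2)$ holds for every such pair. So for each individual graph $G$ I apply the theorem with its own $P(m)$, with no limiting argument. I would also note the monotonicity that $e(G)\le 2m^{P'}$ for every $P'\ge P$. This guarantees that rounding $P$ up, or the cap at $2$, does no harm.
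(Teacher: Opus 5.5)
Your proof is correct and is essentially the paper's own argument. The paper also sets $P_m=\max\{2,C_0(\log m)^{a-1}\}$, uses $e(G)\le m^{P_m}\le 2m^{P_m}$, and applies Theorem~\ref{thm:quantitative-threshold} with constant $2$ (uniformly in $P$, as you emphasize) to get $\log M_{r,k}(P_m,2)\le B(P_m+\log 2)^2=O((\log m)^{2(a-1)})=o(\log m)$.
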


\begin{proof}
Let $m=\chi(G)$ and set
\[
        P_m=\max\{2,C_0(\log m)^{a-1}\}.
\]
For all large $m$, $e(G)\le m^{P_m}\le2m^{P_m}$.  Theorem~\ref{thm:quantitative-threshold}, applied with constant $2$, gives
\[
        \log M_{r,k}(P_m,2)
        \le B(P_m+\log2)^2
        =O((\log m)^{2(a-1)}).
\]
Since $2(a-1)<1$, this is $o(\log m)$.  Hence $m\ge M_{r,k}(P_m,2)$ for all sufficiently large $m$, and Theorem~\ref{thm:poly-sparse} applies.
\end{proof}

\begin{remark}[Why the compact base is necessary]\label{rem:no-squaring}
If the inner induction were terminated by $e\le |V|^2/2$, the current constant $C'$ would be replaced by roughly $(C')^2$.  Iterating over $\Theta(P)$ outer levels would double the coefficient of $\log C$ at each level and would destroy the quadratic threshold estimate.  The fixed compact-core base avoids this squaring and keeps the dependence on the current constant logarithmic at each level.
\end{remark}

\begin{remark}[Why imbalanced splits do not help this proof]\label{rem:imbalanced-split-fails}
A natural attempt to improve the quadratic threshold is to replace the balanced chromatic split by an imbalanced split.  At outer level $j$, suppose the high-degree branch is used only when it has chromatic number at least $(1-\rho_j)t$.  This makes the high-degree branch cheaper, but the complementary branch has chromatic number only $\rho_jt$.  In the small-defect branch, the monochromatic-edge graph then has density constant, in its own chromatic parameter, of order
\[
        (q/\rho_j)^{\Theta_r(j)}.
\]
Thus the level-$j$ threshold must increase by at least $\Omega_r(j\log(1/\rho_j))$ from the small-defect branch.  If $\rho_j$ stays bounded away from zero, the high-degree branch already gives a quadratic cumulative loss; if $\rho_j\to0$, the small-defect increments still sum to a quadratic scale, and the choice $\rho_j=1/(j+3)$ is worse, giving $\Omega_r(P^2\log P)$.  Therefore this one-parameter split scheme cannot yield $\log M_{r,k}(P,C)=o(P^2)$.
\end{remark}

\begin{remark}[What would be needed to reach density exponent $a<2$]\label{rem:a-less-two-target}
The preceding theorem gives $\log M_{r,k}(P,C)=O_{r,k}((P+\log C)^2)$ and hence the range $1<a<3/2$.  A near-linear threshold $O_{r,k}(P\operatorname{polylog}P+\log(C\vee2))$ would extend the same argument to every $1<a<2$, by taking $P=C_0(\log m)^{a-1}$.  The previous remark explains why such a threshold requires a different peeling or defect-reduction step, not only a different choice of chromatic split.
\end{remark}

\begin{remark}[The quantitative barrier left by Section \ref{sec:poly}]
Corollary \ref{cor:quasipoly-density} reaches $1<a<3/2$ because Theorem \ref{thm:quantitative-threshold} gives a quadratic logarithmic threshold in $P$.  Reaching $a<2$ remains a concrete quantitative target for any future refinement of the peeling/thinning bootstrap.
\end{remark}

\subsection{\texorpdfstring{Sparse-regime bounds as functions of $k$}{Sparse-regime bounds as functions of k}}

For fixed $r,P,C$, define the sparse-regime threshold
\[
        f_{P,C}(k,r)=\min\{M:\ e(G)\le C\chi(G)^P,\ \chi(G)\ge M
        \Rightarrow h_r(G)\ge k\}.
\]
The next proposition records the effective $k$-dependence furnished by the proof.  It is independent of the quasi-polynomial-density discussion above: here $P$ and $C$ are fixed and only $k$ varies.

\begin{lemma}[Polynomial $q$-dependence of one mixed step]
\label{lem:poly-q-step}
Fix $r\ge4$ and fix an upper edge exponent $P$.  Suppose that, at one edge-exponent level, all threshold requirements have the form
\[
        m\ge (D+2)^a(q+2)^b
\]
for some exponents $a,b$ depending only on $r$ and $P$, where $D$ is the current density constant and $q=k-1$.  Then one mixed-peeling increment preserves this polynomial form: the next level has a sufficient threshold
\[
        m\ge (D+2)^{a'}(q+2)^{b'}
\]
for exponents $a',b'$ depending only on $r,P,a,b$.
\end{lemma}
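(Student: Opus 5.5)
We retrace one application of Proposition~\ref{prop:mixed} (in the quantitative form of Lemma~\ref{lem:quant-bookkeeping}), now with $P$ fixed and only $q$ and the density constant $D$ varying. Every threshold requirement produced along the way will be written in the form $m\ge (D'+2)^{\alpha}(q+2)^{\beta}$. Here $D'$ is whatever constant is current at that moment, and $\alpha,\beta$ are allowed to depend on $r,P,a,b$. The key structural fact is that the number of inner high-degree descents is bounded by $N_0\le C_r(P+1)$. This is because $\eta=\eta_0$ is fixed and the vertex exponent starts at most $P$. So only boundedly many (in $r,P$) operations are composed. A finite composition of substitutions that preserve the polynomial form again has polynomial form, with exponents depending only on $r,P,a,b$.

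The plan is to check each branch in turn.

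\emph{Peeling descents.} After $s$ descents the chromatic number is $t_s\ge 2^{-s}m$ and the constant is $D_s\le D\,2^{O_{r,P}(s)}$. A requirement $t_s\ge (D_s+2)^{a''}(q+2)^{b''}$ is therefore implied by $m\ge 2^{s}(D\,2^{O(s)}+2)^{a''}(q+2)^{b''}$. This is again of polynomial form, because $s\le N_0=O_{r,P}(1)$, so the factors $2^{O(s)}$ are absorbed into the exponent of $D+2$.

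\emph{Terminal compact-core step and random-thinning branch.} Here I invoke Lemma~\ref{lem:quant-input-thresholds}(i),(iii). These require $m^{\eps_*}\ge K_*(B+2)^{K_*}$, where $K_*$ depends on $k$, so I first have to unpack that dependence. The proof of the lemma shows that the $q$-dependence enters only through $(q\log q+2)^{O_r(1)}$ and $(q+2)^{O_r(1)}$. In particular the Chernoff union bound needs $L\ge 32(B+1)\log q+32$, and the Markov step needs $m^{\xi_0}\ge C(B+2)^{O(r)}L^{r-2}$. Hence those requirements are $m\ge \bigl(C_r(B+2)^{O(r)}(q+2)^{O(r)}\bigr)^{1/\eps_*}$, and $\eps_*$ depends only on $r$. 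This is of polynomial form.

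\emph{Small-defect branch.} The monochromatic graph $M_\psi$ has chromatic number $s_\psi\ge t'/(2q)$ and density constant at most $(2q+2)^{\gamma}\le (q+2)^{O(P)}$. The hypothesis at the previous level gives the requirement $s_\psi\ge ((q+2)^{O(P)}+2)^{a}(q+2)^{b}$. This is implied by $m\ge 2q\cdot 2^{N_0}(q+2)^{O(Pa)+b}$, which is polynomial in $q+2$ with exponent depending on $r,P,a,b$. Taking the maximum of the finitely many resulting exponents then gives $a',b'$.

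The main obstacle is the compact-core and random branches. Lemma~\ref{lem:quant-input-thresholds} as stated hides the $k$-dependence inside $K_{\rm comp},K_{\rm edge},K_{\rm rand}$, and it writes $K_*$ in the exponent of $(B+2)$. I must verify that these exponents do not actually grow with $q$: only the multiplicative constant should carry $q$-dependence, and that dependence should be polynomial. I would first try to redo the ledger of that proof with $q$ explicit. The key points are that the defect bound $p\mu_q\ge m^{2-\alpha}/(8q)$ introduces only a factor $q$, that the count of colourings contributes $\log q$, and that the exponents $\delta_c,\delta_e,\zeta_0,\xi_0$ are $q$-free. Together these give thresholds $(B+2)^{O_r(1)}(q+2)^{O_r(1)}$ raised to the power $O_r(1)$.
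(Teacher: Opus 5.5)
Your proposal is correct and follows the paper's own argument. The increment involves at most $O_{r,P}(1)$ high-degree descents, each multiplying the density constant by a fixed factor. The terminal compact-core and large-defect thinning branches have requirements polynomial in $D+2$ and $q+2$. The small-defect branch passes a constant polynomial in $q$ (in fact independent of $D$) to the previous-level hypothesis. Your concern about the $k$-dependent exponent $K_*$ in Lemma~\ref{lem:quant-input-thresholds} is justified, since that lemma as stated would not give polynomial dependence on $q$. The paper's proof sidesteps it by asserting directly that the underlying input inequalities are polynomial in the density constant and in $q\log q+2$, which is exactly what your explicit unpacking of that ledger checks.
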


\begin{proof}
The compact-core terminal branch and the large-defect random-thinning branch have fixed positive exponent gaps when $r$ and $P$ are fixed.  In both branches the quantitative input inequalities are polynomial in the current density constant and in $q\log q+2$, so they are implied by a threshold of the displayed polynomial form after increasing $a'$ and $b'$.

A high-degree descent stays at the current edge-exponent level and lowers only the vertex exponent.  For fixed $r$ and $P$ the number of such descents in one increment is bounded by a constant depending only on $r$ and $P$.  Each descent rewrites an edge or vertex bound in a subgraph whose chromatic number is at least a fixed fraction of the previous one; this multiplies the current density constant by a fixed power of $D+2$ and $q+2$.  Therefore all density constants produced before the terminal branch are bounded by $(D+2)^E(q+2)^F$ for constants $E,F$ depending only on $r$ and $P$.

Only the small-defect branch invokes the previous edge-exponent level.  In that branch the monochromatic-edge graph has chromatic number at least a $1/(2q)$-fraction of the current chromatic number.  Its density constant, measured in its own chromatic parameter, is again bounded by $(D+2)^E(q+2)^F$ after increasing $E,F$.  Applying the previous-level hypothesis to this constant gives
\[
        \bigl((D+2)^E(q+2)^F+2\bigr)^a(q+2)^b
        \le (D+2)^{a'}(q+2)^{b'}
\]
for suitable $a',b'$.  This proves the lemma.
\end{proof}

\begin{proposition}[Polynomial sparse thresholds in $k$]\label{prop:sparse-threshold-k}
Fix $r\ge4$, $P\ge2$ and $C\ge2$.  There is a constant $K=K(r,P,C)$ such that for every $k\ge2$,
\[
        f_{P,C}(k,r)\le k^K.
\]
In particular,
\[
        f_{P,C}(k,5)\le k^{O_{P,C}(1)}.
\]
\end{proposition}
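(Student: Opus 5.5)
The plan is to run the bootstrap of Section~\ref{sec:poly} with $r,P,C$ fixed and only $q=k-1$ varying. The whole proof becomes an induction on the edge-exponent level whose inductive step is Lemma~\ref{lem:poly-q-step}. Set $A_j=A_b+j\Delta$ with $A_b=2+1/(3r-5)$ and $\Delta=1/(2(r-1))$, as in Lemma~\ref{lem:quant-bookkeeping}, and let $J=J(r,P)$ be minimal with $P\le A_J$. The inductive claim at level $j$ is the following: there are exponents $a_j,b_j$, depending only on $r$ and $P$, such that every graph with $\chi(G)=m$, $e(G)\le Dm^x$ and $x\le A_j$ satisfies $h_r(G)\ge k$ as soon as $m\ge (D+2)^{a_j}(q+2)^{b_j}$. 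This must hold uniformly in $D\ge1$ and $q\ge1$.

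\emph{Base case.} Lemma~\ref{lem:quant-input-thresholds}(ii) gives a threshold of the form $m^{\eps_*}\ge K_*(B+2)^{K_*}$. Here $\eps_*$ depends only on $r$, but $K_*$ was only stated to depend on $r,k$. So I would reread that ledger and check that the $k$-dependence is polynomial. Equations (2)--(5) there involve $q$ only through factors $(q\log q+2)$ and $(q+2)^{C_5}$, while the constants $C_i$ depend on $r$ alone. The defect lower bound $p\mu_q\ge m^{2-\alpha}/(8q)$ costs one further factor of $q$, and the condition $m\ge 2q$ is itself polynomial. Raising both sides to the power $1/\eps_*$ then gives level $0$ with $a_0,b_0=O_r(1)$. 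The same check on parts (i) and (iii) shows that the compact-core and random-thinning requirements are also polynomial in $(B+2)(q+2)$. For (iii), the Chernoff constant $L\asymp (B+1)\log q$ enters only polynomially through (9) and (11).

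\emph{Inductive step.} Apply Lemma~\ref{lem:poly-q-step} $J$ times. Since $J=O_r(P)$ is fixed, the final exponents $a_J,b_J$ depend only on $r$ and $P$.

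\emph{Conclusion.} Specialise to $D=C$, and absorb $(C+2)^{a_J}$ into a constant $K_1(r,P,C)$. This gives $f_{P,C}(k,r)\le K_1(k+1)^{b_J}\le k^{K}$ for all $k\ge2$, after enlarging $K$ to absorb $K_1$ and the factor $(k+1)/k\le 3/2$. Taking $r=5$ gives the displayed special case.

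The main obstacle is verifying that no step of the descent hides a non-polynomial dependence on $q$. Two places need care:
\begin{enumerate}[label=\textup{(\roman*)}]
\item The number of high-degree descents in one increment must be bounded independently of $q$. This holds because $\eta_0=1/(16(r-1))$ is fixed and the vertex exponent drops from at most $P$ to $\beta_*$.
\item The small-defect branch sends to the previous level a density constant $(2q+2)^{O_r(j+1)}$ and a chromatic loss factor $2q\cdot 2^{O_r(j+1)}$, both polynomial in $q$ for fixed $j$.
\end{enumerate}
Composing polynomials $J$ times keeps the degree bounded by a function of $r$ and $P$, which is what Lemma~\ref{lem:poly-q-step} packages. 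Because $C$ is fixed, there is no concern about the $(j+1)^2$ growth of exponents seen in Theorem~\ref{thm:quantitative-threshold}.
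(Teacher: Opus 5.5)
Your proposal is correct and follows essentially the same route as the paper. Both run an induction over the exponent levels $A_b+j\Delta$ with the hypothesis that the threshold is at most $(D+2)^{a_j}(q+2)^{b_j}$, take the base case from the fixed-gap compact and near-quadratic cores, and get the inductive step from Lemma~\ref{lem:poly-q-step}, before specialising to $D=C$ and $j=J$. The paper additionally unpacks that lemma into the explicit recurrence $\mathcal T_{j+1}(q,D)\le\max\{(D+2)^A(q+2)^B,\ \mathcal T_j(q,(D+2)^E(q+2)^F)\}$. Your check that $K_*$ in Lemma~\ref{lem:quant-input-thresholds} depends on $k$ only polynomially, with $k$-independent exponents on $B+2$, makes explicit a point the paper uses without comment when it asserts that $A,B,E,F$ depend only on $r$ and $P$.
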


\begin{proof}
The case $k=2$ is immediate, so assume $q=k-1\ge2$.  We track only the dependence on $q$, since $r,P,C$ are fixed throughout the proof.  Put
\[
        A_b=2+\frac1{3r-5},
        \qquad
        \Delta=\frac1{2(r-1)},
\]
and choose $J=J(r,P)$ such that $P\le A_b+J\Delta$.
For $0\le j\le J$ let $\mathcal T_j(q,D)$ be a threshold sufficient for all edge exponents at most $A_b+j\Delta$ and all edge-density constants at most $D$.  We prove by induction on $j$ that there are exponents $a_j,b_j$, depending on $r,P,C$ but not on $q$ or $D$, such that
\[
        \mathcal T_j(q,D)
        \le (D+2)^{a_j}(q+2)^{b_j}
        \qquad(D\ge1).                         \tag{1}
\]

For $j=0$, the compact-core and near-quadratic sparse-core bases have fixed positive exponent gaps.  The defect estimate
\[
        \mu_q(G)\ge \frac{(m-q)^2}{2q}\ge \frac{m^2}{8q}
        \qquad(m\ge2q)
\]
shows that the preservation condition in Theorem~\ref{thm:random-extraction} is implied by an inequality of the form
\[
        m^{\eta_1}\ge C_1(D+2)^{A_1}(q\log q+2)^{B_1},
\]
where $\eta_1>0$ and $A_1,B_1,C_1$ depend only on the fixed exponent gaps and on $r$.  The short-cycle inequalities in the compact base use $C_\ell(G)\le |V(G)|^\ell$, while the sparse base uses $C_\ell(G)\le (2e(G))^{\ell/2}$.  Since all exponent gaps are fixed, these inequalities have the same polynomial form.  Enlarging the exponents gives (1) for $j=0$.

Assume (1) holds at level $j$.  Lemma~\ref{lem:poly-q-step} gives the abstract closure mechanism; the next paragraphs spell out the recurrence explicitly.  In one mixed-peeling increment all exponent gaps are bounded below in terms of $r$ and $P$.  Lemma~\ref{lem:quant-input-thresholds} therefore gives constants
\[
        A,B,E,F>0,
\]
depending only on $r$ and $P$, with the following two consequences.  First, the terminal compact-core branch and the large-defect random-thinning branch are guaranteed once
\[
        m\ge (D+2)^A(q+2)^B.                         \tag{2}
\]
Second, a high-degree descent does not pass to the previous edge-exponent level.  It stays at the current edge exponent and lowers the vertex exponent by the fixed amount supplied by the mixed-peeling step.  Since, for fixed $r$ and $P$, the number of such descents in one exponent increment is bounded by a constant depending only on $r$ and $P$, the cumulative loss in the density constant over all high-degree descents is polynomial in the current density constant and in $q+2$.  Thus, after all possible high-degree descents inside the current exponent increment, the density constants are bounded by
\[
        (D+2)^E(q+2)^F.                         \tag{3}
\]

Only in the small-defect branch do we invoke the previous edge-exponent level.  In that branch the monochromatic-edge graph has chromatic number at least a $1/(2q)$-fraction of the current chromatic number and has edge exponent strictly below the previous level.  Measured in its own chromatic parameter, its density constant is also bounded by the quantity in (3), after increasing $E,F$ if necessary.  The retention multiplier in the random branch is polynomial in the current density constant and in $q\log q$, and this is absorbed into the same polynomial form.

Thus we have the explicit recurrence
\[
        \mathcal T_{j+1}(q,D)
        \le
        \max\left\{(D+2)^A(q+2)^B,
        \mathcal T_j\bigl(q,(D+2)^E(q+2)^F\bigr)\right\}.        \tag{4}
\]
Applying the induction hypothesis to the second term gives
\[
\begin{aligned}
        \mathcal T_j\bigl(q,(D+2)^E(q+2)^F\bigr)
        &\le \bigl((D+2)^E(q+2)^F+2\bigr)^{a_j}(q+2)^{b_j}  \\
        &\le (D+2)^{2Ea_j}(q+2)^{2Fa_j+b_j},
\end{aligned}
\]
where the harmless factor $2$ is absorbed because $D+2,q+2\ge2$.  Therefore (4) is bounded by
\[
        (D+2)^{a_{j+1}}(q+2)^{b_{j+1}}
\]
if, for example,
\[
        a_{j+1}=\max\{A,2Ea_j\},
        \qquad
        b_{j+1}=\max\{B,2Fa_j+b_j\}.
\]
This proves (1) at level $j+1$.

Taking $D=C$ and $j=J$ gives
\[
        f_{P,C}(k,r)
        \le (C+2)^{a_J}(q+2)^{b_J}
        \le k^{K(r,P,C)}
\]
after increasing $K(r,P,C)$.  This proves the proposition.
\end{proof}

Thus the sparse regime has a dramatically smaller upper bound in $k$ than the currently known unrestricted $r=5$ lower-bound examples.  Those Burling-based examples necessarily lie outside every fixed polynomial-density critical-core regime, whereas every fixed polynomial-density regime has polynomial thresholds in the target chromatic number.

\appendix
\section{Explicit threshold ledger for the quantitative sparse theorem}\label{app:threshold-ledger}

This appendix records the quantitative dependence used in Theorem~\ref{thm:quantitative-threshold}.  It is included to make explicit that the constants accumulated in the mixed peeling proof grow quadratically in the edge exponent.  Throughout this appendix, all constants may depend on the fixed parameters $r$ and $k$, but not on $P$, $C$, or the chromatic number $m$.

Let
\[
        A_b=2+\frac1{3r-5},\qquad
        \Delta=\frac1{2(r-1)},\qquad
        A_j=A_b+j\Delta .
\]
For $C\ge2$, let $N_j(C)$ be a monotone majorant for the least threshold with the following property: every graph $G$ with chromatic number at least $N_j(C)$ and
\[
        e(G)\le C\chi(G)^x,
        \qquad x\le A_j,
\]
contains a subgraph of girth at least $r$ and chromatic number at least $k$.  We prove, by induction on $j$, that there are constants $B_1,B_2$ such that
\[
        \log N_j(C)\le B_1(j+1)^2+B_2\log C .      \tag{A.1}
\]

The base case $j=0$ follows from the near-quadratic sparse-core theorem.  Indeed, $A_b=2+1/(3r-5)$ lies a fixed positive distance below the endpoint $2+2/(3r-5)$ in Theorem~\ref{thm:edge-core}.  Hence the edge-core proof has a fixed exponent margin.  Equivalently, there are constants $b_0,b_1$ such that
\[
        \log N_0(C)\le b_0+b_1\log C.              \tag{A.2}
\]
This is the only place where the fixed interior choice of $A_b$ is used.

Assume now that (A.1) is known at level $j$.  We prove the level $j+1$ bound.  The only new range is $A_j<x\le A_{j+1}$.  Passing to an $m$-critical subgraph gives
\[
        |V(G)|\le 2C m^{x-1}
\]
for all large $m$.  The mixed peeling argument is run with previous edge level $A_j$, edge exponent $x$, and initial vertex exponent $u_0=x-1$.  Since $x-A_j\le \Delta$, the endpoint slack is at least $1/2$.  With the fixed value $\eta_0=1/(16(r-1))$ from Lemma~\ref{lem:uniform-slack}, every high-degree descent lowers the vertex exponent by $\eta_0$.

The inner descent stops either when the vertex exponent reaches the fixed compact exponent $\beta_*=1+1/(4r)$, or when the small-defect branch appears.  Thus the number of high-degree descents in one outer increment is at most
\[
        R_j\le b_2(j+1).                            \tag{A.3}
\]
At the $s$th inner stage, measured in the current chromatic parameter $t_s$, the edge and vertex bounds have constants at most
\[
        C_s\le C\exp\{b_3(j+1)s\}.                  \tag{A.4}
\]
This is because rewriting a bound in a subgraph whose chromatic number is at least half of the previous one multiplies a constant by at most $2^{O_r(j+1)}$.  Combining (A.3) and (A.4), every constant appearing before the terminal compact-core branch is at most
\[
        C\exp\{b_4(j+1)^2\}.                        \tag{A.5}
\]
The compact-core terminal branch and the large-defect random-thinning branch have fixed positive exponent margins; by Lemma~\ref{lem:quant-input-thresholds}, they are therefore available once
\[
        \log m\ge b_5\bigl((j+1)^2+1+\log C\bigr).  \tag{A.6}
\]
Here (A.5) has been substituted into the logarithmic dependence on the current density constant.

It remains to record the small-defect branch.  Suppose it occurs at current chromatic number $t'$.  The graph of monochromatic edges has chromatic number $s_\psi\ge t'/(2q)$ and at most $(t')^\gamma$ edges, where $\gamma=A_j-\eta_0<A_j$.  Measured in its own chromatic parameter,
\[
        e(M_\psi)\le (t')^\gamma
        \le (2q)^\gamma s_\psi^\gamma
        \le (2q+2)^{b_6(j+1)}s_\psi^\gamma .        \tag{A.7}
\]
The chromatic-number losses before invoking the previous level are also explicit.  If $s$ high-degree descents preceded the small-defect branch, then $t'\ge 2^{-s}m$, and the branch itself loses a further factor $2q$.  Hence a previous-level threshold $s_\psi\ge N_j(D)$ is guaranteed by
\[
        \log m\ge \log N_j(D)+b_7(j+1)+\log(2q+2),  \tag{A.8}
\]
where $D=(2q+2)^{b_6(j+1)}$.  By the induction hypothesis,
\[
        \log N_j(D)
        \le B_1(j+1)^2+B_2 b_6(j+1)\log(2q+2).     \tag{A.9}
\]
The right side is $O_{r,k}((j+1)^2)$.  Combining (A.6)--(A.9), and increasing the constants in (A.1), gives
\[
        \log N_{j+1}(C)
        \le B_1(j+2)^2+B_2\log C .
\]
This proves (A.1) at level $j+1$.  Since the least $j$ with $P\le A_j$ is $O_r(P)$, Theorem~\ref{thm:quantitative-threshold} follows.

\section*{Acknowledgements}
The author acknowledges the use of OpenAI's ChatGPT during the preparation of this manuscript. While it was used for ideation, formulation, proof exploration and refinement, narrowing the search space, programming, LaTeX formatting and other forms of orchestration, the author nonetheless takes full responsibility for the accuracy of the final contents of this paper.


\begin{thebibliography}{99}

\bibitem{BerkowitzDevlinLeeReichardTownley2018}
R. Berkowitz, P. Devlin, C. Lee, H. Reichard and D. Townley,
\newblock Expected chromatic number of random subgraphs,
\newblock arXiv:1811.02018, 2018.

\bibitem{BohmanKeevash2013}
T. Bohman and P. Keevash,
\newblock Dynamic concentration of the triangle-free process,
\newblock Random Structures Algorithms 58 (2021), 221--293,
\newblock doi:10.1002/rsa.20973.

\bibitem{BKN2023}
B. Bukh, M. Krivelevich and B. Narayanan,
\newblock Colouring random subgraphs,
\newblock Combin. Probab. Comput. 34 (2025), no. 4, 585--595,
\newblock doi:10.1017/S0963548325000069.

\bibitem{DiwanKenkreVishwanathan2008}
A. A. Diwan, S. Kenkre and S. Vishwanathan,
\newblock Circumference, chromatic number and online coloring,
\newblock arXiv:0809.1710, 2008.

\bibitem{ErdosHajnal1966}
P. Erd\H{o}s and A. Hajnal,
\newblock On chromatic number of graphs and set-systems,
\newblock Acta Math. Acad. Sci. Hungar. 17 (1966), 61--99.

\bibitem{Erdos1959}
P. Erd\H{o}s,
\newblock Graph theory and probability,
\newblock Canad. J. Math. 11 (1959), 34--38.

\bibitem{FelsnerJoretMicekTrotterWiechert2018}
S. Felsner, G. Joret, P. Micek, W. T. Trotter and V. Wiechert,
\newblock Burling graphs, chromatic number, and orthogonal tree-decompositions,
\newblock Electron. J. Combin. 25 (2018), Paper No. 1.35.


\bibitem{FGM2013}
G. Fiz Pontiveros, S. Griffiths and R. Morris,
\newblock The triangle-free process and the Ramsey number $R(3,k)$,
\newblock Mem. Amer. Math. Soc. 263 (2020), no. 1274, 125 pp.

\bibitem{Kim1995}
J. H. Kim,
\newblock The Ramsey number $R(3,t)$ has order of magnitude $t^2/\log t$,
\newblock Random Structures Algorithms 7 (1995), 173--207.

\bibitem{KenkreVishwanathan2007}
S. Kenkre and S. Vishwanathan,
\newblock A bound on the chromatic number using the longest odd cycle length,
\newblock J. Graph Theory 54 (2007), 267--276.

\bibitem{MoharWuRandomFractional}
B. Mohar and H. Wu,
\newblock Fractional chromatic number of a random subgraph,
\newblock J. Graph Theory 95 (2020), 467--472,
\newblock doi:10.1002/jgt.22571.

\bibitem{MoharWuTriangleFreeFractional}
B. Mohar and H. Wu,
\newblock Triangle-free subgraphs with large fractional chromatic number,
\newblock Combin. Probab. Comput. 31 (2022), 136--143.

\bibitem{MoharWuKneser}
B. Mohar and H. Wu,
\newblock Subgraphs of Kneser graphs with large girth and large chromatic number,
\newblock Art Discrete Appl. Math. 6 (2023), no. 2, Paper No. 2.11, 7 pp.,
\newblock doi:10.26493/2590-9770.1491.14a.

\bibitem{PTW2026}
S. Pettie, G. Tardos and B. Walczak,
\newblock On a clique game and the Erd\H{o}s--Hajnal problem on high-chromatic high-girth subgraphs,
\newblock Proceedings of the 2026 Annual ACM-SIAM Symposium on Discrete Algorithms (SODA), 2026, 2903--2927,
\newblock doi:10.1137/1.9781611978971.108.

\bibitem{Walczak2015}
B. Walczak,
\newblock Triangle-free geometric intersection graphs with no large independent sets,
\newblock Discrete Comput. Geom. 53 (2015), 221--225,
\newblock doi:10.1007/s00454-014-9645-y.

\bibitem{Rodl1977}
V. R\"odl,
\newblock On the chromatic number of subgraphs of a given graph,
\newblock Proc. Amer. Math. Soc. 64 (1977), 370--371.

\bibitem{Shearer1983}
J. B. Shearer,
\newblock A note on the independence number of triangle-free graphs,
\newblock Discrete Math. 46 (1983), 83--87.


\bibitem{Shinkar2016}
I. Shinkar,
\newblock On coloring random subgraphs of a fixed graph,
\newblock arXiv:1612.04319, 2016.

\end{thebibliography}
\end{document}